 \definecolor{orange}{rgb}{1,0.5,0}
 \definecolor{brown}{rgb}{0.48,0.33,0.19}
 \definecolor{magenta}{rgb}{1,0,1}
 \definecolor{miao}{cmyk}{0.5,0,0.2,0.2}
 \definecolor{qiao}{gray}{0.96}
 \newcommand{\R}{\ensuremath{\mathbb{R}}}
 \newcommand{\ba}{\begin{align*}}
 \newcommand{\ea}{\end{align*}}
 \newcommand{\na}{\nabla}
\newcommand{\la}{\langle}
\newcommand{\ra}{\rangle}
\newcommand{\lc}{\left(}
\newcommand{\rc}{\right)}
 \newcommand{\KN}{\mathbin{\bigcirc\mspace{-14mu}\wedge\mspace{3mu}}}
\newcommand{\ep}{\epsilon}
 \newcommand{\norm}[2]{{ \ensuremath{\left\|} #1 \ensuremath{\right\|}}_{#2}}
 \newcommand{\snorm}[2]{{ \ensuremath{\left |} #1 \ensuremath{\right |}}_{#2}}
 \def\ExtendSymbol#1#2#3#4#5{\ext@arrow 0099{\arrowfill@#1#2#3}{#4}{#5}}
 \def\ExtendSymbol#1#2#3#4#5{\ext@arrow 0099{\arrowfill@#1#2#3}{#4}{#5}}
 \newcommand\longright[2][]{\ExtendSymbol{-}{-}{\rightarrow}{#1}{#2}}
\numberwithin{equation}{section}
\newtheorem{thm}{Theorem}[section]
\newtheorem{theorem}[thm]{Theorem}
\newtheorem{cor}[thm]{Corollary}
\newtheorem{corollary}[thm]{Corollary}
\newtheorem{prop}[thm]{Proposition}
\newtheorem{proposition}[thm]{Proposition}
\newtheorem{lem}[thm]{Lemma}
\newtheorem{lemma}[thm]{Lemma}
\newtheorem{rem}[thm]{Remark}
\newtheorem{remark}[thm]{Remark}
\newtheorem{defn}[thm]{Definition}
\newtheorem{claim}[thm]{Claim}
\title{Heat kernel on Ricci shrinkers}
\author{Yu Li \footnote{Partially supported by research fund from SUNY Stony Brook.} \quad and \quad Bing Wang \footnote{Partially supported by NSF grant DMS-1510401 and research funds from USTC and UW-Madison.}}
\date{\today}
\begin{document}
\maketitle

\begin{abstract}
In this paper, we systematically study the heat kernel of the Ricci flows induced by Ricci shrinkers.
We develop several estimates which are much sharper than their counterparts in general closed Ricci flows. 
Many classical results, including the optimal Logarithmic Sobolev constant estimate,  the Sobolev constant estimate, the no-local-collapsing theorem, the pseudo-locality theorem and the strong maximum principle for curvature tensors, 
are essentially improved for Ricci flows induced by Ricci shrinkers.  
Our results provide many necessary tools to analyze short time singularities of the Ricci flows of general dimension. 
\end{abstract}

\tableofcontents

\section{Introduction}

A Ricci shrinker is a triple $(M^n, g, f)$ of smooth manifold $M^n$, Riemannian metric $g$ and a smooth function $f$ satisfying 
\begin{align} 
Rc+\text{Hess}\,f=\frac{1}{2}g. 
\label{E100}
\end{align}
By a normalization of $f$,  we can assume that 
\begin{align} 
R+|\nabla f|^2&=f,  \label{E101}\\
\int_{M} e^{-f} (4\pi)^{-\frac{n}{2}} dV&=e^{\boldsymbol{\mu}},   \label{eqn:PK27_1}
\end{align}
where $\boldsymbol{\mu}$ is the functional of Perelman.   As usual, we define
\begin{align}
    \mathcal{M}_{n}(A) \coloneqq \left\{ (M^n, g,f) \left|\, \boldsymbol{\mu} \geq -A \right.\right\}.    \label{eqn:PL24_1}
\end{align}

Lying on the intersection of critical metrics and geometric flows, the study of Ricci shrinkers has already become a very important topic in geometric analysis. 
Up to dimension 3, all Ricci shrinkers are classified. In dimension 2, the only Ricci shrinkers are $\R^2, \,S^2$ and $\R P^2$ with standard metrics, due to the classification of Hamilton \cite{Ham95}. In dimension 3, we know that $\R^3,\,S^2 \times \R,\,S^3$ and their quotients are all possible Ricci shrinkers, based on the work of Perelman \cite{Pe1}, Petersen-Wylie \cite{PW10}, Naber \cite{Naber}, Ni-Wallach \cite{NW10} and Cao-Chen-Zhu \cite{CCZ08}.
If we assume the curvature operator to be nonnegative, then the Ricci shrinkers are also classified, see Munteanu-Wang~\cite{MW17}.
However, an important motivation for the study of the Ricci shrinkers is that the Ricci shrinkers are models for short time singularities of the Ricci flows.
In dimension 3, by the Hamilton-Ivey pinch \cite{Ham95}\cite{Ham99}\cite{Ivey93}, one may naturally assume that the Ricci shrinker has nonnegative curvature operator.  
If the dimension is strictly greater than $3$,  the loss of pinch estimate makes the nonnegativity of curvature operator an unsatisfactory condition and should be dropped. 
Also,  it is well known (cf. Haslhofer-M\"uller~\cite{HM11}) that most interesting singularity models are non-compact.
Therefore, to prepare for the singularity analysis of high dimensional Ricci flow,  we shall focus only on the study of \textit{non-compact Ricci shrinkers without any curvature assumption}. 
Since $M$ is non-compact,  the inequality
\begin{align}
      \sup_{M} |Rm|< \infty      \label{eqn:PL24_4}
\end{align}
may fail.  The failure of Riemannian curvature bound causes serious consequences. 
Many fundamental analysis tools, e.g., maximum principle and integration by parts,  cannot be applied directly without estimates of the manifold at infinity.

In this paper, we shall provide a solid foundation for many fundamental analysis tools in the Ricci shrinkers.  We shall  mostly take the point of view that Ricci shrinkers are
time slices of self-similar Ricci flow solutions.  After a delicate choice of cutoff functions and calculations, 
we show that most of the fundamental tools, including maximum principle, existence of (conjugate) heat solutions, uniqueness and stochastic completeness, integration by parts, etc.,  work well on the Ricci shrinker spacetime.  
Then we use these fundamental tools to study the geometric properties of the Ricci flows induced by the Ricci shrinkers.
Therefore, we are able to check that most known important properties of the compact Ricci flows, including monotonicity of Perelman's functional, no-local-collapsing and pseudo-locality theorem of Perelman, curvature tensor strong maximum principle of Hamilton,  do apply
on noncompact Ricci shrinkers. Furthermore, since the Ricci flows induced by the Ricci shrinkers are self-similar,  we obtain many special properties of the Ricci shrinkers.

The first property is the estimate of sharp Logarithmic Sobolev constant, which can be regarded as an improvement of the fact that Perelman's functional is monotone along each Ricci flow.

\begin{theorem}[\textbf{Optimal Logarithmic Sobolev constant}]
Let $(M^n,p,g,f)$ be a Ricci shrinker.
Then $\boldsymbol{\mu}(g,\tau)$ is a continuous function for $\tau>0$ such that $\boldsymbol{\mu}(g,\tau)$ is decreasing for $\tau \le 1$ and increasing for $\tau \ge 1$.
In particular,  we have 
  \begin{align}
     \boldsymbol{\nu}(g) \coloneqq \inf_{\tau >0}\boldsymbol{\mu}(g,\tau)=\boldsymbol{\mu}(g).
  \label{eqn:PK20_1}   
  \end{align}
Consequently, the following properties hold.

\begin{itemize}
  \item Logarithmic Sobolev  inequality. In other words, for each compactly supported locally Lipschitz function  $u$ and each $\tau>0$, we have
        \begin{align}
	  &\int u^2 \log u^2 dV - \left(\int u^2 dV \right) \log \left( \int u^2 dV \right)   + \left( \boldsymbol{\mu} +n +\frac{n}{2} \log (4\pi \tau) \right) \int u^2 dV \notag\\
	  &\leq \tau \int \left\{ 4|\nabla u|^2 +Ru^2 \right\} dV.        \label{eqn:PK20_2}
        \end{align}
        
  \item Sobolev inequality.   Namely,  for each compactly supported locally Lipschitz function $u$, we have
        \begin{align}
            \left(\int u^{\frac{2n}{n-2}}\,dV\right)^{\frac{n-2}{n}} \le C e^{-\frac{2 \boldsymbol{\mu}}{n}} \int \left\{ 4|\nabla u|^2+Ru^2  \right\} dV
          \label{eqn:PK20_3}
        \end{align}
	for some dimensional constant $C=C(n)$. 
\end{itemize}
  \label{thm:A}
\end{theorem}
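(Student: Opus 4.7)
The plan is to leverage the self-similarity of the Ricci flow induced by the shrinker. Let $g(t) := (1-t)\varphi_t^* g$ be the canonical Ricci flow on $(-\infty, 1)$ generated by $(M,g,f)$, where $\varphi_t$ is the one-parameter family of diffeomorphisms generated by the time-dependent vector field $\nabla f/(1-t)$ with $\varphi_0 = \mathrm{id}$. Because Perelman's $\boldsymbol{\mu}$-functional is diffeomorphism invariant and satisfies the scaling $\boldsymbol{\mu}(cg, c\tau) = \boldsymbol{\mu}(g,\tau)$, one obtains the key identity
\begin{align*}
\boldsymbol{\mu}(g(t), \tau) \;=\; \boldsymbol{\mu}\!\left(g,\,\tfrac{\tau}{1-t}\right), \qquad t < 1,\ \tau > 0.
\end{align*}

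Next I invoke Perelman's monotonicity along the Ricci flow $g(t)$: with $\tau(t) = \tau_0 - t$ (so that $d\tau/dt = -1$), the function $t \mapsto \boldsymbol{\mu}(g(t), \tau(t))$ is non-decreasing. Setting $\phi(\sigma) := \boldsymbol{\mu}(g,\sigma)$ and $s(t) := (\tau_0-t)/(1-t)$, the scaling identity rephrases this as the non-decrease of $t \mapsto \phi(s(t))$. Since $s'(t) = (\tau_0-1)/(1-t)^2$, the sign of $s'$ matches that of $\tau_0-1$. For $\tau_0 > 1$, $s$ sweeps $[\tau_0, \infty)$ monotonically as $t \uparrow 1$, so $\phi$ must be non-decreasing on $[\tau_0, \infty)$; letting $\tau_0 \downarrow 1$ covers all of $(1,\infty)$. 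For $\tau_0 \in (0,1)$, $s$ sweeps $(0,\tau_0]$ monotonically downward as $t \uparrow \tau_0$, forcing $\phi$ to be non-increasing on $(0,\tau_0]$; letting $\tau_0 \uparrow 1$ covers $(0,1)$. Consequently $\phi$ attains its global infimum at $\tau=1$, giving $\boldsymbol{\nu}(g) = \phi(1) = \boldsymbol{\mu}(g,1) = \boldsymbol{\mu}(g)$, which is \eqref{eqn:PK20_1}. Continuity of $\phi$ away from $\tau=1$ follows from the two-sided monotonicity combined with the upper semi-continuity inherited from the variational (infimum) definition of $\boldsymbol{\mu}$; at $\tau=1$ one uses $f$ itself (suitably renormalized) as a test function to obtain $\limsup_{\tau\to 1}\phi(\tau) \le \phi(1)$, matching the lower bound supplied by monotonicity.

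The logarithmic Sobolev inequality \eqref{eqn:PK20_2} is then a direct unravelling: writing $u^2 = (4\pi\tau)^{-n/2}e^{-v}$ with an additive normalization of $v$ so that $\int u^2 \,dV = 1$ turns the inequality $\boldsymbol{\mu}(g,\tau) \ge \boldsymbol{\mu}(g)$ into \eqref{eqn:PK20_2} in the normalized case, and the general statement follows by rescaling $u$. For the Sobolev inequality \eqref{eqn:PK20_3}, the crucial gain is that a \emph{single} constant $\boldsymbol{\mu}(g)$ now works for \emph{all} $\tau > 0$, rather than a scale-specific bound. The passage from this one-parameter family of log-Sobolev inequalities to the Sobolev inequality is classical: one optimizes $\tau$ in terms of $u$, or equivalently runs the Davies heat-kernel / Beckner--Rothaus chain from a $\tau$-parametrized log-Sobolev family to a Sobolev inequality, as in the standard closed Ricci flow argument.

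The principal obstacle is the very first ingredient: justifying Perelman's $\boldsymbol{\mu}$-monotonicity along the noncompact self-similar flow $g(t)$. One must verify that $\boldsymbol{\mu}(g(t),\tau)$ is actually attained (or produce a minimizing sequence with enough uniform control), that the associated backward conjugate heat equation has a positive solution with sufficient decay at spatial infinity, and that every integration by parts appearing in Perelman's differentiation of $\mathcal{W}$ survives without boundary contributions in the noncompact setting. These are precisely the foundational tools that the preceding sections of the paper are devoted to establishing; once they are in place, the proof of Theorem~\ref{thm:A} above becomes essentially a scaling and algebraic identity.
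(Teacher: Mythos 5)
Your skeleton — the identity $\boldsymbol{\mu}(g(t),\tau)=\boldsymbol{\mu}(g,\tau/(1-t))$, Perelman monotonicity along $\tau(t)=\tau_0-t$, and the reparametrization $s(t)=(\tau_0-t)/(1-t)$ — is exactly the paper's Corollary 4.7, and your closing remark correctly identifies that the real labor is justifying the monotonicity on a noncompact shrinker. But there is a genuine gap at the step ``$\boldsymbol{\nu}(g)=\phi(1)=\boldsymbol{\mu}(g,1)=\boldsymbol{\mu}(g)$.'' The last equality is not notation: $\boldsymbol{\mu}(g)$ is the explicit constant $\log\int(4\pi)^{-n/2}e^{-f}\,dV$ from the normalization, and the identity $\boldsymbol{\mu}(g,1)=\boldsymbol{\mu}$ is the Carrillo--Ni theorem, whose hard half ($\boldsymbol{\mu}(g,1)\ge\boldsymbol{\mu}$) is the Bakry--\'Emery logarithmic Sobolev inequality for the weighted measure $e^{-f_0}dV$ satisfying $CD(\tfrac12,\infty)$. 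The paper devotes a substantial portion of Section 5 to proving this on a noncompact shrinker (via the weighted heat flow $\partial_t\rho=\Delta_f\rho$ and an entropy-dissipation argument). Without it, your argument identifies $\boldsymbol{\nu}(g)$ with $\boldsymbol{\mu}(g,1)$ but not with the constant $\boldsymbol{\mu}$ appearing in \eqref{eqn:PK20_2}.

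Two further steps do not close as written. First, ``decreasing on $(0,1)$ and increasing on $(1,\infty)$'' does not by itself yield $\inf_{\tau>0}\phi(\tau)=\phi(1)$: monotonicity only shows $\inf_{\tau<1}\phi(\tau)=\lim_{\tau\to1^-}\phi(\tau)$, and upper semi-continuity bounds this limit \emph{above} by $\phi(1)$, which is the wrong direction. One needs $\phi(\tau)\ge\phi(1)$ for $\tau\in(0,1)$ separately; the paper's Lemma 5.12 gets this by running the conjugate heat equation backward to $t\to-\infty$ so that $\theta(t)=(\eta_0-t)/(1-t)\to1$, and the troublesome term $(\theta-1)\int u^2\log u^2$ is controlled only because the conjugate heat solution satisfies $w\le C(4\pi\bar\tau)^{-n/2}e^{-f}$ (Lemma 3.10). (The case $\tau>1$ is the elementary estimate $\phi(\tau)\ge\phi(1)-\tfrac n2\log\tau$.) Second, your continuity argument fails: a monotone, upper semi-continuous function can still jump (e.g.\ $\phi=1$ on $(0,\tau_0]$, $\phi=0$ after), so ``two-sided monotonicity plus u.s.c.'' does not give continuity. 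The paper obtains lower semi-continuity by showing near-minimizers $u_i$ have uniformly bounded $\int(4|\nabla u_i|^2+Ru_i^2)\,dV$, which in turn requires the Sobolev inequality of Corollary 5.16 as input — so the logical order is: monotonicity and $\boldsymbol{\nu}=\boldsymbol{\mu}$ first, then Sobolev, then continuity, rather than continuity as a preliminary.
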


In geometric analysis, it is a fundamental problem to estimate uniform Sobolev constant.   When the underlying manifold is noncompact, the uniform Sobolev constant in general does not exist.
However, (\ref{eqn:PK20_3}) says that there is a uniform (Scalar-)Sobolev constant, depending only on $n$ and $\boldsymbol{\mu}$.    In particular, if the scalar curvature is bounded, i.e., $\sup_{M} R<\infty$, 
then there exists a classical Sobolev constant. Namely, for each $u \in C_{c}^{\infty}(M)$, we have
\begin{align*}
          \left(\int u^{\frac{2n}{n-2}}\,dV\right)^{\frac{n-2}{n}} \le C e^{-\frac{2 \boldsymbol{\mu}}{n}} \int \left\{ |\nabla u|^2+u^2  \right\} dV
\end{align*}
for some $C=C(n, \sup_{M} R)$.    Note that the term $e^{-\frac{2 \boldsymbol{\mu}}{n}}$ is almost $|B(p, 1)|^{-\frac{2}{n}}$ by Lemma 2.5 of~\cite{LLW18}.

The proof of Theorem~\ref{thm:A} follows a similar route as done in Proposition 9.5 of~\cite{LLW18}, by using the monotonicity of Perelman's functional
along Ricci flow and the invariance of Perelman's functional under diffeomorphism actions.

Secondly, we can improve the no-local-collapsing theorem of Perelman on the Ricci shrinker Ricci flow. 
By the fundamental work of Perelman~\cite{Pe1}, the Ricci flow spacetime can be regarded as a ``Ricci-flat" spacetime in terms of reduced volume and reduced distance. 
Now we can regard Ricci shrinker as a special time slice of the induced Ricci flow.  
On a Ricci flat manifold,  an elementary  comparison argument shows that $\frac{|B(x, r)|}{|B(x,1)|}$ grows at most Euclideanly and at least linearly (cf.~\cite{Yau76}, ~\cite{SZhu}, and Theorem 2.5 of~\cite{Peter12}). 
This comparison geometry picture has a spacetime version which is used to illustrate the no-local-collapsing (cf. ~\cite{Pe1} and~\cite{BW17A}). 
Although the comparison argument (even the space-time version) does not apply directly in the Ricci shrinker case,  we can still show that similar phenomena hold for Ricci shrinkers.

\begin{theorem}[\textbf{Improved no-local-collapsing theorem}]
 Suppose $(M^n,p,g,f)$ is a Ricci shrinker, $r>1$. Then
 \begin{subequations}
\begin{align}[left = \empheqlbrace \,]
&\frac{1}{C} r \leq \frac{|B(p,r)|}{|B(p,1)|} \leq C r^{n}, \label{eqn:PL05_2a}\\
&\inf_{\rho \in (0, r^{-1})} \rho^{-n}|B(q,\rho)| \geq \frac{1}{C} |B(p,1)|.  \label{eqn:PL05_2b}
\end{align}
\label{eqn:PL05_2}
\end{subequations}
\\
\noindent
Here $q$ is any point on $\partial B(p, r)$, and $C$ is a dimensional constant. 
  \label{thm:B}
\end{theorem}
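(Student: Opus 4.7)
The plan is to prove~\eqref{eqn:PL05_2b} first, as a sharpened no-local-collapsing statement built on Theorem~\ref{thm:A}, then use a Cao-Zhou-type Laplacian comparison to obtain the upper bound in~\eqref{eqn:PL05_2a}, and finally transport~\eqref{eqn:PL05_2b} to a near-singular time of the self-similar Ricci flow $g(t)=(1-t)\phi_t^*g$ to derive the linear lower bound.

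For~\eqref{eqn:PL05_2b}, fix $q\in\partial B(p,r)$. On any Ricci shrinker one has $R\le f$, and the Cao-Zhou estimate $f(x)\le\tfrac14(d(p,x)+c_1(n))^2$ then gives $R\le Cr^2$ throughout $B(q,r^{-1})$. Hence $R\le\rho^{-2}$ on $B(q,\rho)$ for every $\rho\le r^{-1}$. Feeding this curvature bound into Perelman's standard no-local-collapsing cutoff argument, with the sharp Sobolev inequality~\eqref{eqn:PK20_3} of Theorem~\ref{thm:A} replacing the $\boldsymbol{\mu}$-functional bound, yields $\rho^{-n}|B(q,\rho)|\ge c\,e^{2\boldsymbol{\mu}/n}$ with $c=c(n)$. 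The comparison $e^{2\boldsymbol{\mu}/n}\asymp|B(p,1)|^{2/n}$ from Lemma~2.5 of~\cite{LLW18} then converts this into~\eqref{eqn:PL05_2b}.

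For the upper bound in~\eqref{eqn:PL05_2a}, introduce $\rho:=2\sqrt{f}$. A direct computation from $R+|\nabla f|^2=f$, $\Delta f=n/2-R$, and $R\ge 0$ gives $\rho\,\Delta\rho\le n-1$ outside a bounded set, which is precisely a Euclidean-type Laplacian comparison. Integration yields $|\{\rho\le s\}|\le\omega_n s^n$ for $s$ large, and Cao-Zhou's pointwise comparison $|\rho-d(p,\cdot)|\le c_1$ translates this into $|B(p,r)|\le C(n)r^n$. Dividing by $|B(p,1)|\asymp e^{\boldsymbol{\mu}}$ (again Lemma~2.5 of~\cite{LLW18}) produces the desired ratio bound.

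The lower bound in~\eqref{eqn:PL05_2a} is the main obstacle. Because $p$ is a critical point of $f$ and hence a fixed point of $\phi_t$, the self-similar rescaling yields
\[
|B_g(p,r)| \;=\; r^{n}\,\bigl|B_{g(t)}(p,1)\bigr|_{g(t)},\qquad t:=1-r^{-2}.
\]
Since $g(t)$ is (up to the trivial rescaling built into the self-similar flow) again a Ricci shrinker, the argument of~\eqref{eqn:PL05_2b} applies on $g(t)$ and gives $R\le Cr^2$ on $B_{g(t)}(p,1)$ together with non-collapsing at scale $(3r)^{-1}$ at every point. Place $N\asymp r$ points along a minimizing $g(t)$-geodesic from $p$ of length $1$, spaced at distance $(3r)^{-1}$: at each such point the time-$t$ version of~\eqref{eqn:PL05_2b} furnishes a ball of $g(t)$-volume at least $c\,|B(p,1)|\,r^{-n}$, and these balls are pairwise disjoint and contained in $B_{g(t)}(p,1)$. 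Summing them and multiplying by the self-similarity factor $r^n$ produces $|B_g(p,r)|\ge c\,r\,|B(p,1)|$. The principal technical point to settle is running Perelman's non-collapsing machinery rigorously on the non-compact self-similar flow without curvature bounds at infinity; this is where the spacetime cutoff, heat-kernel and maximum-principle tools developed in the earlier sections of the paper will be indispensable.
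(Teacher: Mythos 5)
Your treatment of \eqref{eqn:PL05_2b} is essentially the paper's: bound $R\le Cr^2$ on $B(p,2r)\supset B(q,\rho)$ via the potential estimate, then invoke the Sobolev-based non-collapsing (Theorem~\ref{thm:PL06_2}). One correction there: the Sobolev iteration yields $\rho^{-n}|B(q,\rho)|\ge c\,e^{\boldsymbol{\mu}}(1+\Lambda\rho^2)^{-n/2}$, i.e.\ the full power $e^{\boldsymbol{\mu}}\asymp|B(p,1)|$, not $e^{2\boldsymbol{\mu}/n}$; your stated intermediate bound is strictly stronger than what the argument produces (and false in general), though the correct bound gives \eqref{eqn:PL05_2b} directly. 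The two halves of \eqref{eqn:PL05_2a}, however, each contain a genuine gap. For the upper bound, an absolute estimate $|B(p,r)|\le C(n)r^n$ is useless here: dividing by $|B(p,1)|\asymp e^{\boldsymbol{\mu}}$ gives $Ce^{-\boldsymbol{\mu}}r^n$, which is not $\le Cr^n$ with a dimensional $C$ because Theorem~\ref{thm:B} assumes no lower bound on $\boldsymbol{\mu}$. What is needed is $|B(p,r)|\le Ce^{\boldsymbol{\mu}}r^n$, and the $e^{\boldsymbol{\mu}}$ factor comes precisely from anchoring the Cao--Zhou iteration at the sublevel set $D(r_0)=\{f\le(n+2)/2\}$, whose volume is $\asymp e^{\boldsymbol{\mu}}$ (Lemma~\ref{L101}); moreover "integrating" $\rho\Delta\rho\le n-1$ to $|\{\rho\le s\}|\le\omega_n s^n$ skips the scalar-curvature term that Cao--Zhou must track through the coupled identity for $V$ and $\chi=\int_{D(r)}R$.

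The lower bound is where the proposal breaks down irreparably. Your packing argument needs each $g(t)$-ball of radius $(3r)^{-1}$ --- which under the self-similarity is a $g$-ball of \emph{unit} radius $1/3$ centered at a point $q$ with $d(p,q)\le r$ --- to have volume $\ge c\,|B(p,1)|\,(3r)^{-n}\cdot r^{n}$, i.e.\ unit-scale non-collapsing $|B_g(q,1/3)|\ge c\,|B(p,1)|$ throughout $B(p,r)$. But at such $q$ the only available curvature bound is $R\le Cr^2$, so Theorem~\ref{thm:PL06_2} gives merely $|B_g(q,1/3)|\ge c\,e^{\boldsymbol{\mu}}(1+Cr^2)^{-n/2}\asymp e^{\boldsymbol{\mu}}r^{-n}$ --- a factor $r^{-n}$ short; this is exactly why \eqref{eqn:PL05_2b} is stated only for scales $\rho<r^{-1}$. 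Packing at the scale that is actually available yields $|B(p,r)|\gtrsim|B(p,1)|\,r^{2-n}$, nowhere near linear growth, and unit-scale non-collapsing far from $p$ would essentially require a uniform scalar curvature bound, which is not assumed. The paper's proof of $\tfrac1C r\le|B(p,r)|/|B(p,1)|$ is entirely different: Proposition~\ref{prn:PK29_1} runs a Munteanu--Wang-type induction on the annuli volumes $|A(t_m,t_{m+1})|$ using the Sobolev inequality \eqref{eqn:PK20_3}, showing that if $V(r)\le\epsilon_0e^{\boldsymbol{\mu}}r$ once then the total volume is finite, contradicting the infinite-volume theorem for noncompact shrinkers. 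Some argument of this global, contradiction-based type is needed; the local packing of non-collapsed balls cannot produce the linear lower bound.
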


Although the volume estimate (\ref{eqn:PL05_2a}) behaves like the Ricci-flat case,  its proof is totally different and much more involved. 
The proof builds on the the Sobolev inequality (\ref{eqn:PK20_3}) and an improvement (cf. Remark~\ref{rmk:PK29_1}) of the induction argument due to Munteanu and Wang~\cite{MW12}. 
The non-collapsing estimate (\ref{eqn:PL05_2b}) in general does not hold for Ricci-flat manifold.   
This indicates that  Ricci shrinkers are more rigid than Ricci-flat manifold.  See Figure~\ref{fig:comparisonA} for intuition.

 \begin{figure}[h]
 \begin{center}
 \psfrag{A}[c][c]{\color{brown}{$B(p, 1)$}}
 \psfrag{B}[c][c]{\color{red}{$B(q,\rho)$}}
 \psfrag{C}[c][c]{\color{blue}{$B(p, r)$}}
 \includegraphics[width=0.4 \columnwidth]{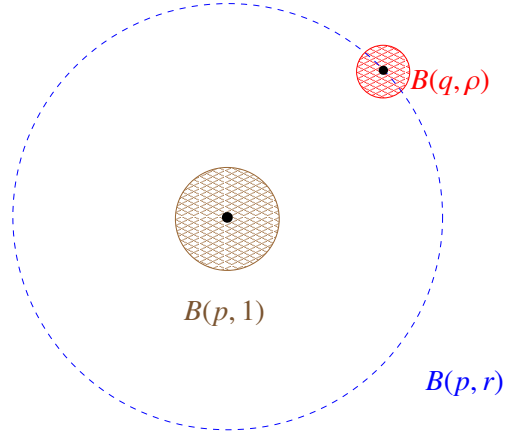}
 \caption{Propagation of non-collapsing on Ricci-shrinkers}
 \label{fig:comparisonA}
 \end{center}
 \end{figure}

 The proof of (\ref{eqn:PL05_2b}) relies on (\ref{eqn:PK20_1}) and  an effective volume estimate in~\cite{BW17A}. 
 The scale $\rho \in (0, r^{-1})$ is chosen such that $R\rho^2 \leq C(n)$ inside $B(q,r)$. If we further assume scalar curvature is uniformly bounded on $M$, then
 we shall obtain that every unit ball on the Ricci shrinker $M$ is uniformly non-collapsed. 
 Theorem~\ref{thm:B} can be regard as a special case of  Theorem~\ref{thm:PL06_2} and Theorem~\ref{thm:PL06_2}, which are more general versions of the no-local-collapsing.  
 The proof of Theorem~\ref{thm:B}, Theorem~\ref{thm:PL06_1} and Theorem~\ref{thm:PL06_2} can be  found in Section~\ref{sec:nolocal}.   
 Note that Theorem~\ref{thm:B} indicates that the Ricci shrinkers are similar to the Ricci-flat manifolds.
 Actually,  there exist many other similarities between the Ricci-flat manifolds and the Ricci Shrinkers. 
 For example, in~\cite{LLW18} and~\cite{LHW18}, it is proved that each sequence of non-collapsed Ricci shrinkers sub-converges  to a limit Riemannian conifold Ricci shrinker. 
 Such results are analogue of the weak compactness theorem of non-collapsed Ricci-flat manifolds, by the deep work of Cheeger, Colding and Naber (cf. ~\cite{CC97},~\cite{CN2},~\cite{ColdNa}).

Thirdly, the pseudo-locality theorem of Perelman has an elegant version on the Ricci shrinker Ricci flow.
The  pseudo-locality theorem of Perelman~\cite{Pe1} is a fundamental tool in the study of Ricci flow. It claims that the Ricci flow cannot turn an almost Euclidean domain
to a very curved region in a short time period.    In the literature, it is known that the pseudo-locality theorem hold for Ricci flow with bounded Riemannian curvature, 
which condition is clearly not available in the current setting.   However, using the existence of special cutoff function, we can show maximum principle and
stochastic completeness for conjugate heat kernel. By carefully checking the integration by parts, we obtain that the traditional pseudo-locality theorem
holds on the Ricci flow spacetime induced by the Ricci shrinker.     Furthermore,  the pseudo-locality has the following special version for Ricci shrinkers. 

\begin{theorem}[\textbf{Improved pseudo-locality theorem}]
Suppose that $(M^n,p,g,f)$ is a non-flat Ricci shrinker. Then we have
  \begin{align}
    \boldsymbol{\mu} < -\delta_0        \label{eqn:PK20_4}
  \end{align}
  for some small positive constant $\delta_0=\delta_0(n)$. 
  Furthermore, the following properties are equivalent.
  \begin{itemize}
    \item[(a)] $M$ has bounded geometry. Namely, the norm of Riemannian curvature tensor is bounded from above and the injectivity radius is bounded from below. 
    \item[(b)] The infinitesimal functional satisfies
                   \begin{align}
                      \lim_{\tau \to 0^{+}} \boldsymbol{\mu}(g,\tau)=0.   \label{eqn:PL05_3}
                   \end{align}
    \item[(c)] The infinitesimal functional satisfies the gap
                   \begin{align}
                      \lim_{\tau \to 0^{+}} \boldsymbol{\mu}(g,\tau)>-\delta_0.  \label{eqn:PL05_4}
                   \end{align}
  \end{itemize}  
  If one of the above conditions hold, we can define 
  \begin{align}
     \tau_0 \coloneqq \sup \left\{ \tau | \,\boldsymbol{\mu}(g,s) \geq -\delta_0, \quad \forall\; s \in (0, \tau)\right\}.     \label{eqn:PL05_0}
  \end{align}
  Then for some positive constant $C=C(n)$, we have the following explicit estimates
\begin{subequations}
\begin{align}[left = \empheqlbrace \,]
& \sup_{x \in M} |Rm|(x) \leq C \tau_0^{-1}, \label{eqn:PL05_1a}\\
&\inf_{x \in M} inj(x) \geq \frac{1}{C} \sqrt{\tau_0}. \label{eqn:PL05_1b}
\end{align}
\label{eqn:PL05_1}
\end{subequations}
\\
\noindent
\label{thm:C}
\end{theorem}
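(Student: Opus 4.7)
The plan is to exploit two ingredients developed earlier: the $\tau$-monotonicity of $\boldsymbol{\mu}(g,\tau)$ from Theorem~\ref{thm:A}, and the shrinker-adapted pseudo-locality theorem, applied on the self-similar spacetime $g(t)=(1-t)\psi_{t}^{*}g$, $t<1$, with $g(0)=g$. The bridge between the two is the identity $\boldsymbol{\mu}(g(t),\tau)=\boldsymbol{\mu}(g,\tau/(1-t))$, a consequence of the diffeomorphism and scaling invariance of $\boldsymbol{\mu}$, which trades flow time for the parameter $\tau$; combined with the self-similar scaling $|\operatorname{Rm}_{g(t)}|=(1-t)^{-1}|\operatorname{Rm}_{g}|\circ\psi_{t}$, this is what converts pseudo-locality (a Ricci-flow statement) into pointwise bounds on the shrinker itself.

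For the gap (\ref{eqn:PK20_4}), suppose $(M,g,f)$ is non-flat with $\boldsymbol{\mu}(g)\geq -\delta_{0}$. By Theorem~\ref{thm:A}, $\boldsymbol{\mu}(g,\tau)\geq \boldsymbol{\mu}(g)\geq -\delta_{0}$ for all $\tau>0$. Pseudo-locality at base time $0$ with arbitrary scale $r_{0}$ therefore yields $|\operatorname{Rm}_{g(t)}|\leq 1/t$ uniformly on $M$ for $t\in(0,\epsilon r_{0}^{2})$. The self-similar scaling converts this to $|\operatorname{Rm}_{g}|\leq (1-t)/t$ uniformly on $M$; since $r_{0}$ is arbitrary one may let $t\to 1^{-}$, forcing $|\operatorname{Rm}_{g}|\equiv 0$, contradicting non-flatness.

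For the equivalences and (\ref{eqn:PL05_1}), the implication (b)$\Rightarrow$(c) is trivial, and (a)$\Rightarrow$(b) follows by plugging a localized Gaussian trial density into Perelman's $\mathcal{W}$-functional in the uniform harmonic-radius charts provided by bounded geometry. The substantive content is (c)$\Rightarrow$(a) together with the quantitative estimates: continuity of $\boldsymbol{\mu}(g,\cdot)$ and the hypothesis (c) ensure $\tau_{0}>0$, while the gap (\ref{eqn:PK20_4}) forces $\tau_{0}\leq 1$ in the non-flat case. Pseudo-locality at base time $0$ with scale $r_{0}^{2}=\tau_{0}$ now produces $|\operatorname{Rm}_{g(t)}|\leq 1/t$ for $t\in(0,\epsilon\tau_{0})$; choosing $t=\epsilon\tau_{0}/2$ and invoking the self-similar scaling yields (\ref{eqn:PL05_1a}). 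Combining (\ref{eqn:PL05_1a}) with the volume non-collapsing estimate from Theorem~\ref{thm:B} and the standard Cheeger-type injectivity radius lemma then gives (\ref{eqn:PL05_1b}), supplying condition (a).

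The principal technical obstacle is justifying Perelman's pseudo-locality on the non-compact shrinker: the classical proof relies on bounded curvature, which is unavailable here. Its extension rests on the machinery developed in earlier sections of the paper, namely cutoff functions adapted to the growth of $f$ at infinity, the maximum principle and stochastic completeness of the conjugate heat flow, and careful integration by parts against the shrinker weight. A secondary, essentially bookkeeping, subtlety lies in balancing the pseudo-locality decay $1/t$ against the self-similar amplification $1/(1-t)$, so that the final curvature bound takes the clean form $C\tau_{0}^{-1}$ with a dimensional constant $C=C(n)$ independent of the particular shrinker.
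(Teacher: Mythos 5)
Your overall architecture matches the paper's: the gap (\ref{eqn:PK20_4}) comes from feeding the optimal log-Sobolev constant into pseudo-locality on the self-similar flow and letting the scale tend to infinity, and the quantitative bounds come from pseudo-locality at scale $\sqrt{\tau_0}$. However, two steps do not go through as you have stated them. The serious one is (a)$\Rightarrow$(b). Plugging a localized Gaussian trial density into $\overline{\mathcal W}$ only produces an upper bound $\boldsymbol{\mu}(g,\tau)\le\overline{\mathcal W}(g,u_{\mathrm{trial}},\tau)$, hence only $\limsup_{\tau\to 0^{+}}\boldsymbol{\mu}(g,\tau)\le 0$ --- and that half holds for \emph{every} Ricci shrinker, with or without bounded geometry (the paper records it as (\ref{eqn:PL21_10}), via Cheeger--Gromov convergence of the blow-ups to $\R^n$). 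The content of (b) is the reverse inequality $\liminf_{\tau\to 0^{+}}\boldsymbol{\mu}(g,\tau)\ge 0$, a lower bound on an infimum over all test functions, which no single trial function can supply. The paper's argument is a genuine compactness/contradiction scheme: rescale $g_i=\tau_i^{-1}g$, take Rothaus minimizers $u_i$ of the local entropy on exhausting domains $B_i$, use Moser iteration and Schauder estimates (this is exactly where bounded geometry enters) to get uniform $C^{k,\alpha}$ bounds together with a uniform positive lower bound on $\max u_i$, pass to a nontrivial limit on $\R^n$, and contradict the sharp Euclidean logarithmic Sobolev inequality. Some argument of this type is unavoidable, and your proposal omits it entirely.

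The second, smaller issue is (\ref{eqn:PL05_1b}). Theorem~\ref{thm:B} gives non-collapsing relative to $|B(p,1)|$, which is comparable to $e^{\boldsymbol{\mu}}$; under hypothesis (c) alone one only knows $\boldsymbol{\mu}=\boldsymbol{\mu}(g,1)\le\boldsymbol{\mu}(g,\tau_0)=-\delta_0$, with no dimensional lower bound on $\boldsymbol{\mu}$, so this route yields $\mathrm{inj}\ge c(n,\boldsymbol{\mu})\sqrt{\tau_0}$ rather than the claimed $C(n)^{-1}\sqrt{\tau_0}$. You should instead use the entropy at scale $\tau_0$: either the volume-ratio conclusion (\ref{eqn:PL03_1b}) of pseudo-locality itself, or (\ref{eqn:PL06_2}) with $\boldsymbol{\mu}(g,\tau_0)=-\delta_0$ as the paper does, before invoking the Cheeger--Gromov--Taylor injectivity radius estimate.
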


We remark that the gap inequality (\ref{eqn:PK20_4}) is not new. It was first proved by Yokota in~\cite{Yo09} and \cite{Yo12}.
However, our proof of (\ref{eqn:PK20_4}) is completely different and is the base for the proof of (\ref{eqn:PL05_3}), (\ref{eqn:PL05_4}) and (\ref{eqn:PL05_1}). Theorem~\ref{thm:C} also indicates that the bounded geometry for Ricci shrinkers is equivalent to the gap inequality \eqref{eqn:PL05_4}. This criterion has divided all Ricci shrinkers into two categories characterized by their graphs of entropies, which are illustrated by Figure~\ref{fig:mu1} and Figure~\ref{fig:mu2}. 
Note that Figure~\ref{fig:mu1} represents the functional behavior of a typical Ricci shrinker, for example, the cylinder $S^{k} \times \R^{n-k}$ for $k \geq 2$. 
Figure~\ref{fig:mu2} represents the functional behavior of a Ricci shrinker with unbounded geometry. 
However, it is not clear whether such Ricci shrinker exists. 
For Ricci shrinkers with bounded geometry, it follows from (\ref{eqn:PL05_0}) and (\ref{eqn:PL05_1}) that the number $\sqrt{\tau_0}$ can be understood as the regularity scale. 
Actually, under the scale $\sqrt{\tau_0}$, all the higher curvature derivatives norm $|\nabla^k Rm|$ are bounded by $C(n,k) \tau_0^{-1-\frac{k}{2}}$, in light of the estimates of Shi~\cite{Shi89A}.

 \begin{figure}[h]
    \begin{center}
      \psfrag{A}[c][c]{$\boldsymbol{\mu}(g,\tau)$}
      \psfrag{B}[c][c]{$\tau$}
      \psfrag{C}[c][c]{$\boldsymbol{\mu}(g,\tau)=\boldsymbol{\mu}$}
      \psfrag{D}[c][c]{$\tau=1$}
      \psfrag{E}[c][c]{$-\delta_0$}
      \psfrag{F}[c][c]{$\tau=\tau_0$}
      \includegraphics[width=0.5\columnwidth]{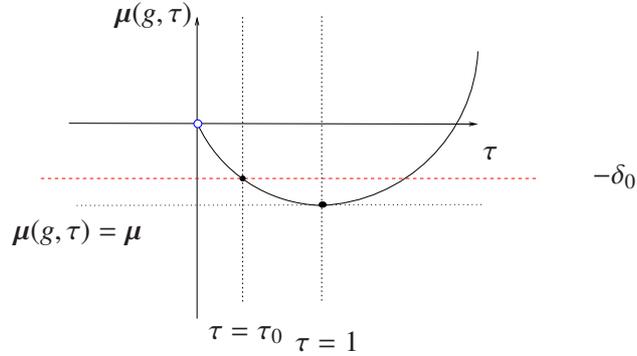}
    \end{center}
    \caption{$\boldsymbol{\mu}(g,\tau)$ of a Ricci shrinker with bounded geometry}
    \label{fig:mu1}
  \end{figure}

\begin{figure}[h]
    \begin{center}
      \psfrag{A}[c][c]{$\boldsymbol{\mu}(g,\tau)$}
      \psfrag{B}[c][c]{$\tau$}
      \psfrag{C}[c][c]{$\boldsymbol{\mu}(g,\tau)=\boldsymbol{\mu}$}
      \psfrag{D}[c][c]{$\tau=1$}
      \psfrag{E}[c][c]{$-\delta_0$}
      \includegraphics[width=0.5\columnwidth]{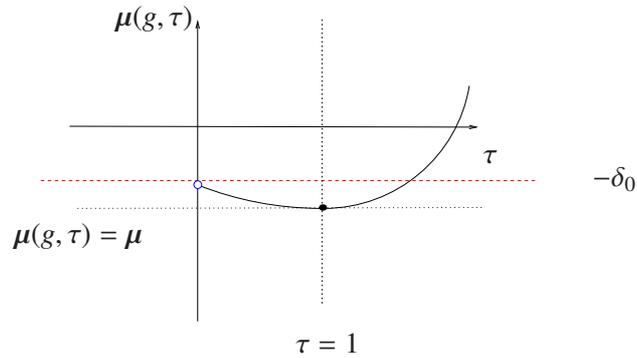}
    \end{center}
    \caption{$\boldsymbol{\mu}(g,\tau)$ of a Ricci shrinker with unbounded geometry}
    \label{fig:mu2}
  \end{figure}

 There exist several other special versions and consequences of the pseudo-locality theorems.  The proof of all of them, including the proof of Theorem~\ref{thm:C}, can be found in Section~\ref{sec:pseudo}.

Fourthly, the curvature tensor strong maximum principle, developed by R. Hamilton, works on Ricci shrinker Ricci flows and also
has an improved version.   Using the curvature tensor maximum principle,  Hamilton shows that the nonnegativity of curvature operator is preserved under the Ricci flow
and the kernel space is parallel. Therefore, the manifold splits as product when kernel space is nontrivial. 
Since different time slices of a Ricci shrinker Ricci flow are the same up to scaling and diffeomorphism, the preservation of curvature conditions is automatic.  
The interesting problem on Ricci shrinker is to show the strong maximum principle, i.e., the splitting of the manifold when eigenvalues of curvature operator satisfy some nonnegativity condition.
On this perspective, we can improve the traditional strong maximum principle of curvature operator to the following format. 

\begin{theorem}[\textbf{Improved strong maximum principle of curvature tensor}]

Suppose $(M^n,g,f)$ is a Ricci shrinker and $\lambda_1 \leq \lambda_2 \leq \cdots$ are the eigenvalue functions of the curvature operator $Rm$. 
Then the following properties hold.
\begin{itemize}
\item If $\lambda_2 \geq 0$ as a function, then there is a $k \in \{0,1, 2, \cdots, n\}$ and a closed symmetric space $N^k$ such that $(M^n, g)$ is isometric to a quotient of  $N^k \times \R^{n-k}$.
\item If $\lambda_2 \geq 0$ as a function and $\lambda_2>0$ at one point,  then $(M^n,g)$ is isometric to a quotient of round sphere $S^n$. 
\end{itemize}
\label{thm:D}
\end{theorem}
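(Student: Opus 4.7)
The plan is to apply Hamilton's strong maximum principle for curvature tensors along the self-similar Ricci flow generated by the shrinker, combined with the de Rham decomposition theorem, handling the noncompactness of $M$ by the analytical machinery (maximum principle, heat and conjugate-heat kernel, good cutoff functions) developed in the earlier sections of the paper.

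First I would pass to the canonical self-similar Ricci flow $g(t) = (1-t)\,\phi_t^{*} g$ on $(-\infty,1)$, where $\phi_t$ is generated by $\nabla f/(1-t)$; under this flow the curvature operator evolves by Hamilton's equation $\partial_t Rm = \Delta Rm + 2(Rm^{2}+Rm^{\#})$, and because every time slice is a pulled-back rescaling of the initial data, the hypothesis $\lambda_2 \geq 0$ is automatically preserved in time, as is $\lambda_2 > 0$ at individual points along the trajectories of $\phi_t$. Running Hamilton's strong maximum principle on this noncompact spacetime---justified at spatial infinity by the cutoff, heat-kernel, and integration-by-parts technology established earlier in the paper---yields a dichotomy: either $\lambda_2 > 0$ strictly on all of $M$, or there exists a nontrivial parallel distribution $V \subset TM$ of positive codimension along which $M$ is locally flat, with $V$ collecting the eigendirections of $Rm$ responsible for the degeneracy.

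In the degenerate alternative, the de Rham decomposition theorem applied to the universal cover produces an isometric splitting $\tilde M = \tilde N^{k} \times \R^{n-k}$ in which the $\R^{n-k}$-factor absorbs $V$. By the standard soliton-splitting argument, the potential splits as $f = f_{N} + \tfrac14 |x|^{2}$, so the Euclidean factor is the Gaussian shrinker and $\tilde N^{k}$ carries its own shrinker structure whose curvature operator has no further flat directions in the above sense. Re-applying the strong maximum principle on $\tilde N^{k}$ then forces the strict alternative there, so $\tilde N^{k}$ is compact; the parallelism of the pointwise nullspace of $Rm$, together with the shrinker equation, identifies $\tilde N^{k}$ as a closed symmetric space, which after passing to the quotient by the deck group gives the first bullet. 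For the second bullet, the strict form of the strong maximum principle upgrades $\lambda_2 > 0$ at a single point to $\lambda_2 > 0$ everywhere on $M$, ruling out any splitting ($k = n$); combining with the shrinker identity $R + |\nabla f|^{2} = f$ and Chen's inequality $R \geq 0$ promotes this to full positivity of the curvature operator, after which the Hamilton and B\"ohm--Wilking classifications of compact Ricci flows with positive curvature operator identify $(M^{n}, g)$ as a quotient of the round $n$-sphere.

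The hardest step will be executing the strong maximum principle on the noncompact manifold: both the parabolic propagation of eigenvalue degeneracies of $Rm$ along the evolving bundle of curvature operators, and the integration-by-parts step that promotes their pointwise nullspace to a parallel distribution, must be run without a uniform curvature bound at infinity---which is exactly what the cutoff-function and weighted-heat-kernel machinery from the earlier sections is designed to supply. A secondary subtlety is that the classical formulation of Hamilton's strong maximum principle targets $\lambda_1 \geq 0$ rather than $\lambda_2 \geq 0$; one must track the at-most-one negative eigendirection of $Rm$ separately from the null eigenspace throughout the argument, and rule out its persistence using the self-similarity together with $R \geq 0$.
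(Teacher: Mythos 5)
Your plan leans on Hamilton's parabolic strong maximum principle, but that tool presupposes a nonnegativity (or at least an invariant convex cone) condition on the curvature operator, and the entire content of this theorem is to \emph{derive} $\lambda_1 \geq 0$ from the weaker hypothesis $\lambda_2 \geq 0$. You defer this to a ``secondary subtlety'' (``rule out the persistence of the at-most-one negative eigendirection using self-similarity together with $R \geq 0$''), but no such argument is supplied, and none is available along these lines: $R \geq 0$ is compatible with $\lambda_1 < 0$, and the paper explicitly remarks that it is not even known whether $\lambda_2 \geq 0$ is preserved by Ricci flow on closed manifolds, so there is no invariant-set framework in which to run Hamilton's SMP. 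The actual proof is elliptic, not parabolic: one computes, in the barrier sense, $\Delta_f \lambda_1 \leq \lambda_1 - \bigl(2\lambda_1^2 + \sum_{i,j} C_{ij}^2 \lambda_i \lambda_j\bigr)$ with $C_{ij} = \langle [\phi_i,\phi_j],\phi_1\rangle$, and the key algebraic lemma (using $|C_{ij}| \leq 2$ and $\sum_i \lambda_i = R/2$) is that $\lambda_2 \geq 0$ — indeed even $\lambda_2 \geq -\epsilon \lambda_1^2/|R - 2\lambda_1|$ — forces the quadratic term to be nonnegative, so that $\Delta_f \lambda_1 \leq \lambda_1$. This is the step your sketch is missing entirely.

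The second missing ingredient is the analytic input that converts $\Delta_f \lambda_1 \leq \lambda_1$ into $\lambda_1 \geq 0$ on a noncompact shrinker with no curvature bound. Your ``cutoff-function and weighted-heat-kernel machinery'' is too vague here: what is actually needed is the weighted integrability $\int_M |Rm|^2 e^{-f}\,dV < \infty$ (Theorem \ref{thm:PL21_1}), which in this generality requires the improved no-local-collapsing theorem, a local conformal rescaling, and the Jiang--Naber $L^2$-curvature estimate; only then does the Petersen--Wylie weighted maximum principle apply to give $\lambda_1 \geq 0$. Once $\lambda_1 \geq 0$ is in hand, the splitting into a quotient of $N^k \times \R^{n-k}$ is quoted from Munteanu--Wang rather than re-derived via de Rham, and the second bullet follows as you say (only $k=n$ survives when $\lambda_2 > 0$ somewhere, and B\"ohm--Wilking identifies the universal cover as $S^n$), so the endgame of your sketch is fine; the gap is concentrated in the passage from $\lambda_2 \geq 0$ to $\lambda_1 \geq 0$ and in the justification of the maximum principle at infinity.
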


 The statement in Theorem~\ref{thm:D} should be well known to experts in Ricci flow if we replace $\lambda_2$ by $\lambda_1$. 
 In fact, by the work of Munteanu-Wang~\cite{MW17} and Petersen-Wylie~\cite{PW10}, we know that the same geometry conclusion hold if we replace $\lambda_2$ in Theorem~\ref{thm:D} by $\lambda_1+\lambda_2$.
 Their proof  builds on the celebrated work of B\"ohm-Wilking~\cite{BW08} on the closed Ricci flow satisfying $\lambda_1+\lambda_2>0$ and also relies on a weighted Riemannian curvature integral estimate $\int_{M} |Rm|^2 e^{-f} dV<\infty$.
 If $\lambda_1+\lambda_2 \geq 0$,  the Riemannian curvature integral estimate can be deduced from the Ricci curvature integral bound $\int_{M} |Rc|^2 e^{-f} dV<\infty$, which follows from a clever integration-by-parts.
 In Theorem~\ref{thm:D}, with only condition $\lambda_2 \geq 0$,   Riemannian curvature integral estimate $\int_{M} |Rm|^2 e^{-f} dV<\infty$ becomes nontrivial. 
 As done in~\cite{LLW18},  we apply  local conformal transformations  and the classical Cheeger-Colding theory to study the local structure of Ricci shrinkers. 
 Combining the $L^2$-curvature estimate of Jiang-Naber~\cite{JN16} with the improved no-local-collapsing Theorem~\ref{thm:B}, we are able to show that $\int_{M} |Rm|^2 e^{-f} dV<\infty$ always holds true (i.e., Theorem~\ref{thm:PL21_1}).    
 Consequently,  the work of Petersen-Wylie~\cite{PW10} applies and the curvature tensor strong maximum principle holds for Ricci shrinkers.  Then we are able to obtain $\lambda_1 \geq 0$ from the condition $\lambda_2 \geq 0$.
 Clearly, the condition $\lambda_2 \geq 0$ is weaker than $\lambda_1 + \lambda_2 \geq 0$ and Theorem~\ref{thm:D} is an improvement of the results of Munteanu-Wang~\cite{MW17} and Petersen-Wylie~\cite{PW10}. 
 Note that $\lambda_2 \geq 0$ is a novel condition in the Ricci flow literature. It is not clear whether $\lambda_2 \geq 0$ is preserved by the Ricci flow on a closed manifold. 
 Actually,  in Theorem~\ref{thm:D}, the same conclusion holds if one replace the condition $\lambda_2 \geq 0$  by an even weak condition
  \begin{align*}
     \lambda_2 \ge -\ep \dfrac{\lambda_1^2}{|R-2\lambda_1|}
 \end{align*}
 for some $\epsilon=\epsilon(n)$.  The details can be found in Theorem~\ref{T:rigidity}.    The proof of Theorem~\ref{thm:D} and Theorem~\ref{T:rigidity} appear in Section~\ref{sec:L2}.\\

The proof of the previous  four theorems requires some elementary, but delicate,  geometric and analytic facts on Ricci shrinkers. 
\begin{itemize}
\item The level sets of $f$ are comparable with geodesic balls. 
\item A special cutoff function.
\item Special heat solution and conjugate heat solution on the Ricci shrinker Ricci flow.
\item The existence of heat kernel and stochastic completeness of the backward heat solution.
\item The existence and uniqueness of bounded (conjugate) heat solutions.
\end{itemize}
After the above estimates are developed, we check that the entropy of Perelman is monotone along the Ricci flow induced by the Ricci shrinker, whose proof needs more delicate integration by parts. 
Then the proof of Theorem~\ref{thm:A} follows a similar route as the one in Proposition 9.5 of~\cite{LLW18}, with more involved technique. 
From Theorem~\ref{thm:A}, we can obtain Theorem~\ref{thm:B} by repeatedly choosing proper test function $u$. 
When integration by parts are assumed, one can formally follows the routine of Perelman to obtain the differential Harnack inequality (i.e., Theorem~\ref{T401}), and then the  traditional pseudo-locality theorem.   
Combining with a standard localization technique, one can deduce Theorem~\ref{thm:C}.    
However, as the functional derivatives contain quadratic Ricci curvature term, many terms concerning high order derivatives need to be carefully handled to verify the integration by parts. 
This causes many technical difficulties.  One key difficulty is the delicate heat kernel estimate to derive the differential Harnack inequality. 
Therefore, the following heat kernel estimate is in the central position for developing fundamental analytic estimates on Ricci shrinker.

\begin{thm}[\textbf{Heat Kernel estimate}]
\label{TI:4}
Let $(M^n,g,f)$ be a Ricci shrinker in $\mathcal M_n(A)$. Then the following properties hold.
\begin{enumerate}[label=(\roman*)]
\item (Heat kernel upper bound)\begin{align*} 
H(x,t,y,s) \le \frac{e^{-\boldsymbol{\mu}}}{(4\pi(t-s))^{\frac{n}{2}}}.
\end{align*}

\item (Heat kernel lower bound) 
For any $0<\delta<1$, $D>1$ and $0<\epsilon <4$, there exists a constant $C=C(n,\delta,D)>0$ such that
\begin{align*} 
H(x,t,y,s) \ge \frac{C^{\frac{4}{\ep}}e^{\boldsymbol{\mu}(\frac{4}{\ep}-1)}}{(4\pi(t-s))^{n/2}} \exp{\lc-\frac{d_t^2(x,y)}{(4-\epsilon)(t-s)}\rc}
\end{align*}
for any $t \in [-\delta^{-1},1-\delta]$ and $d_t(p,y)+\sqrt{t-s}\le D$.

\item (Heat kernel integral bound) 
For any $0<\delta<1$, $D>1$ and $\epsilon>0$, there exists a constant $C=C(n,A,\delta,D,\epsilon)>1$ such that
\begin{align*} 
\int_{M \backslash B_s(x,r\sqrt{t-s})} H(x,t,y,s)\,dV_s(y) \le C\exp{\lc-\frac{(r-1)^2}{4(1+\epsilon)}\rc}
\end{align*}
for any $t \in [-\delta^{-1},1-\delta]$, $d_t(p,x)+\sqrt{t-s}\le D$ and $r \ge 1$.
\end{enumerate}
\end{thm}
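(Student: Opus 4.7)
Throughout, fix $(x,t)$ and set $u(\cdot,\sigma):=H(x,t,\cdot,\sigma)$, which solves the conjugate heat equation in $(y,\sigma)$, is strictly positive by the strong maximum principle, and has unit mass in $y$ by stochastic completeness (both among the preliminary tools collected just above). The three statements will be established in the listed order, each leveraging a different consequence of Theorem~\ref{thm:A} together with the analytic machinery prepared earlier.

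For the upper bound (i), I would run a Davies-style ultracontractivity argument. Choose a smooth monotone exponent $p:[s,t)\to[2,\infty]$ and differentiate $\|u(\cdot,\sigma)\|_{p(\sigma)}$ along the conjugate heat flow; after integration by parts (legitimized by the special cutoff functions of the preparation), the resulting expression is exactly controlled by the right-hand side of the logarithmic Sobolev inequality (\ref{eqn:PK20_2}) applied to $v=u^{p/2}$. Since Theorem~\ref{thm:A} provides the sharp log-Sobolev constant $\boldsymbol{\mu}$ independently of $\tau$ (via the identity $\boldsymbol{\nu}=\boldsymbol{\mu}$), the Gronwall integration collapses cleanly and yields $\|u(\cdot,s)\|_\infty\le e^{-\boldsymbol{\mu}}/(4\pi(t-s))^{n/2}$.

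For the lower bound (ii), I would first extract a diagonal estimate $H(x,t,x,t-\tau)\ge c\tau^{-n/2}$ by combining (i), the conservation identity $\int H\,dV=1$, and a Cauchy--Schwarz argument on a ball of radius $\sqrt{\tau}$ around $x$ whose volume is controlled by Theorem~\ref{thm:B}. To propagate off-diagonal, iterate the Chapman--Kolmogorov identity
\begin{align*}
H(x,t,y,s)=\int_M H(x,t,z,\sigma)\,H(z,\sigma,y,s)\,dV_\sigma(z)
\end{align*}
over $N\sim 4/\epsilon$ equal subintervals of $[s,t]$, anchoring each intermediate factor to a thin tube around a minimizing $g_t$-geodesic from $y$ to $x$. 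Each link contributes one factor of the diagonal bound, producing the prefactor $C^{4/\epsilon}e^{\boldsymbol{\mu}(4/\epsilon-1)}$, and a localized Gaussian weight; summing via Jensen along the chain yields the sharp exponent $d_t^2(x,y)/((4-\epsilon)(t-s))$, with the $(4-\epsilon)$ loss reflecting the finite number of links.

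For the integral bound (iii), I would run a weighted $L^2$ estimate of Davies--Grigor'yan type: fix an admissible Lipschitz approximation $\phi$ of $d_s(x,\cdot)$ with $|\nabla\phi|\le 1$, pick a parameter $\alpha>0$, and compute the time derivative of $\int u^2\,e^{\alpha\phi-\beta(\sigma)}\,dV_\sigma$ along the conjugate heat flow; choosing $\beta'(\sigma)=\alpha^2(1+\epsilon)$ absorbs the cross term produced by the spatial weight, while the scalar-curvature contribution is handled via (i) and the cutoff machinery. Combining the resulting exponential $L^2$-tail decay with (i) via H\"older and optimizing $\alpha$ in $r$ delivers the stated bound. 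The main obstacle in all three parts is justifying each differentiation under the integral sign and each integration by parts: the sharpness of the constants leaves no room to shed boundary terms, so every manipulation must be paired with a cutoff-and-exhaustion argument using the tools developed earlier in the paper. I expect the off-diagonal propagation in (ii) to be the most delicate, since the $(4-\epsilon)$ denominator is rigid and requires good volume control at every intermediate scale.
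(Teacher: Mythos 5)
Your part (i) is exactly the paper's argument (Davies' method with $p(\tau)=\tfrac{T-s}{T-s-\tau}$, the sharp log-Sobolev constant from Theorem~\ref{thm:A}, and the cutoff machinery to justify the integrations by parts), so no comment there. The problems are in (ii) and, to a lesser extent, (iii).

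For (ii) there are two genuine gaps. First, your on-diagonal bound via ``conservation of mass plus Cauchy--Schwarz on a ball of radius $\sqrt\tau$'' implicitly uses either the symmetry of the kernel (false here: $H(x,t,y,s)$ solves the heat equation in $(x,t)$ and the \emph{conjugate} heat equation in $(y,s)$, and the semigroup identity composes two different kernels, so $H(x,t,x,s')$ is not a square) or a prior statement that a definite fraction of the unit mass sits in $B(x,\sqrt\tau)$ — which is precisely the concentration estimate (iii) you prove afterwards. The paper avoids this circularity by proving $H(x,t,y,s)\ge e^{-l_{(x,t)}(y,s)}(4\pi(t-s))^{-n/2}$ with $l$ the reduced distance (Theorem~\ref{T302}), and bounding $l_{(y,t)}(y,s)$ along the constant path using $R\le f/(1-z)$. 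Second, and more seriously, the Chapman--Kolmogorov chaining over $N\sim 4/\epsilon$ links cannot produce the exponent $d_t^2/((4-\epsilon)(t-s))$: to chain from $y$ to $x$ you need near-diagonal lower bounds at step-distance $d/N$ and step-time $(t-s)/N$, which forces $N\gtrsim d^2/(t-s)$ (not $N\sim 4/\epsilon$), and each link contributes a multiplicative constant $c<1$, so the product contributes an extra factor $e^{-(\log c^{-1})\,d^2/(t-s)}$ that ruins the sharp Gaussian constant. The structure of the stated constant $C^{4/\epsilon}e^{\boldsymbol{\mu}(4/\epsilon-1)}$ is the fingerprint of the paper's actual mechanism: the gradient estimate $|\nabla u|/u\le t^{-1/2}\sqrt{\log(\Lambda/u)}$ (Lemma~\ref{L301}) integrates to the Harnack inequality \eqref{E311}, $u(y,t)\le\Lambda^{\frac{\sigma}{1+\sigma}}u(x,t)^{\frac{1}{1+\sigma}}\exp(\tfrac{d_t^2(x,y)}{4\sigma t})$, which applied on $[\tfrac{t+s}{2},t]$ with $\Lambda$ from (i) and $\sigma=4/\epsilon-1$ converts the on-diagonal bound into the off-diagonal one with exactly the $(4-\epsilon)$ denominator.

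For (iii), the Davies--Grigor'yan weighted $L^2$ estimate with a fixed weight $\phi\approx d_s(x,\cdot)$ runs into the fact that $|\nabla\phi|_{g_\sigma}\le 1$ holds only at time $\sigma=s$; at intermediate times the metric has evolved by $-2Rc$ and, with no curvature bound available, neither $|\nabla\phi|_{g_\sigma}$ nor $\partial_\sigma d_\sigma$ is controlled, so the cross term cannot be absorbed by a choice of $\beta'(\sigma)$. The paper circumvents this entirely (following Hein--Naber): the measure $v_s=H(x,t,\cdot,s)\,dV_s$ satisfies a log-Sobolev inequality with constant $(t-s)$ at the \emph{fixed} time $s$ (Theorem~\ref{T305}), hence a Talagrand inequality and Gaussian concentration (Theorem~\ref{T306}); the tail bound then follows once $v_s(B_s(x,\sqrt{t-s}))$ is bounded below, which uses the pointwise lower bound (ii) together with the distance-distortion estimate (Theorem~\ref{T501a}) — this is why the constant in (iii), unlike in (i) and (ii), depends on $A$.
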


We briefly discuss the proof of Theorem \ref{TI:4}. Notice that the Logarithmic Sobolev inequality for all scales implies the ultracontractivity of the heat kernel by Davies' methods (see Chapter 2 of~\cite{Dav89}). 
We prove that the same result (i) holds for Ricci shrinkers. The lower bound of the heat kernel can be estimated by considering the reduced distance (i.e., Theorem~\ref{T302}). 
We first obtain an on-diagonal lower bound of the heat kernel, in which case the estimate of the reduced distance is straightforward. 
Then we derive the general off-diagonal lower bound by exploiting a Harnack property (i.e., \eqref{E311}). 
To prove the integral upper bound, we consider the probability measure $v_s(y)\coloneqq H(x,t,y,s)\,dV_s(y)$. Following the work of Hein-Naber~\cite{HN13}, 
we show that $v_s$ satisfies a type of Logarithmic Sobolev inequality (i.e., Theorem~\ref{T305}). 
The equivalence of the Logarithmic Sobolev inequality and the Gaussian concentration (i.e., Theorem~\ref{T306}) then shows that we can estimate the integral upper bound of the heat kernel by its pointwise lower bound.\\

\textbf{Organization of the paper}:
In section 2, we review the definition of the Ricci flows induced by the Ricci shrinkers.  We also present the estimates of the potential function and volume upper bound. In section 3, we introduce a family of cutoff functions and prove a maximum principle (i.e., Theorem \ref{T101}) on Ricci shrinker spacetime. Moreover, we prove the existence and other basic properties of the heat kernel on spacetime. In section 4, we prove the monotonicity of Perelman's entropy (i.e., Theorem \ref{T201}). In section 5, we prove Theorem \ref{thm:A}. In section 6, we prove the logarithmic Sobolev inequality (i.e., Theorem \ref{T305}) and the Gaussian concentration (i.e., Theorem \ref{T306}) of the probability measure induced by the heat kernel. In section 7, Theorem \ref{TI:4} is proved. In section 8, we prove the differential Harnack inequality (i.e., Theorem \ref{T401}) by using the heat kernel estimates. In section 9, we provide the proof of Theorem \ref{thm:B}. In section 10, we prove the pseudo-locality theorem (i.e., Theorem \ref{thm:PL03_1}) and Theorem \ref{thm:C}. In the last section, we obtain an $L^2$-integral bound of the Riemannian curvature (i.e., Theorem \ref{thm:PL21_1}). As a consequence, we prove Theorem \ref{thm:D}.\\

{\bf Acknowledgements}:  
Yu Li would like to thank Jiyuan Han and Shaosai Huang for helpful comments. Bing Wang would like to thank Haozhao Li and Lu Wang for their interests in this work. Part of this work was done while both authors were visiting IMS (Institute of Mathematical Sciences) at ShanghaiTech University during the summer of 2018. They wish to thank IMS for their hospitality.

\section{Preliminaries}
For any Ricci shrinker $(M^n,g,f)$, let ${\psi^t}: M \to M$ be a $1$-parameter family of diffeomorphisms generated by $X(t)=\dfrac{1}{1-t}\nabla_gf$. That is  
\begin{align} 
\frac{\partial}{\partial t} {\psi^t}(x)=\frac{1}{1-t}\nabla_g f\left({\psi^t}(x)\right).
\label{E102}
\end{align}

By a direct calculation, see \cite[Chapter $4$]{CLN06}, the rescaled pull-back metric $g(t)\coloneqq (1-t) (\psi^t)^*g$ and the pull-back function $f(t)\coloneqq (\psi^t)^*f$ satisfy the equation
\begin{align} 
Rc(g(t))+\text{Hess}_{g(t)}f(t)=\frac{1}{2(1-t)}g(t),
\label{E103}
\end{align}
where $\left\{(M,g(t)), -\infty <t<1 \right\}$ is a Ricci flow solution with $g(0)=g$, that is,
\begin{align} 
\partial_t g=-2Rc(g(t)).
\label{E104}
\end{align}

For notational simplicity, we will omit the subscript $g(t)$ if there is no confusion. From \eqref{E103} and \eqref{E104}, it is easy to show that
\begin{align} 
&\partial_tf=|\na f|^2 \label{E105}, \\
&R+\Delta f=\frac{n}{2(1-t)}\label{E106}, \\
&R+|\nabla f|^2=\frac{f}{1-t}.\label{E107}
\end{align}

Now we define
\begin{align} 
\bar \tau=1-t ,\quad F(x,t)=\bar \tau f(x,t)  \quad \text{and} \quad \bar v(x,t)=(4\pi \bar \tau)^{-n/2}e^{-f(x,t)}.\label{EX101} 
\end{align}

It follows from \eqref{E105}, \eqref{E106} and \eqref{E107} that
\begin{align} 
&\partial_tF=\bar \tau |\nabla f|^2-f=-\bar \tau R,
\label{E108} \\
&\bar \tau R+\Delta F=\frac{n}{2},
\label{E109}\\
&\bar \tau^2R+|\nabla F|^2=F,
\label{E110}
\end{align}

Now we define
\begin{align} 
&\square\coloneqq \partial_t-\Delta_t,  \label{def:heat}\\ 
&\square^*\coloneqq-\partial_t-\Delta_t+R. \label{def:conjheat}
\end{align}

We have special heat solution and conjugate heat solution:
\begin{align}
&\square \left(F  + \frac{n}{2} t\right)=0, \label{eqn:PK15_1}\\
&\square^* \bar v=0.  \label{E110Y}
\end{align}
Note that (\ref{eqn:PK15_1}) is equivalent to
\begin{align}
 \square F=-\frac{n}{2}.
\label{E110X}
\end{align}

Now we have the following estimate of $F$ by using the same method as \cite{CZ10} and \cite{HM11}.  
\begin{lem}
\label{L100}
There exists a point $p \in M$ where $F$ attains its infimum and $F$ satisfies the quadratic growth estimate
\begin{align}
\frac{1}{4}\left(d_t(x,p)-5n\bar \tau-4 \right)^2_+ \le F(x,t) \le \frac{1}{4} \left(d_t(x,p)+\sqrt{2n\bar \tau} \right)^2
\label{eqn:PL20_1}
\end{align}
for all $x\in M$ and $t < 1$, where $a_+ :=\max\{0,a\}$.
\end{lem}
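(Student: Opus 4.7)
The proof rests on three basic identities: \eqref{E110} gives $|\nabla F|^2\le F$ (using $R\ge 0$, which is valid on any complete gradient shrinker by B.-L.\ Chen's maximum principle); \eqref{E109} gives $\Delta F=\tfrac{n}{2}-\bar\tau R$; and the shrinker equation \eqref{E103} gives the Hessian identity $\text{Hess}\,F=\tfrac12 g-\bar\tau\,Rc$. From $|\nabla F|^2\le F$ one immediately concludes $F\ge 0$ and that $\sqrt{F}$ is $\tfrac12$-Lipschitz on $M$.

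For the upper bound, assume for now that $F$ attains its infimum at some $p\in M$; this will be justified by the lower bound and properness. At the minimizer, $\nabla F(p)=0$ and $\Delta F(p)\ge 0$, so \eqref{E109} forces $\bar\tau R(p)\le n/2$, and \eqref{E110} then yields
\[
F(p)=\bar\tau^2 R(p)\le\tfrac{n\bar\tau}{2}.
\]
Integrating $|\nabla\sqrt F|\le\tfrac12$ along a minimizing geodesic from $p$ to any $x\in M$ gives
\[
\sqrt{F(x)}\le\sqrt{F(p)}+\tfrac12 d_t(x,p)\le\sqrt{\tfrac{n\bar\tau}{2}}+\tfrac12 d_t(x,p),
\]
which squares to the stated upper bound.

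The main obstacle is the quadratic lower bound; here I will follow the strategy of Cao-Zhou~\cite{CZ10} and Haslhofer-M\"uller~\cite{HM11}. Fix $x$ and let $\gamma:[0,s_0]\to M$ be a unit-speed minimizing geodesic from $p$ to $x$, with $s_0=d_t(p,x)$. The Hessian identity gives
\[
(F\circ\gamma)''(s)=\tfrac12-\bar\tau\,Rc(\dot\gamma,\dot\gamma),
\]
and everything reduces to bounding $\int_0^{s_0}\bar\tau\,Rc(\dot\gamma,\dot\gamma)\,ds$ from above by a linear function of $s_0$ with slope strictly less than $\tfrac12$. I will do this via the second variation of arc length applied to variation fields $V_i=\phi(s)E_i(s)$, where $\{E_i\}_{i=1}^{n-1}$ is a parallel orthonormal frame along $\gamma$ normal to $\dot\gamma$ and $\phi$ is a piecewise linear cutoff equal to $1$ on $[1,s_0-1]$ and vanishing at the endpoints. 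Summing over $i$ and invoking the minimality of $\gamma$ produces
\[
\int_0^{s_0}\phi^2\,Rc(\dot\gamma,\dot\gamma)\,ds\le (n-1)\int_0^{s_0}|\phi'|^2\,ds=2(n-1).
\]
The contributions on the two unit-length endpoint intervals are absorbed by rewriting $\bar\tau Rc=\tfrac12 g-\text{Hess}\,F$ and integrating by parts, which converts them to boundary values of $(F\circ\gamma)'$ that are controlled by the Lipschitz estimate $|(F\circ\gamma)'|\le\sqrt{F}\le\sqrt{n\bar\tau/2}+s/2$. Integrating the resulting one-dimensional inequality for $(F\circ\gamma)$ twice on $[0,s_0]$ with initial data $(F\circ\gamma)(0)\le n\bar\tau/2$ and $(F\circ\gamma)'(0)=0$ yields, after elementary bookkeeping, $F(x)\ge\tfrac14(s_0-5n\bar\tau-4)_+^2$. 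This simultaneously establishes properness of $F$ and hence the existence of the minimizer $p$ used above, closing the loop.
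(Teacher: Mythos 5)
Your strategy is the same as the paper's: the upper bound comes from the $\tfrac12$-Lipschitz property of $\sqrt{F}$ together with $F(p)\le n\bar\tau/2$ at a minimizer, and the lower bound comes from the second variation of arc length with a piecewise-linear cutoff, converting $\bar\tau\,Rc(\dot\gamma,\dot\gamma)$ into $\tfrac12-\mathrm{Hess}\,F(\dot\gamma,\dot\gamma)$ and controlling the endpoint boundary terms of $(F\circ\gamma)'$ by $|\nabla F|\le\sqrt F$. This is exactly the Cao--Zhou/Haslhofer--M\"uller argument reproduced in the paper.

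The one genuine flaw is the logical order. As written, your lower-bound derivation takes $\gamma$ to emanate from the minimizer $p$ and uses the initial data $(F\circ\gamma)(0)\le n\bar\tau/2$ and $(F\circ\gamma)'(0)=0$, both of which presuppose that the infimum is attained; you then assert that the resulting lower bound ``establishes properness and hence the existence of the minimizer,'' which is circular. The repair is the one the paper uses: run the second-variation argument between two \emph{arbitrary} points $x,y$ with $d=d_t(x,y)>2$, obtaining the symmetric two-point inequality
\begin{align*}
\frac{d}{2}-\frac{2}{3}-2\bar\tau(n-1)\le \sqrt{F(x)}+\sqrt{F(y)}+1 ,
\end{align*}
which requires no information at the endpoints beyond $|\nabla_{\gamma'}F|\le\sqrt F$ and the Lipschitz bound. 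Fixing $x$, this shows $F(y)\to\infty$ as $d_t(x,y)\to\infty$, so $F$ is proper and the minimizer $p$ exists; only then do you evaluate $F(p)\le n\bar\tau/2$ and specialize $x=p$ to get both sides of \eqref{eqn:PL20_1}. Also note that one cannot literally ``integrate $(F\circ\gamma)''$ twice,'' since the second variation only controls $\int\phi^2\,Rc(\dot\gamma,\dot\gamma)$ in the integrated, cutoff-weighted sense, not pointwise; the single integration by parts against $\phi^2$ is what the argument actually permits, and it already yields the displayed inequality.
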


\begin{proof}
This originates from the work of Cao-Zhou \cite[Theorem $1.1$]{CZ10}. We follow the argument of Haslhofer-M\"uller \cite{HM11}. It follows from \cite{CBL07} that for any Ricci shrinker $R \ge 0$ since its corresponding Ricci flow solution is ancient. So from \eqref{E110}, we have
\begin{align} 
  |\nabla F|^2 \le F.      \label{eqn:PK11_9}
\end{align}
It implies that $\sqrt{F}$ is $\frac{1}{2}$-Lipschitz, since 
\begin{align*} 
|\nabla \sqrt{F}|=\frac{1}{2}\left|\frac{\nabla F}{\sqrt{F}}\right | \le \frac{1}{2}.
\end{align*}

On the other hand, for any $x,y \in M$, we choose a minimizing geodesic $\gamma (s), 0\le s \le d=d_t(x,y)$ joining $x=\gamma (0)$ and $y=\gamma (d)$. Assume that $d >2$, we construct a function
\begin{align*} 
\phi(s)=
\begin{cases} 
      s, & s\le 1 \\
    1, & 1\le x\le d-1 \\
      d-s, & d-1 \le x\le d. 
   \end{cases}
\end{align*}
The second variation formula for shortest geodesic implies that
\begin{align} 
\int_0^d \phi^2 Rc(\gamma',\gamma') \, ds \le (n-1)\int_0^d \phi'^2 \, ds=2(n-1).
\label{E111}
\end{align}
Note that from the equation \eqref{E103},
\begin{align} 
\bar \tau\text{Rc}(\gamma',\gamma')=\frac{1}{2}-\text{Hess}F(\gamma',\gamma').
\end{align}
Therefore from \eqref{E110} we have
\begin{align} 
\frac{d}{2}-\frac{2}{3}-2\bar\tau(n-1) &\le \int_0^d \phi^2 \text{Hess}F(\gamma',\gamma') \, ds \notag \\
& \le -2\int_0^1 \phi \nabla_{\gamma'}F \, ds+2\int_{d-1}^d \phi \nabla_{\gamma'}F \, ds \notag \\
& \le \sup_{s\in[0,1]}|\nabla_{\gamma'}F|+\sup_{s\in[d,d-1]}|\nabla_{\gamma'}F| \notag \\
&\le \sqrt{F(x)}+\sqrt{F(y)}+1,
\label{E112}
\end{align}
where we used (\ref{eqn:PK11_9}) in the last inequality.  It is now immediate from \eqref{E112} 
that $F$ has a minimum point $p$. It is clear that $|\nabla F|=0$ and $\Delta F \ge 0$ at the point $p$ by the minimum principle. 
Hence from \eqref{E109} and \eqref{E110} we have 
\begin{align*} 
F(p)=\bar \tau^2 R \le \bar \tau(\bar \tau R+\Delta F) =\frac{\bar \tau n}{2}.
\end{align*}

For any $q \in M$ such that $d_t(p,q)=d$, it is straightforward from (\ref{eqn:PK11_9}) and \eqref{E112} that
\begin{align*} 
\frac{1}{4}\left(d-5n\bar \tau-4  \right)^2_+ \le \frac{1}{4}\left(d-\frac{10}{3}-4\bar \tau(n-1)-\sqrt{2n\bar \tau} \right)^2_+ \le F(q) \le \frac{1}{4} \left(d+\sqrt{2n\bar \tau} \right)^2.
\end{align*}
\end{proof}

Note that $F(\cdot,t)$ is a pull-back function of $f(\cdot,0)$ up to the scale $\bar \tau$, we can choose a base point $p \in M$ such that $p$ is a minimum point for all $F(\cdot,t)$. Now from Lemma \ref{L100}, $F(\cdot,t)$ can be regarded as an approximation of $\frac{d_t^2}{4}$.

With Lemma \ref{L100}, we have the following volume estimate whose proof follows from \cite[Theorem $1.2$]{CZ10}.

\begin{lem}
\label{L101}
There exists a constant $C=C(n)>0$ such that for any Ricci shrinker $(M^n,g,f)$ with $p \in M$ a minimum point of $f$,
\begin{align*}
|B_t(p,r)|_t \le \begin{cases}\,Ce^{\boldsymbol{\mu}}r^n \quad &\text{if} \quad r \ge 2\sqrt{\bar \tau n}; \\
\,Cr^n \quad &\text{if} \quad r < 2\sqrt{\bar \tau n}.
\end{cases}
\end{align*}
\end{lem}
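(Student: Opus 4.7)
The plan is to first reduce the statement to the time-$0$ case using the self-similar structure of the induced Ricci flow, and then invoke the volume estimate of Cao--Zhou \cite{CZ10}, with constants tracked carefully to extract the factor $e^{\boldsymbol{\mu}}$.

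Because $\psi^t$ is generated by a multiple of $\nabla f$ and $p$ is a critical point of $f$, we have $\psi^t(p)=p$ for all $t$. Combined with $g(t)=\bar\tau(\psi^t)^*g$, this yields the exact identity
\[
|B_t(p,r)|_t=\bar\tau^{n/2}\,|B_0(p,r/\sqrt{\bar\tau})|_0.
\]
Since $\boldsymbol{\mu}$ is invariant under rescaling and pull-back by diffeomorphisms, both claims reduce (setting $s=r/\sqrt{\bar\tau}$) to the time-$0$ statements: $|B_0(p,s)|\le Ce^{\boldsymbol{\mu}}s^n$ for $s\ge 2\sqrt n$, and $|B_0(p,s)|\le Cs^n$ for $s<2\sqrt n$.

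For the large-radius range $s\ge 2\sqrt n$ I would follow Cao--Zhou \cite[Theorem~1.2]{CZ10}. Lemma~\ref{L100} at $t=0$ gives $B_0(p,s)\subset\{f\le (s+\sqrt{2n})^2/4\}$, so it suffices to control $V(r)\coloneqq|\{f\le r\}|$. By the co-area formula $V'(r)=\int_{\{f=r\}}|\nabla f|^{-1}dA$; the divergence theorem together with $\Delta f=n/2-R$ gives $\int_{\{f=r\}}|\nabla f|\,dA=(n/2)V(r)-\int_{\{f\le r\}}R\,dV$. Combining these with the level-set identity $|\nabla f|^2=f-R=r-R$ and a Cauchy--Schwarz step on $\{f=r\}$ yields a differential inequality of the shape
\[
rV'(r)\le\tfrac{n}{2}V(r)+E(r),
\]
where the integrated error $E(r)$ can be absorbed through the global bound $\int_M e^{-f}\,dV=(4\pi)^{n/2}e^{\boldsymbol{\mu}}$. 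Integration gives $V(r)\le Ce^{\boldsymbol{\mu}}r^{n/2}$, which after translation back by Lemma~\ref{L100} reads $|B_0(p,s)|\le Ce^{\boldsymbol{\mu}}s^n$ in the desired range.

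For the short-range case $s<2\sqrt n$, Lemma~\ref{L100} yields $f(p)\le n/2$ and $|\nabla\sqrt f|\le 1/2$, forcing $f\le C(n)$ and hence $R\le C(n)$, $|\nabla f|\le C(n)$ uniformly on $B_0(p,2\sqrt n)$. The Bakry--\'Emery identity $Rc+\mathrm{Hess}\,f=\tfrac12 g$ together with these $C^0$ controls on $f$ and $|\nabla f|$ allows one to invoke a Wei--Wylie-type weighted Bishop--Gromov comparison on $B_0(p,2\sqrt n)$, giving the purely dimensional bound $|B_0(p,s)|\le C(n)s^n$. The main technical obstacle is the differential inequality in the large-radius step: the scalar curvature $R$ carries no pointwise upper bound and appears in both $|\nabla f|^2=f-R$ and $\Delta f=n/2-R$, so the remainder terms involving $R$ must be handled through the global weighted integral $\int e^{-f}\,dV=(4\pi)^{n/2}e^{\boldsymbol{\mu}}$, which is precisely how the factor $e^{\boldsymbol{\mu}}$ enters the final constant.
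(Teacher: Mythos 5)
Your overall skeleton matches the paper's proof: reduce to the time-$0$ slice, run the Cao--Zhou sublevel-set ODE for large radii, and use the Wei--Wylie weighted comparison for small radii. The reduction $|B_t(p,r)|_t=\bar\tau^{n/2}|B_0(p,r/\sqrt{\bar\tau})|_0$ via $\psi^t(p)=p$ is correct and is in fact a cleaner packaging of what the paper does when it rewrites $D(r_0)$ in terms of $f(\psi^t(x))$. The small-radius step via \cite[Theorem 1.2]{WW09} is exactly the paper's argument and is fine.

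There is, however, a genuine gap in the large-radius step: you misidentify where the factor $e^{\boldsymbol{\mu}}$ comes from. In the Cao--Zhou argument the $R$-terms in the differential identity are \emph{not} absorbed through $\int_M e^{-f}\,dV=(4\pi)^{n/2}e^{\boldsymbol{\mu}}$; they are controlled by the purely dimensional inequality $0\le\chi(r)\le\frac{n}{2}V(r)$ (which follows from $R+\Delta f=\frac{n}{2}$ and $\int_{D(r)}\Delta f\,dV\ge 0$). Integrating then yields only $V(r)\le C(n)\,(r/r_0)^{n}\,V(r_0)$, i.e.\ a bound with a free initial datum $V(r_0)$, and the factor $e^{\boldsymbol{\mu}}$ enters solely through the separate estimate $V(r_0)=|\{f\le\tfrac{n+2}{2}\}|\le|B(p,c_0(n))|\le C(n)e^{\boldsymbol{\mu}}$, which uses the inclusion $\{f\le\tfrac{n+2}{2}\}\subset B(p,c_0(n))$ from Lemma~\ref{L100} together with the comparability of $|B(p,1)|$ with $e^{\boldsymbol{\mu}}$ (Lemma~\ref{lem:unitvol}, i.e.\ \cite[Lemma 2.5]{LLW18}). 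Your proposed mechanism cannot replace this step: to convert the weighted integral $\int_{\{f\le r\}}e^{-f}\,dV\le(4\pi)^{n/2}e^{\boldsymbol{\mu}}$ into an unweighted volume bound on $\{f\le r\}$ one must pay a factor $e^{r}$, which destroys the polynomial growth $s^n$ you need. So as written your argument does not produce the bound $Ce^{\boldsymbol{\mu}}s^n$; you must add the initial-volume estimate $V(r_0)\le C(n)e^{\boldsymbol{\mu}}$ to close the proof.
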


\begin{proof}
We set $\rho=2\sqrt{F}$ and $D(r)=\{x\in M \mid \rho \le r\}$. Moreover we define $V(r)=\int_{D(r)}\,dV_t$ and $\chi(r)=\int_{D(r)}R(t)\,dV_t$. It follows from a similar computation as \cite[(3.5)]{CZ10}, by using \eqref{E109} and \eqref{E110}, that 
\begin{align}\label{eq:V1}
nV-rV'=2\bar\tau \chi-\frac{4{\bar\tau}^2}{r}\chi'.
\end{align}
If we set $r_0=\sqrt{2\bar\tau(n+2)}$, by integrating \eqref{eq:V1} we obtain, see \cite[(3.6)]{CZ10} for details, that
\begin{align*}
V(r) \le 2r^nr_0^{-n}V(r_0)
\end{align*}
for any $r \ge 2\sqrt{\bar \tau n}$.
Then it follows from Lemma \ref{L100} that for any $r \ge 2\sqrt{\bar \tau n}$,
\begin{align*}
|B_t(p,r)|_t \le V\lc r+\sqrt{2n\bar \tau} \rc\le V(2r) \le 2^{n+1}r^nr_0^{-n}V(r_0).
\end{align*}

By definition, we have
\begin{align*}
D(r_0)&=\left\{x\in M \left|  F \le \frac{\bar \tau(n+2)}{2} \right. \right\}\\
&=\left\{x\in M \left| f(x,t) \le \frac{n+2}{2} \right. \right\}=\left\{x\in M \left| f(\psi^t(x)) \le \frac{n+2}{2} \right. \right\}.
\end{align*}
Moreover, since $g(t)=\bar \tau (\psi^t)^*g$, 
\begin{align*}
V(r_0) \le \bar \tau^{\frac{n}{2}} \int_{f(x) \le \frac{n+2}{2}} \,dV \le \bar \tau^{\frac{n}{2}}| \{x \mid f(x) \le (n+2)/2\}|.
\end{align*}
For any $x$ such that $f(x) \le (n+2)/2$. it follows from Lemma \ref{L100} that $d(p,x) \le c_0(n)$. Therefore for any $r \ge 2\sqrt{\bar \tau n}$,
\begin{align*}
|B_t(p,r)|_t \le C(n)|B(p,c_0)| r^n \le C(n)e^{\boldsymbol{\mu}}r^n,
\end{align*}
where the last inequality follows from \cite[Lemma $2.3$]{LLW18}.

Finally, the case $r \le 2\sqrt{\bar \tau n}$ follows from the comparison theorem \cite[Theorem 1.2]{WW09} by using \eqref{E103}. Indeed, for any $x$ with $d_t(p,x) \le 2\sqrt{\bar \tau n}$, it follows from Lemma \ref{L100} that $f(x,t)=\bar \tau^{-1}F(x,t) \le C$. Therefore, from \eqref{E107} we obtain $|\na f|(x,t) \le C{\bar \tau}^{-1/2}$. Now it follows from \cite[(1.5) of Theorem 1.2]{WW09} that for any $s \le r$,
\begin{align*}
\int_{B_t(p,r)} e^{-f(x,t)} \,dV_t \le e^{Cr{\bar \tau}^{-1/2}}\frac{r^n}{s^n}\int_{B_t(p,s)} e^{-f(x,t)} \,dV_t \le C\frac{r^n}{s^n}\int_{B_t(p,s)} e^{-f(x,t)} \,dV_t.
\end{align*}
Then the conclusion follows if we let $s \to 0$.
\end{proof}

\section{Cutoff functions, maximum principle and heat kernel}

Now we construct a family of cutoff functions which is important when we perform integration by parts throughout the paper.

Fix a function $\eta \in C^{\infty}([0,\infty))$ such that $0\le \eta \le 1$, $\eta=1$ on $[0,1]$ and $\eta=0$ on $[2,\infty)$. Furthermore, $-C \le \eta'/\eta^{\frac{1}{2}}\le 0$ and $|\eta''|+|\eta '''| \le C$ for a universal constant $C>0$.
For each $r \ge 1$, we define 
\begin{align}
  \phi^r \coloneqq \eta\left(\frac{F}{r} \right). 
  \label{eqn:PK11_8}
\end{align}
Then $\phi^r$ is a smooth function on $M \times (-\infty,1)$. 
The following estimates of $\phi^{r}$ will be repeatedly used in this paper. 

\begin{lem} 
There exists a constant $C=C(n)$ such that 
\begin{align}
(\phi^r)^{-1} |\nabla \phi^r|^2 & \le Cr^{-1}, \label{E113}\\
|\phi^r_t|&\le C{\bar \tau}^{-1}, \label{E114}\\
|\Delta \phi^r|&\le C(\bar \tau^{-1}+r^{-1}), \label{E115} \\
|\square \phi^r|& \le Cr^{-1}, \label{E115a}\\
|\square^{*} \phi^{r}|&\leq C\left( r^{-1} +\bar{\tau}^{-1} + \bar{\tau}^{-2} r \right).   \label{eqn:PK12_5}
\end{align}
\label{lem:cutoff} 
\end{lem}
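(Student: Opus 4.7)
The plan is to compute each derivative of $\phi^r=\eta(F/r)$ by the chain rule and then substitute the structural identities for $F$ recorded in Section 2. The crucial observation is that on $\mathrm{supp}(\phi^r)$ we have $F\leq 2r$, so by \eqref{E110} and $R\geq 0$ we get both
\[
|\nabla F|^2\leq F\leq 2r \qquad\text{and}\qquad \bar\tau^2 R\leq F\leq 2r,
\]
which in particular gives $\bar\tau R\leq 2r/\bar\tau$ and $R\leq 2r/\bar\tau^2$. Combined with the bounds on $\eta$, namely $|\eta'|\leq C\eta^{1/2}\leq C$ and $|\eta''|\leq C$, these will handle every term that appears.

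I will proceed estimate by estimate. For \eqref{E113}, since $\nabla\phi^r=\eta'(F/r)\nabla F/r$, we get $(\phi^r)^{-1}|\nabla\phi^r|^2=(\eta')^2\eta^{-1}\cdot|\nabla F|^2/r^2\leq C^2\cdot 2r/r^2=Cr^{-1}$ using the hypothesis $|\eta'/\eta^{1/2}|\leq C$ and $|\nabla F|^2\leq 2r$ on the support. For \eqref{E114}, differentiate in time and use \eqref{E108}: $\phi^r_t=\eta'(F/r)\cdot(-\bar\tau R)/r$, hence $|\phi^r_t|\leq C\bar\tau R/r\leq C\cdot (2r/\bar\tau)/r=C\bar\tau^{-1}$. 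For \eqref{E115}, expand
\[
\Delta\phi^r=\eta''(F/r)\frac{|\nabla F|^2}{r^2}+\eta'(F/r)\frac{\Delta F}{r},
\]
bound $|\eta''|,|\eta'|\leq C$, use $|\nabla F|^2\leq 2r$ on the support, and invoke \eqref{E109} to write $|\Delta F|\leq n/2+\bar\tau R\leq n/2+2r/\bar\tau$, so $|\Delta\phi^r|\leq C(1/r+1/\bar\tau)$.

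The estimates \eqref{E115a} and \eqref{eqn:PK12_5} then follow by regrouping. For \eqref{E115a} we combine the first two computations,
\[
\square\phi^r=\frac{\eta'(F/r)}{r}\bigl(F_t-\Delta F\bigr)-\eta''(F/r)\frac{|\nabla F|^2}{r^2}=\frac{\eta'(F/r)}{r}\square F-\eta''(F/r)\frac{|\nabla F|^2}{r^2},
\]
and apply the key identity \eqref{E110X}, namely $\square F=-n/2$, together with $|\nabla F|^2\leq 2r$, giving $|\square\phi^r|\leq Cr^{-1}$ with the $\bar\tau^{-1}$ terms cancelling. For \eqref{eqn:PK12_5} we write
\[
\square^*\phi^r=-\frac{\eta'(F/r)}{r}\bigl(F_t+\Delta F\bigr)-\eta''(F/r)\frac{|\nabla F|^2}{r^2}+R\,\eta(F/r),
\]
use $F_t+\Delta F=n/2-2\bar\tau R$ from \eqref{E108} and \eqref{E109}, and then bound $R\leq 2r/\bar\tau^2$ on the support to conclude $|\square^*\phi^r|\leq C(r^{-1}+\bar\tau^{-1}+\bar\tau^{-2}r)$.

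The only non-routine point is the cancellation that produces the sharper rate $r^{-1}$ in \eqref{E115a}: one must notice that the "bad" $\bar\tau^{-1}$ contributions in $\phi^r_t$ and $\Delta\phi^r$ combine into the clean quantity $\square F=-n/2$, so no $\bar\tau^{-1}$ survives. No such cancellation occurs for $\square^*$, which is why the additional $\bar\tau^{-1}+\bar\tau^{-2}r$ terms appear in \eqref{eqn:PK12_5}. Apart from this observation, the argument is a direct bookkeeping of the chain rule and the basic identities on Ricci shrinker spacetime.
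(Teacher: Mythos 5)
Your proof is correct and follows essentially the same route as the paper: chain rule on $\eta(F/r)$ combined with the identities \eqref{eqn:PK11_9}, \eqref{E108}, \eqref{E109}, \eqref{E110X} and the bound $F\le 2r$ on the support, including the key cancellation $\square F=-\tfrac{n}{2}$ that yields the sharper rate in \eqref{E115a}. The only cosmetic difference is that you bound $\Delta\phi^r$ directly via \eqref{E109}, whereas the paper recovers \eqref{E115} from \eqref{E115a} and \eqref{E114}; both are equivalent bookkeeping.
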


\begin{proof}
  Note that $F \leq 2r$ on the support of $\phi^r$, it follows from the assumption of $\eta$ and (\ref{eqn:PK11_9}) that  
\begin{align*}
  \frac{|\nabla \phi^r|^2}{\phi^r}&=r^{-2}\eta'^2\eta^{-1} |\nabla F|^2\le Cr^{-2}F \le Cr^{-1}.
\end{align*}
This finishes the proof of (\ref{E113}). Similarly, by using (\ref{E108}), (\ref{E110}), (\ref{E110X}) and (\ref{eqn:PK11_9}), we can prove  
\begin{align*}
|\phi^r_t|&=r^{-1}|\eta'F_t| \le Cr^{-1}\bar \tau R\le Cr^{-1}\bar \tau^{-1}F \le C{\bar\tau}^{-1}, \\
|\square \phi^r|&=|(\partial_t-\Delta)\phi^r| =|r^{-1}\eta' \square F-r^{-2}\eta''|\nabla F|^2|=|-nr^{-1}\eta'/2-r^{-2}\eta''|\nabla F|^2| \le Cr^{-1}.
\end{align*}
So (\ref{E114}) and (\ref{E115a}) are proved. Then we have
\begin{align*}
  |\Delta \phi^r|&=|-\square \phi^r +\partial_t \phi^{r}| \leq |\square \phi^{r}| + |\phi^{r}_{t}|  \le C(\bar \tau^{-1}+r^{-1}). 
\end{align*}
Hence we obtain (\ref{E115}). 
Finally, using (\ref{E110}) again, we have
\begin{align*}
  |\square^{*} \phi^{r}|&=|\left( -\partial_t-\Delta + R \right) \phi^{r}|=|\left( \square -2\partial_t +R \right) \phi^{r}|\\
  &\leq |\square \phi^{r}| + 2|\phi_{t}^{r}| + R \phi^{r} \leq |\square \phi^{r}| + 2|\phi_{t}^{r}| + \bar{\tau}^{-2} F \phi^{r}\\
  &\leq C\left( r^{-1} +\bar{\tau}^{-1} + \bar{\tau}^{-2} r \right),    
\end{align*}
which proves (\ref{eqn:PK12_5}). 
\end{proof}


Now we move on to show the maximum principle on general Ricci shrinkers. 
On a closed manifold, maximum principle holds automatically.  If the underlying manifold is noncompact, 
then some additional assumptions are needed in order the maximum principle to hold. 
For example, in~\cite[Theorem $15.2$]{Peter12},  a condition
\begin{align} 
\int_a^b \int_M u_+^2(x,t)e^{-cd^2(x)}\,dV\,dt < \infty \label{eqn:PK12_4}
\end{align}
is needed for the maximum principle of the static heat equation subsolution $u$. 
In our current setting of Ricci shrinker spacetime, the metrics are evolving under Ricci curvature. 
Then the distance distortion of  different time slices is not easy to estimate directly without Ricci curvature bound.  
Fortunately, we can replace $d^2$ by $f$ and obtain a maximum principle under a condition similar to (\ref{eqn:PK12_4}). 

\begin{thm}[\textbf{Maximum principle on Ricci shrinkers}]
\label{T101}
Let $(M^n,g,f)$ be a Ricci shrinker. Given any closed interval $[a,b] \subset (-\infty,1)$ and a function $u$ which satisfies $\square u \le 0$ on $M \times [a,b]$, suppose that 
\begin{align} 
\int_a^b \int_M u^2_{+}(x,t)e^{-2f(x,t)}\,dV_t(x)\,dt < \infty.
\label{E116}
\end{align}
If $u(\cdot,a) \le c$, then $u(\cdot, b) \le c$.
\end{thm}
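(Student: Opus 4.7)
The plan is to reduce to $c=0$, study a weighted $L^2$-norm of $v \coloneqq u_+$, and use the initial condition $v(\cdot,a)\equiv 0$ together with a Gronwall argument to show that $v$ remains zero at $t=b$. First I would replace $u$ by $u-c$: since $c$ is constant, $\square(u-c)\le 0$ and the hypothesis \eqref{E116} carries over (using $\int e^{-2f}\,dV_t<\infty$, which follows from the $\boldsymbol{\mu}$-normalization and Lemma~2.3 of \cite{LLW18}). Setting $v=u_+$, a Kato-type argument gives $(v^2)_t\le \Delta v^2 - 2|\nabla v|^2 \le \Delta v^2$ in the distributional sense on $\{u>0\}$, while the initial condition becomes $v(\cdot,a)\equiv 0$.

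I would then test against the compactly supported weight $\rho \coloneqq e^{-2f}\phi^r$, where $\phi^r$ is the cutoff from Lemma~\ref{lem:cutoff}, and define
\begin{align*}
J^r(t) \coloneqq \int_M v^2\, e^{-2f}\, \phi^r\, dV_t,
\end{align*}
so that $J^r(a)=0$. Because $\rho$ has compact spatial support at each $t$, differentiation under the integral and integration by parts are legitimate. Using $\partial_t dV_t = -R\,dV_t$ together with $(v^2)_t\le \Delta v^2$, one obtains
\begin{align*}
\frac{d}{dt} J^r(t) \le -\int_M v^2\, \square^*\rho\, dV_t.
\end{align*}
Expanding $\square^*(\phi^r e^{-2f})$ via the product rule and computing $\square^* e^{-2f}$ directly from $\partial_t f = |\nabla f|^2$, $\Delta f = n/(2\bar\tau) - R$, and $|\nabla f|^2 = f/\bar\tau - R$, and invoking Lemma~\ref{lem:cutoff} to bound all $\phi^r$-derivative contributions, the resulting inequality can be cast in the form
\begin{align*}
\frac{d}{dt} J^r(t) \le K_r(t)\, J^r(t) + B_r(t),
\end{align*}
where $B_r$ is supported in the annular transition region $\{r\le F\le 2r\}$ and controlled by $(C/\bar\tau)\int_{\{r\le F\le 2r\}} v^2 e^{-2f}\,dV_t$.

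Gronwall then yields $J^r(b)\le \int_a^b B_r(s)\, e^{\int_s^b K_r}\,ds$. By the hypothesis \eqref{E116} and dominated convergence, the annular integrals satisfy $\int_a^b B_r\,ds\to 0$ as $r\to\infty$. Passing $r\to\infty$ and invoking monotone convergence with $\phi^r\nearrow 1$ one concludes $\int_M v^2 e^{-2f}\,dV_b = 0$, which forces $v\equiv 0$ at $t=b$; in particular $u(\cdot,b)\le c$. I expect the main obstacle to be keeping the Gronwall coefficient $K_r(t)$ moderate enough that $e^{\int_a^b K_r}$ does not overwhelm the decay of $\int_a^b B_r$ as $r\to\infty$. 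The crucial ingredient is the cancellation afforded by the shrinker identity $|\nabla f|^2 = f/\bar\tau - R$ together with $R\ge 0$: this converts the otherwise problematic $|\nabla f|^2$ term arising from $\square^*(e^{-2f})$ into a harmless $(f/\bar\tau) e^{-2f}$ contribution (pointwise bounded in view of $f e^{-2f}\le C$) plus a nonpositive $-R$ term. Careful balancing of the Cauchy--Schwarz constants in the integration by parts, rather than estimating each gradient and Hessian term in isolation, is essential to exploit this cancellation cleanly.
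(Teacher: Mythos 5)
Your overall strategy (reduce to $c=0$, weight by $e^{-2f}\phi^r$, let $r\to\infty$ using \eqref{E116}) is the right one and matches the paper's, but the reduction you make at the very start breaks the argument. The fatal step is discarding the gradient term in the Kato inequality, $(v^2)_t\le\Delta(v^2)-2|\nabla v|^2\le\Delta(v^2)$, and then integrating by parts twice so that all spatial derivatives land on the weight. Once you write $\tfrac{d}{dt}J^r(t)\le-\int v^2\,\square^*(e^{-2f}\phi^r)\,dV_t$, the relevant coefficient on the bulk of the support of $\phi^r$ is
\begin{align*}
-e^{2f}\,\square^{*}\!\left(e^{-2f}\right)=2|\nabla f|^2-2\Delta f-R=\frac{2f-n}{1-t}-R,
\end{align*}
by \eqref{E106} and \eqref{E107}. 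The $-R$ term is harmless, but the $\frac{2f}{1-t}$ term is unbounded in space (of order $r/(1-t)$ on $\{F\le 2r\}$), so there is no Gronwall coefficient $K_r(t)$ depending on $t$ alone; and if you settle for $K_r(t)\sim r$, the factor $e^{\int_a^bK_r}\sim e^{Cr}$ overwhelms whatever decay \eqref{E116} provides for the annulus terms $B_r$. Your proposed rescue, ``$fe^{-2f}\le C$,'' does not help: it trades $\int v^2fe^{-2f}\phi^r\,dV_t$ for $C\int v^2\phi^r\,dV_t$, and $\int v^2\,dV_t$ is controlled neither by $J^r$ nor by hypothesis \eqref{E116}.

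The cancellation you are hoping for does exist, but it lives in the term you threw away. Keeping $-2|\nabla v|^2$ (equivalently, multiplying $\square u\le 0$ by $u_+(\phi^r)^2e^{-2f}$ and integrating by parts only once, as the paper does), the three pieces $-|\nabla (v\phi^r)|^2e^{-2f}$, $+2\,v\phi^r\langle\nabla (v\phi^r),\nabla f\rangle e^{-2f}$ (from moving one derivative onto $e^{-2f}$), and $-v^2(\phi^r)^2|\nabla f|^2e^{-2f}$ (from $\partial_te^{-2f}=-2|\nabla f|^2e^{-2f}$, using \eqref{E105}) assemble into the complete square $-|\nabla(v\phi^r)-v\phi^r\nabla f|^2e^{-2f}\le 0$; this is exactly the paper's term $I$ in \eqref{eqn:PK11_7}, and the exponent $2$ in the weight is precisely what makes the square close. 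Once this is done no Gronwall argument is needed: integrating in time from $a$ to $b$ leaves only the boundary term at $t=b$ and the transition-region terms $II$ supported on $K_r=\{r\le F\le 2r\}$, which are bounded by $C\left((1-b)^{-1}+r^{-1}\right)\iint_{K_r}u_+^2e^{-2f}\,dV_t\,dt$ and vanish as $r\to\infty$ by \eqref{E116}. I recommend you redo the computation keeping the gradient term and verify the completed square explicitly.
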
 

\begin{proof}
From Lemma \ref{L100} and Lemma \ref{L101}, it is easy to see
\begin{align*} 
\int_a^b \int_M e^{-2f(x,t)}\,dV_t(x)\,dt < \infty
\end{align*}
Therefore, we only need to prove the special case when $c=0$, by considering $u-c$.

Multiplying both sides of $\square u \le 0$ by $u_+(\phi^r)^2e^{-2f}$ and integrating on the spacetime $M \times [a,b]$, then we obtain
\begin{align} 
\int_a^b \int_M \left(\frac{u_+^2}{2}\right)_t(\phi^r)^2e^{-2f}\,dV_t\,dt &\le \int_a^b \int_M \Delta u u_+ (\phi^r)^2e^{-2f}\,dV_t\,dt.
\label{E117}
\end{align}
For the left side of \eqref{E117}, we have
\begin{align} 
&\int_a^b \int_M \left(\frac{u_+^2}{2}\right)_t(\phi^r)^2e^{-2f}\,dV_t\,dt \notag \\
=&\left(\int_M \frac{u_+^2}{2} (\phi^r)^2e^{-2f} \,dV_t     \right)(b) -\int_a^b \int_M u_+^2\phi^r\phi^r_te^{-2f}\,dV_t\,dt \notag \\
&+\int_a^b \int_M u_+^2(\phi^r)^2f_te^{-2f}\,dV_t\,dt +\int_a^b \int_M \frac{u_+^2}{2}(\phi^r)^2Re^{-2f}\,dV_t\,dt \notag \\
\ge &\left(\int_M \frac{u_+^2}{2} (\phi^r)^2e^{-2f} \,dV_t     \right)(b) -\int_a^b \int_M u_+^2\phi^r\phi^r_te^{-2f}\,dV_t\,dt \notag \\
&+\int_a^b \int_M u_+^2(\phi^r)^2|\nabla f|^2e^{-2f}\,dV_t\,dt,
\label{E118}
\end{align}
where we have used $R \ge 0$, $f_t=|\nabla f|^2$ and $u_+(\cdot,a)=0$.
For the right side of~\eqref{E117}, we have
\begin{align} 
& \int_a^b \int_M \Delta u u_+ (\phi^r)^2e^{-2f}\,dV_t\,dt \notag\\
=&\int_a^b \int_M -|\nabla(u_+\phi^r)|^2e^{-2f}\,dV_t\,dt+\int_a^b\int_M|\nabla \phi^r|^2u_+^2e^{-2f}\,dV_t\,dt  \notag\\
&+\int_a^b \int_M2\langle \nabla u_+,\nabla f \rangle u_+(\phi^r)^2e^{-2f}\,dV_t\,dt \notag\\
=&\int_a^b \int_M -|\nabla(u_+\phi^r)|^2e^{-2f}\,dV_t\,dt+\int_a^b\int_M|\nabla \phi^r|^2u_+^2e^{-2f}\,dV_t\,dt  \notag\\
&+\int_a^b \int_M2\langle \nabla (u_+\phi^r),\nabla f \rangle u_+\phi^re^{-2f}\,dV_t\,dt-\int_a^b \int_M2\langle \nabla \phi^r,\nabla f \rangle u_+^2\phi^re^{-2f}\,dV_t\,dt.
\label{E119}
\end{align}
Combining \eqref{E118} and \eqref{E119}, we obtain
\begin{align} 
\left(\int_M \frac{u_+^2}{2} (\phi^r)^2e^{-2f} \,dV_t     \right)(b) \le I+II,
\label{E119a}
\end{align}
where
\begin{align} 
I=& -\int_a^b \int_M u_+^2(\phi^r)^2|\nabla f|^2e^{-2f}\,dV_t\,dt-\int_a^b \int_M |\nabla(u_+\phi^r)|^2e^{-2f}\,dV_t\,dt \notag\\
&+\int_a^b \int_M2\langle \nabla (u_+\phi^r),\nabla f \rangle u_+\phi^re^{-2f}\,dV_t\,dt \le 0, \label{eqn:PK11_7} 
\end{align}
and
\begin{align} 
II=&\int_a^b \int_M u_+^2\phi^r\phi^r_te^{-2f}\,dV_t\,dt+\int_a^b\int_M|\nabla \phi^r|^2u_+^2e^{-2f}\,dV_t\,dt  \notag\\
&-\int_a^b \int_M2\langle \nabla \phi^r,\nabla f \rangle u_+^2\phi^re^{-2f}\,dV_t\,dt.   \label{eqn:PK11_6}
\end{align}
From our construction of $\phi^r$, it is easy to see that all functions involved in last three integrals are supported in the spacetime set
\begin{align}
  K_{r} \coloneqq \{r\le F(x,t)\le 2r, \, a\le t\le b\}. 
  \label{eqn:PK11_5}
\end{align}
Moreover, all the cutoff function terms can be estimated by \eqref{E113} and \eqref{E114}.  For example, we have
\begin{align*} 
  |\langle \nabla \phi^r,\nabla f \rangle| \le \bar \tau^{-1}|\nabla \phi^r||\nabla F| \le C\bar \tau r^{-1/2}\sqrt{F} \le C(1-b)^{-1}, \quad \textrm{on}\; K_{r}. 
\end{align*}
Plugging \eqref{E113},\eqref{E114} and the above inequality into (\ref{eqn:PK11_6}), we arrive at
\begin{align} 
  II\le C\left((1-b)^{-1}+r^{-1}\right)\iint_{K_r} u_+^2e^{-2f}\,dV_t\,dt. 
\label{E120}
\end{align}
It follows from \eqref{E119a},(\ref{eqn:PK11_7}) and \eqref{E120} that  
\begin{align*} 
\left(\int_M \frac{u_+^2}{2} (\phi^r)^2e^{-2f} \,dV_t     \right)(b) \le  C\left((1-b)^{-1}+r^{-1}\right)\iint_{K_r} u_+^2e^{-2f}\,dV_t\,dt. 
\end{align*}
Note that the left hand side of the above inequality is independent of $r$. 
Letting $r \to +\infty$, the finite integral assumption \eqref{E116} implies that
$$
\left(\int_M \frac{u_+^2}{2} e^{-2f} \,dV_t     \right)(b) \leq 0.
$$
Therefore, $u(\cdot,b) \le 0$ by the continuity of $u$ and positivity of $e^{-2f(\cdot, b)}$. 
\end{proof}

The condition (\ref{E116}) is satisfied in many cases.  For example, if $u$ is a bounded heat solution. 
The technique used in the proof of Theorem~\ref{T101} will be repeatedly used in this paper.

Now we control the spacetime integral of $|\text{Hess}\,F|^2$.

\begin{lem}\label{L103}
For any $\lambda>0$, $a<b<1$, there exists a constant $C=C(a,b,\lambda)$ such that
\begin{align*} 
\int_a^b\int |\text{Hess}\,F|^2 e^{-\lambda F} \,dV_t\,dt \le C.
\end{align*}
\end{lem}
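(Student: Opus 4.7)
The plan is to reduce the spacetime integral to a static one on $(M,g,f)$ via self-similarity, and then bound the latter using a Bochner identity followed by a cutoff integration by parts.

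First, the shrinker identity \eqref{E103} gives $\text{Hess}\,F=\frac{1}{2}g(t)-\bar\tau\,Rc$, hence
\begin{align*}
|\text{Hess}\,F|^2=\tfrac{n}{4}-\bar\tau R+\bar\tau^2|Rc|^2.
\end{align*}
Since $t\in[a,b]$ forces $\bar\tau\in[1-b,1-a]$ and $R\ge 0$, it is enough to control
\begin{align*}
\int_a^b\int_M\bigl(1+R+|Rc|^2\bigr)\,e^{-\lambda F}\,dV_t\,dt.
\end{align*}
Pulling back by $\psi^t$ and using $g(t)=\bar\tau(\psi^t)^*g$ together with $F(\cdot,t)=\bar\tau(\psi^t)^*f$, the inner integral equals $\bar\tau^{n/2}\int_M(1+R+|Rc|^2)e^{-\lambda\bar\tau f}\,dV_g$. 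Since $\lambda\bar\tau\ge\mu\coloneqq\lambda(1-b)>0$ and $f\ge 0$, this reduces the problem to the static estimate $\int_M(1+R+|Rc|^2)e^{-\mu f}dV_g<\infty$ for every $\mu>0$.

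The pieces involving $1$ and $R$ are easy: by \eqref{E107} one has $R\le f$, by Lemma~\ref{L100} one has $f\ge\frac{1}{4}(d(p,\cdot)-C)_+^2$, and by Lemma~\ref{L101} one has $|B(p,r)|\le Ce^{\boldsymbol{\mu}}r^n$, so $\int(1+R)e^{-\mu f}dV<\infty$ for any $\mu>0$. For the Ricci piece I would apply Bochner to $f$ on the static shrinker; using $\Delta f=n/2-R$, $|\nabla f|^2=f-R$ and the standard shrinker identity $\nabla R=2\,Rc(\nabla f)$ yields
\begin{align*}
|\text{Hess}\,f|^2=\tfrac{1}{2}\Delta(f-R)+Rc(\nabla f,\nabla f),
\end{align*}
which combined with the algebraic relation $|\text{Hess}\,f|^2=\tfrac{n}{4}-R+|Rc|^2$ gives
\begin{align*}
|Rc|^2=\tfrac{1}{2}\Delta(f-R)+\tfrac{1}{2}\langle\nabla f,\nabla R\rangle+R-\tfrac{n}{4}.
\end{align*}
I would then multiply by $\phi^r e^{-\mu f}$, with $\phi^r=\eta(f/r)$ the time-zero restriction of the cutoff in Lemma~\ref{lem:cutoff}, integrate over $M$, and move the $\Delta$ and the $\nabla R$ onto the weight by integration by parts. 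Every term that survives is a polynomial in $f$ and $R$, at worst $R^2\le f^2$, multiplied by $e^{-\mu f}$; by Lemma~\ref{L100} and Lemma~\ref{L101} such integrals are finite. The boundary-type contributions are supported on $\{r\le f\le 2r\}$ and vanish as $r\to\infty$ thanks to the cutoff estimates of Lemma~\ref{lem:cutoff}.

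The hard part is the Ricci integral for small $\mu$: the classical weighted bound $\int|Rc|^2e^{-f}dV<\infty$ of Munteanu--Wang only handles $\mu\ge 1$, while here $\mu$ can be arbitrarily small (as $b\to 1^-$ or $\lambda\to 0^+$), so that shortcut is unavailable. The Bochner identity above is engineered to sidestep this: after IBP no curvature squares remain on the right-hand side; only $R$ appears, at most quadratically, and the pointwise bound $R\le f$ tames it against $e^{-\mu f}$ for every $\mu>0$.
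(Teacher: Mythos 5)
Your proposal is correct, but it takes a genuinely different route from the paper. The paper works parabolically: from $\square F=-\tfrac n2$ one gets $\square|\nabla F|^2=-2|\text{Hess}\,F|^2$, and multiplying by $\phi^r e^{-\lambda F}$ and integrating over $M\times[a,b]$ produces $2\iint|\text{Hess}\,F|^2\phi^r e^{-\lambda F}$ on the left and, on the right, only terms controlled by $|\nabla F|^2\le F$, the cutoff bounds of Lemma~\ref{lem:cutoff}, and the weighted volume bounds of Lemmas~\ref{L100} and~\ref{L101}; letting $r\to\infty$ finishes it in one spacetime integration by parts. You instead convert $\text{Hess}\,F=\tfrac12 g(t)-\bar\tau\,Rc$ algebraically, rescale to the $t=0$ slice, and reduce everything to the static estimate $\int_M|Rc|^2e^{-\mu f}\,dV<\infty$ for every $\mu>0$, which you obtain from the elliptic soliton/Bochner identity (equivalently $2|Rc|^2=R-\Delta_f R$) plus one static integration by parts against $\phi^r e^{-\mu f}$. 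Both arguments are sound; your identities check out, the surviving terms are indeed at worst $O(f^2)e^{-\mu f}$ hence integrable, and the cutoff contributions on $\{r\le f\le 2r\}$ vanish as $r\to\infty$. You also correctly identify the one real danger in your route — that $\mu=\lambda(1-b)$ can be arbitrarily small, so the standard Munteanu--Wang bound $\int|Rc|^2e^{-f}\,dV<\infty$ (the paper's Lemma~\ref{lma:PL21_1}, which is exactly the $\mu=1$ case) cannot be quoted directly — and your Bochner rearrangement handles it. What your version buys is a static weighted $L^2$ Ricci estimate valid for all $\mu>0$, of independent interest and strictly stronger than the $\mu=1$ statement used later in Section~\ref{sec:L2}; what the paper's version buys is economy (no Bianchi identity, no scaling bookkeeping) and consistency with the spacetime cutoff machinery it reuses repeatedly, e.g.\ in Lemma~\ref{L404a}.
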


\begin{proof}
From \eqref{E110X} and direct computations, 
\begin{align*} 
\square |\nabla F|^2=-2|\text{Hess}\, F|^2.
\end{align*}
Multiplying both sides of the above equation by $\phi^re^{-\lambda F}$ and integrating on the spacetime $M \times [a,b]$,  we obtain
\begin{align*} 
&2\int_a^b\int |\text{Hess}\,F|^2 \phi^re^{-\lambda F} \,dV_t\,dt \\
=&- \left.\left(\int |\nabla F|^2\phi^re^{-\lambda F} \,dV\right) \right |_a^b +\int_a^b\int \square^*\phi^r|\nabla F|^2 e^{-\lambda F} \,dV_t\,dt  \\
&+\lambda\int_a^b\int \left(\phi^r(\lambda|\nabla F|^2-F_t-\Delta F)-2\langle \nabla \phi^r,\nabla F\rangle \right)|\nabla F|^2\phi^r \,dV_t\,dt \notag \\
\le &- \left.\left(\int |\nabla F|^2\phi^re^{-\lambda F} \,dV\right) \right |_a^b +\int_a^b\int |\square^*\phi^r|Fe^{-\lambda F} \,dV_t\,dt \\
&+\lambda\int_a^b\int \left((\lambda+2{\bar \tau}^{-1})F+2r^{-\frac{1}{2}}F^{\frac{1}{2}}\right)Fe^{-\lambda F} \,dV_t\,dt. 
\end{align*}
Now we let $r \to \infty$ and the conclusion follows from Lemma \ref{L100} and Lemma \ref{L101}.
\end{proof}

\begin{theorem}
  On the Ricci flow spacetime $M \times (-\infty, 1)$ induced by a Ricci shrinker $(M, g, f)$, there exists a positive heat kernel
  function $H(x,t,y,s)$ for all $x, y \in M$ and $s, t \in (-\infty, 1)$ with $x \neq y$ and $s<t$. It satisfies  
  \begin{align}
    &\square_{x,t} H(x,t,y,s) \coloneqq \left( \partial_t - \Delta_{x} \right) H(x,t,y,s)=0,  \label{eqn:PK14_1}\\
    &\square_{y,s}^{*} H(x,t,y,s) \coloneqq \left( -\partial_{s} -\Delta_{y}+R(y,s) \right) H(x,t,y,s)=0,   \label{eqn:PK14_2}\\
    &\lim_{t \to s^{+}} H(x,t,y,s)=\delta_{y}, \label{eqn:PK14_3}\\
    &\lim_{s \to t^{-}} H(x,t,y,s)=\delta_{x}. \label{eqn:PK14_4}
  \end{align}
  Furthermore, the heat kernel $H$ satisfies the semigroup property
  \begin{align}
    H(x,t,y,s)=&\int_{M}H(x,t,z,\rho)H(z,\rho,y,s)\, dV_{\rho}(z),  \quad  \forall \; x, y \in M, \; \rho \in (s,t) \subset (-\infty, 1),  \label{eqn:PK14_5}  
  \end{align}
  and the following integral relationships
  \begin{align} 
  &\int_{M}H(x,t,y,s)\,dV_t(x) \le 1, \label{Eqsto}\\
  &\int_{M}H(x,t,y,s)\,dV_s(y) =1. \label{eqn:PK14_6}
 \end{align}
\label{thm:PK14_1}
\end{theorem}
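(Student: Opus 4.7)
The approach is the classical exhaustion construction, adapted to the Ricci shrinker spacetime using the sublevel sets of $F$. First I would exhaust $M$ by the compact domains $D_{k} := \{F(\cdot,t_{0}) \leq k\}$ for a fixed reference time $t_{0}$; these are compact by the quadratic growth bound of Lemma~\ref{L100}, and by the self-similarity of the flow they define an exhaustion at every time slice. On each cylinder $D_{k} \times [s,t]$ I would solve the Dirichlet problem for the (time-dependent) heat equation to obtain Dirichlet heat kernels $H_{k}(x,t,y,s)$; the existence and basic properties (symmetry under $(x,t)\leftrightarrow(y,s)$ with conjugate roles, parabolic regularity, short-time delta asymptotics) follow from standard parabolic theory on bounded domains with smooth time-dependent metrics. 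By the parabolic maximum principle on compact domains, $H_{k} \leq H_{k+1}$, and a short-time Euclidean-type upper bound (valid for compact smooth Ricci flows on any fixed compact set) gives a uniform upper bound on small parabolic cylinders.

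Next, passing to the limit $H := \lim_{k\to\infty} H_{k}$, I would invoke standard interior Schauder estimates to deduce that $H \in C^{\infty}$ on $\{(x,t,y,s) : s<t\}$ and that it satisfies the two equations \eqref{eqn:PK14_1} and \eqref{eqn:PK14_2}. The initial-time delta behaviors \eqref{eqn:PK14_3} and \eqref{eqn:PK14_4} are inherited from the Dirichlet kernels, since on any fixed compact set the Dirichlet kernels agree with $H$ up to an error that is uniformly small near the diagonal.

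The integral identities are the technical core, and here the maximum principle of Theorem~\ref{T101} is the main tool. For the sub-stochastic upper bound \eqref{Eqsto}, fix $(y,s)$ and set $h(x,\tau) := H(x,\tau,y,s)$; since $R \geq 0$, differentiating under the integral sign and integrating by parts against the cutoff $\phi^{r}$ of \eqref{eqn:PK11_8}, then using \eqref{E113}--\eqref{E115} of Lemma~\ref{lem:cutoff} and Lemma~\ref{L101} to kill boundary and cutoff terms as $r\to\infty$, one obtains $\frac{d}{d\tau}\int h\,dV_{\tau} \leq 0$, so the integral is bounded by its limit $1$ at $\tau\to s^{+}$. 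For the conservative identity \eqref{eqn:PK14_6}, I would use the duality argument: if $u$ is any bounded forward heat solution with $\square u = 0$, then the integration-by-parts identity
\begin{align*}
\frac{d}{d\tau}\int_{M} u(y,\tau)\, H(x,t,y,\tau)\,dV_{\tau}(y) = 0, \qquad \tau \in (s,t),
\end{align*}
holds after justifying the vanishing of cutoff errors via the $e^{-2f}$-weighted integrability of Theorem~\ref{T101}. Evaluating at $\tau\to t^{-}$ and at $\tau = s$ yields $\int u(y,s) H(x,t,y,s)\,dV_{s}(y) = u(x,t)$. The bounded heat solution $u\equiv 1$ then gives \eqref{eqn:PK14_6}, while uniqueness of bounded heat solutions (a direct consequence of Theorem~\ref{T101}) gives the semigroup property \eqref{eqn:PK14_5} by checking that both sides solve the same forward heat equation in $(x,t)$ with the same data at $t = \rho^{+}$.

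\textbf{Main obstacle.} The genuine difficulty is not the compact-exhaustion construction nor the limiting regularity, but rather justifying the integration by parts in the duality identity above, because the natural cutoff $\phi^{r}$ has support growing in $r$ while the conjugate heat kernel $H$ must be controlled on this support. The key ingredients will be the precise estimates \eqref{E113}--\eqref{eqn:PK12_5} on $\phi^{r}$, the weighted integrability from the maximum-principle hypothesis \eqref{E116}, and the volume growth bound of Lemma~\ref{L101}; the weighted factor $e^{-2f}$ aligns naturally with the Gaussian decay that a correctly constructed heat kernel must possess on a shrinker, which is what makes the cutoff errors in \eqref{eqn:PK11_6} tend to zero as $r\to\infty$.
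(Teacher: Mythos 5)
Your proposal follows essentially the same route as the paper: exhaustion of $M$ by compact domains, Dirichlet heat kernels $H_i$ made monotone by the maximum principle and uniformly bounded via the mean value inequality together with the sub-probability bounds $\int_{\Omega_i} H_i\,dV \le 1$, a smooth monotone limit, and the cutoff functions $\phi^{r}$ of Lemma~\ref{lem:cutoff} to justify the integration by parts behind \eqref{Eqsto} and \eqref{eqn:PK14_6}. The only (harmless) deviations are that the paper obtains \eqref{Eqsto} and the semigroup property \eqref{eqn:PK14_5} by passing the corresponding Dirichlet identities to the limit using positivity and the two-sided monotone bounds, whereas you re-derive them directly on $M$ through the duality/uniqueness argument of Lemma~\ref{lma:PK15_2}; both versions close, since the $L^1$ control on $H$ that your cutoff computation needs is already supplied by the limit of the Dirichlet bounds.
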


\begin{proof}

We shall divide the proof of Theorem~\ref{thm:PK14_1} into four steps. 

  \textit{Step 1. Existence of a heat kernel function $H$ solving heat equation and conjugate heat equation.} 

Fix a compact interval $I=[a,b] \subset (-\infty,1)$ and a compact set $\Omega \subset M$ with smooth boundary, there exists a Dirichlet heat kernel.
The proof can be found in in~\cite[Chapter $24$, Section $5$]{CCGG}.
Regarding $(-\infty,1)$ as the union of $[-2^{k}, 1-2^{-k}]$, it is easy to see that the Dirichlet  heat kernel actually exists on $\Omega \times (-\infty, 1)$.
Now we let $\left\{ \Omega_i \right\}$ be an exhaustion of $M$ by relatively compact domains with smooth boundary such that
$\overline{\Omega_i}\subset \Omega_{i+1}$. Let $H_i(x,t,y,s)$ be the Dirichlet heat kernel of $(\overline{\Omega_i},g)$.
Then the following properties hold. 
\begin{align} 
\partial_tH_i(x,t,y,s)=&\Delta_{x,t} H_i(x,t,y,s), \label{E201} \\
\partial_sH_i(x,t,y,s)=&-\Delta_{y,s}H_i(x,t,y,s)+R(y,s)H_i(x,t,y,s);  \label{E203} \\
\lim_{ t \searrow s}H_i(x,t,y,s)=&\delta_y,  \label{E202} \\
\lim_{ s \nearrow t}H_i(x,t,y,s)=&\delta_x. \label{E204}
\end{align}
Let $\vec{n}$ be the outward normal vector of $\partial \Omega_i$, then the positivity of $H_i$ implies that $\frac{\partial H_i}{\partial \vec n} \le 0$.  
Since $R \geq 0$ on Ricci shrinkers,  direct computation shows that
\begin{align} 
\partial_t\int_{\Omega_i}H_i(x,t,y,s)\,dV_t(x)=&\int_{\Omega_i} \left( \Delta_{x,t} -R \right) H_i(x,t,y,s) \,dV_t(x)
  \le \int_{\partial \Omega_i} \frac{\partial H_i}{\partial \vec n}\,d\sigma_t(x) \le 0. 
\end{align}
Hence from \eqref{E202}, we have
\begin{align} 
\int_{\Omega_i}H_i(x,t,y,s)\,dV_t(x) \le 1.
\label{E205}
\end{align}
Similarly, we have 
\begin{align} 
\partial_s\int_{\Omega_i}H_i(x,t,y,s)\,dV_s(y)=&-\int_{\Omega_i}\Delta_{y,s} H_i(x,t,y,s) \,dV_s(y) 
=-\int_{\partial \Omega_i}\frac{\partial H_i}{\partial \vec n}\,d\sigma_s(y) \ge 0,
\end{align}
which implies that
\begin{align} 
\int_{\Omega_i}H_i(x,t,y,s)\,dV_s(y) \le 1.
\label{E206}
\end{align}
As $H_i>0$ on $\Omega_i \times (-\infty,1)$, it follows from the classical maximum principle that
\begin{align} 
0\le H_i \le H_{i+1}
\label{E207}
\end{align}
on $\Omega_i \times \Omega_i \times (-\infty, 1)$.
Now we define the heat kernel on $M \times (-\infty, 1)$ by
\begin{align} 
  H(x,t,y,s) \coloneqq \lim_{i \to \infty}H_i(x,t,y,s).  \label{eqn:PK13_1}
\end{align}
From the well-known mean value theorem (cf.~Theorem $25.2$ in~\cite{CCGG}) the interior regularity estimates for the heat equation and conjugate heat equation,
it follows from (\ref{E205}) and (\ref{E206}) that $H_i$ is uniformly bounded when $s,t$ are fixed.
Threfore, $H$ exists as a smooth function.  Its positivity is guranteed by (\ref{E207}). 
The regularity estimates also imply that the convergence from $H_i$ to $H$ is locally  smooth.
In particular, we can take limit of (\ref{E201}) and (\ref{E203}) to obtain that $H$ solves heat equation and conjugate heat equation 
on $M \times (-\infty, 1)$. In other words, (\ref{eqn:PK14_1}) and (\ref{eqn:PK14_2}) are satisfied. 

\textit{Step 2. The heat kernel is a fundamental solution of heat equation and conjugate heat equation.}

Let $\phi$ be a smooth function on $M$ with compact support $K$. For fixed $y$ and $s$, we have
\begin{align} 
&\left|\partial_t\int_{\Omega_i}H_i(x,t,y,s)\phi(x)\,dV_t(x)\right| \notag\\
=&\left|\int_{\Omega_i} (\Delta_{x,t}-R) H_i(x,t,y,s)\phi(x)\,dV_t(x)\right| \notag \\
\le& \left|\int_{\Omega_i} H_i(x,t,y,s)\Delta \phi(x)\,dV_t(x)\right|+\left|\int_{\Omega_i}RH_i(x,t,y,s)\phi(x) \,dV_t(x)\right| \notag \\
\le&C \left|\int_{\Omega_i}H_i(x,t,y,s)\,dV_t(x)\right| \le C,  \label{eqn:PK13_5}
\end{align}
where $C$ is independent of $H_i$. Notice that the last two inequalities hold since we just need to restrict the integral on $K$, and for a fixed $s$, when $t$ is close to $s$, the metrics are uniformly equivalent on $K \times [s,t]$. Combining~\eqref{E202} with (\ref{eqn:PK13_5}), we obtain 
\begin{align} 
  \left|\int_{\Omega_i}H_i(x,t,y,s)\phi(x)\,dV_t(x)-\phi(y)\right|\le C(t-s).  \label{eqn:PK13_6}
\end{align}
Since $\phi$ has compact support, it is clear that 
\begin{align*}
  \lim_{i \to \infty} \int_{\Omega_i} H_i(x,t,y,s)\phi(x)\,dV_t(x)=\int_{M} H(x,t,y,s) \phi(x) dV_{t}(x). 
\end{align*}
Plugging the above equation into (\ref{eqn:PK13_6}) yields that
\begin{align*} 
\left|\int_{M}H(x,t,y,s)\phi(x)\,dV_t(x)-\phi(y)\right|\le C(t-s),
\end{align*}
which means that
\begin{align*}
  \lim_{ t \to s^{+}} \int_{M}H(x,t,y,s)\phi(x)\,dV_t(x)=\phi(y).
\end{align*}
By the arbitrary choice of $\phi$, we obtain (\ref{eqn:PK14_3}). Therefore, $H$ is a fundamental solution of the heat equation.
Similary, we can use the limit argument to derive (\ref{eqn:PK14_4}) and claim that $H$ is a fundamental solution of the conjugate heat equation. 

 \textit{Step 3. The heat kernel satisfies the semigroup property.}

 From its construction,  $H_i$ satisfies the semigroup property:
\begin{align}
  H_i(x,t,y,s)=\int_{\Omega_i}H_i(x,t,z,\rho)H_i(z,\rho,y,s)\, dV_{\rho}(z),  \quad  \forall \; x, y \in \Omega_i, \; \rho \in (s,t) \subset (-\infty, 1).   \label{E204a}
\end{align}
 For each compact set $K \subset M$, it is clear that $K \subset \Omega_i$ for large $i$. 
 By the positivity of each $H_i$, we have
 \begin{align*}
   H(x,t,y,s) &=\lim_{i \to \infty} H_i(x,t,y,s)=\lim_{i \to \infty} \int_{\Omega_i}H_i(x,t,z,\rho)H_i(z,\rho,y,s)\, dV_{\rho}(z)\\
   &\geq \lim_{i \to \infty} \int_{K} H_i(x,t,z,\rho)H_i(z,\rho,y,s)\, dV_{\rho}(z)=\int_{K} H(x,t,z,\rho)H(z,\rho,y,s)\, dV_{\rho}(z). 
 \end{align*}
 By the arbitrary choice of $K \subset M$, the above inequality implies that
 \begin{align}
   H(x,t,y,s) \geq  \int_{M} H(x,t,z,\rho)H(z,\rho,y,s)\, dV_{\rho}(z).   \label{eqn:PK14_7}  
 \end{align}
 By (\ref{E207}), (\ref{eqn:PK13_1}) and the positivity of $H$,  we have
 \begin{align*}
   H_i(x,t,y,s)&=\int_{\Omega_i}H_i(x,t,z,\rho)H_i(z,\rho,y,s)\, dV_{\rho}(z) \leq \int_{\Omega_i} H(x,t,z,\rho)H(z,\rho,y,s) dV_{\rho}(z)\\
   &<\int_{M} H(x,t,z,\rho) H(z,\rho,y,s) dV_{\rho}(z),
 \end{align*}
 whose limit form is
 \begin{align}
   H(x,t,y,s) \leq \int_{M} H(x,t,z,\rho) H(z,\rho,y,s) dV_{\rho}(z).   \label{eqn:PK14_8}
 \end{align}
 Therefore, the semigroup property (\ref{eqn:PK14_5}) follows from the combination of (\ref{eqn:PK14_7}) and (\ref{eqn:PK14_8}).

 \textit{Step 4. The integral relationships (\ref{Eqsto}) and (\ref{eqn:PK14_6}) are satisfied.}

 On each compact set $K \subset M$, since $K \subset \Omega_i$ for large $i$ and each $H_i$ is positive on $\Omega_i$, we have
 \begin{align*}
   \int_{K} H(x,t,y,s) dV_{t}(x) =\lim_{i \to \infty} \int_{K} H_i(x,t,y,s) dV_{t}(x) \leq \lim_{i \to \infty} \int_{\Omega_i} H_i(x,t,y,s) dV_{t}(x) \leq 1,
 \end{align*}
 where (\ref{E205}) is applied in the last step. The arbitrary choice of $K$ then yields that
 \begin{align*}
    \int_{M} H(x,t,y,s) dV_{t}(x) \leq 1, 
 \end{align*}
 which is nothing but (\ref{Eqsto}). Similar reasoning can pass (\ref{E206}) to obtain
 \begin{align}
   \int_{K} H(x,t,y,s) dV_{s}(y) \leq 1,   \label{eqn:PK14_9}
 \end{align}
 where the inequality will be improved to equality (\ref{eqn:PK14_6}) in the following argument. 
 In fact, let $\phi^{r}$ be the cutoff function defined in (\ref{eqn:PK11_8}).
 For any fixed $x$ and $t$, it follows from the cutoff function estimate (\ref{E115a}) that
 \begin{align*} 
 &\left|\partial_s\int H(x,t,y,s)\phi^r(y,s)\,dV_s(y)\right| \\
 =&\left|\int H(x,t,y,s)\square_{y,s}\phi^r(y,s)\, dV_s(y)\right|
 \leq Cr^{-1}\int H(x,t,y,s) \, dV_s(y).
 \end{align*}
 Plugging (\ref{eqn:PK14_9}) into the above inequality, we obtain 
 \begin{align*} 
 \left|\partial_s\int H(x,t,y,s)\phi^r(y,s)\,dV_s(y)\right| \leq Cr^{-1}. 
 \end{align*}
 When $r$ is large, $x$ is covered by the support of $\phi^r$ at the time $t$.  Using (\ref{eqn:PK14_4}), the above inequality implies that
 \begin{align*} 
 \left|\int H(x,t,y,s)\phi^r(y,s)\,dV_s(y)-1\right| \le Cr^{-1}(t-s).
 \end{align*}
 Since $r$ could be arbitrarily large in the above inequality, we obtain (\ref{eqn:PK14_6}) by letting $r \to \infty$. 
\end{proof}

\begin{lemma}
  Suppose $[a,b] \subset (-\infty, 1)$ and $u_{a}$ is a bounded function on the time slice $(M, g(a))$. Then
  \begin{align}
    u(x,t) \coloneqq \int_{M}H(x,t,y,a)u_a(y) dV_{a}(y), \quad \forall \; t \in [a,b]  \label{eqn:PK15_3}
  \end{align}
  is the unique bounded heat solution with initial value $u_a$. 
  \label{lma:PK15_2}
\end{lemma}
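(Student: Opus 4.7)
The statement has two independent pieces: showing that the formula (\ref{eqn:PK15_3}) does define a bounded heat solution with initial value $u_a$, and then showing that no other bounded heat solution shares this initial datum. My plan is to handle existence first, using the properties of $H$ already established in Theorem~\ref{thm:PK14_1}, and then deduce uniqueness as a clean application of the maximum principle Theorem~\ref{T101}.

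For existence, the boundedness is immediate from the stochastic-completeness identity (\ref{eqn:PK14_6}), since
\begin{align*}
|u(x,t)|\le \|u_a\|_\infty \int_M H(x,t,y,a)\,dV_a(y)=\|u_a\|_\infty.
\end{align*}
For the equation $\square u=0$, I plan to exchange $\square_{x,t}$ with the integral and invoke (\ref{eqn:PK14_1}). To justify this differentiation under the integral on the noncompact manifold $M$, I will use the Dirichlet approximation from the proof of Theorem~\ref{thm:PK14_1}: let $u_i(x,t):=\int_{\Omega_i}H_i(x,t,y,a)u_a(y)\,dV_a(y)$, which is a smooth heat solution on $\Omega_i$ with Dirichlet boundary condition by the classical theory. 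By (\ref{eqn:PK14_6}) and monotone convergence, $u_i\to u$ locally, while parabolic interior estimates applied to $u_i$ (which are uniformly bounded in sup norm by $\|u_a\|_\infty$) give uniform $C^{2,1}_{\mathrm{loc}}$ bounds; passing to the limit shows $u$ is smooth and satisfies $\square u=0$. The attainment of the initial value is obtained from (\ref{eqn:PK14_3}), at least at each Lebesgue/continuity point of $u_a$, by the standard argument that tests $u(x,t)$ against a continuous function and uses $\int_M H(x,t,y,a)\,dV_a(y)=1$.

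For uniqueness, suppose $u_1, u_2$ are two bounded heat solutions with the same initial value $u_a$, and set $v\coloneqq u_1-u_2$. Then $\square v=0$ on $M\times[a,b]$, $v$ is bounded, and $v(\cdot,a)=0$. To apply Theorem~\ref{T101}, I need to verify
\begin{align*}
\int_a^b\int_M v_{+}^2(x,t)\,e^{-2f(x,t)}\,dV_t(x)\,dt<\infty,
\end{align*}
but this follows at once from $v_+\le \|v\|_\infty$ together with $\int_a^b\int_M e^{-2f}\,dV_t\,dt<\infty$, which was already noted in the proof of Theorem~\ref{T101} as a consequence of Lemmas \ref{L100} and \ref{L101}. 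Thus Theorem~\ref{T101} gives $v\le 0$ on $M\times[a,b]$, and applying the same argument to $-v$ yields $v\equiv 0$, completing uniqueness.

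The only genuinely delicate point is the existence part: the noncompactness of $M$ means I cannot naively differentiate under the integral, and this is why I go through the Dirichlet exhaustion $\{(\Omega_i,H_i)\}$ already built in Theorem~\ref{thm:PK14_1}. Once smoothness and the equation are secured through this limiting procedure, everything else reduces to direct consequences of (\ref{eqn:PK14_3})–(\ref{eqn:PK14_6}) and the maximum principle, so I expect the whole proof to be short.
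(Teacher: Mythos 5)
Your proof is correct and follows essentially the same route as the paper: the paper also deduces uniqueness by applying the maximum principle Theorem~\ref{T101} to $\pm(u-\tilde{u})$ (the integrability hypothesis being immediate from boundedness and $\int_a^b\int_M e^{-2f}\,dV_t\,dt<\infty$), while simply asserting the existence part as clear. Your Dirichlet-exhaustion justification of the existence step is a reasonable expansion of what the paper leaves implicit.
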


\begin{proof}
  Clearly, $u$ is a well-defined heat solution with the initial value $u_a$. Suppose $\tilde{u}$ is another heat solution with initial value $u_a$.  Then
  $u-\tilde{u}$ is a bounded heat solution with initial value $0$.  Therefore, we can apply maximum principle Theorem~\ref{T101} on $\pm (u-\tilde{u})$ to obtain that 
  \begin{align*}
    u -\tilde{u} \equiv 0, \quad \textrm{on} \; M \times [a,b].
  \end{align*}
  In other words, $\tilde{u} \equiv u$ and the uniqueness is proved. 
\end{proof}

\begin{corollary}
  Suppose $u_a$ is a smooth, bounded, integrable function on $(M, g(a))$.
  Let $u$ be the unique bounded heat solution on $M \times [a,b]$ starting from $u_a$. Then we have
  \begin{align}
    \sup_{M} |\nabla u(\cdot, b)|  \leq \sup_{M} |\nabla u(\cdot, a)|.   \label{eqn:PK15_4}
  \end{align}
  \label{cly:PK15_1}
\end{corollary}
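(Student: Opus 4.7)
The plan is to apply the maximum principle of Theorem~\ref{T101} to the squared gradient $v \coloneqq |\nabla u|^2$. Using $\square u = 0$, the variation $\partial_t g^{ij} = 2R^{ij}$ along the Ricci flow, and the Bochner formula
\begin{align*}
\Delta|\nabla u|^2 = 2|\text{Hess}\,u|^2 + 2\langle \nabla u, \nabla \Delta u\rangle + 2\,\text{Rc}(\nabla u, \nabla u),
\end{align*}
a direct computation gives
\begin{align*}
\square v = \partial_t|\nabla u|^2 - \Delta|\nabla u|^2 = -2|\text{Hess}\,u|^2 \le 0,
\end{align*}
so $v$ is a spacetime subsolution of the heat equation. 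Set $K \coloneqq \sup_M |\nabla u(\cdot, a)|$; if $K = \infty$ there is nothing to prove, so assume $K < \infty$, and consider $w \coloneqq v - K^2$, which satisfies $\square w \le 0$ on $M \times [a,b]$ and $w(\cdot, a) \le 0$.

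The goal is then to invoke Theorem~\ref{T101} and conclude $w \le 0$ on $M \times [a,b]$, which is precisely the claimed inequality at $t = b$. What must be checked is the weighted integrability
\begin{align*}
\int_a^b \int_M w_+^2 \, e^{-2f} \, dV_t \, dt < \infty.
\end{align*}
First, by applying Theorem~\ref{T101} to the bounded subsolutions $\pm(u - \sup_M u_a)$ of the heat equation (their integrability hypotheses are immediate from the boundedness of $u$ together with Lemmas~\ref{L100}--\ref{L101}), $u$ is uniformly bounded on $M \times [a,b]$ by $\sup_M |u_a|$. With $u$ bounded, I would then derive an a priori polynomial-in-distance bound on $|\nabla u|$ by a local Bernstein/Shi-type argument: on the exhaustion $\{\Omega_i\}$ used in Theorem~\ref{thm:PK14_1}, apply a parabolic maximum principle to the subsolution $|\nabla u|^2 + A u^2$, for which $\square(|\nabla u|^2 + Au^2) = -2|\text{Hess}\,u|^2 - 2A|\nabla u|^2 \le 0$ and the $-2A|\nabla u|^2$ term can absorb the bad contributions from the cutoffs when $A$ is large, combined with a short-time persistence of the initial bound $|\nabla u(\cdot, a)| \le K$ near $t=a$.

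Once such a polynomial-in-distance bound on $|\nabla u|$ is in hand, Lemma~\ref{L100} provides $f(x,t) \gtrsim d_t(p,x)^2$ outside a compact set and Lemma~\ref{L101} bounds the volume growth polynomially, so the Gaussian weight $e^{-2f}$ dominates $w_+^2$ and the spacetime integral above is finite. The main obstacle in this plan is exactly the a priori growth bound on $|\nabla u|$ in the noncompact Ricci shrinker setting where Riemannian curvature is not assumed bounded; the Bernstein computation has to be executed with the cutoff functions $\phi^r$ from Lemma~\ref{lem:cutoff}, in the same spirit as the proofs of Theorem~\ref{T101} and Lemma~\ref{L103}. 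Granted the integrability, Theorem~\ref{T101} forces $w \le 0$ on $M \times [a,b]$, i.e.\ $|\nabla u|^2 \le K^2$ everywhere, so in particular $\sup_M |\nabla u(\cdot, b)| \le K = \sup_M |\nabla u(\cdot, a)|$, which is \eqref{eqn:PK15_4}.
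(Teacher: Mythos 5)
Your reduction to Theorem~\ref{T101} applied to the subsolution $|\nabla u|^2$ is exactly the paper's strategy, and the Bochner computation $\square|\nabla u|^2=-2|\text{Hess}\,u|^2\le 0$ is correct. The gap is the step you yourself flag as ``the main obstacle'': verifying the weighted integrability hypothesis of Theorem~\ref{T101}. Your proposed route --- a localized Bernstein estimate for $G\coloneqq|\nabla u|^2+Au^2$ on the exhaustion $\Omega_i$ or with the cutoffs $\phi^r$ --- does not close as described. At an interior spacetime maximum of $\phi^rG$ one has $\nabla(\phi^rG)=0$, so the error terms $G\,\square\phi^r$ and $2G|\nabla\phi^r|^2/\phi^r$ are of size $Cr^{-1}G$ with no factor of $\phi^r$, whereas the good term $-2A\phi^r|\nabla u|^2$ does carry a factor of $\phi^r$; at points where $\phi^r$ is small the absorption fails and you only bound $\phi^rG$ by a multiple of $G$ itself, which is circular. (On the Dirichlet exhaustion $\Omega_i$ you face the analogous problem of uncontrolled boundary values of $|\nabla u|^2$ on $\partial\Omega_i$.) So the Bernstein step is not merely unexecuted; as set up it would not go through without a further idea.

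The missing ingredient is the hypothesis you never use: $u_a$ is \emph{integrable}. By Lemma~\ref{lma:PK15_2} and \eqref{Eqsto}, $u(\cdot,t)=\int H(\cdot,t,y,a)u_a(y)\,dV_a(y)$ is bounded \emph{and} integrable for every $t\in[a,b]$, hence $u^2$ has uniformly bounded spatial integral. Multiplying $\square u=0$ by $u(\phi^r)^2$, integrating over $M\times[a,b]$, and letting $r\to\infty$ using \eqref{E113} and \eqref{E114} gives the Caccioppoli-type bound $\int_a^b\int_M|\nabla u|^2\,dV_t\,dt<\infty$, which is the spacetime integrability the paper uses to run the cutoff argument of Theorem~\ref{T101} for the subsolution $|\nabla u|^2$. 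This two-line energy estimate replaces your entire Bernstein argument and requires no pointwise growth control on $|\nabla u|$; I recommend you restructure the proof around it.
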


\begin{proof}
  Fix $r>>1$ and multiply both sides of $\square u=0$ by $u (\phi^r)^2$ and integrating on $M \times [a,b]$, we obtain  
  \begin{align} 
  \left. \frac{1}{2}\int_{M} u^2(\phi^r)^2 \,dV \right|_a^b-\int_a^b \int_{M} u^2 \phi^r\phi^r_t \,dV\,dt=\int_a^b \int_{M} \left\{-|\na(u \phi^r)|^2+|\na \phi^r|^2u^2 \right\} dV\,dt.
  \label{eqn:PK15_6}  
\end{align}
By Lemma~\ref{lma:PK15_2} we know
\begin{align*}
   u =\int_{M}H(x,t,y,a)u_a(y) dV_{a}.
\end{align*}
Then it follows from (\ref{Eqsto}) that $u$ is bounded and integrable. Consequently, $u^2$ is integrable. It follows from \eqref{E113} and \eqref{E114} that by letting $r \to \infty$, we obtain from \eqref{eqn:PK15_6} 
\begin{align*} 
  \int_a^b \int_{M} |\na u|^2\,dV\,dt  \leq -\left. \frac{1}{2} \int_{M} u^2 dV \right|_a^b+C\bar\tau^{-1}\int_a^b \int_{M} u^2\,dV\,dt <\infty.
\end{align*}
Therefore, the assumption of Theorem~\ref{T101} is satisfied.  Since $\square |\nabla u|^2=-2|\text{Hess}\,u|^2 \leq 0$, 
following the maximum principle, we arrive at (\ref{eqn:PK15_4}).  
\end{proof}

\begin{proposition}
Suppose $u$ is a bounded, integrable heat solution, $w$ is a bounded conjugate heat solution on $M \times [a,b]$ for some compact interval $[a,b] \subset (-\infty, 1)$.
Then we have
\begin{align}
   \left.  \int_{M} uw dV_{t}  \right|_{t=b}=\left. \int_{M} uw dV_{t} \right|_{t=a}.    \label{eqn:PK15_2} 
\end{align}
\label{prn:PK15_1}
\end{proposition}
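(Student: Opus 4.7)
The plan is to invoke heat-kernel representations of both $u$ and $w$ and reduce (\ref{eqn:PK15_2}) to the semigroup property of $H$. Since $u$ is a bounded heat solution on $M\times[a,b]$, Lemma~\ref{lma:PK15_2} gives
\[
  u(x,t) = \int_M H(x,t,y,a)\,u(y,a)\,dV_a(y), \qquad t \in [a,b].
\]
Analogously, by the uniqueness of bounded conjugate heat solutions (which the paper develops in parallel with Lemma~\ref{lma:PK15_2}, via the same cutoff/weighted-$L^2$ maximum principle technique as Theorem~\ref{T101} applied to $\square^*$ with the time direction reversed),
\[
  w(x,t) = \int_M H(z,b,x,t)\,w(z,b)\,dV_b(z), \qquad t \in [a,b].
\]

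Substituting both identities into $\int_M u(x,t)\,w(x,t)\,dV_t(x)$ produces a triple integral. Fubini's theorem applies: by (\ref{Eqsto}) applied twice,
\[
  \int_M \!\int_M H(x,t,y,a)\,H(z,b,x,t)\,dV_b(z)\,dV_t(x) \leq 1,
\]
so the absolute value of the triple integrand is dominated by $\|w\|_\infty\,|u(y,a)|$, which is integrable in $y$. Swapping the order of integration and invoking the semigroup property (\ref{eqn:PK14_5}) with the intermediate time $\rho = t$,
\[
  \int_M H(z,b,x,t)\,H(x,t,y,a)\,dV_t(x) = H(z,b,y,a),
\]
independent of $t$. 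Hence
\[
  \int_M u(x,t)\,w(x,t)\,dV_t(x) = \iint_{M\times M} u(y,a)\,w(z,b)\,H(z,b,y,a)\,dV_a(y)\,dV_b(z),
\]
which does not depend on $t \in [a,b]$. Specializing to $t=a$ and $t=b$ yields (\ref{eqn:PK15_2}).

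The main obstacle is the conjugate-heat representation of $w$, which rests on uniqueness among bounded conjugate heat solutions. This parallels Theorem~\ref{T101} and Lemma~\ref{lma:PK15_2}, but the integration-by-parts calculation mirrors (\ref{E117})--(\ref{E119a}) with $\square$ replaced by $\square^*$; the cutoff error terms must be handled using the estimate (\ref{eqn:PK12_5}) in place of (\ref{E115a}) and the finiteness $\int_M e^{-2f}\,dV_t < \infty$ guaranteed by Lemmas~\ref{L100} and~\ref{L101}. Once this uniqueness is in hand, the identity (\ref{eqn:PK15_2}) reduces to the Fubini-plus-semigroup calculation above.
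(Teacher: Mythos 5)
Your Fubini-plus-semigroup computation is fine as far as it goes, but the argument has a circularity that your sketch does not repair. The representation $w(x,t)=\int_M H(z,b,x,t)\,w(z,b)\,dV_b(z)$ is exactly the content of Lemma~\ref{lma:PK15_3} (uniqueness of bounded conjugate heat solutions), and in the paper that lemma is \emph{deduced from} Proposition~\ref{prn:PK15_1} by pairing $\tilde w-w$ against forward heat solutions; the same is true of the conjugate maximum principle in Lemma~\ref{lma:PK16_1}. So the uniqueness is not "developed in parallel with" Lemma~\ref{lma:PK15_2} — it sits downstream of the very identity you are trying to prove. Your proposed independent proof (mirror Theorem~\ref{T101} with $\square$ replaced by $\square^*$) does not go through. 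The forward argument works because the weight $e^{-2f}$ is adapted to the forward direction: the term $+\int u_+^2(\phi^r)^2 f_t e^{-2f}=+\int u_+^2(\phi^r)^2|\nabla f|^2e^{-2f}$ coming from differentiating the weight combines with the Dirichlet and cross terms into the complete square $I=-\int|\nabla(u_+\phi^r)-u_+\phi^r\nabla f|^2e^{-2f}\le 0$ of \eqref{eqn:PK11_7}. Reversing time flips the sign of the $|\nabla f|^2$ term, the quadratic form becomes $-|X-Y|^2+2|Y|^2$ with $Y=v_+\phi^r\nabla f$, which is indefinite, and the leftover $+2\int v_+^2(\phi^r)^2|\nabla f|^2e^{-2f}$ is supported on all of $M$ and does not vanish as $r\to\infty$. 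No weight $e^{-\lambda f}$ with $\lambda>0$ fixes this (the weight adapted to $\square^*$ would be $e^{+2f}$, which is not integrable). Finally, \eqref{eqn:PK12_5} cannot substitute for \eqref{E115a}: it grows linearly in $r$, so error terms controlled by it do not tend to zero.

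The paper's proof avoids all of this by never representing $w$: it computes $\partial_t\int_M wu\phi^r\,dV=\int_M w\{u\,\square\phi^r-2\langle\nabla u,\nabla\phi^r\rangle\}\,dV$ directly from $\square u=\square^*w=0$, and the right-hand side is $O(r^{-1/2})$ by \eqref{E113}, \eqref{E115a} and the gradient bound of Corollary~\ref{cly:PK15_1}, using only that $w$ is bounded and $u$ is integrable. If you wish to keep your route you must first prove Lemma~\ref{lma:PK15_3} without appealing to Proposition~\ref{prn:PK15_1}, which requires a genuinely different argument from the one you outline.
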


\begin{proof}

Fix $r >>1$. We calculate 
\begin{align} 
  \partial_t\int_{M} wu\phi^r \,dV&=\int_{M} \left\{ w \square (u\phi^{r}) - (u \phi^{r}) \square^{*} w \right\} dV=\int_{M} w \square \left( u \phi^{r} \right)dV \notag\\
  &=\int_{M} w\left\{ u \square \phi^{r} + \phi^{r} \square u - 2\langle \nabla u, \nabla \phi^{r}\rangle \right\} \notag\\
  &=\int_{M} w\left\{ u \square \phi^{r} - 2\langle \nabla u, \nabla \phi^{r}\rangle \right\}.  \label{EL202a}
\end{align}
Note that $|\nabla u|\le C$ by Corollary~\ref{cly:PK15_1}.  Plugging the cutoff function estimates (\ref{E113}) and (\ref{E115a}) into the above inequality, we obtain
\begin{align*}
  \left| \left.   \int_{M} wu \phi^{r} dV \right|_{a}^{b} \right| \leq C(r^{-1}+r^{-\frac{1}{2}}).
\end{align*}
Taking $r \to \infty$, the right hand side of the above inequality tends to zero, the left hand side converges to
\begin{align*}
  \left. \int_{M} wu dV \right|_{t=b} - \left. \int_{M} wu dV \right|_{t=a},
\end{align*}
since $w$ is bounded and $u$ is integrable. Consequently, we arrive at (\ref{eqn:PK15_2}).  
\end{proof}

\begin{lemma}
  Suppose $[a,b] \subset (-\infty, 1)$ and $w_{b}$ is a bounded function on the time slice $(M, g(b))$. Then
  \begin{align}
    w(y,s) \coloneqq \int_{M}H(x,b,y,s)w_b(x) dV_{b}(x)  \label{eqn:PK15_7}
  \end{align}
  is the unique bounded conjugate heat solution with initial value $w_{b}$. 
  \label{lma:PK15_3}
\end{lemma}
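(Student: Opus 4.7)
The plan is to mirror the proof of Lemma~\ref{lma:PK15_2}, swapping the roles of heat and conjugate heat equations, and to handle uniqueness by invoking the duality pairing of Proposition~\ref{prn:PK15_1} rather than a direct maximum principle (since Theorem~\ref{T101} is formulated for forward heat subsolutions).

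\textbf{Existence.} I would first verify that $w$ defined by (\ref{eqn:PK15_7}) is a bounded smooth function on $M \times [a,b)$ that solves $\square^{*} w = 0$ with the correct terminal value. Boundedness is immediate from
\[
|w(y,s)| \leq \sup_{M}|w_{b}| \cdot \int_{M} H(x,b,y,s)\, dV_{b}(x) \leq \sup_{M}|w_{b}|,
\]
using (\ref{Eqsto}). Since $H(x,b,y,s)$ satisfies (\ref{eqn:PK14_2}) in $(y,s)$, differentiating under the integral sign gives $\square^{*} w = 0$; the terminal condition $w(\cdot, s) \to w_{b}$ as $s \to b^{-}$ follows from (\ref{eqn:PK14_4}).

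\textbf{Uniqueness.} Let $\tilde{w}$ be another bounded conjugate heat solution on $[a,b]$ with $\tilde{w}(\cdot, b) = w_{b}$, and set $v \coloneqq w - \tilde{w}$, which is a bounded conjugate heat solution with $v(\cdot, b) \equiv 0$. Fix any $s_{0} \in [a,b)$ and any test function $u_{s_{0}} \in C_{c}^{\infty}(M)$. By Lemma~\ref{lma:PK15_2} applied on $[s_{0}, b]$ there is a unique bounded heat solution $u$ starting from $u_{s_{0}}$, and (\ref{Eqsto}) combined with Fubini shows that $u(\cdot, t)$ is integrable on each time slice between $s_{0}$ and $b$. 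Applying Proposition~\ref{prn:PK15_1} to the pair $(u, v)$ on $[s_{0}, b]$ yields
\[
\int_{M} u_{s_{0}}(y)\, v(y, s_{0})\, dV_{s_{0}}(y) = \int_{M} u(x,b)\, v(x,b)\, dV_{b}(x) = 0.
\]
Since $u_{s_{0}} \in C_{c}^{\infty}(M)$ was arbitrary, this forces $v(\cdot, s_{0}) \equiv 0$, and since $s_{0} \in [a,b)$ was arbitrary, $w \equiv \tilde{w}$.

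\textbf{Main obstacle.} The only genuinely delicate point is justifying the differentiation under the integral sign in the existence step: one must dominate $|\partial_{s} H|$, $|\Delta_{y} H|$, and $R\, H$ locally in $(y,s)$ by an $L^{1}(dV_{b})$ majorant in $x$. This can be arranged by combining the interior parabolic Schauder estimates on $H$ (applied on compact spacetime neighborhoods that stay away from $s=b$) with the global integral bound (\ref{Eqsto}). Once the differentiation is legitimized, everything else reduces to formal applications of results already established earlier in the excerpt; in particular, the duality route for uniqueness avoids having to re-prove a maximum principle adapted to $\square^{*}$.
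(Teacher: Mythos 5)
Your argument is correct and coincides with the paper's own proof: the paper likewise proves uniqueness by pairing the difference $\tilde{w}-w$ against the bounded, integrable heat solution launched from an arbitrary compactly supported test function and invoking the conservation identity of Proposition~\ref{prn:PK15_1} (which the paper cites, with an apparent mislabel, as Lemma~\ref{lma:PK15_2}). If anything your write-up is slightly more careful, since you start the test heat solution at the very time $s_0$ under examination and explicitly record the integrability of $u$ and the verification that $w$ itself solves $\square^{*}w=0$ with the correct terminal value.
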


\begin{proof}
  Fix a time $a_0 \in [a,b]$ and let $h$ be an arbitrary smooth function with compact support. 
  Then we solve the heat equation starting from $h$ to obtain
  a unique bounded integrable function $u$ as
  \begin{align*}
    u(x,t)=\int_{M} H(x,t,y,a)h(y) dV_{a}(y).
  \end{align*}
  Since $w$ is given by (\ref{eqn:PK15_7}), it follows from (\ref{eqn:PK14_6}) that $w$ is a bounded function on $M \times [a,b]$.
  Suppose $\tilde{w}$ is another bounded conjugate heat solution starting from $w_b$, then $\tilde{w}-w$ is a bounded conjugate heat solution
  starting from $0$.  Then we can apply Lemma~\ref{lma:PK15_2} to the couple of $u$ and $\tilde{w}-w$ to obtain that for any $t \in [a_0,b]$,
  \begin{align*}
    \int_{M} \left(\tilde{w}(x,t)-w(x,t) \right) u(x,t) dV_{t}(x)=0.
  \end{align*}
In particular,
  \begin{align*}
    \int_{M} \left(\tilde{w}(x,a_0)-w(x,a_0) \right) h(x) dV_{a_0}(x)=0.
  \end{align*}
  By the arbitrary choice of $h$, we obtain $\tilde{w}(\cdot, a_0)-w(\cdot, a_0) \equiv 0$. Then by the arbitrary choice of $a_0$, we see that 
  \begin{align*}
    \tilde{w}(\cdot, t) \equiv w(\cdot, t), \quad \; \forall \; t \in [a,b].
  \end{align*}
  Therefore, the uniqueness of the bounded conjugate heat solution is proved. 
\end{proof}

\begin{lemma}
 Suppose $w$ is a bounded function on $M \times [a,b]$ satisfying $\square^* w \leq 0$.
 Then we have
 \begin{align}
    \sup_{M} w(\cdot, a) \leq \sup_{M} w(\cdot, b).   \label{eqn:PK16_1}
 \end{align}
 
\label{lma:PK16_1} 
\end{lemma}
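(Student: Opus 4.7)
The plan is to mirror the proof of Theorem~\ref{T101}, adapted to the conjugate heat operator $\square^{*}$. The structural difference is that the ``initial datum'' for a $\square^{*}$-subsolution lives at the terminal time $b$, so integration by parts in time will isolate the boundary contribution at $t = a$ rather than at $t = b$.

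Let $c := \sup_{M} w(\cdot, b)$; the statement is trivial if $c = +\infty$, so assume $c$ is finite. Define $v := (w - c)_{+}$, which is non-negative, bounded, and satisfies $v(\cdot, b) \equiv 0$; it suffices to prove $v(\cdot, a) \equiv 0$. I would multiply $\square^{*} w \leq 0$ by the non-negative test function $v(\phi^{r})^{2}$, with $\phi^{r}$ the cutoff from \eqref{eqn:PK11_8}, and integrate over $M \times [a,b]$. Expanding $\square^{*} w = -\partial_{t} w - \Delta w + R w$ and using (i) $v\,\partial_{t} w = \partial_{t}(v^{2}/2)$ on $\{v > 0\}$ together with $\partial_{t} dV_{t} = -R\, dV_{t}$, (ii) spatial integration by parts with $\nabla w = \nabla v$ on $\{v > 0\}$, and (iii) the decomposition $Rvw = Rv^{2} + cRv$ on $\{v > 0\}$, the three contributions combine to give
\begin{align*}
&\int_{M} \tfrac{v^{2}}{2}(\phi^{r})^{2}\, dV_{a} \;+\; \iint (\phi^{r})^{2}|\nabla v|^{2}\, dV\,dt \;+\; \tfrac{1}{2} \iint (\phi^{r})^{2} R v^{2}\, dV\,dt \;+\; c \iint v(\phi^{r})^{2} R\, dV\,dt \\
&\qquad \leq\; -\iint v^{2} \phi^{r} \phi^{r}_{t}\, dV\,dt \;-\; \iint 2 v \phi^{r} \langle \nabla \phi^{r}, \nabla v \rangle\, dV\,dt,
\end{align*}
where the $+\tfrac{1}{2}(\phi^{r})^{2} R v^{2}$ term results from combining the $-\tfrac{1}{2}(\phi^{r})^{2} R v^{2}$ coming from the time integration (via $\partial_{t} dV = -R\, dV$) with the $+(\phi^{r})^{2} R v^{2}$ piece of the reaction term. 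Both error integrals on the right are supported on $K_{r} = \{r \leq F \leq 2r\}$.

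The gradient cross-term on the right would be absorbed into $\tfrac{1}{2}\iint (\phi^{r})^{2}|\nabla v|^{2}\, dV\,dt$ by Cauchy--Schwarz using $|\nabla \phi^{r}|^{2} \leq C r^{-1}\phi^{r}$, and the time-cutoff error is controlled by $|\phi^{r}_{t}| \leq C \bar{\tau}^{-1}$; both bounds come from Lemma~\ref{lem:cutoff}. One is left with an error of order $C\bigl((1-b)^{-1} + r^{-1}\bigr)\iint_{K_{r}} v^{2}\, dV\,dt$, which vanishes as $r \to \infty$ by the boundedness of $v$ together with the quadratic growth of $F$ (Lemma~\ref{L100}) and the volume bound (Lemma~\ref{L101}), exactly as in the final step of Theorem~\ref{T101}. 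Passing to the limit then gives $\int_{M} (v^{2}/2)\, dV_{a} \leq 0$, whence $v(\cdot, a) \equiv 0$ and \eqref{eqn:PK16_1} follows.

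The hard part will be controlling the term $c \iint v(\phi^{r})^{2} R\, dV\,dt$ when $c < 0$, which arises because, unlike $\square$, the operator $\square^{*}$ does not annihilate constants (one has $\square^{*} c = Rc$). To handle this I would insert a decay weight $e^{-\lambda f}$ into the test function, in direct analogy with the $e^{-2f}$ used in Theorem~\ref{T101}, choosing $\lambda > 0$ so that the $R$ and $|\nabla f|^{2}$ contributions arising from $\partial_{t}(e^{-\lambda f}\, dV)$ and from $\nabla(v e^{-\lambda f}) \cdot \nabla w$ combine favorably via the Ricci-shrinker identity~\eqref{E107}. The weight additionally supplies Gaussian decay in space (since $F$ grows like $d^{2}/4$ by Lemma~\ref{L100}), which renders $\iint R e^{-\lambda f}\, dV\,dt$ finite (a variant of Lemma~\ref{L103}) and suffices to eliminate all boundary contributions as $r \to \infty$ independently of the sign of $c$.
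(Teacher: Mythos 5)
Your route is genuinely different from the paper's, but as written it has a gap that your proposed fixes do not close. In the unweighted version, the error term $C\bigl((1-b)^{-1}+r^{-1}\bigr)\iint_{K_r}v^2\,dV\,dt$ does \emph{not} vanish as $r\to\infty$: boundedness of $v$ together with Lemma~\ref{L100} and Lemma~\ref{L101} only gives $\iint_{K_r}v^2\,dV\,dt\le C\|v\|_{\infty}^2(b-a)\,r^{n/2}$, which diverges. In Theorem~\ref{T101} the analogous term vanishes because of the integrability hypothesis \eqref{E116} (the tail of a convergent integral), not because of boundedness, and here $w$ is only assumed bounded. The weighted version then fails for a structural reason: since $\square^{*}$ runs backward in time, the derivative of the weight enters with the opposite sign from Theorem~\ref{T101}. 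Concretely, integrating $-\partial_t(v^2/2)$ against $(\phi^r)^2e^{-\lambda f}\,dV_t$ from $a$ to $b$ produces $-\tfrac{\lambda}{2}\iint v^2(\phi^r)^2|\nabla f|^2e^{-\lambda f}$ on the left with a \emph{minus} sign (in Theorem~\ref{T101} it appears with a plus sign and exactly absorbs the cross term from the spatial integration by parts). Collecting all contributions via \eqref{E106} and \eqref{E107}, the net coefficient of $v^2(\phi^r)^2e^{-\lambda f}$ in the bulk integral is $\tfrac{(1+\lambda^2)R}{2}+\tfrac{\lambda n}{4(1-t)}-\tfrac{\lambda(1+\lambda)f}{2(1-t)}$; the last term is unboundedly negative and is not dominated by $R$ (on a cylinder, say, $R$ stays bounded while $f\to\infty$). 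The resulting negative quantity is finite but $O(1)$ in $r$, so it survives the limit and destroys the conclusion; the same objection applies to $c\iint Rv(\phi^r)^2e^{-\lambda f}$ when $c<0$ --- finiteness of $\iint Re^{-\lambda f}$ is not enough, you would need this term to vanish or be absorbable, and it is neither. No constant $\lambda>0$ makes these contributions ``combine favorably.''

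The paper's proof sidesteps all of this by duality rather than energy estimates: normalize $\sup_M w(\cdot,b)=0$, pick an arbitrary nonnegative compactly supported $h$ at time $a$, let $u\ge0$ be the bounded heat solution with $u(\cdot,a)=h$, and run the pairing computation of Proposition~\ref{prn:PK15_1}. Since $\square^{*}w\le0$ and $u\phi^r\ge0$, the troublesome term $-u\phi^r\,\square^{*}w$ is manifestly nonnegative, and one obtains $\int w(\cdot,a)h\,dV_a\le\int w(\cdot,b)u(\cdot,b)\,dV_b\le0$; arbitrariness of $h$ finishes the proof. If you want to rescue a direct Caccioppoli argument for $\square^{*}$ you would need a genuinely different (e.g.\ time-dependent, Aronson-type) weight adapted to the backward direction, which is a substantial piece of work not supplied by your sketch.
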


\begin{proof}
  Without loss of generality,  by adding a constant, we may assume that $\displaystyle \sup_{M} w(\cdot, b)=0$.  Then it suffices to show that
  \begin{align}
      \sup_{M} w(\cdot, a) \leq 0.   \label{eqn:PK16_2}
  \end{align}
  At the time slice $t=a$, we choose an arbitrary nonnegative smooth function $h$ with compact support.  Then we solve the forward heat solution
  starting from $h$ and denote the function by $u$. It is clear that $u \geq 0$. 
  Similar to the proof of Proposition \ref{prn:PK15_1}, we obtain that
  \begin{align*}
     \int_{M} w(x,a)h(x) dV_{a}(x) \leq \int_{M} w(x,b) u(x,b) dV_{b}(x) \leq 0,
  \end{align*}
  since at time $t=b$ we have $u \geq 0$ and $w \leq 0$.  
  Therefore, the inequality (\ref{eqn:PK16_2}) follows from the  arbitrary choice of $h$. 
\end{proof}

\begin{theorem}[\textbf{Bounded heat solution}]
Suppose $t_0 \in (-\infty, 1)$ and $h$ is a bounded function on the time-slice $(M, g(t_0))$.
On $M \times (t_0, 1)$,  starting from $h$,  there is a unique heat solution $u$ which  is bounded on each compact time-interval of $[t_0,1)$. The solution is
\begin{align}
    u(x,t) = \int_{M} H(x,t,y,t_0) h(y) dV_{t_0}(y), \quad \forall \; x \in M, \; t \in (t_0, 1).   \label{eqn:PK17_1}
\end{align}
Similarly, starting from $h$, there is a unique conjugate heat solution $w$ which is bounded on each compact time interval of $(-\infty, t_0]$.
The solution is 
\begin{align}
    w(x,t)=\int_{M} H(y,t_0,x,t) h(y) dV_{t_0}(y), \quad \forall \; x \in M, \; t \in (-\infty, t_0).   \label{eqn:PK17_2}
\end{align}

\label{thm:PK17_1}
\end{theorem}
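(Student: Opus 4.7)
The plan is to verify that the integral formulas (\ref{eqn:PK17_1}) and (\ref{eqn:PK17_2}) define bounded solutions with the required properties, and then to obtain uniqueness by reducing to the finite-slab lemmas already proved. First I would check boundedness of the two candidates. For the forward solution, the stochastic completeness identity (\ref{eqn:PK14_6}) gives $\int_{M} H(x,t,y,t_{0})\,dV_{t_{0}}(y)=1$ for every $(x,t)$ with $t>t_{0}$, so $|u(x,t)|\leq \|h\|_{\infty}$. For the conjugate solution, relabeling the variables in (\ref{Eqsto}) yields $\int_{M} H(y,t_{0},x,t)\,dV_{t_{0}}(y)\leq 1$ for every $(x,t)$ with $t<t_{0}$, hence $|w(x,t)|\leq \|h\|_{\infty}$. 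Smoothness of $H$ together with the local estimates established in the proof of Theorem~\ref{thm:PK14_1} justify differentiation under the integral sign, so $\square u=0$ on $M\times(t_{0},1)$ and $\square^{*}w=0$ on $M\times(-\infty,t_{0})$ by (\ref{eqn:PK14_1}) and (\ref{eqn:PK14_2}).

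Next I would handle the initial-value matching and uniqueness simultaneously by reducing to Lemma~\ref{lma:PK15_2} and Lemma~\ref{lma:PK15_3}. For any $b\in(t_{0},1)$, Lemma~\ref{lma:PK15_2} applied with $a=t_{0}$ says that the restriction of (\ref{eqn:PK17_1}) to $M\times[t_{0},b]$ is the unique bounded heat solution starting from $h$. Taking an exhaustion $b_{k}\nearrow 1$ and patching on overlaps (the patching is automatic since the slab-level solutions agree by uniqueness) produces the global heat solution on $M\times[t_{0},1)$. The conjugate case is symmetric: for each $a<t_{0}$, Lemma~\ref{lma:PK15_3} applied with $b=t_{0}$ exhibits (\ref{eqn:PK17_2}) restricted to $M\times[a,t_{0}]$ as the unique bounded conjugate heat solution with value $h$ at $t=t_{0}$, and exhausting as $a\searrow -\infty$ produces the global $w$.

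For the global uniqueness statement, let $\tilde{u}$ be any competitor that is bounded on each compact subinterval of $[t_{0},1)$. For each fixed $b\in(t_{0},1)$, the restriction $\tilde{u}|_{M\times[t_{0},b]}$ is a bounded heat solution on a finite slab starting from $h$, so Lemma~\ref{lma:PK15_2} forces $\tilde{u}\equiv u$ there; letting $b\nearrow 1$ yields $\tilde{u}\equiv u$ throughout $M\times[t_{0},1)$. The conjugate uniqueness follows identically from Lemma~\ref{lma:PK15_3} applied to each slab $[a,t_{0}]$.

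The hard analytic work is already packaged into the preceding machinery: the heat kernel construction of Theorem~\ref{thm:PK14_1}, the stochastic completeness identity (\ref{eqn:PK14_6}), and the Ricci shrinker maximum principle Theorem~\ref{T101} that underlies the finite-slab lemmas. The only subtlety worth flagging is the asymmetry between (\ref{eqn:PK14_6}) and (\ref{Eqsto}): the forward semigroup preserves total mass while the conjugate one may only contract it, so the $L^{\infty}$ bound for $w$ is sharp but one cannot expect $L^{1}$ mass preservation on the backward side. This is the reason the statement only asserts boundedness on compact subintervals rather than any conserved quantity.
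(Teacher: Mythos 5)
Your proposal is correct and follows exactly the route the paper intends: the theorem is stated without proof precisely because it is the patching of Lemma~\ref{lma:PK15_2} and Lemma~\ref{lma:PK15_3} over an exhaustion of $[t_0,1)$ (resp.\ $(-\infty,t_0]$) by compact slabs, with the $L^\infty$ bounds coming from (\ref{eqn:PK14_6}) and (\ref{Eqsto}) as you describe. Your closing remark on the asymmetry between mass conservation (\ref{eqn:PK14_6}) and mass contraction (\ref{Eqsto}) is a correct and worthwhile observation.
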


\begin{theorem}[\textbf{Maximum principle of bounded functions}]
 Suppose $u$ is a bounded super-heat-solution, i.e.,  $\square u \leq 0$  on $M \times [a,b]$. Then 
 \begin{align}
    \sup_{M} u(\cdot, b) \leq \sup_{M} u(\cdot, a).   \label{eqn:PK17_3}
 \end{align}
 Similarly, if $w$ is a bounded super-conjugate-heat-solution, i.e., $\square^* w \leq 0$ on $M \times [a,b]$.   Then
 \begin{align}
     \sup_{M} w(\cdot, b) \geq \sup_{M} w(\cdot, a).     \label{eqn:PK17_4}
 \end{align}
\label{thm:PK17_2} 
\end{theorem}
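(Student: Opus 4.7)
The two statements in Theorem~\ref{thm:PK17_2} are essentially already available from preceding results in the same section, so the plan is mainly to verify the hypotheses and invoke them.

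\textbf{Inequality (\ref{eqn:PK17_4}) (conjugate case).} This is simply a restatement of Lemma~\ref{lma:PK16_1}: the hypothesis there is that $w$ is bounded on $M\times[a,b]$ and satisfies $\square^{*}w\leq 0$, and the conclusion $\sup_M w(\cdot,a)\leq \sup_M w(\cdot,b)$ is exactly (\ref{eqn:PK17_4}). So no additional work is required here; I would just cite Lemma~\ref{lma:PK16_1}.

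\textbf{Inequality (\ref{eqn:PK17_3}) (heat case).} The plan is to reduce to the maximum principle Theorem~\ref{T101}. Let $c\coloneqq \sup_M u(\cdot,a)$, which is finite because $u$ is bounded, and consider the function $\tilde u\coloneqq u-c$ on $M\times[a,b]$. Then $\square\tilde u=\square u\leq 0$, $\tilde u(\cdot,a)\leq 0$, and $\tilde u$ is still bounded. To apply Theorem~\ref{T101} with constant $0$, I must verify the weighted integrability condition
\[
   \int_a^b\int_M \tilde u_+^2(x,t)\,e^{-2f(x,t)}\,dV_t(x)\,dt<\infty.
\]
Since $\tilde u_+\leq \tilde u_+ + \tilde u_- = |\tilde u|\leq \sup_{M\times[a,b]}|u|+|c|$ is uniformly bounded, this reduces to showing $\int_a^b\int_M e^{-2f}\,dV_t\,dt<\infty$. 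This fact is already established inside the proof of Theorem~\ref{T101} using the quadratic growth of $F=\bar\tau f$ (Lemma~\ref{L100}) together with the volume upper bound (Lemma~\ref{L101}); I would just refer back to that observation. Theorem~\ref{T101} then gives $\tilde u(\cdot,b)\leq 0$, i.e., $u(\cdot,b)\leq c=\sup_M u(\cdot,a)$, which is (\ref{eqn:PK17_3}).

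\textbf{Main obstacle.} There is no real obstacle beyond bookkeeping, because all the analytic work, the construction of cutoff functions, the integration-by-parts, and the weighted $L^2$ integrability of $e^{-f}$ on the Ricci-shrinker spacetime, is already done in Theorem~\ref{T101} and Lemma~\ref{lma:PK16_1}. The only point that deserves care is ensuring that the shift $u\mapsto u-c$ preserves the hypothesis: boundedness is clearly preserved, $\square(u-c)=\square u\leq 0$ since $c$ is a constant, and the weighted-$L^2$ hypothesis is preserved because $e^{-2f}$ is itself integrable in spacetime over $[a,b]$. Thus the theorem follows immediately in two short invocations.
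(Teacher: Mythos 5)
Your proposal is correct and matches what the paper intends: the conjugate case is exactly Lemma~\ref{lma:PK16_1}, and the heat case follows from Theorem~\ref{T101} once one notes (as the paper does immediately after that theorem) that the integrability condition \eqref{E116} holds automatically for bounded $u$ because $\int_a^b\int_M e^{-2f}\,dV_t\,dt<\infty$ by Lemmas~\ref{L100} and~\ref{L101}. The shift $u\mapsto u-c$ is harmless (and is in fact the same normalization used inside the proof of Theorem~\ref{T101}), so nothing further is needed.
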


From (\ref{eqn:PK15_1}) and (\ref{E110Y}) from previous section, on the space-time $M \times (-\infty, 1)$, there are standard heat solution and conjugate heat solutions
$F+\frac{n}{2}t$ and $\bar{v}=(4\pi(1-t))^{-\frac{n}{2}} e^{-f}$.  We can apply Theorem~\ref{T101} and Theorem~\ref{thm:PK17_2} to compare other supersolutions or subsolutions
with $F+\frac{n}{2}t$ and $\bar{v}=(4\pi(1-t))^{-\frac{n}{2}} e^{-f}$.   In particular, we have the following Lemma.

\begin{lem}\label{L202}
Given a smooth function $\phi$ with compact support on a Ricci shrinker $(M^n,g,f)$. For any $b<1$, let $w(x,t)=\int H(y,b,x,t)\phi(y) \,dV_b(y)$ be the bounded solution of conjugate heat equation with $w(\cdot,b)=\phi$. Then there exists a constant $C>0$ such that for $t \le b$
\begin{align} 
w(x,t) \le C\bar v(x,t)=C\frac{e^{-f(x,t)}}{(4\pi\bar\tau)^{n/2}}.
\end{align}
\end{lem}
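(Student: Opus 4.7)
The plan is to compare $w$ against a suitable multiple of the standard conjugate heat solution $\bar v$ by means of the backward maximum principle (Theorem~\ref{thm:PK17_2}). Since both $w$ and $\bar v$ satisfy $\square^\ast =0$, any linear combination does as well, so one should choose a constant $C$ large enough that $\phi = w(\cdot,b) \le C\bar v(\cdot,b)$ on $M$ and then propagate this inequality backwards in time.

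Concretely, I would first set
\[
 C \coloneqq \sup_{M}\frac{\phi_{+}(\cdot,b)}{\bar v(\cdot,b)}\, ,
\]
and argue that this supremum is finite: the numerator is supported in a fixed compact set $K=\operatorname{supp}\phi$, and on $K$ the potential $f(\cdot,b)$ is bounded above (by Lemma~\ref{L100}, together with the fact that $K$ has bounded $g(b)$-diameter), so $\bar v(\cdot,b)=(4\pi\bar\tau)^{-n/2}e^{-f(\cdot,b)}$ has a positive lower bound on $K$. Next I would define $\tilde w \coloneqq w - C\bar v$, which satisfies $\square^\ast \tilde w = 0$ on $M\times(-\infty,b]$ and $\tilde w(\cdot,b)\le 0$ everywhere by the choice of $C$. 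To apply Theorem~\ref{thm:PK17_2} on any compact time sub-interval $[a,b]$, I must verify that $\tilde w$ is bounded on $M\times[a,b]$: $w$ is bounded there by hypothesis (it is constructed from a compactly supported $\phi$ via Theorem~\ref{thm:PK17_1}), while $\bar v(\cdot,t)\le (4\pi(1-t))^{-n/2}$ on all of $M$ since $f\ge 0$ (which itself follows from $R\ge 0$ and \eqref{E107}, or directly from Lemma~\ref{L100}). Hence $\tilde w$ is bounded on every compact time interval.

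Applying Theorem~\ref{thm:PK17_2} to the bounded super-conjugate-heat-solution $\tilde w$ on $[a,b]$ then yields
\[
 \sup_{M}\tilde w(\cdot,a) \le \sup_{M}\tilde w(\cdot,b)\le 0,
\]
and letting $a$ range over $(-\infty,b]$ gives $w\le C\bar v$ on $M\times(-\infty,b]$, which is the desired estimate. The only subtle point, and the one I expect to require a brief calculation, is the verification that $C$ is indeed finite; once the non-negativity of $f$ and the $g(b)$-compactness of $\operatorname{supp}\phi$ are in hand, everything else is a direct application of the maximum principle proved in the previous subsection.
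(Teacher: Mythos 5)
Your proposal is correct and is exactly the argument the paper intends: the sentence preceding the lemma states that one compares conjugate heat (super/sub)solutions against $\bar v$ via Theorem~\ref{thm:PK17_2}, which is precisely your comparison of $\tilde w = w - C\bar v$ with $C$ chosen from the initial data. The two points you flag — finiteness of $C$ (from compactness of $\operatorname{supp}\phi$ and the upper bound on $f$ there) and boundedness of $\tilde w$ (from $f\ge 0$ and the boundedness of $w$ on compact time intervals) — are the right ones and are handled correctly.
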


Lemma \ref{L202} tells us that starting from a compact supported function, the solution of the conjugate heat equation is at least exponentially decaying.


\section{Monotonicity of Perelman's entropy}

 Recall that on any compact Riemannian manifold $(M^n,g)$, Perelman's $\mathcal W$ entropy \cite{Pe1} is defined as
\begin{equation}
\mathcal W(g,\phi,\tau)=\int \left ( \tau(|\nabla \phi|^2+R)+\phi-n \right ) \frac{e^{-\phi}}{(4 \pi \tau)^{n/2}}\,dV
\end{equation} 
for $\phi$ a smooth function and $\tau >0$. Let $u^2=\frac{e^{-\phi}}{(4\pi\tau)^{n/2}}$, we can rewrite above functional as
\begin{equation}
\overline{\mathcal W}(g,u,\tau)=\int \tau(4|\nabla u|^2+Ru^2)-u^2\log u^2 \,dV-\lc n+\frac{n}{2}\log(4\pi\tau)\rc\int u^2   \,dV.
\end{equation} 
For a general Ricci shrinker $(M^n,g,f)$, we define the $\boldsymbol{\mu}$-functional as
\begin{align}
\boldsymbol{\mu}(g,\tau)= \inf\left\{\overline{\mathcal W}(g,u,\tau) \left| u \in \mathcal \mathcal W_{*}^{1,2}(M) \right.  \right\},  \label{eqn:PL21_4}
\end{align}
where
\begin{align} 
W_{*}^{1,2}(M)=\left\{u \left| \int_M |\nabla u|^2\,dV <\infty, \right.  \int_M u^2 \,dV=1 \,\text{and}\, \int_M d^2(p,\cdot)u^2\,dV<\infty \right\}. \label{eqn:PL21_5}
\end{align}

The last integral condition $\int d^2(p,\cdot)u^2\,dV<\infty$ is imposed for two reasons. First, it follows from Lemma \ref{L100} and \eqref{E107} that
\begin{align} 
\int_M Ru^2\,dV<\infty.
\end{align}
Second, the term $\int u^2 \log u^2\,dV$ in the definition of $\overline{\mathcal W}(g,u,\tau)$ is well defined. Indeed, if we consider the rescaled measure $d\tilde V \coloneqq e^{-d^2(p,x)}V$, then it follows from the volume estimate Lemma \ref{L101} that $\tilde V(M)$ is finite. Given a $u \in W_{*}^{1,2}$, we set $A \coloneqq \{x\in M\mid u(x)<1\}$ and $\tilde u \coloneqq \chi_Au$, where $\chi_A$ is the characteristic function of the set $A$. Then it is clear that $\int d^2(p,x)\tilde u^2(x)\,dV<\infty$. By a direct calculation,
\begin{align}\label{eq:changebase}
\int \tilde u^2\log \tilde u^2\,dV=\int \hat u^2\log \hat u^2\,d\tilde V-\int d^2(p,\cdot)\tilde u^2\,dV,
\end{align}
where $\hat u^2=\tilde u^2 e^{d^2(p,\cdot)}$. By Jensen's inequality, we obtain
\begin{align*} 
\int \hat u^2\log \hat u^2\,d\tilde V \ge \lc \int \hat u^2\,d\tilde V \rc \log \lc\frac{1}{\tilde V(M)}\int \hat u^2\,d\tilde V \rc >-\infty
\end{align*}
since $\int \hat u^2\,d\tilde V=\int \tilde u^2\,dV \in [0,1]$. Therefore it follows from \eqref{eq:changebase} that 
\begin{align*} 
\int \tilde u^2\log \tilde u^2\,dV>-\infty.
\end{align*}

In other words, it implies that for any $u \in W_{*}^{1,2}(M)$, the negative part of $u^2\log u^2$ is integrable and $\overline{\mathcal W}(g,u,\tau) \in [-\infty,+\infty)$. In fact, it will be proved later, see Proposition \ref{lma:PK10_3} that $\overline{\mathcal W}(g,u,\tau)$ cannot be $-\infty$.

\begin{rem}\label{rem:mudef}
The space $W_{*}^{1,2}(M)$ can be regarded as a collection of probability measure $v$ such that
\begin{enumerate}[label=(\roman*)]
\item $v=\rho V$, that is, $v$ is absolutely continuous with respect to the volume form $V$.
\item $v$ has finite moment of second order ($v \in P_2(M)$), that is, for any point $q \in M$,
\begin{align*} 
\int d^2(q,\cdot)\,dv < \infty.
\end{align*}
\item The Fisher information
\begin{align*} 
F(\rho) \coloneqq 4\int_M |\na \sqrt{\rho}|^2\,dV<+\infty.
\end{align*}
\end{enumerate}
\end{rem}

Now we show that for any Ricci shrinker, we can always restrict the infimum on all smooth functions with compact support.

\begin{prop}
For any Ricci shrinker $(M^n,g,f)$,
\begin{align}
\boldsymbol{\mu}(g,\tau)= \inf\left\{\overline{\mathcal W}(g,u,\tau) \left|\, u \in C_0^{\infty}(M) \, \, \text{and} \, \int u^2 \,dV=1 \right. \right\}.
\label{eqn:PK08_5}
\end{align}
\end{prop}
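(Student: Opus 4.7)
The inequality $\boldsymbol{\mu}(g,\tau) \leq \inf\{\overline{\mathcal W}(g,u,\tau) : u \in C_0^{\infty}(M),\,\int u^2\,dV = 1\}$ is immediate from the set inclusion $\{u \in C_0^{\infty}(M) : \int u^2\,dV = 1\} \subset W_*^{1,2}(M)$. For the reverse inequality, given any $u \in W_*^{1,2}(M)$, I will construct an approximating sequence $u_j \in C_0^{\infty}(M)$ with $\int u_j^2\,dV = 1$ and $\overline{\mathcal W}(g,u_j,\tau) \to \overline{\mathcal W}(g,u,\tau)$. Replacing $u$ by $|u|$ (and then by $\min(u,N)\in W_*^{1,2}\cap L^{\infty}$ with $N \to \infty$ in a final diagonalization, the convergence being checked term by term as below), I may assume $0 \leq u \leq N$ throughout the main construction.

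\textbf{Truncation.} Put $v_r := \phi^r u$, where $\phi^r := \eta(f/r)$ is the $t = 0$ slice of the spacetime cutoff of Lemma \ref{lem:cutoff} (noting $F|_{t=0} = f$). Then $v_r$ is compactly supported in $\{f \leq 2r\}$, $0 \leq v_r \leq u$, and $v_r(x) \to u(x)$ pointwise (indeed, eventually equal at each $x$). Dominated convergence yields $\int v_r^2\,dV \to 1$ and $\int R v_r^2\,dV \to \int R u^2\,dV$; the latter integral is finite because $R \leq f$ by \eqref{E107}, $f$ is comparable with $d^2(p,\cdot)/4$ by Lemma \ref{L100}, together with the moment hypothesis in \eqref{eqn:PL21_5}. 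Expanding $\nabla v_r = \phi^r \nabla u + u\nabla \phi^r$ and invoking $|\nabla \phi^r|^2 \leq Cr^{-1}\phi^r$ from \eqref{E113} with Cauchy--Schwarz, both $\int u^2|\nabla \phi^r|^2\,dV$ and $|\int 2\phi^r u\langle \nabla u,\nabla\phi^r\rangle\,dV|$ are $O(r^{-1/2})$, so $\int|\nabla v_r|^2\,dV \to \int|\nabla u|^2\,dV$.

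\textbf{Log term (main obstacle).} Since $t \mapsto t^2 \log t^2$ is not monotone, more care is needed. Pointwise, $v_r^2\log v_r^2 \to u^2\log u^2$. The negative part obeys $(v_r^2\log v_r^2)_- \leq 2/e$ on all of $M$ (the minimum of $t^2\log t^2$ over $t \geq 0$), and is supported in the compact set $\{f \leq 2r\}$, which has finite volume by Lemma \ref{L101}. Since $0 \leq v_r \leq u$ and $t^2\log t^2$ is nondecreasing on $[1,\infty)$, one has $(v_r^2\log v_r^2)_+ \leq (u^2\log u^2)_+$. Two cases: if $\int u^2\log u^2\,dV < \infty$, dominated convergence closes the argument; if instead $\int u^2\log u^2\,dV = +\infty$, then $\overline{\mathcal W}(g,u,\tau) = -\infty$, and Fatou applied to the positive parts forces $\int v_r^2\log v_r^2\,dV \to +\infty$, hence $\overline{\mathcal W}(g,v_r,\tau) \to -\infty = \overline{\mathcal W}(g,u,\tau)$.

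\textbf{Mollification and normalization.} Since $v_r$ lies in $L^{\infty}$, is compactly supported, and has $|\nabla v_r| \in L^2$, a finite coordinate cover of $\mathrm{supp}\,v_r$ combined with standard Euclidean mollifiers produces $v_{r,k} \in C_0^{\infty}(M)$ with $v_{r,k} \to v_r$ in $H^1(M)$ and a.e., and $\|v_{r,k}\|_{\infty} \leq \|v_r\|_{\infty}$; the log term then passes to the limit by dominated convergence on the common compact support. A diagonal extraction over $N, r, k$ gives $\tilde u_j \in C_0^{\infty}(M)$ with $\|\tilde u_j\|_{L^2} \to 1$ and $\overline{\mathcal W}(g,\tilde u_j,\tau) \to \overline{\mathcal W}(g,u,\tau)$. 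Setting $u_j := \tilde u_j/\|\tilde u_j\|_{L^2}$ gives the required normalized sequence; continuity under rescaling follows from the identity $\overline{\mathcal W}(g,\alpha w,\tau) = \alpha^2\,\overline{\mathcal W}(g,w,\tau) - \alpha^2\log(\alpha^2)\int w^2\,dV$, which converges to $\overline{\mathcal W}(g,u,\tau)$ as $\alpha \to 1$. The log term is the sole nontrivial point, handled by the case split above.
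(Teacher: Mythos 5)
Your argument follows essentially the same route as the paper's proof: truncate by the cutoff $\phi^r=\eta(f/r)$, check convergence of each term of $\overline{\mathcal W}$ separately (with the logarithmic term treated by a positive/negative-part split, including the case $\overline{\mathcal W}(g,u,\tau)=-\infty$), then mollify on the compact support and renormalize. The one imprecision is the dominated-convergence step for the negative part of $v_r^2\log v_r^2$: a constant bound on the $r$-dependent set $\{f\le 2r\}$ is not a single integrable dominator, but one is furnished by $(u^2\log u^2)_-+u^2$ (where $u^2\le e^{-1}$ one has $v_r^2\log v_r^2\ge u^2\log u^2$ since $s\mapsto s\log s$ is decreasing on $[0,e^{-1}]$, so $(v_r^2\log v_r^2)_-\le (u^2\log u^2)_-$, while where $u^2>e^{-1}$ one has $(v_r^2\log v_r^2)_-\le e^{-1}\le u^2$), so the conclusion stands.
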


\begin{proof}
For any function $u \in W_{*}^{1,2}(M)$ such that $\overline{\mathcal W}(g,u,\tau)$ is finite, we define a positive constant
\begin{align*} 
c_r^2=\int_M u^2(\phi^r)^2 \,dV.
\end{align*}

It is clear from the definition that $c_r \le 1$ and $\lim_{r \to \infty}c_r=1$. From direct computations,
\begin{align*} 
& \overline{\mathcal W}(g,c_r^{-1}u\phi^r,\tau) \notag\\
=&\int c_r^{-2}\tau(4|\nabla (u\phi^r)|^2+R(u\phi^r)^2)-(c_r^{-1}u\phi^r)^2\log (c_r^{-1}u\phi^r)^2  \,dV-n-\frac{n}{2}\log(4\pi\tau) \notag \\
=&\int 4\tau c_r^{-2}\left((\phi^r)^2|\nabla u|^2+|\nabla \phi^r|^2u^2+2u\phi^r\langle \nabla u, \nabla\phi^r \rangle\right)+c_r^{-2}\tau R(u\phi^r)^2 \, dV \notag \\
&-\int (c_r^{-1}\phi^r)^2u^2\log u^2+(c_r^{-1}\phi^r)^2 \log{(\phi^r)^2}u^2\, dV+\log c_r^2-n-\frac{n}{2}\log(4\pi\tau).
\end{align*}

Now by the definition of $W_{*}^{1,2}$ and the dominated convergence theorem,
\begin{align*} 
&\lim_{r \to \infty}\overline{\mathcal W}(g,c_r^{-1}u\phi^r,\tau)-\overline{\mathcal W}(g,u,\tau) \notag \\
=&-\lim_{r \to \infty}\int \left(1-(c_r^{-1}\phi^r)^2\right) u^2\log u^2 \,dV
\end{align*}

Since $u^2\log u^2$ is absolutely integrable, by the dominated convergence theorem,
\begin{align*} 
\lim_{r \to \infty}\int \left(1-(c_r^{-1}\phi^r)^2\right) u^2\log u^2 \,dV=0
\end{align*}
and hence
\begin{align*} 
&\lim_{r \to \infty}\overline{\mathcal W}(g,c_r^{-1}u\phi^r,\tau)=\overline{\mathcal W}(g,u,\tau).
\end{align*}

Similarly, if $\overline{\mathcal W}(g,u,\tau)=-\infty$, then
\begin{align*} 
\lim_{r \to \infty}\overline{\mathcal W}(g,c_r^{-1}u\phi^r,\tau)=-\infty.
\end{align*}

For a fixed $r$, it is not hard to choose a sequence of smooth functions $u_s$ with compact support by the usual smoothing process such that
\begin{align*} 
&\lim_{s \to \infty}\overline{\mathcal W}(g,u_s,\tau)=\overline{\mathcal W}(g,c_r^{-1}u\phi^r,\tau).
\end{align*}
\end{proof}

Now we prove the celebrated monotonicity theorem of Perelman on Ricci shrinkers.

\begin{thm} \label{T201}
For any Ricci shrinker $(M^n,g,f)$ and $\tau>0$,
\begin{align} 
\boldsymbol{\mu}(g(t),\tau-t)
\label{E212}
\end{align}
is increasing for $t < \min\{1,\tau\}$.
\end{thm}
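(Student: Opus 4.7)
The plan is to adapt Perelman's coupled Ricci-flow/conjugate-heat monotonicity to the Ricci shrinker spacetime. Let me fix $t_1<t_2<\min\{1,\tau\}$ and set $\tau_i\coloneqq\tau-t_i$. By the variational characterization (\ref{eqn:PK08_5}), it suffices to show that for every smooth compactly supported $u$ on $(M,g(t_2))$ with $\int u^2\,dV_{t_2}=1$, one has $\boldsymbol{\mu}(g(t_1),\tau_1)\le\overline{\mathcal{W}}(g(t_2),u,\tau_2)$; taking the infimum over $u$ at time $t_2$ will then yield the inequality $\boldsymbol{\mu}(g(t_1),\tau_1)\le\boldsymbol{\mu}(g(t_2),\tau_2)$.

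For such $u$, I would use Theorem \ref{thm:PK17_1} to produce the unique bounded backward conjugate-heat solution $w$ on $M\times[t_1,t_2]$ with $w(\cdot,t_2)=u^2$. Since $u^2$ has compact support, Lemma \ref{L202} gives $0<w(x,t)\le C\bar v(x,t)$ for $t\in[t_1,t_2]$, and the semigroup identity combined with (\ref{eqn:PK14_6}) yields mass conservation $\int w(\cdot,t)\,dV_t=1$. Writing $w=(4\pi(\tau-t))^{-n/2}e^{-\phi}$, this defines a smooth $\phi$ on the whole strip.

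Next I would establish the Perelman identity
\begin{align*}
\frac{d}{dt}\mathcal{W}(g(t),\phi(t),\tau-t)=\int 2(\tau-t)\left|Rc+\mathrm{Hess}\,\phi-\frac{g}{2(\tau-t)}\right|^2 w\,dV_t\ge 0.
\end{align*}
On a closed manifold this is standard; here it must be regularized. My approach is to insert $(\phi^r)^2$ into the $\mathcal{W}$-integrand, differentiate in $t$, integrate by parts, and track the error terms, which by Lemma \ref{lem:cutoff} live on the annular region $\{r\le F\le 2r\}$. The exponential decay $w\le C\bar v$ furnished by Lemma \ref{L202} combined with the quadratic growth (\ref{eqn:PL20_1}) and the spacetime Hessian estimate of Lemma \ref{L103} beats all polynomial cutoff losses, so the error integrals vanish as $r\to\infty$. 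This is the most delicate step, because the integrand of $\mathcal{W}$ involves $|\nabla\phi|^2$ and its time derivative brings in $|\mathrm{Hess}\,\phi|^2$ and $|\nabla\phi|^4$, so the cutoff scheme has to control two derivatives of $\phi$ simultaneously against the weight $w$.

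Finally, to read off the monotonicity I need $\sqrt{w(\cdot,t_1)}\in W_{*}^{1,2}(M)$ so that $\mathcal{W}(g(t_1),\phi(t_1),\tau_1)\ge\boldsymbol{\mu}(g(t_1),\tau_1)$. The normalization and second-moment condition are immediate from $\int w\,dV_{t_1}=1$ and $w\le C\bar v$ together with Lemma \ref{L100}; the Fisher information $\int|\nabla\sqrt{w}|^2\,dV_{t_1}<\infty$ follows from a cutoff energy estimate obtained by multiplying the conjugate heat equation by $\log w$ (or a truncation thereof) and using the decay of $w$ once more. Integrating the monotonicity identity from $t_1$ to $t_2$ then gives
\begin{align*}
\boldsymbol{\mu}(g(t_1),\tau_1)\le \mathcal{W}(g(t_1),\phi(t_1),\tau_1)\le \mathcal{W}(g(t_2),\phi(t_2),\tau_2)=\overline{\mathcal{W}}(g(t_2),u,\tau_2),
\end{align*}
and the infimum over $u$ completes the argument. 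The principal obstacle is rigorously justifying the integration-by-parts in the derivation of the Perelman identity on a non-compact manifold without any curvature hypothesis, which is where the full strength of the cutoff $\phi^r$ together with the weight bound $w\le C\bar v$ is used.
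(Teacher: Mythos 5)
Your proposal is correct and follows essentially the same route as the paper: solve the conjugate heat equation backward from a compactly supported test function, establish Perelman's monotonicity via the cutoff $\phi^r$ with error terms controlled by Lemma \ref{lem:cutoff}, the decay $w\le C\bar v$ from Lemma \ref{L202} and the Hessian bound of Lemma \ref{L103}, verify $\sqrt{w(\cdot,t_1)}\in W_{*}^{1,2}$ (the paper's Lemmas \ref{L201a} and \ref{L201c} do exactly the Fisher-information and admissibility steps you describe), and take the infimum over test functions at the later time. The paper phrases the monotonicity through the subsolution inequality $\square^* v\le 0$ integrated against $\phi^r$ rather than a differentiated identity for $\mathcal{W}$, but this is only a cosmetic difference.
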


\begin{proof}
We fix a time $t_1 < \min\{1,\tau\}$ and an nonnegative smooth function $\sqrt w$ with compact support such that $\int w=1$. By defining
\begin{align} 
w(x,t)=\int H(y,t_1,x,t)w(y,t_1)\,dV_{t_1}(y),
\label{E213}
\end{align}
it is straightforward to check that 
\begin{align*} 
\int w(x,t) \, dV_t(x)=\iint H(y,t_1,x,t)w(y,t_1)\,dV_t(x)\,dV_{t_1}(y)=\int w(y,t_1)dV_{t_1}(y)=1,
\end{align*}
where we have used stochastic completeness (\ref{eqn:PK14_6}) for the last equality. 

\begin{lem}\label{L201a}
For any time $t_0 < t_1$, 
\begin{align} 
4\int_{t_0}^{t_1}\int |\na \sqrt w|^2\,dV_t\,dt =\int_{t_0}^{t_1}\int\frac{|\nabla w|^2}{w}\,dV_t\,dt <\infty.
\label{E213a}
\end{align}
\end{lem}

\noindent\emph{Proof of Lemma \ref{L201a}}:
By direct computations,
\begin{align} 
&\int_{t_0}^{t_1}\int\frac{|\nabla w|^2}{w}\phi^r\,dV_t\,dt \notag \\
=&\int_{t_0}^{t_1}\int\langle \nabla (\log w), \nabla w \rangle\phi^r\,dV_t\,dt \notag\\
=&-\int_{t_0}^{t_1}\int (\log w) \Delta w \phi^r\,dV_t\,dt-\int_{t_0}^{t_1}\int\log w \langle\nabla  w, \nabla \phi^r \rangle\,dV_t\,dt \notag \\
=&I+II.
\label{E214}
\end{align}

We estimate $I$ first.
\begin{align} 
I \coloneqq &-\int_{t_0}^{t_1}\int (\log w) \Delta w \phi^r\,dV_t\,dt \notag \\
=&\int_{t_0}^{t_1}\int (\log w) w_t \phi^r\,dV_t\,dt-\int_{t_0}^{t_1}\int (\log w) R \phi^r\,dV_t\,dt \notag \\
=&\left.\left(\int (\log w) w \phi^r\,dV\right)\right |_{t_0}^{t_1}-\int_{t_0}^{t_1}\int (\log w)_t w \phi^r\,dV_t\,dt-\int_{t_0}^{t_1}\int (\log w) w \phi^r_t\,dV_t\,dt \notag\\
&+\int_{t_0}^{t_1}\int (\log w) R \phi^r\,dV_t\,dt-\int_{t_0}^{t_1}\int (\log w) R \phi^r\,dV_t\,dt \notag\\
=&\left.\left(\int (\log w) w \phi^r\,dV\right)\right |_{t_0}^{t_1}-\int_{t_0}^{t_1}\int w_t \phi^r\,dV_t\,dt-\int_{t_0}^{t_1}\int (\log w) w \phi^r_t\,dV_t\,dt \notag\\
=&\left.\left(\int (\log w) w \phi^r\,dV\right)\right |_{t_0}^{t_1}-\left.\left(\int w \phi^r\,dV\right)\right |_{t_0}^{t_1}+\int_{t_0}^{t_1}\int w \phi^r_t\,dV_t\,dt \notag \\
&-\int_{t_0}^{t_1}\int w R\phi^r\,dV_t\,dt-\int_{t_0}^{t_1}\int (\log w) w \phi^r_t\,dV_t\,dt.
\label{E215}
\end{align}

Now it is easy to show that all integrals in \eqref{E215} are bounded. Indeed, from Lemma \ref{L202}, there exists a constant $C$ such that 
\begin{align*} 
w(x,t) \le Ce^{-f(x,t)}
\end{align*}
on $M \times [t_0,t_1]$, where $C$ depends only on $t_1$, $t_2$ and the upper bound of $w(\cdot, t_1)$.

Therefore for $t \in [t_0,t_1]$ 
\begin{align*} 
\int |w(\log w)| \, dV_t \le C\int w^{1/2}+w^2\, dV_t \le C\int e^{-f/2}+e^{-f}\, dV_t \le C.
\end{align*}

Moreover, by using \eqref{E107},
\begin{align*} 
\int_{t_0}^{t_1}\int w R\,dV_t\,dt \le C\int_{t_0}^{t_1}\int fe^{-f}\,dV_t\,dt \le C.
\end{align*}

Now we estimate $II$ in \eqref{E214}.
\begin{align} 
|II|\le& \left|\int_{t_0}^{t_1}\int\log w \langle\nabla  w, \nabla \phi^r \rangle\,dV_t\,dt \right| \notag \\
\le &\int_{t_0}^{t_1}\int |\log w||\nabla  w||\nabla \phi^r|\,dV_t\,dt \notag\\
=&\int_{t_0}^{t_1}\int |\log w|\frac{|\nabla  w|}{\sqrt{w}}\frac{|\nabla \phi^r|}{\sqrt{\phi^r}}\sqrt{w}\sqrt{\phi^r}\,dV_t\,dt \notag\\
\le & \frac{1}{2}\int_{t_0}^{t_1}\int\frac{|\nabla w|^2}{w}\phi^r\,dV_t\,dt+\frac{1}{2}\int_{t_0}^{t_1}\int w|\log w|^2\frac{|\nabla \phi^r|^2}{\phi^r}\,dV_t\,dt. 
\label{E216}
\end{align}

By our construction of $\phi^r$, $\frac{|\nabla \phi^r|^2}{\phi^r}$ is uniformly bounded. Reasoning as before,
\begin{align*} 
\int_{t_0}^{t_1}\int w|\log w|^2\frac{|\nabla \phi^r|^2}{\phi^r}\,dV_t\,dt \le C\iint w^{1/2}+w^2\, dV_t\,dt \le C\iint e^{-f/2}+e^{-f}\, dV_t\,dt \le C.
\end{align*}

Now it is easy to see from \eqref{E215} and \eqref{E216}, that 
\begin{align*} 
\int_{t_0}^{t_1}\int\frac{|\nabla w|^2}{w}\phi^r\,dV_t\,dt \le C,
\end{align*}
where $C$ depends only on $t_0$, $t_1$ and the upper bound of $w(\cdot,t_1)$. By taking $r \to \infty$, we have proved Lemma \ref{L201a}.

Now we define the function $\phi$ as
\begin{align*} 
w(x,t)=\frac{e^{-\phi}}{(4\pi(\tau-t))^{n/2}}.
\end{align*}

By direct computations, see Theorem $9.1$ of \cite{Pe1}, that if we set 
\begin{align*} 
v=\left((\tau-t)(2\Delta \phi-|\nabla \phi|^2+R)+\phi-n\right)w,
\end{align*}
then for $t<\tau$,
\begin{align} 
\square^* v=-2(\tau-t)\left|Rc+\text{Hess}\,\phi-\frac{g}{2(\tau-t)}\right|^2w \le 0,
\label{E217}
\end{align}
that is, $v$ is a subsolution of the conjugate heat equation.

We set $\tau_1=\tau_1(t)=\tau-t$ for simplicity. By the definition,
\begin{align} 
v=\tau_1\left(-2\Delta w+\frac{|\nabla w|^2}{w}+Rw\right)-w\log w-(n+\frac{n}{2}\log(4\pi \tau_1))w.
\label{E217a}
\end{align}

Now we multiply both sides of \eqref{E217} by $\phi^r$ so that
\begin{align} 
\int_{t_0}^{t_1}\int v_t \phi^r\,dV_t\,dt \ge& -\int_{t_0}^{t_1}\int \Delta v \phi^r\,dV_t\,dt+\int_{t_0}^{t_1}\int Rv\phi^r\,dV_t\,dt.
\label{E218}
\end{align}

The left side of \eqref{E218} is 
\begin{align} 
\int_{t_0}^{t_1}\int v_t \phi^r\,dV_t\,dt =& -\int_{t_0}^{t_1}\int v \phi_t^r\,dV_t\,dt +\int_{t_0}^{t_1}\int Rv\phi^r\,dV_t\,dt+\left.\left( \int v\phi^r\,dV\right)\right |_{t_0}^{t_1}.
\label{E219}
\end{align}

The right side of \eqref{E218} is
\begin{align} 
-\int_{t_0}^{t_1}\int \Delta v \phi^r\,dV_t\,dt+\int_{t_0}^{t_1}\int Rv\phi^r\,dV_t\,dt=&-\int_{t_0}^{t_1}\int  v\Delta \phi^r\,dV_t\,dt+\int_{t_0}^{t_1}\int R\phi^r\,dV_t\,dt
\label{E220}
\end{align}

Therefore, we have
\begin{align} 
\left.\left( \int v\phi^r\,dV\right)\right |_{t_0}^{t_1} \ge \int_{t_0}^{t_1}\int v \square\phi^r\,dV_t\,dt.
 \label{E221}
\end{align}

Now it is important to use the exact expression of $\square \phi^r$, that is,
\begin{align} 
\square \phi^r=-nr^{-1}{\eta}'/2-r^{-2}{\eta}''|\nabla F|^2.
 \label{E222}
\end{align}

We consider the first term of $v$ and prove the following lemma.

\begin{lem}\label{L201b}
\begin{align} 
\lim_{r \to \infty}\int_{t_0}^{t_1}\int  \Delta w \square \phi^r\,dV_t\,dt=0.
\end{align}
\end{lem}

\noindent\emph{Proof of Lemma \ref{L201b}}:
From \eqref{E222}, we have
\begin{align} 
\int_{t_0}^{t_1}\int  \Delta w \square \phi^r\,dV_t\,dt=&-\frac{n}{2r}\int_{t_0}^{t_1}\int   \Delta w\eta'\,dV_t\,dt-r^{-2}\int_{t_0}^{t_1}\int  \Delta w {\eta}'' |\nabla F|^2 \,dV_t\,dt  \notag \\
=& I+II.
\label{E223}
\end{align}

Now 
\begin{align} 
|I|=& \left|-\frac{n}{2r}\int_{t_0}^{t_1}\int   \Delta w\eta'\,dV_t\,dt \right|=\left |\frac{n}{2r}\int_{t_0}^{t_1}\int   \langle\nabla w,\nabla {\eta}' \rangle\,dV_t\,dt \right| \notag\\
=& \left|\frac{n}{2r^2}\int_{t_0}^{t_1}\int   \langle\nabla w,\nabla F\rangle {\eta}''\,dV_t\,dt \right| \le \frac{n}{2r^2}\int_{t_0}^{t_1}\int   |\nabla w||\nabla F||{\eta}''|\,dV_t\,dt  \notag \\
=&\frac{n}{2r^2}\int_{t_0}^{t_1}\int   \frac{|\nabla w|}{\sqrt{w}}|\nabla F||{\eta}''|\sqrt{w}\,dV_t\,dt  \notag \\
\le &\frac{n}{2r^2}\left(\int_{t_0}^{t_1}\int  \frac{|\nabla w|^2}{w}\,dV_t\,dt \right)^{1/2}\left(\int_{t_0}^{t_1}\int  |\nabla F|^2|{\eta}''|^2w\,dV_t\,dt \right)^{1/2}.
\label{E224}
\end{align}
Now the first integral of \eqref{E223} is bounded by \eqref{E213a} while the second
\begin{align} 
\int_{t_0}^{t_1}\int  |\nabla F|^2|{\eta}''|^2w\,dV_t\,dt \le C\int_{t_0}^{t_1}\int Fw\,dV_t\,dt \le C\int_{t_0}^{t_1}\int Fe^{-f}\,dV_t\,dt v \le C
\end{align}
where the last constant $C$ depends only on $t_0$, $t_1$ and the upper bound of $w(\cdot,t_1)$.

It is immediate that from \eqref{E224} by taking $r \to \infty$ that $\lim_{r \to \infty} I=0$.

We continue to estimate $II$ in \eqref{E223}.
\begin{align} 
|II|=& \left|-r^{-2}\int_{t_0}^{t_1}\int  \Delta w {\eta}'' |\nabla F|^2 \,dV_t\,dt  \right| \notag\\
\le&\left |r^{-2}\int_{t_0}^{t_1}\int   \langle\nabla w,\nabla {\eta}'' \rangle|\nabla F|^2\,dV_t\,dt \right|
+\left|r^{-2}\int_{t_0}^{t_1}\int   \langle\nabla w,\nabla |\nabla F|^2\rangle {\eta}''\,dV_t\,dt \right| \notag \\
=&III+IV.
\label{E225}
\end{align}

Now we have
\begin{align} 
III=&\left |r^{-2}\int_{t_0}^{t_1}\int   \langle\nabla w,\nabla {\eta}'' \rangle|\nabla F|^2\,dV_t\,dt \right|\le r^{-3}\int_{t_0}^{t_1}\int   |\nabla w||\nabla F|^3 |{\eta}'''|\,dV_t\,dt \notag \\
\le & Cr^{-3}\int_{t_0}^{t_1}\int  \frac{|\nabla w|}{\sqrt w}|\nabla F|^3\sqrt{w}\,dV_t\,dt  \notag \\
\le &Cr^{-3}\left(\int_{t_0}^{t_1}\int\frac{|\nabla w|^2}{w}\,dV_t\,dt \right)^{1/2}\left(\int_{t_0}^{t_1}\int |\nabla F|^6w\,dV_t\,dt \right)^{1/2} \le C \notag
\end{align}
since 
\begin{align*} 
\int_{t_0}^{t_1}\int |\nabla F|^6w\,dV_t\,dt \le\int_{t_0}^{t_1}\int F^3e^{-f}\,dV_t\,dt \le C.
\end{align*}

Therefore $\lim_{r \to \infty} III=0$.

Similarly,
\begin{align*} 
IV=&\left|r^{-2}\int_{t_0}^{t_1}\int   \langle\nabla w,\nabla |\nabla F|^2\rangle {\eta}''\,dV_t\,dt \right| \le Cr^{-2}\int_{t_0}^{t_1}\int |\nabla w||\nabla F||\text{Hess} F||{\eta}''|\,dV_t\,dt \notag \\
\le& Cr^{-3/2}\int_{t_0}^{t_1}\int |\nabla w||\text{Hess} F|\,dV_t\,dt=Cr^{-3/2}\int_{t_0}^{t_1}\int \frac{|\nabla w|}{\sqrt w}|\text{Hess} F|\sqrt{w}\,dV_t\,dt \notag \\
\le& Cr^{-3/2}\left(\int_{t_0}^{t_1}\int\frac{|\nabla w|^2}{w}\,dV_t\,dt \right)^{1/2}\left(\int_{t_0}^{t_1}\int |\text{Hess}F|^2w\,dV_t\,dt \right)^{1/2}.
\end{align*}

Now from Lemma \ref{L103} the last integral is bounded since $w \le Ce^{-f}$, so $\lim_{r \to \infty} IV=0$. Therefore, Lemma \ref{L201b} is proved.

We can estimate the integral of $v \square\phi^r$.

From the expression of $v$ in \eqref{E217a}, we have
\begin{align} 
&\int_{t_0}^{t_1}\int v \square\phi^r\,dV_t\,dt \notag \\
=&\int_{t_0}^{t_1}\int \left(\tau_1 (-2\Delta w+\frac{|\nabla w|^2}{w}+Rw)-w\log w-(n+\frac{n}{2}\log(4\pi \tau_1))w\right) \square\phi^r\,dV_t\,dt.
\label{E226}
\end{align}

Since we have $|\square \phi^r|\le Cr^{-1}$ from \eqref{E115a} and all terms except the first in \eqref{E226} have bounded integral on spacetime, it is easy to show, by taking into account of the claim, that 
\begin{align} 
\lim_{r \to \infty} \int_{t_0}^{t_1}\int v \square\phi^r\,dV_t\,dt=0.
\label{E227}
\end{align}

Now from \eqref{E221},
\begin{align} 
\lim_{r \to \infty}\left( \int v\phi^r\,dV\right)(t_1) \ge \lim_{r \to \infty}\left( \int v\phi^r\,dV\right)(t_0).
\label{E228}
\end{align}

Since we choose $\sqrt{w}(\cdot,t_1)$ to be a smooth function with compact support, it is immediate that 
\begin{align} 
\lim_{r \to \infty}\left( \int v\phi^r\,dV\right)(t_1)=\overline{\mathcal W}(g(t_1),\sqrt{w}(\cdot,t_1),\tau-t_1).
\label{E229}
\end{align}

\begin{lem}\label{L201c}
\begin{align*} 
\sqrt{w}(\cdot,t_0)\in W_{*}^{1,2}
\end{align*}
and
\begin{align} \label{eq:lap1} 
\lim_{r \to \infty}\int \Delta w\phi^r \,dV_{t_0} =0.
\end{align}
\end{lem}

\noindent\emph{Proof of Lemma \ref{L201c}}:
From the definition of $v$,
\begin{align*} 
&\lim_{r \to \infty}\left( \int v\phi^r\,dV\right)(t_0)\\
=&\lim_{r \to \infty}\int \left((\tau_1 (-2\Delta w+\frac{|\nabla w|^2}{w}+Rw)-w\log w-(n+\frac{n}{2}\log(4\pi \tau_1))w\right)\phi^r \,dV_{t_0}.
\end{align*}

All terms except for the first two in the above integral are absolutely integrable, due to $w \le Ce^{-f}$ and $R \le \tau^{-2}F$.

Combining with \eqref{E228}, we conclude that
\begin{align*} 
\lim_{r \to \infty}\int( -2\Delta w+\frac{|\nabla w|^2}{w})\phi^r \,dV_{t_0}
\end{align*}
is bounded above.

Then we have
\begin{align} 
&\lim_{r \to \infty}\int( -2\Delta w+\frac{|\nabla w|^2}{w})\phi^r \,dV_{t_0} \notag \\
=&\lim_{r \to \infty}\int2\langle \nabla w,\nabla \phi^r \rangle+\frac{|\nabla w|^2}{w}\phi^r \,dV_{t_0} \notag \\
\ge &\lim_{r \to \infty}\int - \frac{|\nabla w|^2}{2w}\phi^r-2\frac{|\nabla \phi^r|^2}{\phi^r}w+\frac{|\nabla w|^2}{w}\phi^r \,dV_{t_0} \notag\\
=& \frac{1}{2}\lim_{r \to \infty}\int\frac{|\nabla w|^2}{w}\phi^r \,dV_{t_0},
\end{align}
where we have used
\begin{align*}
\lim_{r \to \infty}\int \frac{|\nabla \phi^r|^2}{\phi^r}w \,dV_{t_0}=0.
\end{align*}

To prove \eqref{eq:lap1}, for any $\ep>0$,
\begin{align*} 
\lim_{r \to \infty}\left|\int \Delta w\phi^r \,dV_{t_0}\right|=&\lim_{r \to \infty}\left|\int \langle \nabla w,\nabla \phi^r \rangle \,dV_{t_0}\right| \\
\le& \lim_{r \to \infty} \int \ep \frac{|\nabla w|^2}{w}\phi^r+\ep^{-1}\frac{|\nabla \phi^r|^2}{4\phi^r}w \,dV_{t_0} \\
=&\ep\int \frac{|\nabla w|^2}{w}\,dV_{t_0} 
\end{align*}

By taking $\ep \to 0$, we conclude that \eqref{eq:lap1} holds. Therefore, the proof of Lemma \ref{L201c} is complete.

Therefore, 
\begin{align*} 
\lim_{r \to \infty}\left( \int v\phi^r\,dV\right)(t_0)=\overline{\mathcal W}(g(t_0),\sqrt{w}(\cdot,t_0),\tau-t_0).
\end{align*}

In summary, we have shown from \eqref{E228} that
\begin{align*} 
\overline{\mathcal W}(g(t_1),\sqrt{w}(\cdot,t_1),\tau-t_1) \ge \overline{\mathcal W}(g(t_0),\sqrt{w}(\cdot,t_0),\tau-t_0) \ge \boldsymbol{\mu}(g(t_0),\tau-t_0).
\end{align*}

Since $\tau$, $t_0$, $t_1$ and $\sqrt{w}(\cdot,t_1)$ are arbitrary, the proof of Theorem \ref{T201} is complete.
\end{proof}

\begin{cor}
\label{c:mono}
On a Ricci shrinker $(M^n,g,f)$, the functional $\boldsymbol{\mu}(g,\tau)$ is decreasing for $ 0<\tau <1$ and increasing for $\tau >1$.
\end{cor}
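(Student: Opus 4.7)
The plan is to deduce Corollary \ref{c:mono} directly from Theorem \ref{T201} by exploiting the self-similarity $g(t) = (1-t)(\psi^t)^{*} g$ of the Ricci flow induced by the shrinker (cf.\ Section~2). The key additional ingredients are two elementary invariance properties of $\boldsymbol\mu$: diffeomorphism invariance, which is manifest from the definition since pullback preserves every integrand and the admissible class $W_{*}^{1,2}$; and the joint scaling invariance $\boldsymbol\mu(cg,\, c\tau) = \boldsymbol\mu(g,\tau)$ for any $c>0$, which follows from the substitution $\tilde u = c^{-n/4} u$ in $\overline{\mathcal{W}}$. Before applying these I would spend a moment checking that $W_{*}^{1,2}$ is preserved under scaling, using that $d_{cg} = \sqrt{c}\,d_{g}$ so that the second-moment condition transforms consistently.

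Combining the two invariances, I would rewrite
\[
\boldsymbol{\mu}\bigl(g(t),\, \tau - t\bigr) \;=\; \boldsymbol{\mu}\bigl((1-t)\, g,\, \tau - t\bigr) \;=\; \boldsymbol{\mu}\!\left(g,\; \sigma(t)\right), \qquad \sigma(t) \coloneqq \frac{\tau - t}{1-t}.
\]
Theorem \ref{T201} then asserts that $t \mapsto \boldsymbol{\mu}(g, \sigma(t))$ is nondecreasing on $t < \min\{1, \tau\}$, while a direct computation gives $\sigma'(t) = (\tau - 1)/(1-t)^{2}$, whose sign matches that of $\tau - 1$.

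For $\tau > 1$, $\sigma$ is strictly increasing on $t \in (-\infty, 1)$ and sweeps out $(1, \infty)$ bijectively (the endpoints being the limits $\sigma(-\infty) = 1$ and $\sigma(1^{-}) = +\infty$), so $\boldsymbol\mu(g, \cdot)$ inherits the increase on $(1, \infty)$. For $\tau < 1$, $\sigma$ is strictly decreasing on $t \in (-\infty, \tau)$ and sweeps $(0, 1)$, so the $t$-increase from Theorem \ref{T201} becomes a $\sigma$-decrease on $(0, 1)$. To make the comparison at arbitrary $s_{1} < s_{2}$ in a single interval, I would fix $\tau = s_{2}$ and solve $\sigma(t_{i}) = s_{i}$; the formula $t_{2} - t_{1} = (s_{2} - s_{1})(\tau - 1)/[(1 - s_{1})(1 - s_{2})]$ gives the desired ordering of $t_{1}, t_{2}$. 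There is no substantive obstacle here since the monotonicity content is entirely in Theorem \ref{T201}; the only place requiring care is the bookkeeping for scaling invariance and the preservation of $W_{*}^{1,2}$ noted above.
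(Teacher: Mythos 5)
Your proposal is correct and follows essentially the same route as the paper: the paper's proof also rewrites $\boldsymbol{\mu}(g(t),\tau_0-t)=\boldsymbol{\mu}\bigl((1-t)(\psi^t)^*g,\tau_0-t\bigr)=\boldsymbol{\mu}\bigl(g,\tfrac{\tau_0-t}{1-t}\bigr)$ via diffeomorphism and scaling invariance and then reads off the monotonicity from the sign of $\tau_0-1$ in the reparametrization. Your extra remarks on the preservation of $W_{*}^{1,2}$ under scaling and the explicit solution of $\sigma(t_i)=s_i$ are just a more careful write-up of the same argument.
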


\begin{proof}
The same argument appeared in Step 1, Proposition 9.5 of~\cite{LLW18}.  We repeat the argument here for the convenience of the readers. 

For a fixed constant $\tau_0>1$, from Theorem \ref{T201},
\begin{align*} 
\boldsymbol{\mu}(g(t),\tau_0-t)=\boldsymbol{\mu}((1-t)(\psi^t)^*g,\tau_0-t)=\boldsymbol{\mu}\left(g,\frac{\tau_0-t}{1-t} \right)
\end{align*}
is increasing for $t <1$. Now as $t$ goes from $0$ to $1$, $\frac{\tau_0-t}{1-t}$ goes from $\tau_0$ to $\infty$. As $\tau_0>1$ is arbitrary, we have proved that
$\boldsymbol{\mu}(g,\tau)$ is increasing for all $\tau >1$. Similarly, for any $\tau_0<1$, as $t$ goes from $0$ to $\tau_0$, $\frac{\tau_0-t}{1-t}$ goes from $\tau_0$ to $0$. Therefore, $\boldsymbol{\mu}(g,\tau)$ is decreasing for all $\tau <1$. 
\end{proof}

\section{Optimal logarithmic Sobolev constant---Part I}

For any Ricci shrinker $(M^n,g,f)$ with the normalization \eqref{E101}, we define
\begin{align} \label{eq:mu}
\boldsymbol{\mu}=\boldsymbol{\mu}(g)\coloneqq \log \int\frac{e^{-f}}{(4\pi)^{n/2}}\, dV.
\end{align}

It follows from a direct calculation that $e^{\boldsymbol{\mu}}$ is comparable to the volume of the unit ball $B(p,1)$. 

\begin{lem}[cf. Lemma 2.5 of ~\cite{LLW18}]\label{lem:unitvol}
For any Ricci shrinker $(M^n,g,f)$, there exists a constant $C=C(n)>1$ such that
\begin{align*} 
C^{-1}e^{\boldsymbol{\mu}} \le |B(p,1)| \le Ce^{\boldsymbol{\mu}}.
\end{align*}
\end{lem}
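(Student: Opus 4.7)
The plan is to prove the two inequalities separately, with the upper bound on $|B(p,1)|$ being essentially immediate from Lemma~\ref{L100} and the lower bound requiring a polynomial volume comparison followed by a telescoping sum over $f$-level sets.

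\textbf{Upper bound} $|B(p,1)| \le Ce^{\boldsymbol{\mu}}$. Apply Lemma~\ref{L100} at $t=0$ (so $\bar\tau=1$ and $F=f$): on $B(p,1)$ we have $f(x) \le \tfrac{1}{4}(1+\sqrt{2n})^{2} \le C(n)$, hence $e^{-f} \ge e^{-C(n)}$ on $B(p,1)$. Using the definition of $\boldsymbol{\mu}$ in~\eqref{eq:mu},
\begin{align*}
(4\pi)^{n/2} e^{\boldsymbol{\mu}} = \int_{M} e^{-f}\, dV \ge \int_{B(p,1)} e^{-f}\, dV \ge e^{-C(n)} |B(p,1)|,
\end{align*}
which gives the desired inequality.

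\textbf{Lower bound} $|B(p,1)| \ge C^{-1} e^{\boldsymbol{\mu}}$. The goal is to show $\int_{M} e^{-f}\, dV \le C(n)\, |B(p,1)|$. First I would establish a dimensional polynomial volume comparison of the form
\begin{align}\label{eq:polyvol}
|B(p,r)| \le C(n)\, r^{n}\, |B(p,1)|, \qquad r \ge 1.
\end{align}
The argument of Cao--Zhou (reproduced at $t=0$ in the proof of Lemma~\ref{L101}) yields $V(r) \le 2 r^{n} r_{0}^{-n} V(r_{0})$ for $r \ge r_{0} := \sqrt{2(n+2)}$, where $V(r) = |\{2\sqrt{f}\le r\}|$. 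By Lemma~\ref{L100}, $B(p,r) \subset \{2\sqrt{f}\le r+\sqrt{2n}\}$ and $\{f \le (n+2)/2\} \subset B(p,c_{0}(n))$ for some dimensional $c_{0}$, so
\begin{align*}
|B(p,r)| \le C(n) r^{n} |B(p,c_{0})|, \qquad r \ge 1.
\end{align*}
To pass from $|B(p,c_{0})|$ to $|B(p,1)|$ I will use the Wei--Wylie weighted comparison cited at the end of the proof of Lemma~\ref{L101}: on $B(p,c_{0})$ we have $|\nabla f|^{2} = f - R \le f \le C(n)$, hence
\begin{align*}
|B(p,c_{0})| \le e^{C(n)} \int_{B(p,c_{0})} e^{-f}\, dV \le C(n) \int_{B(p,1)} e^{-f}\, dV \le C(n)\, |B(p,1)|,
\end{align*}
which establishes~\eqref{eq:polyvol}.

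With~\eqref{eq:polyvol} in hand, decompose $M$ by integer level sets of $f$: let $D_{k} = \{k \le f < k+1\}$ for $k \ge 0$. On $D_{k}$, $e^{-f} \le e^{-k}$, and by the quadratic lower bound in Lemma~\ref{L100}, $D_{k} \subset B(p, c_{1}\sqrt{k+1} + c_{2})$ for dimensional constants $c_{1},c_{2}$. Therefore
\begin{align*}
(4\pi)^{n/2} e^{\boldsymbol{\mu}} = \int_{M} e^{-f}\, dV \le \sum_{k=0}^{\infty} e^{-k} |B(p, c_{1}\sqrt{k+1}+c_{2})| \le C(n) |B(p,1)| \sum_{k=0}^{\infty} e^{-k}(c_{1}\sqrt{k+1}+c_{2})^{n},
\end{align*}
and the last series converges to a dimensional constant, completing the proof.

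\textbf{Main obstacle.} The only nontrivial step is the polynomial volume comparison~\eqref{eq:polyvol}, whose constant must depend only on $n$ and involve $|B(p,1)|$ rather than $e^{\boldsymbol{\mu}}$ (otherwise the lower bound would be circular). The Cao--Zhou monotonicity handles the large-scale part cleanly, and Wei--Wylie closes the gap between $|B(p,c_{0})|$ and $|B(p,1)|$ on the bounded region where $|\nabla f|$ is controlled dimensionally; threading these two ingredients together is the key technical point.
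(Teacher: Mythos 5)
Your proof is correct; note that the paper itself gives no argument for this lemma, deferring entirely to Lemma 2.5 of \cite{LLW18}, and your write-up is a self-contained version built from exactly the ingredients the paper deploys in Lemma~\ref{L101} (the quadratic growth of $f$ from Lemma~\ref{L100}, the Cao--Zhou volume monotonicity, and the Wei--Wylie weighted comparison), with the level-set decomposition $D_k=\{k\le f<k+1\}$ supplying the expected exponential-beats-polynomial summation for the lower bound. The only detail worth tightening is the range of validity of the Cao--Zhou inequality (the paper derives it for $r\ge 2\sqrt{n}$ at $t=0$, not $r\ge\sqrt{2(n+2)}$), but radii in $[1,2\sqrt{n}]$ are already handled by the monotone bound $|B(p,r)|\le|B(p,c_0)|\le C(n)|B(p,1)|$ that you establish anyway, so nothing is lost.
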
 

Next we recall from \cite{Bak94} some standard definitions and properties of the space which satisfies the curvature-dimension estimate.

\begin{defn}
A Riemannian manifold $(M,g,v)$, equipped with a reference measure $v=e^{-W}V$ where $W \in C^2$ and $V$ is the standard volume form, satisfies the $CD(K,\infty)$ condition if the generalized Ricci tensor 
\begin{align*} 
Ric_W \coloneqq Ric+\text{Hess}\,W \ge Kg.
\end{align*}
\end{defn}
In particular, on a Ricci shrinker $(M^n,g,f)$, if we define 
\begin{align}
  f_0=&f+\boldsymbol{\mu}+\frac{n}{2}\log (4\pi), 
  \label{eqn:PK11_1}\\
v_0=&e^{-f_0}V \label{eq:standardmeasure}
\end{align}
then $v_0$ is a probability measure and $(M,g,v_0) \in CD(\frac{1}{2},\infty)$.
Then the following celebrated theorem of Bakry-\'Emery can be applied on Ricci shrinkers. 

\begin{thm}[Bakry-\'Emery theorem \cite{BE85}]
For any Riemannian manifold $(M,g,v)$ satisfying the $CD(K,\infty)$ condition for some $K>0$, the following logarithmic Sobolev inequality holds
\begin{align} 
  \int \rho \log \rho \,dv \le \frac{1}{2K}\int \frac{|\nabla \rho|^2}{\rho} \,dv,   \label{eqn:PK11_2} 
\end{align}
where $v$ and $\rho\, v$  are probability measures which have finite moments of second order and $\rho$ is locally Lipschitz.
\end{thm}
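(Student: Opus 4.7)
The plan is to use the semigroup approach that is classical for Bakry--\'Emery theory. Let $L=\Delta-\langle \nabla W,\nabla\cdot\rangle$ be the weighted Laplacian associated with the measure $v$, and let $(P_t)_{t\geq 0}$ be the induced Markov semigroup, which is symmetric on $L^2(v)$. The key analytic input is the $\Gamma_2$-calculus: setting $\Gamma(u,u)=|\nabla u|^2$ and
\begin{align*}
\Gamma_2(u,u)=\tfrac{1}{2}L|\nabla u|^2-\langle \nabla u,\nabla Lu\rangle,
\end{align*}
a direct Bochner computation shows that the $CD(K,\infty)$ assumption $Ric_W\geq Kg$ is equivalent to the pointwise inequality $\Gamma_2(u,u)\geq K\,\Gamma(u,u)$. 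Differentiating $s\mapsto P_s\bigl(\Gamma(P_{t-s}u,P_{t-s}u)\bigr)$ and invoking this inequality then produces the Bakry--\'Emery gradient estimate
\begin{align*}
\Gamma(P_tu,P_tu)\leq e^{-2Kt}\,P_t\bigl(\Gamma(u,u)\bigr).
\end{align*}

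With this in hand, for a probability density $\rho$ satisfying the hypotheses, set $\rho_t:=P_t\rho$, so that $\rho_t v$ remains a probability measure. Define the entropy $H(t)=\int \rho_t\log\rho_t\,dv$ and the Fisher information $I(t)=\int|\nabla\rho_t|^2/\rho_t\,dv$. Integration by parts against $v$ yields the de Bruijn identity
\begin{align*}
H'(t)=\int (L\rho_t)(\log\rho_t+1)\,dv=-\int \frac{|\nabla\rho_t|^2}{\rho_t}\,dv=-I(t),
\end{align*}
while combining the gradient estimate with a second integration by parts gives $I'(t)\leq -2K\,I(t)$, so that $I(t)\leq e^{-2Kt}I(0)$. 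Integrating $H'(t)=-I(t)$ from $0$ to $\infty$, and using that $H(t)\to 0$ as $t\to\infty$ by the ergodicity that $CD(K,\infty)$ with $K>0$ forces (namely $\rho_t\to 1$ in $L^1(v)$), we obtain
\begin{align*}
H(0)=\int_0^\infty I(t)\,dt\leq I(0)\int_0^\infty e^{-2Kt}\,dt=\frac{I(0)}{2K},
\end{align*}
which is precisely \eqref{eqn:PK11_2}.

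The main obstacle is not the algebraic manipulation but the analytic justification: the integration by parts, the validity of the gradient estimate for $\rho_t$ at all positive times, the ergodic limit $H(t)\to 0$, and the initial regularization step that reduces a locally Lipschitz $\rho$ (with $\rho v$ of finite second moment) to smooth compactly supported data. On a noncompact Ricci shrinker this forces a cutoff argument in the same spirit as the one used in the proof of Theorem~\ref{T101}; the cutoffs $\phi^r$ of \eqref{eqn:PK11_8} together with the estimates of Lemma~\ref{lem:cutoff} are well-suited to this, since by Lemma~\ref{L100} the weight $v_0=e^{-f_0}V$ from \eqref{eq:standardmeasure} has Gaussian-type tails, and the finite second moment assumption coupled with this decay kills every boundary term in the limit $r\to\infty$. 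Once these approximations are in place, the Bakry--\'Emery argument above applies verbatim with $K=\tfrac{1}{2}$, delivering the logarithmic Sobolev inequality.
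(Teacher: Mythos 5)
Your proposal is correct and follows essentially the same route as the paper's self-contained proof (Theorem \ref{T:Bakeme}): heat flow interpolation via the de Bruijn identity $E'(t)=-I(t)$, the Bochner/$\Gamma_2$ computation giving $I(t)\le e^{-2Kt}I(0)$ (with $K=\tfrac12$ from the shrinker equation), the limit $E(t)\to 0$, and cutoff functions $\overline\phi^r=\eta(f/r)$ to justify the integrations by parts. The only cosmetic difference is that the paper derives the Fisher information decay directly from $\square_f\bigl(|\nabla\rho|^2/\rho\bigr)=-\tfrac{2}{\rho}\bigl|\mathrm{Hess}\,\rho-\tfrac{d\rho\otimes d\rho}{\rho}\bigr|^2-|\nabla\rho|^2/\rho$ rather than routing through the pointwise gradient estimate $\Gamma(P_tu)\le e^{-2Kt}P_t\Gamma(u)$.
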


The original proof by Bakry and \'Emery is complete for compact manifolds. A proof using the optimal transport by Lott and Villani for
the general case can be found in \cite[Corollary $6.12$]{LV09}, see also \cite[Theorem $21.2$]{Vil08}. For the self-containedness, we give a proof of the Bakry-\'Emery theorem for Ricci shrinkers.

\begin{thm}\label{T:Bakeme}
For any Ricci shrinker $(M^n,g,f)$ and any nonnegative function $\rho$ such that $\sqrt \rho \in W^{1,2}(M,v_0)$ and $\int d^2(p,\cdot)\rho \,dv_0 <\infty$,
\begin{align*} 
\int \rho\log \rho \,dv_0-\left(\int \rho \,dv_0\right)\log{\left(\int \rho\, dv_0\right)} \le  \int \frac{|\nabla \rho|^2}{\rho}\,dv_0.
\end{align*}
If the equality holds, then either $\rho$ is a constant or $(M^n,g)$ splits off a $\R$ factor.
\end{thm}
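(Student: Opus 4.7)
The plan is to execute the Bakry-\'Emery $\Gamma_2$-calculus on the weighted manifold $(M,g,v_0)$, which the shrinker equation $Rc+\text{Hess}\,f=\tfrac{1}{2}g$ places in the curvature-dimension class $CD(\tfrac{1}{2},\infty)$. Let $L=\Delta-\langle\nabla f,\nabla\cdot\rangle$ denote the drift Laplacian: it is symmetric on $L^{2}(v_{0})$ since $\int (Lu)\,v\,dv_{0}=-\int\langle\nabla u,\nabla v\rangle\,dv_{0}$, and it preserves integration against $v_{0}$. Using the cutoff machinery of Section~3 (with $L$ in place of $\Delta$), I would construct the $L$-heat kernel and the associated semigroup $P_{t}$ on a large enough class of functions. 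By the scaling $\rho\mapsto\rho/\int\rho\,dv_{0}$, the inequality reduces to the case $\int\rho\,dv_{0}=1$.

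Set $u(t)=P_{t}\rho$, $\Phi(t)=\int u\log u\,dv_{0}$, and $I(t)=\int|\nabla u|^{2}/u\,dv_{0}$. A one-line weighted integration by parts gives $\Phi'(t)=-I(t)$. Writing $v=\log u$, the weighted Bochner identity together with the shrinker equation reads
\[
\tfrac{1}{2}L|\nabla v|^{2}-\langle\nabla v,\nabla Lv\rangle=|\text{Hess}\,v|^{2}+(Rc+\text{Hess}\,f)(\nabla v,\nabla v)=|\text{Hess}\,v|^{2}+\tfrac{1}{2}|\nabla v|^{2},
\]
so the standard Bakry-\'Emery differential computation yields $I'(t)\le -I(t)$, hence $I(t)\le e^{-t}I(0)$. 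Integrating $\Phi'=-I$ on $[0,\infty)$ produces
\[
\Phi(0)-\Phi(\infty)=\int_{0}^{\infty}I(t)\,dt\le I(0)\int_{0}^{\infty}e^{-t}\,dt=I(0).
\]
The positive curvature-dimension constant forces ergodicity $P_{t}\rho\to 1$ in $L^{1}(v_{0})$ as $t\to\infty$, so $\Phi(\infty)=0$, and the bound $\Phi(0)\le I(0)$ is the desired logarithmic Sobolev inequality.

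For the rigidity, equality $\Phi(0)=I(0)$ forces $I(t)=e^{-t}I(0)$ for every $t>0$, which saturates the pointwise $\Gamma_{2}$-inequality at each time. Because the shrinker equation already makes $Rc+\text{Hess}\,f=\tfrac{1}{2}g$ an exact equality, saturation requires $|\text{Hess}\,\log u(t)|\equiv 0$ on $M$ for every $t>0$. By the strong maximum principle $u(t)$ is smooth and strictly positive, so $\nabla\log u(t)$ is a parallel vector field. Passing $t\to 0^{+}$ transfers this to $\nabla\log\rho$: if it vanishes, then $\rho$ is constant, and otherwise it is a nonzero parallel vector field, whence $(M,g)$ splits off an $\R$ factor by the de Rham decomposition.

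The main obstacle throughout is the non-compactness of $M$: every integration by parts, the differentiation under the integral defining $I'(t)$, and the passage $t\to\infty$ must be justified. The strategy is exactly the cutoff-and-limit technique developed in Theorem~\ref{T101} and Lemma~\ref{L103}, namely insert $\phi^{r}$ from Lemma~\ref{lem:cutoff}, control the resulting boundary terms using the bounds $|\nabla\phi^{r}|^{2}/\phi^{r}\le Cr^{-1}$ and $|L\phi^{r}|\le C(r^{-1}+1)$, and send $r\to\infty$. The hypothesis $\int d^{2}(p,\cdot)\rho\,dv_{0}<\infty$ enters precisely to produce a uniform-in-$t$ second-moment bound on $P_{t}\rho$, analogous to the finite-integral condition \eqref{E116}, which makes every cutoff error term vanish in the limit. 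The most delicate step is verifying the $t$-continuity of $\Phi$ and $I$ down to $t=0$ and upgrading the integrated saturation to the pointwise identity $\text{Hess}\,\log\rho=0$ used in the rigidity.
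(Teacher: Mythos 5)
Your proposal follows essentially the same route as the paper: reduce to compactly supported $\rho$ with unit mass, run the drift heat flow $\partial_t\rho=\Delta_f\rho$, show $E'(t)=-I(t)$ and use the weighted Bochner formula with the shrinker equation to get $I(t)\le e^{-t}I(0)$, justify all integrations by parts with the cutoff functions $\overline{\phi}^r$, show $E(t)\to 0$, and integrate; the rigidity via $\text{Hess}\log\rho=0$ is also the paper's argument. The proposal is correct and matches the paper's proof in both structure and the technical devices used to handle non-compactness.
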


Before we prove Theorem \ref{T:Bakeme}, we prove the following two lemmas.

\begin{lem}\label{L:Bakeme}
For any smooth function $u(t,x)$ on $M \times [0,T]$ such that
\begin{align*} 
\square_f u\coloneqq (\partial_t-\Delta_f)u \le 0,
\end{align*}
and for some constant $a>0$,
\begin{align*} 
\int_0^T\int u^2(t,x)e^{-ad^2(p,x)}\,dv_0dt<\infty,
\end{align*}
if $u(\cdot,0) \le c$, then $u \le c$ on $M \times [0,T]$.
\end{lem}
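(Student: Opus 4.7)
I plan to adapt the strategy of Theorem~\ref{T101} to the weighted Laplacian $\Delta_f = \Delta - \nabla f \cdot \nabla$ on the static Ricci shrinker, whose natural self-adjoint measure is $dv_0 = (4\pi)^{-n/2} e^{-f_0} dV$. After reducing to $c = 0$ by subtracting a constant, the goal is to show $u_+ \equiv 0$ on $M \times [0,T]$. The test function I would use is $u_+ (\phi^r)^2 e^{-\beta f}$, where $\phi^r = \eta(f/r)$ is the compactly supported cutoff from \eqref{eqn:PK11_8} (compact because $f$ is proper by Lemma~\ref{L100}) and $\beta > 1$ is a fixed constant chosen large enough relative to $a$ so that $e^{-\beta f}$ decays faster than the threshold $e^{-a d^2(p,\cdot)}$ at infinity; by Lemma~\ref{L100}, $\beta \ge 4a + 1 + \epsilon$ suffices for small $\epsilon > 0$.

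Multiplying $\square_f u \le 0$ by this test function and integrating against $dv_0\,dt$ over $M \times [0,T']$ for an arbitrary $T' \in [0,T]$, the self-adjointness relation $\int \psi\,\Delta_f u\,dv_0 = -\int \langle\nabla\psi,\nabla u\rangle dv_0$ together with the identity $2u_+\langle\nabla f,\nabla u_+\rangle = \langle\nabla f,\nabla u_+^2\rangle$ followed by a second integration by parts, and the shrinker identities $|\nabla f|^2 = f - R$ and $\Delta f = n/2 - R$ from \eqref{E106}--\eqref{E107}, reorganize the RHS into the differential inequality
\begin{align*}
\tfrac{d}{dt} E_r(t) + \int (\phi^r)^2 e^{-\beta f}|\nabla u_+|^2 \,dv_0 \le \mathrm{Err}_r(t) + K\,\tilde E_r(t),
\end{align*}
where $E_r(t) \coloneqq \int u_+^2 (\phi^r)^2 e^{-\beta f}\,dv_0$, $\tilde E_r(t) \coloneqq \int u_+^2 (\phi^r)^2 (1+f) e^{-\beta f}\,dv_0$, $K = K(n,\beta)$, and $\mathrm{Err}_r(t)$ collects all cutoff errors (supported on the annulus $K_r = \{r \le f \le 2r\}$ and involving only $|\nabla\phi^r|^2 \le C/r$ and $|\langle\nabla\phi^r,\nabla f\rangle| \le Cr^{-1/2}$).

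The key structural point is that, for $\beta$ chosen as above, the uniform bound $f e^{-\beta f} \le C(\beta)$ together with $e^{-(\beta-1) f} \le C e^{-a d^2(p,\cdot)}$ (valid once $\beta - 1 \ge 4a$) yields $\tilde E_r(t) \le C\int u^2 e^{-a d^2}\,dv_0 < \infty$ uniformly in $r,t$, and dominated convergence on $K_r$ gives $\mathrm{Err}_r(t) \to 0$ as $r \to \infty$. Since $u_+(\cdot, 0) = 0$ forces $E_r(0) = 0$, a Gronwall-type iteration (together with the gradient good-term absorbing the $K \tilde E_r$ contribution via Cauchy--Schwarz when possible, and otherwise being controlled by the fixed integrable quantity) yields $E_r(T') \to 0$ as $r \to \infty$; monotone convergence then gives $\int u_+^2(\cdot,T')\,e^{-\beta f}\,dv_0 = 0$, whence $u \le 0$ on $M \times [0,T]$.

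The main obstacle is handling the cross term produced by the second integration by parts. Unlike in \eqref{eqn:PK11_7} of Theorem~\ref{T101}, where the contribution $u_+^2 |\nabla f|^2 e^{-2f}$ produced by the Ricci flow identity $f_t = |\nabla f|^2$ combined with the gradient term to form a perfect square $-|\nabla(u_+\phi^r) - u_+\phi^r \nabla f|^2 e^{-2f} \le 0$, the static setting here offers no such cancellation. One is left with a positive bulk contribution $\sim \beta(\beta+1)\int u_+^2(\phi^r)^2 |\nabla f|^2 e^{-\beta f}\,dv_0$ whose presence forces the choice of $\beta$ sufficiently large that $f e^{-\beta f}$ is uniformly bounded while $e^{-\beta f}$ simultaneously dominates $e^{-a d^2}$ at infinity; balancing these two requirements, and verifying that the cutoff errors genuinely vanish on the shrinking annulus $K_r$, is the delicate part of the argument.
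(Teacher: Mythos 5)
Your overall strategy---testing $\square_f u\le 0$ against $u_+(\phi^r)^2$ times a decaying weight and integrating against $dv_0$---is in the right family, but the specific choice of a \emph{static} weight $e^{-\beta f}$ with a fixed constant $\beta$ leaves a gap that the proposal does not close. After the second integration by parts (or, equivalently, after applying Cauchy--Schwarz to the cross term $\beta\int u_+(\phi^r)^2 e^{-\beta f}\langle\nabla f,\nabla u_+\rangle\,dv_0$ at the cost of the entire gradient good term), you are left with a positive bulk contribution of the form $\tfrac{\beta^2}{4}\int u_+^2(\phi^r)^2|\nabla f|^2 e^{-\beta f}\,dv_0$. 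Since $|\nabla f|^2=f-R$ grows like $d^2(p,\cdot)/4$, this is a zeroth-order term with an \emph{unbounded} coefficient: it is bounded neither by $K\,E_r(t)$ with $K$ independent of $r$ and $t$ (so Gronwall does not apply) nor by the gradient term. Both of your proposed remedies fail quantitatively. Bounding it by the fixed finite quantity $Q=\sup_t\int u_+^2(1+f)e^{-\beta f}\,dv_0$ only yields $E_r(T')\le E_r(0)+\int_0^{T'}\mathrm{Err}_r+C\beta^2 Q\,T'$, whose right-hand side tends to $C\beta^2 Q\,T'>0$ as $r\to\infty$, not to $0$; and absorbing it into the gradient term via Cauchy--Schwarz is exactly the step that produced it. This is the classical Tychonov obstruction: uniqueness for the heat equation under a Gaussian growth hypothesis genuinely cannot be obtained with a time-independent Gaussian weight, which is why the perfect-square cancellation of \eqref{eqn:PK11_7} in Theorem~\ref{T101} (available there only because $f_t=|\nabla f|^2$ along the flow) has no static analogue.

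The paper's own proof is a one-line reduction: it invokes \cite[Theorem $15.2$]{Peter12} verbatim, with $\Delta$ and $dV$ replaced by $\Delta_f$ and $dv_0$ (integration by parts goes through unchanged because $\Delta_f$ is self-adjoint with respect to $dv_0$, and the volume growth needed there is supplied by Lemma~\ref{L100} and Lemma~\ref{L101}). The essential device in that classical argument is a \emph{time-dependent} weight $e^{\bar h}$ with $\bar h(x,t)=-\rho^2(x)/\bigl(4(2\epsilon-t)\bigr)$, chosen so that $\partial_t\bar h+|\nabla\bar h|^2\le 0$: the time derivative of the weight produces exactly the negative of the troublesome $u_+^2|\nabla\bar h|^2$ term, permitting a completion of the square, at the price of working on a short interval $[0,\epsilon]$ with $\epsilon$ determined by $a$ and then iterating in time. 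Your argument can be repaired in the same spirit by letting $\beta=\beta(t)$ increase with $\beta'(t)\gtrsim\beta(t)^2$ (for instance $\beta(t)=(\beta_0^{-1}-t/4)^{-1}$), so that the extra good term $-\tfrac12\beta'(t)\int u_+^2(\phi^r)^2 f\,e^{-\beta(t)f}\,dv_0$ arising from differentiating the weight in $t$ dominates the bad bulk term; this inevitably forces the short-time-plus-iteration structure that your fixed choice of $\beta$ is missing.
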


\begin{proof}
The proof follows from \cite[Theorem $15.2$]{Peter12} verbatim by using $\Delta_f$ and the measure $v_0$ instead of $\Delta$ and the volume form $V$.
\end{proof}

We define a new familiy of cutoff functions by setting
\begin{align*}
  \overline \phi^r \coloneqq \eta\left(\frac{f}{r} \right),
\end{align*}
where $\eta$ is the same function as in \eqref{eqn:PK11_8} and $f$ is the potential function at time $0$. A direct calculation shows that
\begin{align*}
\Delta_f   \overline \phi^r=r^{-2}\eta''|\na f|^2+r^{-1}\eta' \Delta_f f=r^{-2}\eta''|\na f|^2+r^{-1}\eta'(\frac{n}{2}-f).
\end{align*}
Then it is clear that $\Delta_f   \overline \phi^r$ is supported on $\{f \ge r\}$ and there exists a constant $C=C(n)$ such that
\begin{align}\label{eq:Ba1}
|\Delta_f  \overline \phi^r | \le C.
\end{align}

\begin{lem}\label{L:Bakeme2}
For any smooth bounded function $u$ on $M$,
\begin{align*} 
\lim_{r \to \infty}\int (\Delta_f u) \overline \phi^r\,dv_0=0.
\end{align*}
\end{lem}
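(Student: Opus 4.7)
My strategy is to integrate by parts to transfer the drift Laplacian from $u$ onto the cutoff $\overline{\phi}^r$, and then exploit that $v_0$ is a probability measure together with the already-established uniform bound \eqref{eq:Ba1}.

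First I would record that at the time slice $t=0$ we have $\bar\tau=1$ and $F=f$, so Lemma~\ref{L100} says that $f$ is a proper function on $M$ with quadratic growth in $d(p,\cdot)$. In particular every sublevel set $\{f\le 2r\}$ is compact, so $\overline\phi^r=\eta(f/r)$ is smooth and compactly supported. Since $dv_0=(4\pi)^{-n/2}e^{-\boldsymbol\mu}e^{-f}dV$ is a constant multiple of $e^{-f}dV$, the operator $\Delta_f$ is formally self-adjoint with respect to $v_0$; because $\overline\phi^r$ has compact support, integrating by parts on $\mathrm{supp}(\overline\phi^r)$ produces no boundary contribution and yields
\begin{align*}
\int_M (\Delta_f u)\,\overline\phi^r\,dv_0
= -\int_M \langle \nabla u,\nabla \overline\phi^r\rangle\,dv_0
= \int_M u\,(\Delta_f \overline\phi^r)\,dv_0.
\end{align*}

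Next I would use the pointwise bound and the explicit support information for $\Delta_f\overline\phi^r$. From the formula displayed just above \eqref{eq:Ba1}, $\Delta_f\overline\phi^r$ vanishes outside the annulus $A_r\coloneqq\{r\le f\le 2r\}$, while \eqref{eq:Ba1} gives $|\Delta_f\overline\phi^r|\le C$ uniformly in $r$. Combined with $\|u\|_{L^\infty(M)}<\infty$, this produces
\begin{align*}
\left|\int_M u\,(\Delta_f\overline\phi^r)\,dv_0\right|
\le C\,\|u\|_{L^\infty(M)}\,v_0(A_r).
\end{align*}

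Finally I would finish with a soft measure-theoretic argument. By the normalization \eqref{eqn:PK27_1} and the definition \eqref{eqn:PK11_1} we have $v_0(M)=1$, so $v_0$ is a probability measure; because $f\to\infty$ at infinity by the quadratic growth from Lemma~\ref{L100}, the annuli $A_r$ shrink to the empty set as $r\to\infty$, and dominated convergence gives $v_0(A_r)\to 0$. Chaining the two estimates, the left-hand side tends to $0$ as well, which is what we wanted. The only potential obstacle is justifying the integration by parts on a noncompact manifold, but this is handled automatically by the compact support of $\overline\phi^r$, so no extra maximum-principle or weighted-decay argument is needed; the delicate cancellation between the $\eta''|\nabla f|^2$ and $\eta'(n/2-f)$ terms has already been absorbed into \eqref{eq:Ba1}.
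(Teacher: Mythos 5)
Your proof is correct and follows essentially the same route as the paper: integrate by parts twice (justified by the compact support of $\overline\phi^r$) to move $\Delta_f$ onto the cutoff, then use the uniform bound \eqref{eq:Ba1}, the support of $\Delta_f\overline\phi^r$ in $\{f\ge r\}$, the boundedness of $u$, and the fact that $v_0$ is a probability measure to conclude the integral tends to $0$. The extra detail you supply (compactness of sublevel sets of $f$ and $v_0(A_r)\to 0$) is exactly what the paper leaves implicit.
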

\begin{proof}
From the integration by parts,
\begin{align*}
\lim_{r \to \infty} \int (\Delta_f u)\overline \phi^r \,dv_0=\lim_{r \to \infty} \int  u(\Delta_f \overline \phi^r) \,dv_0=0,
\end{align*}
where the last equality holds since $u$ is bounded and $v_0$ is a probability measure.
\end{proof}

\noindent \emph{Proof of Theorem \ref{T:Bakeme}}: We only prove the inequality for $\rho_0$ such that $\sqrt \rho_0$ is a compactly supported smooth function and the general case follows from approximations as in Proposition \ref{eqn:PK08_5}. In addition, we assume that $\int \rho_0\,dv_0=1$.

Given such $\rho_0$, we consider the heat flow with respect to the measure $v_0$, that is,
\begin{align*}
\begin{cases}
&\partial_t \rho=\Delta_f \rho,\\
&\rho(0,\cdot)=\rho_0.
\end{cases}
\end{align*}

It is clear that there exists a constant $C$ such that $\rho \le C$ on $M \times [0,\infty)$. Now we set 
\begin{align*}
E(t) \coloneqq \lc \int \rho \log \rho \,dv_0 \rc (t).
\end{align*}

By direct computations
\begin{align*} 
 \partial_t \int \rho (\log \rho)  \overline \phi^r \,dv_0 =&\int \rho_t(\log \rho+1) \overline \phi^r \,dv_0\\
=&\int \Delta_f \rho(\log \rho+1) \overline \phi^r \,dv_0 \\
=&\int -\frac{|\na \rho|^2}{\rho} \overline \phi^r+\Delta_f(\rho\log \rho)\overline \phi^r\,dv_0. 
\end{align*}

Therefore, for any $T>0$,
\begin{align*} 
&\lc \int \rho (\log \rho)  \overline \phi^r \,dv_0 \rc(T)-\lc\int \rho (\log \rho)  \overline \phi^r \,dv_0\rc(0) \\
=& \int_0^T \int -\frac{|\na \rho|^2}{\rho} \overline \phi^r+\Delta_f(\rho\log \rho)\overline \phi^r\,dv_0dt.
\end{align*}

It follows from Lemma \ref{L:Bakeme2} that
\begin{align}  \label{eq:BE1}
 \int_0^T \int \frac{|\na \rho|^2}{\rho}\,dv_0dt <\infty
\end{align}
and
\begin{align} \label{eq:BE2}
E(T)-E(0)=- \int_0^T \int \frac{|\na \rho|^2}{\rho}\,dv_0dt.
\end{align}

We compute
\begin{align}\label{eq:BE2a}
 \partial_t \int \frac{|\na \rho|^2}{\rho} \overline \phi^r\,dv_0 =\int \square_f \left(\frac{|\na \rho|^2}{\rho}\right) \overline \phi^r+ \Delta_f \left(\frac{|\na \rho|^2}{\rho}\right) \overline \phi^r\,dv_0.
\end{align}

From Bochner's formula,
\begin{align*}
\partial_t |\na \rho|^2=& 2\la \na \Delta_f \rho,\na \rho \ra \\
=& \Delta_f |\na \rho|^2-2|\text{Hess}\,\rho|^2-2(Rc+\text{Hess}\,f)(\na \rho,\na \rho) \\
=&\Delta_f |\na \rho|^2-2|\text{Hess}\,\rho|^2-|\na \rho|^2,
\end{align*}
where we have used the Ricci shrinker equation for the last equality.

Therefore,
\begin{align}\label{eq:BE4}
\square_f |\na \rho|^2=-2|\text{Hess}\,\rho|^2-|\na \rho|^2.
\end{align}

A direct calculation shows that
\begin{align}\label{eq:BE5}
\square_f \frac{|\nabla \rho|^2}{\rho}=-\frac{2}{\rho}\left|\text{Hess}\,\rho-\frac{d\rho \otimes d\rho}{\rho}\right|^2 -\frac{|\nabla \rho|^2}{\rho}.
\end{align}

It follows from \eqref{eq:BE4} and Lemma \ref{L:Bakeme} that there exists a constant $C>0$ such that
\begin{align}\label{eq:BE6}
\frac{|\nabla \rho|^2}{\rho} \le C.
\end{align}

Therefore, by \eqref{eq:BE2a} and Lemma \ref{L:Bakeme2}, for any $T>S>0$,
\begin{align} 
&\lc \int \frac{|\nabla \rho|^2}{\rho} \,dv_0 \rc(T)-\lc\int \frac{|\nabla \rho|^2}{\rho} \,dv_0\rc(S)  \notag\\
=& \int_S^T \int -\frac{2}{\rho}\left|\text{Hess}\,\rho-\frac{d\rho \otimes d\rho}{\rho}\right|^2 -\frac{|\nabla \rho|^2}{\rho}\,dv_0dt.\label{eq:BE7}
\end{align}

It follows from \eqref{eq:BE2} that for any $t \ge 0$,
\begin{align}\label{eq:BE8}
E'(t)=-\lc \int  \frac{|\nabla \rho|^2}{\rho} \,dv_0 \rc (t) \le 0
\end{align}

Moreover, for any $t >s \ge 0$, it follows from \eqref{eq:BE7} that
\begin{align}\label{eq:BE9}
-E'(t)+E'(s) \le \int_s^t E'(z)\,dz \le 0.
\end{align}

Then it is easy to see from \eqref{eq:BE9} that 
\begin{align}\label{eq:BE10}
E'(t) \ge E'(0)e^{-t}.
\end{align}

Now we claim that $E(t) \to 0$ if $t \to \infty$. Since $E(t)$ is decreasing by \eqref{eq:BE8}, we only need to prove the claim by considering a sequence $t_i \to \infty$. We define $u_i=\sqrt{\rho(t_i,\cdot)}$, then 
\begin{align}\label{eq:BE11}
\int u_i^2 \,dv_0=1
\end{align}
and by \eqref{eq:BE10},
\begin{align}\label{eq:BE12}
\int |\na u_i|^2 \,dv_0 \to 0.
\end{align}

Then by taking a subsequence, we claim that $u_i$ converges to $u_{\infty}$ weakly in $W^{1,2}(M,v_0)$. It is clear from \eqref{eq:BE11} and \eqref{eq:BE12} that $u_{\infty} \equiv 1$. Since we can assume that $u_i$ converges to $1$ almost everywhere,
\begin{align}\label{eq:BE13}
\lim_{i \to \infty} \int u_i^2 \log u_i^2\,dv_0 =0
\end{align}
by the dominated convergence theorem. Therefore, $E(t) \to 0$ if $t \to \infty$.

It follows from \eqref{eq:BE2} and \eqref{eq:BE10} that
\begin{align*}
\int \rho_0\log \rho_0\,dv_0=E(0)=-\int_0^{\infty}E'(t)\,dt \le -E'(0) \int_0^{\infty}e^{-t}\,dt=\int  \frac{|\nabla \rho_0|^2}{\rho_0} \,dv_0.
\end{align*}

If the equality holds and $\rho_0$ is not a constant, it follows from \eqref{eq:BE7} that 
\begin{align*}
\text{Hess}(\log \rho)=\frac{1}{\rho}\lc\text{Hess}\,\rho-\frac{d\rho \otimes d\rho}{\rho} \rc=0.
\end{align*}
Therefore, $(M^n,g)$ splits off a $\R$ factor.

In summary, the proof of Theorem \ref{T:Bakeme} is complete.

Using the Bakry-\'Emery theorem, Carrillo and Ni have proved in \cite{CN09} the following result.

\begin{proposition}[Carrillo-Ni~\cite{CN09}]
For any Ricci shrinker $(M^n,g,f)$, we have 
\begin{align}
  \overline{\mathcal{W}} (g,e^{-\frac{f_0}{2}},1)=\boldsymbol{\boldsymbol{\mu}}(g,1)=\boldsymbol{\boldsymbol{\mu}},
\label{eq:cani}  
\end{align}
where $f_0$ is the normalization of $f$ defined in (\ref{eqn:PK11_1}). 
\label{pro:logsob1}
\end{proposition}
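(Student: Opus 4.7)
The plan is to carry out the two-step strategy of Carrillo--Ni, verifying that the Bakry-\'Emery machinery from Theorem \ref{T:Bakeme} applies to Ricci shrinkers and that the candidate minimizer $u_0 := e^{-f_0/2}$ achieves the value $\boldsymbol{\mu}$. First I would check that $u_0$ is admissible: by the definition \eqref{eqn:PK11_1} of $f_0$ and the normalization \eqref{eq:mu}, we have $\int u_0^2\, dV = \int e^{-f_0}\, dV = 1$, and the quadratic growth of $f$ from Lemma \ref{L100} together with the volume bound in Lemma \ref{L101} gives $\int d^2(p,\cdot) u_0^2\, dV < \infty$ and $\int |\nabla u_0|^2\, dV < \infty$. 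Then I would compute directly: since $|\nabla u_0|^2 = \tfrac14 e^{-f_0} |\nabla f|^2$ and $u_0^2 \log u_0^2 = -f_0 e^{-f_0}$,
\begin{equation*}
\overline{\mathcal{W}}(g, u_0, 1) = \int \bigl(|\nabla f|^2 + R + f_0\bigr)\, e^{-f_0}\, dV - n - \tfrac{n}{2}\log(4\pi).
\end{equation*}
Using the shrinker identity $R + |\nabla f|^2 = f$ from \eqref{E101} and integrating $\Delta f = n/2 - R$ against $e^{-f_0}$ (justified by inserting the cutoff $\phi^r$ of Section 3 and letting $r \to \infty$, since $f e^{-f_0}$ and $e^{-f_0}$ are integrable by Lemma \ref{L101}), one obtains $\int f\, e^{-f_0}\, dV = n/2$. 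Substituting $\int f_0\, e^{-f_0}\, dV = n/2 + \boldsymbol{\mu} + \tfrac{n}{2}\log(4\pi)$ into the formula above yields $\overline{\mathcal{W}}(g, u_0, 1) = \boldsymbol{\mu}$, so $\boldsymbol{\mu}(g,1) \le \boldsymbol{\mu}$.

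For the matching lower bound $\boldsymbol{\mu}(g,1) \ge \boldsymbol{\mu}$ I would apply Theorem \ref{T:Bakeme} to $v_0 = e^{-f_0}\, dV$, which by construction satisfies $\operatorname{Ric}+\operatorname{Hess} f_0 = g/2$, i.e.\ the $CD(1/2,\infty)$ condition. Given any test function $u \in W_*^{1,2}(M)$ with $\int u^2\, dV = 1$, set $\rho := u^2 e^{f_0}$. Then $\rho\, dv_0 = u^2\, dV$ is a probability measure; the definition of $W_*^{1,2}(M)$ and the growth of $f_0$ give $\sqrt{\rho} = u e^{f_0/2} \in W^{1,2}(M, v_0)$ with finite second moment, so Theorem \ref{T:Bakeme} is applicable. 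Expanding $\nabla \rho = e^{f_0}(2u\nabla u + u^2 \nabla f_0)$ and integrating the cross term by parts using $\Delta f_0 = \Delta f = n/2 - R$ and $|\nabla f_0|^2 = |\nabla f|^2 = f - R$ yields, after simplification,
\begin{equation*}
\int \frac{|\nabla \rho|^2}{\rho}\, dv_0 = \int \bigl(4|\nabla u|^2 + R u^2\bigr)\, dV + \int u^2 f\, dV - n,
\end{equation*}
while $\int \rho \log \rho\, dv_0 = \int u^2 \log u^2\, dV + \int f_0 u^2\, dV$. Writing $f = f_0 - \boldsymbol{\mu} - \tfrac{n}{2}\log(4\pi)$ and feeding the two identities into the Bakry-\'Emery inequality, the $\int f_0 u^2\, dV$ terms cancel and the resulting inequality is exactly $\overline{\mathcal{W}}(g, u, 1) \ge \boldsymbol{\mu}$; taking the infimum over $u$ completes the argument.

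The main obstacle is the justification of the various integrations by parts on the noncompact spacetime: the Bakry-\'Emery theorem itself is stated for probability measures on noncompact manifolds, but every intermediate computation (the value of $\int f\, e^{-f_0}\, dV$, the cross term in $\int |\nabla \rho|^2/\rho\, dv_0$) introduces boundary terms that must be shown to vanish. My plan is to handle all of them uniformly by multiplying through by $\phi^r$, invoking the estimates \eqref{E113}--\eqref{eqn:PK12_5}, and using Lemma \ref{L100} and Lemma \ref{L101} to dominate the resulting error integrals by $C(n)\, r^{-1/2}\! \int F^{k} e^{-f}\, dV$, which is finite and tends to zero as $r \to \infty$ thanks to the exponential decay of $e^{-f}$ against polynomial volume growth.
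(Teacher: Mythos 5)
Your proposal is correct and follows essentially the same route as the paper: the lower bound $\boldsymbol{\mu}(g,1)\geq\boldsymbol{\mu}$ via Theorem \ref{T:Bakeme} applied to $\rho=u^2e^{f_0}$ (the paper's $w$) with the same integration by parts converting the cross term into $-2\int u^2\Delta f\,dV$, and the upper bound via direct evaluation at $u_1=e^{-f_0/2}$, where your identity $\int f\,e^{-f_0}\,dV=\tfrac{n}{2}$ is exactly the paper's step $\int(\Delta_{f_0}f)e^{-f_0}\,dV=0$. The cutoff-function justification of the integrations by parts matches the paper's approach as well.
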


\begin{proof}
We shall follow the argument of Carrillo and Ni.   The proof is given for the self-containedness. 

For any Ricci shrinker $(M^n,g,f)$ and any smooth function $u$ on $M$ with compact support such that $\int u^2 \,dV=1$, we define $w=u^2e^{f_0}$. Then it is clear that both $v_0$ and $wv_0$ belong to $P_2(M)$ from the estimates of $f$ and $dV$.

It follows from Theorem \ref{T:Bakeme} that
\begin{align} \label{eq:bak1} 
\int w \log w \,dv_0 \le \int \frac{|\na w|^2}{w} \,dv_0.
\end{align}
By rewriting \eqref{eq:bak1} in terms of $u$, we have
\begin{align}  \label{eq:bak2} 
\int u^2 \log u^2\,dV+\int f_0u^2\,dV \le \int 4|\na u|^2+|\na f_0|^2u^2+4\la \na u,\na f_0\ra u\,dV.
\end{align}
It follows from the integration by parts for the last term that \eqref{eq:bak2} becomes
\begin{align}  \label{eq:bak3} 
\int u^2 \log u^2\,dV+\boldsymbol{\mu}+\frac{n}{2}\log(4\pi) \le \int 4|\na u|^2+u^2(|\na f|^2-2\Delta f-f)\,dV.
\end{align}
It follows from the $|\na f|^2+R=f$ and $\Delta f+R=\frac{n}{2}$ that $|\na f|^2-2\Delta f-f=R-f$. Therefore, by \eqref{eq:bak3} that
\begin{align*}  
  \overline{\mathcal{W}} (g,u,1)=\int \left\{ 4|\na u|^2+Ru^2-u^2\log u^2 \right\} dV-n-\frac{n}{2}\log(4\pi)\ge \boldsymbol{\mu}.
\end{align*}
By the arbitrary choice of $u$, the above inequality means that 
\begin{align}  
  \boldsymbol{\mu}(g,1) \ge \boldsymbol{\mu}.  \label{eqn:PK11_3}
\end{align}
On the other hand, if we set $u_1=e^{-\frac{f_0}{2}}$, it follows from direct calculation that
\begin{align*}  
\overline{\mathcal{W}} (g,u_1,1)
  =\int \lc |\na f|^2+R+f-n \rc\,e^{-f_0}dV+\boldsymbol{\mu}. 
\end{align*}
Recall that $R+|\nabla f|^2=f$ and $R+\Delta f=\frac{n}{2}$ on a Ricci shrinker. So the above equation can be simplified as
\begin{align*}
\overline{\mathcal{W}} (g,u_1,1)-\boldsymbol{\mu}=\int \lc 2f-n \rc\,e^{-f_0}dV=-2\int (\Delta_{f} f) e^{-f_0} dV
  =-2\int (\Delta_{f_0} f) e^{-f_0} dV=0. 
\end{align*}
Then it follows from definition that
\begin{align}
  \boldsymbol{\mu}(g,1) \leq  \overline{\mathcal{W}} (g,u_1,1)=\boldsymbol{\mu}.   \label{eqn:PK11_4}
\end{align}
Therefore, (\ref{eq:cani}) follows from the combination of (\ref{eqn:PK11_3}) and (\ref{eqn:PK11_4}).
\end{proof}

\begin{cor}
For any Ricci shrinker $(M^n,g,f)$, if there exist more than one minimizer $u \in W^{1,2}_{*}$ for $\overline{\mathcal{W}} (g,u,1)$, then $(M,g)$ must split off a $\R$ factor.
\end{cor}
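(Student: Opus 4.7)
The plan is to trace through the chain of inequalities in the proof of Proposition \ref{pro:logsob1} and identify the equality case. Suppose $u \in W^{1,2}_{*}$ is a minimizer; since the functional depends only on $u^2$ we may assume $u \ge 0$. Set $w \coloneqq u^2 e^{f_0}$, so that $w\, v_0 = u^2\, dV$ is a probability measure with $\int w\, dv_0 = 1$, and its second moment $\int d^2(p,\cdot)\, w\, dv_0$ is finite by the defining condition of $W^{1,2}_{*}$.

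I would first verify that the hypotheses of Theorem \ref{T:Bakeme} are satisfied by $w$. The moment bound is immediate from the definition of $W^{1,2}_{*}$. For $\sqrt{w} = u\, e^{f_0/2} \in W^{1,2}(M,v_0)$, one expands $|\nabla\sqrt{w}|^2 e^{-f_0} \le 2|\nabla u|^2 + \tfrac{1}{2} u^2 |\nabla f_0|^2$ and uses $|\nabla f_0|^2 = |\nabla f|^2 \le f$ (a consequence of $R \ge 0$ and \eqref{E107}) together with the finiteness of $\int f u^2\, dV$ (implied by Lemma \ref{L100} and the second moment condition). If the minimizer is not a priori regular enough, one approximates by compactly supported smooth functions exactly as in the proof of \eqref{eqn:PK08_5}, exploiting lower semicontinuity of $\overline{\mathcal{W}}$.

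Next, apply Theorem \ref{T:Bakeme} to $w$ to obtain $\int w \log w\, dv_0 \le \int |\nabla w|^2/w\, dv_0$. Rewriting this back in terms of $u$ via the same manipulation as in the proof of Proposition \ref{pro:logsob1} gives $\overline{\mathcal{W}}(g,u,1) \ge \boldsymbol{\mu}$. Because $u$ is assumed to be a minimizer, equality must hold throughout, hence in particular equality holds in the Bakry--\'Emery inequality for $w$. The equality clause of Theorem \ref{T:Bakeme} then yields the dichotomy: either $w$ is constant, in which case the normalization $\int w\, dv_0 = 1 = \int dv_0$ forces $w \equiv 1$ and therefore $u \equiv e^{-f_0/2}$, or else $(M,g)$ splits off an $\mathbb{R}$ factor.

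To conclude, if two distinct non-negative minimizers existed, at most one of them could coincide with the canonical minimizer $e^{-f_0/2}$, so the other must trigger the splitting alternative. The main obstacle is not the logical structure of the argument but the analytic verification that an arbitrary minimizer in $W^{1,2}_{*}$ has sufficient regularity and decay to justify using Theorem \ref{T:Bakeme} without going through approximations; standard elliptic regularity for the Euler--Lagrange equation of $\overline{\mathcal{W}}(\,\cdot\,,1)$, together with the pointwise bound $|\nabla f|^2 \le f$ on the Ricci shrinker, should handle this in a routine manner.
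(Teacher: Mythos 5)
Your proposal is correct and follows essentially the same route as the paper's (very terse) proof: equality in the chain of inequalities from Proposition \ref{pro:logsob1} forces equality in the Bakry--\'Emery inequality for $w=u^2e^{f_0}$, and the equality case of Theorem \ref{T:Bakeme} gives the dichotomy, with the constant alternative ruled out for any minimizer other than $e^{-f_0/2}$. The additional care you take in verifying the hypotheses of Theorem \ref{T:Bakeme} and justifying the integration by parts for a general $W^{1,2}_{*}$ minimizer is detail the paper omits, but it does not change the argument.
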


\begin{proof}
If $u$ is a minimizer other than $e^{-\frac{f_0}{2}}$, then the same proof as Proposition \ref{pro:logsob1} shows that
\begin{align*}
\int w \log w \,dv_0 = \int \frac{|\na w|^2}{w} \,dv_0,
\end{align*}
where $w=u^2e^{f_0}$.
Then the conclusion follows from the equality case of Theorem \ref{T:Bakeme}.
\end{proof}

Proposition \ref{pro:logsob1} indicates that $\boldsymbol{\mu}$ is the optimal log-Sobolev constant for $(M^n,g,f)$ on scale $1$.
We shall improve \eqref{eq:cani} by showing that $\boldsymbol{\mu}$ is in fact the optimal log-Sobolev constant for all scales.
Note that the same result has already been proved for compact Ricci shrinkers in Proposition 9.5 of~\cite{LLW18}.

\begin{proposition}\label{prn:PK10_2}
For any Ricci shrinker $(M^n,g,f)$, we have 
\begin{align}
\boldsymbol{\nu}(g) \coloneqq \inf_{\tau >0}\boldsymbol{\mu}(g,\tau)=\boldsymbol{\mu}.
\label{eqn:PK10_16}
\end{align}
\end{proposition}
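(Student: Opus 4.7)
The strategy combines the monotonicity from Corollary~\ref{c:mono} with the Carrillo--Ni identity from Proposition~\ref{pro:logsob1}. Since $\boldsymbol{\mu}(g,1) = \boldsymbol{\mu}$, we immediately have $\boldsymbol{\nu}(g) \le \boldsymbol{\mu}$. For the reverse inequality, Corollary~\ref{c:mono} tells us $\tau \mapsto \boldsymbol{\mu}(g,\tau)$ is decreasing on $(0,1)$ and increasing on $(1,\infty)$, so $\boldsymbol{\mu}(g,\tau) \ge \min\{L^-,\boldsymbol{\mu},L^+\}$ for every $\tau > 0$, where $L^{\pm} := \lim_{\tau \to 1^{\pm}} \boldsymbol{\mu}(g,\tau)$ exist by monotonicity. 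Upper semi-continuity of $\tau \mapsto \boldsymbol{\mu}(g,\tau)$, as an infimum of continuous functions of $\tau$, already gives $L^{\pm} \le \boldsymbol{\mu}$; the substantive content of the proposition is thus the reverse bound $L^{\pm} \ge \boldsymbol{\mu}$.

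For the right-limit $L^+$ the comparison is direct. For any admissible $u$ with $\int u^2\,dV = 1$ and any $\tau > 1$, $R \ge 0$ on Ricci shrinkers (as their induced flow is ancient) together with Proposition~\ref{pro:logsob1} yields
\[
\overline{\mathcal{W}}(g,u,\tau) = \overline{\mathcal{W}}(g,u,1) + (\tau-1)\!\int (4|\nabla u|^2 + Ru^2)\,dV - \tfrac{n}{2}\log\tau \ge \boldsymbol{\mu} - \tfrac{n}{2}\log\tau,
\]
since the middle term is nonnegative. Taking the infimum over $u$ gives $\boldsymbol{\mu}(g,\tau) \ge \boldsymbol{\mu} - \tfrac{n}{2}\log\tau$, and letting $\tau \to 1^+$ yields $L^+ \ge \boldsymbol{\mu}$.

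The left-limit $L^-$ is the main difficulty, because the term $(\tau-1)\int(4|\nabla u|^2 + Ru^2)\,dV$ is now nonpositive and unbounded below in $u$. The plan is to argue by contradiction. Assume $L^- < \boldsymbol{\mu}$ and select near-minimizers $u_n$ at scales $\tau_n \nearrow 1$, i.e., $\overline{\mathcal{W}}(g,u_n,\tau_n) \le \boldsymbol{\mu}(g,\tau_n) + 1/n$. Rearranging the same identity and using $\overline{\mathcal{W}}(g,u_n,1) \ge \boldsymbol{\mu}$ forces $(1-\tau_n) A(u_n) \ge \boldsymbol{\mu} - L^- - o(1) > 0$, where $A(u) := \int(4|\nabla u|^2 + Ru^2)\,dV$; hence $A(u_n) \to \infty$ at rate $(1-\tau_n)^{-1}$. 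On the other hand, the rigidity part of Theorem~\ref{T:Bakeme}, together with the uniqueness corollary of Proposition~\ref{pro:logsob1}, identifies the minimizer of $\overline{\mathcal{W}}(g,\cdot,1)$ as $u_1 = e^{-f_0/2}$ with $A(u_1) = n/2 < \infty$, up to a possible $\mathbb{R}$-splitting factor. A compactness/stability argument for near-minimizing sequences at scale $1$ should then pin $u_n$ close to $u_1$ (modulo the splitting) and bound $A(u_n)$, contradicting its blow-up.

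The hard part is executing the rigidity-and-compactness step cleanly in the non-compact Ricci shrinker setting, since unlike the compact case the admissible class $W^{1,2}_*(M)$ is genuinely non-reflexive in the right topology and near-minimizers could a priori concentrate at infinity. This parallels Step~1 of the compact Proposition~9.5 in~\cite{LLW18}, and one expects the cut-off machinery developed in Section~3 of the present paper, together with the exponential decay built into the definition of $W^{1,2}_*(M)$, to suffice.
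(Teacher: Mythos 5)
Your treatment of $\tau>1$ is correct and coincides with the paper's Lemma~\ref{lma:PK10_4}: drop the nonnegative term $(\tau-1)\int(4|\nabla u|^2+Ru^2)\,dV$, get $\boldsymbol{\mu}(g,\tau)\ge\boldsymbol{\mu}-\tfrac n2\log\tau$, and propagate via the monotonicity of Corollary~\ref{c:mono}. The problem is entirely in the case $\tau<1$, where your argument has a genuine gap at exactly the step you flag as "the hard part".

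Your contradiction scheme needs an \emph{upper} bound on $A(u_n)=\int(4|\nabla u_n|^2+Ru_n^2)\,dV$ for a sequence $u_n$ that is near-minimizing at scales $\tau_n\nearrow 1$ (equivalently, as you note, minimizing for $\overline{\mathcal W}(g,\cdot,1)$). No mechanism available at this stage of the paper produces such a bound. Weak $W^{1,2}$ compactness only gives lower semicontinuity of $A$, so even if you could show $u_n\rightharpoonup u_1=e^{-f_0/2}$ (modulo the $\R$-splitting ambiguity), you would conclude $\liminf A(u_n)\ge A(u_1)$, which points the wrong way; a minimizing sequence for $\overline{\mathcal W}(g,\cdot,1)$ can a priori have $A(u_n)\to\infty$ with $\int u_n^2\log u_n^2$ diverging at the same rate, since the log-Sobolev inequality at scale $1$ only bounds $\int u_n^2\log u_n^2$ \emph{above} by $A(u_n)$ plus a constant. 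The one tool in the paper that converts a bound on $\overline{\mathcal W}(g,u,\tau)$ into an upper bound on $\tau A(u)$ is Lemma~\ref{lma:PK10_1}, but it requires the Sobolev inequality of Corollary~\ref{cor234}, whose proof uses $\boldsymbol{\mu}(g,\tau)\ge\boldsymbol{\mu}$ for all scales — precisely the statement you are trying to prove. Finally, the paper establishes no existence or stability theory for minimizers of $\overline{\mathcal W}(g,\cdot,\tau)$ on a noncompact shrinker, so the "compactness/stability" step cannot be borrowed from elsewhere in the text.

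The paper's actual route for $\tau\in(0,1)$ (Lemma~\ref{lma:PK10_3}) avoids near-minimizers altogether. Fix $\eta_0\in(0,1)$ and a compactly supported normalized $w$ at $t=0$; solve the \emph{conjugate heat equation} backward to get $w(\cdot,t)$ on $t\in(-\infty,0]$, and use the monotonicity of Theorem~\ref{T201} to compare $\overline{\mathcal W}(g(0),\sqrt{w(\cdot,0)},\eta_0)$ with $\overline{\mathcal W}(g(t),\sqrt{w(\cdot,t)},\eta_0-t)$. Self-similarity (diffeomorphism invariance) rewrites the latter as $\overline{\mathcal W}(g,u(\cdot,t),\theta(t))$ with $\theta(t)=\frac{\eta_0-t}{1-t}\to 1^-$ as $t\to-\infty$, and the expansion you wrote produces an error term $(\theta-1)\{\int u^2\log u^2\,dV+\cdots\}$. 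The decisive input is the Gaussian-type upper bound $w\le C\bar v$ of Lemma~\ref{L202}, which gives the \emph{uniform} one-sided bound $\int u^2\log u^2\,dV\le\log C$ independent of $t$, so the error term vanishes as $\theta\to1$ and one gets $\overline{\mathcal W}(g,\sqrt{w(\cdot,0)},\eta_0)\ge\boldsymbol{\mu}(g,1)$ directly for every admissible $w$. If you want to repair your write-up, this flow-generated family of test functions with the heat-kernel upper bound is the missing ingredient; the abstract stability argument you sketch would require substantially more machinery than the paper develops.
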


We first show two important intermediate steps before we prove Proposition~\ref{prn:PK10_2}. 

\begin{lemma}  \label{lma:PK10_3}
  For each $\tau \in (0,1)$, we have
  \begin{align}
    \boldsymbol{\mu}(g, \tau) \geq \boldsymbol{\mu}=\boldsymbol{\mu}(g,1).   \label{eqn:PK08_4} 
  \end{align}
\end{lemma}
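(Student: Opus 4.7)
The statement follows almost immediately by combining two earlier results in the excerpt, so the proof should be very short. My plan is the following.

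First, I would invoke Corollary \ref{c:mono}, which says that on any Ricci shrinker $(M^n,g,f)$ the function $\tau \mapsto \boldsymbol{\mu}(g,\tau)$ is decreasing on the interval $(0,1)$. In particular, for every $\tau \in (0,1)$,
\begin{align*}
\boldsymbol{\mu}(g,\tau) \;\geq\; \boldsymbol{\mu}(g,1).
\end{align*}
This is the key monotonicity fact, and it was already established using the Perelman-type monotonicity theorem (Theorem \ref{T201}) combined with the self-similar structure of the induced Ricci flow via the pullback $(1-t)(\psi^t)^*g$.

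Next, I would invoke Proposition \ref{pro:logsob1} (the Carrillo--Ni identity \eqref{eq:cani}), which identifies the value at scale $\tau = 1$:
\begin{align*}
\boldsymbol{\mu}(g,1) \;=\; \boldsymbol{\mu},
\end{align*}
with the minimizer realized by $u_1 = e^{-f_0/2}$. Chaining this equality with the previous inequality immediately yields
\begin{align*}
\boldsymbol{\mu}(g,\tau) \;\geq\; \boldsymbol{\mu}(g,1) \;=\; \boldsymbol{\mu}
\end{align*}
for every $\tau \in (0,1)$, which is exactly \eqref{eqn:PK08_4}.

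Since both ingredients are already stated earlier in the paper, there is essentially no obstacle here: the content of the lemma is purely a packaging of monotonicity plus the known value at $\tau=1$. The only minor point worth flagging in the write-up is that one should verify $\boldsymbol{\mu}(g,\tau) > -\infty$ so that the inequality is meaningful, but this is ensured by the discussion of $W^{1,2}_{*}(M)$ following \eqref{eqn:PL21_5} (via Jensen's inequality applied to the rescaled probability measure $d\tilde V = e^{-d^2(p,\cdot)}\,dV$), so no separate argument is required.
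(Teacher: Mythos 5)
Your second ingredient is fine: Proposition \ref{pro:logsob1} does give $\boldsymbol{\mu}(g,1)=\boldsymbol{\mu}$ and is available at this point. The problem is the first step. Corollary \ref{c:mono} only establishes that $\boldsymbol{\mu}(g,\tau)$ is decreasing on the \emph{open} interval $(0,1)$: its proof runs the reparametrization $\theta(t)=\frac{\tau_0-t}{1-t}$ with $\tau_0<1$, and $\theta$ only sweeps out $(0,\tau_0]$, approaching $1$ from below as $t\to-\infty$ without ever reaching it. Monotonicity on $(0,1)$ gives, for fixed $\tau\in(0,1)$, only
\begin{align*}
\boldsymbol{\mu}(g,\tau)\;\geq\;\lim_{s\to 1^{-}}\boldsymbol{\mu}(g,s),
\end{align*}
and to conclude \eqref{eqn:PK08_4} you must still show that this one-sided limit is $\geq \boldsymbol{\mu}(g,1)$, i.e.\ lower semicontinuity of $\boldsymbol{\mu}(g,\cdot)$ at $\tau=1$ from the left. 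This is not automatic: $\boldsymbol{\mu}(g,\cdot)$ is an infimum of functions continuous in $\tau$, so upper semicontinuity is free, but that inequality points the wrong way. Moreover, you cannot patch the gap by appealing to Proposition \ref{prn:PK10_3} (continuity of $\boldsymbol{\mu}(g,\tau)$), because its lower-semicontinuity half rests on Lemma \ref{lma:PK10_1} and hence on the Sobolev inequality of Corollary \ref{cor234}, whose proof uses $\boldsymbol{\mu}(g,s)\geq\boldsymbol{\mu}$ for all $s$ --- exactly the statement you are trying to prove. That would be circular.

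The missing content is precisely what the paper's proof supplies. There, one fixes a compactly supported $w$ at $t=0$, evolves it backward by the conjugate heat equation, and uses diffeomorphism invariance to write $\overline{\mathcal W}(g(t),\sqrt{w(\cdot,t)},\eta_0-t)=\overline{\mathcal W}(g,u(\cdot,t),\theta(t))$ with $\theta(t)\to 1^{-}$ as $t\to-\infty$. The key estimate is that the error term $(\theta-1)\bigl(\int u^2\log u^2\,dV+n+\tfrac{n}{2}\log(4\pi)\bigr)$ tends to zero, which requires the uniform bound $\int u^2\log u^2\,dV\leq\log C$ coming from the comparison $w\leq C\bar v$ of Lemma \ref{L202}. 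In other words, the whole difficulty of Lemma \ref{lma:PK10_3} is the passage $\theta\to 1^{-}$, and your proposal assumes it away. You would need either to reproduce this limiting argument or to establish left lower semicontinuity of $\boldsymbol{\mu}(g,\cdot)$ at $\tau=1$ by some independent (non-circular) means.
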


\begin{proof}

  Fix $\eta_0 \in (0,1)$.  Let $w$ be a nonnegative, compactly supported smooth function satisfying the normalization condition $\int w dV=1$. 
  We now regard $w$ as a smooth function on the time slice $t=0$ and solve the conjugate heat equation $\square^* w=0$.
  Then $w$ is a smooth function on the space-time $M \times (-\infty, 1)$. It follows from Lemma \ref{L202} that there exists a constant $C>0$ such that
  \begin{align}
  w(x,t) \leq C (4\pi(1-t))^{-\frac{n}{2}} e^{-f(x,t)}, \quad \forall \; x \in M, \; t \in (-\infty, 0].
  \label{eqn:PK08_1}
  \end{align}
  By the diffeomorphism invariance of the $\overline{\mathcal{W}}$-functional, it is easy to see that
  \begin{align}
    \overline{\mathcal{W}}\left(g(t), \sqrt{w(\cdot, t)}, \eta_0-t \right)
    =\overline{\mathcal{W}}\left( (1-t) (\psi^{t})^{*} g, \sqrt{w(\cdot, t)}, \eta_0-t \right)
    =\overline{\mathcal{W}}\left(g, u(\cdot, t), \theta(t) \right)
    \label{eqn:PK08_8}  
  \end{align}
  where we have used the notation
\begin{align}
  &u(\cdot, t) \coloneqq (1-t)^{\frac{n}{4}}\sqrt{((\psi^{t})^{-1})^{*} w(\cdot, t)},   \label{eqn:PK08_6}\\
  &\theta(t) \coloneqq \frac{\eta_0-t}{1-t}.   \label{eqn:PK08_7}
\end{align}
Notice that $\int u^2 dV \equiv 1$ according to our construction.  It follows from definition and direct calculations that
\begin{align}
  &\quad \overline{\mathcal{W}}\left(g, u(\cdot, t), \theta(t) \right) \notag\\
  &=\int \left\{ \theta\left( 4|\nabla u|^2 + Ru^2 \right) -u^2 \log u^2 \right\} dV -n-\frac{n}{2} \log (4\pi \theta) \notag \\
  &= \theta \left\{ \int \left\{ \left( 4|\nabla u|^2 + Ru^2 \right) -u^2 \log u^2 \right\} dV -n-\frac{n}{2} \log (4\pi) \right\} \notag\\
  &\quad +(\theta-1) \left\{ \int u^2 \log u^2 dV + n+\frac{n}{2} \log (4\pi) \right\} -\frac{n}{2} \log \theta \notag\\
  &\geq \theta \boldsymbol{\mu}(g,1) + (\theta-1) \left\{ \int u^2 \log u^2 dV +n +\frac{n}{2} \log (4\pi) \right\}
  -\frac{n}{2} \log \theta.
  \label{eqn:PK08_3}  
\end{align}
By (\ref{eqn:PK08_6}), the inequality (\ref{eqn:PK08_1}) can be understood as
\begin{align*}
  u^2(x,t) \leq C e^{-f(x,0)}
\end{align*}
for some constant $C$ indepenent of $t$.  Consequently, as $f \geq 0$, we obtain
\begin{align*}
  \int u^2 \log u^2 dV \leq \int \left\{ -f+\log C \right\} u^2 dV \leq \log C - \int f \cdot u^2 dV \leq \log C.
\end{align*}
Note that $\theta(t)<1$ when $t<0$.  Plugging the above inequality into (\ref{eqn:PK08_3}), and noting that
\begin{align*}
    \overline{\mathcal{W}}\left(g(0), \sqrt{w(\cdot, 0)}, \eta_0 \right)  \geq \overline{\mathcal{W}}\left(g(t), \sqrt{w(\cdot, t)}, \eta_0-t \right),  \quad \forall \; t \in (-\infty, 0), 
\end{align*}
we can use (\ref{eqn:PK08_8}) to obtain 
\begin{align*}
 \overline{\mathcal{W}}\left(g(0), \sqrt{w(\cdot, 0)}, \eta_0 \right) 
 \geq  \theta \boldsymbol{\mu}(g,1) + (\theta-1)\left\{ \log C +n+\frac{n}{2} \log (4\pi) \right\}
  -\frac{n}{2} \log \theta. 
\end{align*}
From (\ref{eqn:PK08_7}), it is clear that $\displaystyle \lim_{t \to -\infty} \theta(t)=1$. 
On the right hand side of the above inequlaity, letting $t \to -\infty$, we arrive at
\begin{align*}
  \overline{\mathcal{W}}\left(g(0), \sqrt{w(\cdot, 0)}, \eta_0 \right)  \geq \boldsymbol{\mu}(g, 1).
\end{align*}
Since $w(\cdot, 0)$ could be arbitrary smooth nonnegative function satisfying the normalization condition, and $g=g(0)$, in light of (\ref{eqn:PK08_5}), it is clear that 
(\ref{eqn:PK08_4}) follows from the above inequality. 
\end{proof}

\begin{lemma}  \label{lma:PK10_4}
  For each $\tau \in (1, \infty)$, we have
  \begin{align}
    \boldsymbol{\mu}(g, \tau) \geq \boldsymbol{\mu}=\boldsymbol{\mu}(g,1).   \label{eqn:PK10_1} 
  \end{align}
\end{lemma}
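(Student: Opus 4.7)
The plan is to mirror the strategy of Lemma~\ref{lma:PK10_3}, adjusting for the direction $\tau > 1$. Fix $\eta_0 > 1$; by the density result (\ref{eqn:PK08_5}), it suffices to prove $\overline{\mathcal{W}}(g,\sqrt{w},\eta_0) \geq \boldsymbol{\mu}$ for an arbitrary nonnegative smooth compactly supported $w$ on the $t=0$ slice with $\int w\,dV = 1$. I would regard $w$ as initial data and solve the conjugate heat equation backward to obtain $w(\cdot,t)$ on $M\times(-\infty,0]$ with $\int w(\cdot,t)\,dV_t = 1$ (stochastic completeness, Theorem~\ref{thm:PK14_1}). The proof of Theorem~\ref{T201} shows that $t\mapsto\overline{\mathcal{W}}(g(t),\sqrt{w(\cdot,t)},\eta_0-t)$ is monotone increasing, and combined with the self-similarity identity (\ref{eqn:PK08_8}) it gives, for every $t<0$,
\begin{align*}
\overline{\mathcal{W}}(g,\sqrt{w},\eta_0) \;\geq\; \overline{\mathcal{W}}(g, u(\cdot,t),\theta(t)),
\end{align*}
with $u$ as in (\ref{eqn:PK08_6}) and $\theta(t)=(\eta_0-t)/(1-t)>1$, satisfying $\theta(t)\to 1$ as $t\to-\infty$.

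Next, the scaling identity used in the proof of Lemma~\ref{lma:PK10_3}, combined with $\overline{\mathcal{W}}(g,u,1)\geq \boldsymbol{\mu}$ from Proposition~\ref{pro:logsob1}, yields
\begin{align*}
\overline{\mathcal{W}}(g, u, \theta) \;\geq\; \theta\boldsymbol{\mu} + (\theta - 1)\!\left[\int u^2\log u^2\,dV + n + \tfrac{n}{2}\log(4\pi)\right] - \tfrac{n}{2}\log\theta.
\end{align*}
Since $\theta - 1 > 0$, the main obstacle---and the only real difference from the $\tau<1$ case---will be producing a uniform \emph{lower} bound on the entropy $\int u^2\log u^2\,dV$, whereas for $\tau < 1$ the pointwise decay $u^2\leq C e^{-f}$ trivially gave an upper bound.

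I plan to secure the lower bound by decomposing the entropy against the reference probability measure $v_0 = e^{-f_0}\,dV$ of (\ref{eq:standardmeasure}):
\begin{align*}
\int u^2\log u^2\,dV \;=\; \int (u^2 e^{f_0})\log(u^2 e^{f_0})\,dv_0 \,-\, \int u^2 f_0\,dV.
\end{align*}
Since $\int u^2 e^{f_0}\,dv_0 = \int u^2\,dV = 1$, Jensen's inequality applied to the convex function $x\log x$ forces the first summand to be nonnegative. For the second, applying Lemma~\ref{L202} to $w$ and then pulling back and rescaling via (\ref{eqn:PK08_6}) yields the $t$-independent estimate $u(\cdot,t)^2 \leq C(4\pi)^{-n/2} e^{-f}$; together with the Gaussian-type decay $\int f\, e^{-f}\,dV<\infty$ supplied by Lemmas~\ref{L100} and~\ref{L101}, this gives $\int u^2 f_0\,dV \leq C'$ uniformly in $t$, hence $\int u^2\log u^2\,dV \geq -C''$ uniformly. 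Substituting back and letting $t\to-\infty$, each of $\theta(t)\boldsymbol{\mu}$, $(\theta(t)-1)\cdot(\text{bounded})$, and $-\tfrac{n}{2}\log\theta(t)$ converges so that the right-hand side tends to $\boldsymbol{\mu}$, giving $\overline{\mathcal{W}}(g,\sqrt{w},\eta_0)\geq \boldsymbol{\mu}$ and finishing the proof.
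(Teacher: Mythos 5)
Your proposal is correct, but it takes a genuinely different (and considerably longer) route than the paper. The paper's own proof of Lemma~\ref{lma:PK10_4} is a two-line comparison: since $4|\nabla u|^2+Ru^2\geq 0$ and $\tau>1$, one has $\overline{\mathcal W}(g,u,\tau)\geq \overline{\mathcal W}(g,u,1)-\tfrac{n}{2}\log\tau\geq \boldsymbol{\mu}-\tfrac{n}{2}\log\tau$; letting $\tau\to 1^{+}$ gives $\liminf_{\tau\to 1^{+}}\boldsymbol{\mu}(g,\tau)\geq\boldsymbol{\mu}$, and the monotonicity of $\boldsymbol{\mu}(g,\cdot)$ on $(1,\infty)$ from Corollary~\ref{c:mono} then propagates the bound to every $\tau>1$. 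You instead rerun the full conjugate-heat-flow argument of Lemma~\ref{lma:PK10_3} for each fixed $\eta_0>1$, and the one new ingredient you need --- a $t$-uniform \emph{lower} bound on $\int u^2\log u^2\,dV$, since now $\theta-1>0$ --- you obtain correctly by splitting against the probability measure $v_0=e^{-f_0}dV$ and applying Jensen to $x\log x$, together with $\int u^2 f_0\,dV\leq C$ from the pointwise bound $u^2\leq Ce^{-f}$ of Lemma~\ref{L202} and the Gaussian integrability from Lemmas~\ref{L100} and~\ref{L101}. (This Jensen device is the same one the paper uses after \eqref{eq:changebase} to show $\overline{\mathcal W}>-\infty$, so every step is available.) Both arguments ultimately rest on Theorem~\ref{T201}: yours uses the $\overline{\mathcal W}$-level monotonicity along the flow directly, while the paper's uses its corollary for the infimum $\boldsymbol{\mu}(g,\cdot)$. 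What your approach buys is a self-contained proof for each fixed $\tau>1$ that is symmetric with the $\tau<1$ case and does not invoke Corollary~\ref{c:mono}; what it costs is all the analytic machinery of Theorem~\ref{T201} applied once per $\tau$, where the paper gets away with a single trivial pointwise inequality plus monotonicity near $\tau=1$.
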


\begin{proof}
For any $u \in W_{*}^{1,2}$ and $\tau >1$,
\begin{align*} 
\overline{\mathcal W}(g,u,\tau)=&\int \left\{\tau(4|\nabla u|^2+Ru^2)-u^2\log u^2 \right\} dV-n-\frac{n}{2}\log(4\pi\tau)  \\
\ge&\int \left\{ (4|\nabla u|^2+Ru^2)-u^2\log u^2 \right\} dV-n-\frac{n}{2}\log(4\pi\tau)  \\
\ge&\,\boldsymbol{\mu}(g, 1)-\frac{n}{2}\log\tau=\boldsymbol{\mu} -\frac{n}{2}\log\tau. 
\end{align*}
By the arbitrary choice of $u \in W_{*}^{1,2}$, it follows that
\begin{align*} 
\boldsymbol{\mu}(g, \tau)\ge \boldsymbol{\mu}-\frac{n}{2}\log\tau.
\end{align*}
Let $\tau \to 1^{+}$, we obtain that
\begin{align*}
  \liminf_{\tau \to 1^{+}} \boldsymbol{\mu}(g, \tau) \geq \boldsymbol{\mu}.   
\end{align*}
By Corollary~\ref{c:mono}, we know that $\boldsymbol{\mu}(g, \tau)$ is an  increasing function of $\tau$ for $\tau \in (1, \infty)$. 
Then it is clear that (\ref{eqn:PK10_1}) follows directly from the above inequality. 
\end{proof}

\begin{proof}[Proof of Proposition~\ref{prn:PK10_2}:]
  It follows from the combination of Lemma~\ref{lma:PK10_3} and Lemma~\ref{lma:PK10_4}.   
\end{proof}

\begin{lemma}\label{lma:PK10_1}
  Suppose $(M, g)$ is a complete Riemannian manifold with Sobolev constant $C_{RS}$. Namely, for each smooth function $u$ with compact support, we have
  \begin{align}
    \left( \int u^{\frac{2n}{n-2}} dV \right)^{\frac{n-2}{n}} \leq C_{RS} \int \left\{ 4|\nabla u|^2 + Ru^2 \right\} dV.
    \label{eqn:PK10_4}  
  \end{align}
 Then for each positive $\tau$,  the following estimates hold for any $u \in W_{*}^{1,2}$,
\begin{align}
  e^{-\frac{2E}{n}} \leq \tau \int \left\{ 4|\nabla u|^2 + Ru^2 \right\} dV \leq \max \left\{n^2,  2E \right\},     \label{eqn:PK10_5}     
\end{align}
where
\begin{align}
  E =  \overline{\mathcal W}(g,u,\tau) +  \frac{n}{2} \log (4\pi e^2  C_{RS}).   \label{eqn:PK10_8}
\end{align}
\end{lemma}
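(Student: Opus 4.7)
The strategy is the classical reduction of a (scalar-)Sobolev inequality to a log-Sobolev inequality via Jensen's inequality, followed by a one-variable comparison. Since $u\in W^{1,2}_*$ we have $\int u^2\,dV=1$, so $u^2\,dV$ is a probability measure. Write $A\coloneqq\int(4|\nabla u|^2+Ru^2)\,dV$ and $B\coloneqq\int u^2\log u^2\,dV$, and abbreviate $p=\tfrac{2n}{n-2}$. First I would establish, by approximation of $u$ by compactly supported smooth functions (as in the argument preceding Proposition \ref{eqn:PK08_5}), that the Sobolev hypothesis \eqref{eqn:PK10_4} extends to all $u\in W^{1,2}_*$.

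Next, applying Jensen's inequality to the concave function $\log$ with respect to the probability measure $u^2\,dV$ and test function $u^{4/(n-2)}$, I obtain
\begin{align*}
\tfrac{2}{n-2}\,B=\int u^2\log u^{4/(n-2)}\,dV\le \log\int u^{p}\,dV.
\end{align*}
The Sobolev inequality \eqref{eqn:PK10_4} gives $\left(\int u^{p}\,dV\right)^{2/p}\le C_{RS}\,A$, hence $\int u^{p}\,dV\le (C_{RS}A)^{n/(n-2)}$. Combining these two estimates yields the intermediate log-Sobolev bound
\begin{align*}
B\le\tfrac{n}{2}\log(C_{RS}A).
\end{align*}

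Since $\overline{\mathcal W}(g,u,\tau)=\tau A-B-n-\tfrac{n}{2}\log(4\pi\tau)$ under our normalization, direct substitution into the definition of $E$ in \eqref{eqn:PK10_8} gives $E=\tau A-B+\tfrac{n}{2}\log(C_{RS}/\tau)$, which together with the bound on $B$ produces the clean scalar inequality
\begin{align*}
E\ge y-\tfrac{n}{2}\log y,\qquad y\coloneqq \tau A>0.
\end{align*}
From this, the lower bound in \eqref{eqn:PK10_5} is immediate: since $y>0$ we get $\tfrac{n}{2}(-\log y)<E$, i.e.\ $y>e^{-2E/n}$.

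The main (and only nontrivial) point is the upper bound. I would split into cases. If $y\le n^2$ the bound $y\le\max\{n^2,2E\}$ is trivial. If $y>n^2$, then I want $y\le 2E$, which via $E\ge y-\tfrac{n}{2}\log y$ reduces to the elementary scalar inequality $y\ge n\log y$ for $y\ge n^2$. At $y=n^2$ this becomes $n\ge 2\log n$, which holds for all $n\ge 1$ (the minimum of $n-2\log n$ occurs at $n=2$ and equals $2-2\log 2>0$); and for $y\ge n^2\ge n$, the derivative $1-n/y\ge 0$ shows $y-n\log y$ is monotonically increasing, so the inequality persists. This closes the upper bound and completes the proof.
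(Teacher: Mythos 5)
Your proposal is correct and follows essentially the same route as the paper: Jensen's inequality combined with the Sobolev hypothesis gives $\int u^2\log u^2\,dV\le\tfrac{n}{2}\log(C_{RS}\int\{4|\nabla u|^2+Ru^2\}\,dV)$, which reduces \eqref{eqn:PK10_5} to the scalar inequality $y-\tfrac{n}{2}\log y\le E$ for $y=\tau\int\{4|\nabla u|^2+Ru^2\}\,dV$. The only differences are cosmetic: you verify explicitly the elementary fact $s-\tfrac{n}{2}\log s\ge\tfrac{s}{2}$ on $[n^2,\infty)$ that the paper merely asserts as \eqref{eqn:PK10_7}, and you note the approximation step extending \eqref{eqn:PK10_4} from $C_c^\infty$ to $W^{1,2}_*$, which the paper leaves implicit.
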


\begin{proof}
  By Jensen's inequality, we know that
  \begin{align*}
    \int u^2\log u^2 \,dV =\frac{n-2}{2}\int u^2\log u^{\frac{4}{n-2}} \,dV \le \frac{n-2}{2}\log\left(\int u^{\frac{2n}{n-2}} \,dV \right).
  \end{align*}
  Plugging the Sobolev inequality (\ref{eqn:PK10_4}) into the above inequality yields that 
  \begin{align}
    \int u^2\log u^2 \,dV  \leq \frac{n}{2}\log C_{RS} + \frac{n}{2} \log \int \left\{ 4|\nabla u|^2 + Ru^2 \right\} dV.
    \label{eqn:PK10_3}  
  \end{align}
  It follows that
  \begin{align*}
    \overline{\mathcal W}(g,u,\tau) & \geq \int \left\{ \tau(4|\nabla u|^2+Ru^2)-u^2\log u^2 \right\} dV-n-\frac{n}{2}\log(4\pi\tau)\\
    &\geq \int \tau \left\{ 4|\nabla u|^2+Ru^2 \right\} dV -\frac{n}{2} \log \int \tau \left\{ 4|\nabla u|^2 + Ru^2 \right\} dV
    -n-\frac{n}{2} \log (4\pi C_{RS}).
  \end{align*}
  Let $x=\int \left( 4|\nabla u|^2+Ru^2 \right) dV$. The above inequality can be rewritten as 
  \begin{align}
    \tau x -\frac{n}{2} \log (\tau x) \leq  \overline{\mathcal W}(g,u,\tau) +n+\frac{n}{2} \log (4\pi C_{RS})=E.  \label{eqn:PK10_6} 
  \end{align}
  Since $\tau x>0$, it follows from (\ref{eqn:PK10_6}) that
  \begin{align}
    \tau x \geq e^{-\frac{2}{n}E}.   \label{eqn:PK10_9}
  \end{align}
  On the other hand, it is clear that
  \begin{align}
    s-\frac{n}{2} \log s \geq \frac{s}{2}, \quad \textrm{on} \; [n^2, \infty).  \label{eqn:PK10_7} 
  \end{align}
  Suppose $\tau x \geq n^2$, then the combination of (\ref{eqn:PK10_6}) and (\ref{eqn:PK10_7}) implies that $\tau x \leq 2E$.
  Consequently, we always have
  \begin{align}
    \tau x \leq \max \left\{ n^2, 2E \right\}.  \label{eqn:PK10_10}
  \end{align}
  Clearly, (\ref{eqn:PK10_5}) follows from the combination of (\ref{eqn:PK10_9}) and (\ref{eqn:PK10_10}). 
\end{proof}

\begin{cor}[Sobolev inequality]
\label{cor234}
Let $\Big\{(M^n,g(t)),\, t\in (-\infty,1)\Big\}$ be the Ricci flow solution of a Ricci shrinker $(M^n,p,g,f)$, there exists a constant $C=C(n)$ such that at any time $t<1$,
\begin{align}
  \left(\int u^{\frac{2n}{n-2}}\,dV\right)^{\frac{n-2}{n}} \le Ce^{-\frac{2 \boldsymbol{\mu}}{n}} \int \left\{ 4|\nabla u|^2+Ru^2  \right\} dV
\label{E234}
\end{align}
for any smooth function $u$ with compact support.
\end{cor}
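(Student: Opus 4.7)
The plan is to deduce the Sobolev inequality \eqref{E234} from the optimal logarithmic Sobolev inequality at all scales, which is the content of Proposition~\ref{prn:PK10_2} combined with Corollary~\ref{c:mono}, and then to propagate it to every time slice by the scaling and diffeomorphism invariance of $\boldsymbol{\mu}$.

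First I would unpack Proposition~\ref{prn:PK10_2}. The statement $\boldsymbol{\mu}(g,\tau)\ge \boldsymbol{\mu}$ for all $\tau>0$, together with the definition \eqref{eqn:PL21_4} of $\boldsymbol{\mu}(g,\tau)$, yields that for every smooth compactly supported $u$ with $\int u^2\,dV=1$ and every $\tau>0$,
\begin{equation*}
\int u^2\log u^2\,dV \;\le\; \tau\int\bigl(4|\nabla u|^2+Ru^2\bigr)\,dV \;-\;\boldsymbol{\mu}\;-\;n\;-\;\tfrac{n}{2}\log(4\pi\tau).
\end{equation*}
This is a dimensional logarithmic Sobolev inequality: one estimate for each scale $\tau$, with exactly the dependence of the additive term on $\tau$ that encodes the Euclidean dimension $n$.

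Second, I would feed this family of inequalities into Davies' ultracontractivity machine for the self-adjoint operator $L\coloneqq -4\Delta+R$ on $L^2(M,dV)$. Differentiating $\|e^{-tL}u\|_q^q$ along the semigroup and substituting the log-Sobolev inequality at a scale $\tau$ coupled to $q$ and $t$ produces, after a Gronwall-type argument, the ultracontractive estimate
\begin{equation*}
\|e^{-tL}\|_{L^1(M,dV)\to L^\infty(M,dV)} \;\le\; C(n)\,e^{-\boldsymbol{\mu}}\,t^{-n/2}, \qquad \forall\,t>0,
\end{equation*}
as in Davies, \emph{Heat Kernels and Spectral Theory}, Chapter~2. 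A classical Varopoulos-type argument (spectral cutoff plus interpolation, cf.\ Saloff-Coste, \emph{Aspects of Sobolev-Type Inequalities}) converts this heat-kernel bound into the Sobolev inequality
\begin{equation*}
\lc\int u^{\frac{2n}{n-2}}\,dV\rc^{\frac{n-2}{n}} \;\le\; C(n)\,e^{-\frac{2\boldsymbol{\mu}}{n}}\int\bigl(4|\nabla u|^2+Ru^2\bigr)\,dV
\end{equation*}
for every $u\in C_c^\infty(M)$, which is exactly \eqref{E234} at $t=0$. Because $g(t)=(1-t)(\psi^t)^\ast g$, and because both $\boldsymbol{\mu}$ and the two sides of \eqref{E234} are invariant under diffeomorphism and under constant rescaling of the metric (the exponents are chosen so that the inequality is scale-invariant), the estimate transfers to every time slice $g(t)$ with the same dimensional constant $C(n)e^{-2\boldsymbol{\mu}/n}$.

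The main obstacle is to justify the semigroup manipulations rigorously on a non-compact manifold with a potentially unbounded potential $R$: one must verify that $L$ is essentially self-adjoint on $C_c^\infty(M)$, that it generates a positivity-preserving $C_0$-semigroup, and that the $L^p$--differentiation identities underlying Davies' method survive approximation by compactly supported cutoffs. The cutoff-function machinery and the maximum principles developed in Section~3, together with $R\ge 0$ on shrinkers, render each of these verifications routine, but a careful bookkeeping of constants is required to keep the final constant dimensional and to match the prefactor $e^{-2\boldsymbol{\mu}/n}$.
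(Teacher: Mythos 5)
Your proposal is correct and follows essentially the same route as the paper: the paper also derives \eqref{E234} from the all-scale logarithmic Sobolev inequality $\boldsymbol{\mu}(g(t),\tau)=\boldsymbol{\mu}(g,\tfrac{\tau}{1-t})\ge\boldsymbol{\mu}$ via Davies' ultracontractivity theorem (Corollary 2.2.8 of \cite{Dav89}) applied to the Schr\"odinger semigroup of $H=-2\Delta+\tfrac{R}{2}$ (a harmless renormalization of your $L$), followed by the standard conversion of the $L^1\to L^\infty$ heat bound into a Sobolev inequality by splitting $H^{-1/2}$, weak-type estimates and Marcinkiewicz interpolation, exactly as in Theorem 2.4.2 of \cite{Dav89}. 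The diffeomorphism/scaling invariance you invoke to handle all time slices is also how the paper proceeds.
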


\begin{proof}
We consider the Schr\"odinger operator $H=-2\Delta +\frac{R}{2}$ and the quadratic forms $Q(u)\coloneqq \int (Hu)u\,dV$ with its corresponding Markov semigroup $\{e^{-Hs},\,s\ge 0\}$. Since $\boldsymbol{\mu}(g(t),\tau) =\boldsymbol{\mu}(g,\frac{\tau}{1-t})\ge \boldsymbol{\mu}$, we have
\begin{align*} 
\int u^2 \log u\,dV \le \tau Q(u)+\beta(\tau)
\end{align*}
for any $\int u^2\,dV=1$, where $\beta(\tau)=-\frac{n}{2}-\frac{n}{4}\log(4\pi \tau)-\boldsymbol{\mu}$. Then it follows from \cite[Corollary $2.2.8$]{Dav89} that  for any $s>0$,
\begin{align} \label{eq:sob1}
\|e^{-Hs}\|_{\infty,2} \le e^{M(s)}\le Cs^{-\frac{n}{4}}e^{-\frac{\boldsymbol{\mu}}{2}},
\end{align}
where $M(s)\coloneqq \frac{1}{s}\int_0^s \beta(\tau)\,d\tau$. Now we use the same argument as in \cite[Theorem $2.4.2$]{Dav89} to derive the Sobolev inequaltiy. It follows from \eqref{eq:sob1} that for any $u \in L^2$,
\begin{align} \label{eq:sob2}
\|e^{-Hs}u\|_{\infty} \le Cs^{-\frac{n}{4}}e^{-\frac{\boldsymbol{\mu}}{2}}\|u\|_2.
\end{align}
Since $e^{-Hs}$ is self-adjoint, by taking the conjugation of \eqref{eq:sob2} we obtain
\begin{align} \label{eq:sob3}
\|e^{-Hs}u\|_2 \le Cs^{-\frac{n}{4}}e^{-\frac{\boldsymbol{\mu}}{2}}\|u\|_1.
\end{align}
Therefore, for any $s>0$,
\begin{align} \label{eq:sob4}
\|e^{-Hs}u\|_{\infty} \le Cs^{-\frac{n}{4}}e^{-\frac{\boldsymbol{\mu}}{2}}\|e^{-\frac{Hs}{2}}u\|_2 \le Cs^{-\frac{n}{2}}e^{-\boldsymbol{\mu}}\|u\|_1.
\end{align}

Combining \eqref{eq:sob4} with the fact that $e^{-Hs}$ is a contraction on $L^{\infty}$, it follows from the Riesz-Thorin interpolation that for any $q \in [1,\infty]$.
\begin{align} \label{eq:sob5}
\|e^{-Hs}u\|_{\infty} \le Cs^{-\frac{n}{2q}}e^{-\frac{\boldsymbol{\mu}}{q}}\|u\|_q.
\end{align}

We now write
\begin{align*} 
H^{-\frac{1}{2}}u=a+b
\end{align*}
where 
\begin{align*} 
a=&\Gamma^{-1}(1/2) \int_0^T s^{-\frac{1}{2}}e^{-Hs}u\,ds,\\
b=&\Gamma^{-1}(1/2) \int_T^{\infty} s^{-\frac{1}{2}}e^{-Hs}u\,ds.
\end{align*}
It follows from \eqref{eq:sob5} that
\begin{align*} 
\|b\|_{\infty} \le C\Gamma^{-1}(1/2) \int_T^{\infty} s^{-\frac{1}{2}-\frac{n}{2q}}e^{-\frac{\boldsymbol{\mu}}{q}}\|u\|_q\,ds=ce^{-\frac{\boldsymbol{\mu}}{q}}\|u\|_qT^{\frac{1}{2}-\frac{n}{2q}}
\end{align*}
for some constant $c=c(n)$. Given $\lambda>0$, we define $T>0$ by $\frac{\lambda}{2}=ce^{-\frac{\boldsymbol{\mu}}{q}}\|u\|_qT^{\frac{1}{2}-\frac{n}{2q}}$. It is clear that
\begin{align*} 
|\{x:\,|H^{-\frac{1}{2}}u(x)|\ge \lambda\}|\le |\{x:\,|a(x)|\ge \lambda/2\}| \le 2^q\lambda^{-q}\|a\|_{q}^q \le C\lambda^{-q}T^{\frac{q}{2}}\|u\|_q^q,
\end{align*}
since $e^{-Hs}$ is a contraction on $L^q$. For any $1<q<n$, we set $\frac{1}{r}=\frac{1}{q}-\frac{1}{n}$, then it follows from our choice of $\lambda$ that
\begin{align*} 
|\{x:\,|H^{-\frac{1}{2}}u(x)|\ge \lambda\}| \le Ce^{-\frac{\boldsymbol{\mu} q}{n-q}}\lambda^{-r} \|u\|_q^r.
\end{align*}
In other words,
\begin{align} \label{eq:sob6}
\|H^{-\frac{1}{2}}u\|_{r,w} \le Ce^{-\frac{\boldsymbol{\mu} q}{r(n-q)}}\|u\|_q
\end{align}
where $\|\cdot\|_{r,w}$ denotes the weak $L^r$ space. Therefore, it follows from the Marcinkiewicz interpolation theorem that
\begin{align} \label{eq:sob7}
\|H^{-\frac{1}{2}}u\|_{p} \le Ce^{-\frac{2\boldsymbol{\mu} }{p(n-2)}}\|u\|_2=Ce^{-\frac{\boldsymbol{\mu} }{n}}\|u\|_2,
\end{align}
where $\frac{1}{p}=\frac{1}{2}-\frac{1}{n}$. Therefore, \eqref{E234} is a direct consequence.
\end{proof}

\begin{rem}
It follows from the above corollary that the Yamabe invariant of $(M^n,g,f) $
\begin{align}
Y([g]) \coloneqq \inf_{u \in C_0^{\infty}(M)}\frac{\int \frac{4(n-1)}{n-2}|\nabla u|^2+Ru^2 \,dV}{\left(\int u^{\frac{2n}{n-2}}\,dV\right)^{\frac{n-2}{n}}} >0.
\end{align}
Here $Y$ depends only on the conformal class of $g$. Hence it implies some connections between a Ricci shrinker and its conformal class. Note that it is shown in \cite{ZhangZL} that each Ricci shrinker has a conformal metric such that its Ricci curvature has local bound depending only on the dimension. This fact plays a key role in \cite{LLW18}.
\end{rem}

\begin{proposition}\label{prn:PK10_3}
On a Ricci shrinker $(M^n,g,f)$, the functional  $\boldsymbol{\mu}(g,\tau)$ is a continuous function of $\tau \in (0, \infty)$. 
\end{proposition}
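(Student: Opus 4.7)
The plan is to prove Proposition~\ref{prn:PK10_3} by two half-continuity statements: upper semicontinuity will follow immediately from the variational definition, while lower semicontinuity will be derived from a uniform Dirichlet--scalar energy bound on near-minimizers, supplied by the Sobolev inequality of Corollary~\ref{cor234} combined with Lemma~\ref{lma:PK10_1}.

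For upper semicontinuity, I would fix any $u \in C_0^\infty(M)$ with $\int u^2\,dV = 1$ and observe that the map $\tau \mapsto \overline{\mathcal{W}}(g,u,\tau)$ is $C^\infty$ on $(0,\infty)$. By (\ref{eqn:PK08_5}) we have $\boldsymbol{\mu}(g,\tau) \le \overline{\mathcal{W}}(g,u,\tau)$ for every such $u$, so $\boldsymbol{\mu}(g,\cdot)$ is the pointwise infimum of a family of continuous functions and is thus upper semicontinuous at every $\tau_0>0$.

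For lower semicontinuity at a given $\tau_0>0$, I would fix a sequence $\tau_k \to \tau_0$ lying in a compact interval $[a,b] \subset (0,\infty)$. By Proposition~\ref{prn:PK10_2} we have $\boldsymbol{\mu}(g,\tau_k) \ge \boldsymbol{\mu}$, and inserting a single fixed test function into $\overline{\mathcal{W}}(g,\cdot,\tau_k)$ gives a uniform upper bound for $\boldsymbol{\mu}(g,\tau_k)$ on $[a,b]$. Choose $u_k \in C_0^\infty(M)$ with $\int u_k^2\,dV = 1$ and $\overline{\mathcal{W}}(g,u_k,\tau_k) \le \boldsymbol{\mu}(g,\tau_k) + 1/k$, so that $\overline{\mathcal{W}}(g,u_k,\tau_k)$ is uniformly bounded. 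Now apply Lemma~\ref{lma:PK10_1} with the scalar--Sobolev constant $C_{RS} = C(n) e^{-2\boldsymbol{\mu}/n}$ from Corollary~\ref{cor234}: this yields a uniform upper bound on $\tau_k \int \{4|\nabla u_k|^2 + R u_k^2\}\,dV$, and since $\tau_k \ge a > 0$, on $\int \{4|\nabla u_k|^2 + R u_k^2\}\,dV$ itself.

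Since only the $\tau$-linear and $\log\tau$ terms in $\overline{\mathcal{W}}$ depend on $\tau$, we compute
\begin{align*}
\overline{\mathcal{W}}(g,u_k,\tau_0) - \overline{\mathcal{W}}(g,u_k,\tau_k) = (\tau_0 - \tau_k)\int \{4|\nabla u_k|^2 + R u_k^2\}\,dV - \frac{n}{2}\log\frac{\tau_0}{\tau_k},
\end{align*}
and the uniform energy bound above forces this difference to tend to $0$. Consequently
\begin{align*}
\boldsymbol{\mu}(g,\tau_0) \le \overline{\mathcal{W}}(g,u_k,\tau_0) = \overline{\mathcal{W}}(g,u_k,\tau_k) + o(1) \le \boldsymbol{\mu}(g,\tau_k) + \frac{1}{k} + o(1),
\end{align*}
so $\liminf_k \boldsymbol{\mu}(g,\tau_k) \ge \boldsymbol{\mu}(g,\tau_0)$, and continuity at $\tau_0$ follows. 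The main obstacle is the uniform Dirichlet--scalar energy bound on the near-minimizers $u_k$: without the Sobolev inequality of Corollary~\ref{cor234}, the $u^2\log u^2$ term in $\overline{\mathcal{W}}$ is not controlled by $|\nabla u|^2$, and a near-minimizing sequence could concentrate and drive the Dirichlet energy to infinity, breaking the comparison at the two scales $\tau_k$ and $\tau_0$.
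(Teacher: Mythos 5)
Your proposal is correct and follows essentially the same route as the paper: upper semicontinuity from the variational definition, and lower semicontinuity via the uniform bound on $\int\{4|\nabla u_k|^2+Ru_k^2\}\,dV$ for near-minimizers supplied by Lemma~\ref{lma:PK10_1} together with the Sobolev inequality of Corollary~\ref{cor234}, followed by the same two-scale comparison of $\overline{\mathcal W}$. The only cosmetic difference is that you bound $\overline{\mathcal W}(g,u_k,\tau_k)$ from above using a fixed test function on a compact $\tau$-interval, whereas the paper invokes the already-proved upper semicontinuity; both suffice.
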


\begin{proof}
Fix $\tau_0 \in (0, \infty)$.  We need to show both the upper semi-continuity and the lower semi-continuity as $\tau_0$.

The upper-semicontinuity is more or less standard. 
Fix $u \in W_{*}^{1,2}$, we have 
\begin{align*} 
\limsup_{\tau \to \tau_0}\boldsymbol{\mu}(g,\tau)\le&\limsup_{\tau \to \tau_0}\overline{\mathcal W}(g,u,\tau) \\
=&\limsup_{\tau \to \tau_0}\int \tau(4|\nabla u|^2+Ru^2)-u^2\log u^2 \,dV-n-\frac{n}{2}\log(4\pi\tau)\\
=&\int \tau_0(4|\nabla u|^2+Ru^2)-u^2\log u^2 \,dV-n-\frac{n}{2}\log(4\pi\tau_0)\\
=&\overline{\mathcal W}(g,u,\tau_0).
\end{align*}
By taking the infimum among all qualified $u$'s, we have
\begin{align}
  \limsup_{\tau \to \tau_0}\boldsymbol{\mu}(g,\tau)\le \boldsymbol{\mu}(g,\tau_0). 
  \label{eqn:PK10_11}
\end{align}
Hence $\boldsymbol{\mu}(g,\tau)$ is upper semicontinuous. 

The lower semicontinuity relies on the estimate (\ref{eqn:PK10_5}) in Lemma~\ref{lma:PK10_1}.  
Actually, for arbitrary $u \in W_{*}^{1,2}$ satisfying the normalization condition,  direct calculation shows that
\begin{align}
  \overline{\mathcal W}(g,u,\tau)&=\overline{\mathcal W}(g,u,\tau_0) + (\tau-\tau_0)\int \left\{ 4|\nabla u|^2+Ru^2 \right\} dV-\frac{n}{2}\log \lc\frac{\tau}{\tau_0} \rc \notag\\
  &\geq \boldsymbol{\mu}(g, \tau_0) -|\tau-\tau_0|\int \left\{ 4|\nabla u|^2+Ru^2 \right\} dV-\frac{n}{2}\log \lc\frac{\tau}{\tau_0} \rc. \label{eqn:PK10_12}
\end{align}
For any $\tau_i \to \tau_0$, we choose $u_i \in W_{*}^{1,2}$ such that 
\begin{align}
  \overline{\mathcal W}(g,u_i,\tau_i)-\boldsymbol{\mu}(g,\tau_i)<i^{-1}.
  \label{eqn:PK10_14}
\end{align}
Together with (\ref{eqn:PK10_11}), this implies that
\begin{align}
  \limsup_{i \to \infty} \overline{\mathcal W}(g,u_i,\tau_i) =\limsup_{i \to \infty} \boldsymbol{\mu}(g,\tau_i) \leq \boldsymbol{\mu}(g, \tau_0). \label{eqn:PK10_13} 
\end{align}
By Corollary~\ref{cor234}, the Sobolev constant on each Ricci shrinker is finite. 
It follows from (\ref{eqn:PK10_5}) and (\ref{eqn:PK10_8}) that $\int (4|\nabla u_i|^2+Ru_i^2)\,dV$ is uniformly bounded.
In (\ref{eqn:PK10_12}), replacing $u$ by $u_i$ and letting $i \to \infty$, we obtain
\begin{align*}
  \liminf_{i \to \infty} \overline{\mathcal W}(g,u_i,\tau_i) \geq \boldsymbol{\mu}(g, \tau_0).
\end{align*}
Combining the above inequality with (\ref{eqn:PK10_14}), we obtain that 
\begin{align*}
  \liminf_{i \to \infty} \boldsymbol{\mu}(g,\tau_i) \geq \boldsymbol{\mu}(g, \tau_0),
\end{align*}
which is the lower semi-continuity at $\tau_0$.   The continuity of $\boldsymbol{\mu}(g,\tau)$ with respect to $\tau$ at $\tau_0$ follows from the combination of the 
above inequality and (\ref{eqn:PK10_11}). 
\end{proof}

\section{Optimal logarithmic Sobolev constant---Part II}

We first prove the log-Sobolev inequality for the conjugate heat kernel following \cite{HN13}. The proof in \cite{HN13} is for spacetime with bounded geometry. Since we do not impose any curvature restriction here, more should be done due to the integration by parts.

\begin{thm}
For any Ricci shrinker $(M^n,g,f)$ with its heat kernel $H(x,t,y,s)$,
\begin{align*} 
\int \rho\log \rho \,dv_s-\left(\int \rho \,dv_s\right)\log{\left(\int \rho\, dv_s\right)} \le  (t-s) \int \frac{|\nabla \rho|^2}{\rho}\,dv_s.
\end{align*}
\label{T305}
Here $dv_s(y)=H(x,t,y,s)dV_s(y)$ for any $x\in M$ and $s<t<1$ and $\rho$ is any nonnegative function such that $\sqrt \rho \in W^{1,2}(M,v_s)$ and $\int d^2(p,\cdot)\rho \,dv_s <\infty$. If the equality holds, then either $\rho$ is a constant or $(M^n,g)$ splits off a $\R$ factor.
\end{thm}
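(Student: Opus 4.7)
The plan is to follow Hein–Naber's parabolic adaptation of the Bakry–\'Emery argument, with the main technical issue being the justification of integration by parts on the non-compact shrinker. First I would scale $\rho \mapsto \rho/a$ with $a = \int \rho\,dv_s$ to reduce to the normalization $\int\rho\,dv_s=1$, then use a truncation-and-smoothing argument (in the spirit of the passage from $W_{*}^{1,2}$ to $C_0^\infty$ used for~\eqref{eqn:PK08_5}) to further assume $\rho = \rho_0$ is smooth and compactly supported. Next, fix the base point $(x,t)$ of the theorem, write $K(y,\sigma) := H(x,t,y,\sigma)$, and extend $\rho_0$ to a forward heat solution $\rho(\cdot,\sigma)$ on $M\times[s,t)$ via Theorem~\ref{thm:PK17_1}. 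By Theorem~\ref{thm:PK17_2} and the strong maximum principle, $0<\rho\le\sup\rho_0$ on $M\times(s,t)$, and Proposition~\ref{prn:PK15_1} applied to the bounded integrable $\rho$ and the bounded conjugate heat solution $K$ gives $\int \rho K\,dV_\sigma \equiv 1$; in particular $\rho(x,t)=1$. Define
\begin{align*}
E(\sigma) := \int (\rho\log\rho)\,K\,dV_\sigma, \qquad I(\sigma) := \int \frac{|\nabla\rho|^2}{\rho}\,K\,dV_\sigma.
\end{align*}
The goal will be to establish the two monotonicity identities $\partial_\sigma E = -I$ and $\partial_\sigma I \le 0$, so that $E(s) = \int_s^t I(\sigma)\,d\sigma \le (t-s)\,I(s)$.

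Both identities will come from the pointwise equations
\begin{align*}
\square(\rho\log\rho) = -\frac{|\nabla\rho|^2}{\rho}, \qquad
\square\left(\frac{|\nabla\rho|^2}{\rho}\right) = -\frac{2}{\rho}\left|\text{Hess}\,\rho - \frac{d\rho\otimes d\rho}{\rho}\right|^2 \le 0,
\end{align*}
where the first is a direct consequence of $\square\rho = 0$ and the second is the Bochner–Ricci-flow identity also used in the proof of Theorem~\ref{T:Bakeme}. Formally pairing with $K$, using $\square^* K = 0$ and integration by parts, gives the Duhamel-type formula $\partial_\sigma\int f\,K\,dV_\sigma = \int (\square f)\,K\,dV_\sigma$, hence both monotonicity statements. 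To make this rigorous I would multiply by the cutoff $\phi^r$ of Lemma~\ref{lem:cutoff}, integrate on $M\times[s_1,t_1]$ with $s<s_1<t_1<t$, and let $r\to\infty$. The remainder terms, involving $\square\phi^r$, $|\nabla\phi^r|^2/\phi^r$, and cross-pairings $\langle\nabla\phi^r,\nabla K\rangle$ and $\langle\nabla\phi^r,\nabla\rho\rangle$, will be controlled using the Gaussian-type bound $K\le C\bar v$ from Lemma~\ref{L202}, the quadratic growth of $F$ from Lemma~\ref{L100}, the volume estimate of Lemma~\ref{L101}, and, for the $\partial_\sigma I$ step, the weighted Hessian integral of Lemma~\ref{L103}, all following the template developed in the proof of Theorem~\ref{T201}.

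For the endpoint, $K\,dV_\sigma$ is a probability measure by~\eqref{eqn:PK14_6} and converges weakly to $\delta_x$ as $\sigma\to t^-$ (a standard heat-kernel property that also follows from Lemma~\ref{L202} and the semigroup identity~\eqref{eqn:PK14_5}); since $\rho\log\rho$ is uniformly bounded on $M\times[s,t]$ and continuous at $(x,t)$ with value $\rho(x,t)\log\rho(x,t) = 0$, this gives $\lim_{\sigma\to t^-}E(\sigma) = 0$. Integrating $\partial_\sigma E = -I$ and using that $I$ is non-increasing then yields $E(s)\le(t-s)I(s)$, and undoing the scaling handles the general case. For the rigidity statement, equality forces $I(\sigma) \equiv I(s)$ on $[s,t)$, hence $\square(|\nabla\rho|^2/\rho) \equiv 0$ on $M$ (since $K>0$), hence $\text{Hess}(\log\rho) \equiv 0$; therefore either $\rho$ is constant or $\nabla\log\rho$ is a nonzero parallel vector field, so $(M,g)$ splits off an $\R$ factor by de Rham.

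The hardest part will be the $\partial_\sigma I \le 0$ step: the cutoff argument must simultaneously handle the cutoff derivatives, the non-uniform decay of $K$, and the Hessian-squared terms coming out of $\square(|\nabla\rho|^2/\rho)$, whose integrability against $K\,dV_\sigma\sim e^{-f}\,dV$ relies precisely on the weighted estimate of Lemma~\ref{L103}. I expect the cleanest route is to carry out both monotonicity calculations on $[s_1,t_1]\subset(s,t)$, where $\rho>0$ is smooth and all terms are manifestly integrable, and then let $s_1\to s^+$ and $t_1\to t^-$ using continuity of the integrals and the uniform bounds on $\rho$ and $K$ supplied by Lemma~\ref{L202}.
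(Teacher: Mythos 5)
Your proposal follows essentially the same route as the paper: reduce to a smooth compactly supported $\rho_0$, evolve it forward by the heat equation against the conjugate heat kernel $w=H(x,t,\cdot,\cdot)$, derive $\partial_\sigma E=-I$ and $\partial_\sigma I\le 0$ from $\square(\rho\log\rho)=-|\nabla\rho|^2/\rho$ and $\square(|\nabla\rho|^2/\rho)=-\tfrac{2}{\rho}|\mathrm{Hess}\,\rho-\rho^{-1}d\rho\otimes d\rho|^2$ via cutoffs $\phi^r$, evaluate the endpoint with the $\delta$-function property, and read off the splitting from $\mathrm{Hess}(\log\rho)=0$ in the equality case; the paper's explicit double time-integral is just your ``$I$ non-increasing'' statement written out. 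The only tool you did not name is the gradient estimate of Lemma~\ref{L301}, which the paper uses to control the cubic remainder $\int w|\nabla\phi^r|\,|\nabla\rho|^3/\rho^2$ arising in the $\partial_\sigma I$ step, where the uniform bound on $|\nabla\rho|^2/\rho$ alone does not suffice.
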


\begin{proof}
By a similar approximation process as in Section $4$, we only need to prove the inequality for any $\rho$ such that $\sqrt \rho$ is a compactly supported smooth function. Without loss of generality, we assume $s=0$ and fix $T>0$ and $q \in M$. Moreover, we set $w(x,t)=H(q,T,x,t)$, $dv=w(y,0)\,dV_0(y)$ and $\rho(x,t)$ is the bounded solution of the heat equation starting from $\rho(x)$. In the proof, we denote $\rho(x,t)$ by $\rho$ with the time $t$ implicitly understood. We also assume that $\rho$ is uniformly bounded by $1$ on $M \times [0,T]$.

It is clear from the definition of $w$ that
\begin{align*} 
\lim_{t \nearrow T} \int \rho (\log \rho) w\phi^r \,dV_t=\rho(q,T) \log \rho(q,T)=\left(\int  \rho\,dv\right)\log{\left(\int \rho\, dv\right)}
\end{align*}
and
\begin{align*} 
\int \rho (\log \rho) w\phi^r \,dV_0=\int \rho\log \rho \phi^r \,dv.
\end{align*}
Therefore, we have
\begin{align*} 
\int \rho\log \rho \phi^r \,dv-\left(\int \rho \,dv\right)\log{\left(\int \rho\, dv\right)} = \int_0^T -\partial_t \int \rho (\log \rho) w\phi^r \,dV_t \,dt.
\end{align*}

By direct computations
\begin{align} 
& -\partial_t \int \rho (\log \rho) w\phi^r \,dV_t \notag \\
&=-\int \rho_t(\log \rho+1)w\phi^r+\rho(\log \rho)w_t \phi^r+\rho(\log \rho)w\phi^r_t-R\rho(\log \rho)w\phi^r \,dV_t \notag \\
&=-\int \Delta \rho(\log \rho+1)w\phi^r-\rho(\log \rho)\Delta w \phi^r+\rho(\log \rho)w\phi^r_t \,dV_t  \notag \\
&=\int \frac{|\na \rho|^2}{\rho}w\phi^r+2w\la\na(\rho\log \rho),\na \phi^r \ra-\rho(\log \rho)w \square \phi^r\,dV_t. \label{X601}
\end{align}

Similarly,
\begin{align*}
 \partial_t \int \frac{|\na \rho|^2}{\rho}w\phi^r\,dV_t =\int \square \left(\frac{|\na \rho|^2}{\rho}\right)w\phi^r-2w\la\na\left(\frac{|\na \rho|^2}{\rho}\right),\na \phi^r \ra+\frac{|\na \rho|^2}{\rho}w \square \phi^r\,dV_t.
\end{align*}

Since 
\begin{align}\label{X603}
\square \frac{|\nabla \rho|^2}{\rho}=-\frac{2}{\rho}\left|\text{Hess}\,\rho-\frac{d\rho \otimes d\rho}{\rho}\right|^2 
\end{align}
we have for any $s \in [0,T]$,
\begin{align}
&\int \frac{|\na \rho|^2}{\rho}w\phi^r\,dV_s \notag\\
 =& \int \frac{|\na \rho|^2}{\rho} \phi^r \,dv-\int_0^s\int 2w\la\na\left(\frac{|\na \rho|^2}{\rho}\right),\na \phi^r \ra\,dV_t \,dt \notag \\
&+\int_0^s\int \frac{|\na \rho|^2}{\rho}w \square \phi^r\,dV_t \,dt- \int_0^s\int \frac{2}{\rho}\left|\text{Hess}\,\rho-\frac{d\rho \otimes d\rho}{\rho}\right|^2w\phi^r \,dV_t \,dt.\label{X602} 
\end{align}

With \eqref{X601} and \eqref{X602}, we have proved so far that if $r$ is sufficiently large,
\begin{align*}
&\int \rho\log \rho \,dv-\left(\int \rho \,dv\right)\log{\left(\int \rho\, dv\right)}- T\int \frac{|\na \rho|^2}{\rho} \,dv \\
 =&\int_0^T\int2w\la\na(\rho\log \rho),\na \phi^r \ra \,dV_t\,dt-\int_0^T\int \rho(\log \rho)w \square \phi^r\,dV_t\,dt \\
&+\int_0^T\int_0^s\int\frac{|\na \rho|^2}{\rho}w \square \phi^r\,dV_t dtds-\int_0^T\int_0^s\int 2w\la\na\left(\frac{|\na \rho|^2}{\rho}\right),\na \phi^r \ra\, dV_t dtds \\
&- \int_0^T\int_0^s\int \frac{2}{\rho}\left|\text{Hess}\,\rho-\frac{d\rho \otimes d\rho}{\rho}\right|^2w\phi^r \,dV_t \,dt\,ds\\
=&I+II+III+IV+V,
\end{align*}
where
\begin{align*}
I=& \int_0^T\int2w\la\na(\rho\log \rho),\na \phi^r \ra \,dV_t\,dt,\\
II=& \int_0^T\int-\rho(\log \rho)w \square \phi^r\,dV_t\,dt, \\
III=& \int_0^T\int_0^s\int\frac{|\na \rho|^2}{\rho}w \square \phi^r\,dV_t dtds, \\
IV=&\int_0^T\int_0^s\int -2w\la\na\left(\frac{|\na \rho|^2}{\rho}\right),\na \phi^r \ra \,dV_t dtds, \\
V=&\int_0^T\int_0^s\int -\frac{2}{\rho}\left|\text{Hess}\,\rho-\frac{d\rho \otimes d\rho}{\rho}\right|^2w\phi^r \,dV_t \,dtds.
\end{align*}

It remains to show that when $r \to \infty$ the sum is less or equal to 0.

We first notice that as $\rho$ is smooth with compact support, by using \eqref{X603} and the maximum principle,
\begin{align*}
\frac{|\nabla \rho|^2}{\rho} \le C.
\end{align*}
Here the assumption in Theorem \ref{T101} can be checked as \eqref{E213a}.

Now we have for the first term $I$
\begin{align*}
\lim_{r \to \infty}|I| \le \lim_{r \to \infty} 2\int_0^T\int w|\nabla \rho|(1+|\log \rho|)|\na \phi^r| \,dV_t\,dt \le  \lim_{r \to \infty} Cr^{-1/2}=0.
\end{align*}

For the second term $II$,
\begin{align*}
\lim_{r \to \infty}|II| \le \lim_{r \to \infty} \int_0^T\int w\rho|\log \rho| |\square \phi^r| \,dV_t\,dt \le  \lim_{r \to \infty} Cr^{-1}=0.
\end{align*}

Similarly for the third term $III$,
\begin{align*}
\lim_{r \to \infty}|III| \le \lim_{r \to \infty} \int_0^T\int_0^s\int\frac{|\na \rho|^2}{\rho}w |\square \phi^r|\,dV_t dtds \le  \lim_{r \to \infty} Cr^{-1}=0.
\end{align*}

The fourth term $IV$ is more involved, by computation we have
\begin{align}\label{X604}
\na\frac{|\na \rho|^2}{\rho}=2 \frac{\la \text{Hess}\,\rho,\na \rho \ra}{\rho} -\frac{|\na \rho|^2}{\rho^2}\na \rho=2 \frac{\la \text{Hess}\,\rho-\rho^{-1}d\rho\otimes d\rho,\na \rho \ra}{\rho}+\frac{|\na \rho|^2}{\rho^2}\na \rho. 
\end{align}

From \eqref{X604}, we have
\begin{align*}
|IV| \le& \int_0^T\int_0^s\int 2w|\na \phi^r| \left(2\frac{|h||\na \rho|}{\rho}+\frac{|\na \rho|^3}{\rho^2} \right)  \,dV_t dtds \\
\le& \int_0^T\int_0^s\int 2\ep\frac{|h|^2}{\rho}w\phi^r+2\ep^{-1}\frac{|\na \rho|^2}{\rho}\frac{|\na \phi^r|^2}{\phi^r}w+2w|\na \phi^r| \frac{|\na \rho|^3}{\rho^2} \,dV_t dtds \\
\le&-\ep V+C\ep^{-1}r^{-1}+2r^{-1/2}\int_0^T\int_0^s\int w\frac{|\na \rho|^3}{\rho^2} \,dV_t dtds
\end{align*}
where we denote $\text{Hess}\,\rho-\rho^{-1}d\rho\otimes d\rho$ by $h$ and $\ep \in (0,1)$.

To deal with the last integral, we notice from Lemma \ref{L301} that
\begin{align*}
\frac{|\na \rho|^3}{\rho^2} =\frac{|\nabla \rho|^{3/2}}{\rho^{3/2}} \frac{|\nabla \rho|^{3/2}}{\rho^{3/4}} \rho^{1/4} \le \frac{C}{t^{3/4}}\left( \rho^{1/6} \log{\frac{M}{\rho}} \right)^{3/2} \le \frac{C}{t^{3/4}}
\end{align*}
and hence
\begin{align*}
\int_0^s\int w\frac{|\na \rho|^3}{\rho^2} \,dV_t dt \le C\int_0^s t^{-3/4} \,dt \le C.
\end{align*}

Therefore, $\lim_{r \to \infty}V$ is finite and $\lim_{r \to \infty}|IV| \le -\ep\lc\lim_{r \to  \infty}V\rc$. By taking $\ep \to 0$, we obtain that $\lim_{r \to \infty}|IV|=0$ and hence
\begin{align}\label{eq:logsobeq}
&\int \rho\log \rho \,dv-\left(\int \rho \,dv\right)\log{\left(\int \rho\, dv\right)}- T\int \frac{|\na \rho|^2}{\rho} \,dv \\
 =&-\int_0^T\int_0^s\int \frac{2}{\rho}\left|\text{Hess}\,\rho-\frac{d\rho \otimes d\rho}{\rho}\right|^2w \,dV_t \,dt\,ds \le 0.
\end{align}

If the equality holds and $\rho$ is not a constant, it follows from \eqref{eq:logsobeq} that 
\begin{align*}
\text{Hess}(\log \rho)=\frac{1}{\rho}\lc\text{Hess}\,\rho-\frac{d\rho \otimes d\rho}{\rho} \rc=0.
\end{align*}
Therefore, $(M^n,g)$ splits off a $\R$ factor.
\end{proof}

For fixed $x,t$ and $s$, Theorem \ref{T305} implies that the probability measure $dv_s(y)=H(x,t,y,s)dV_s(y)$ satisfies the log-Sobolev inequality with the constant $\frac{1}{2(t-s)}$. It is a standard fact that log-Sobolev condition implies the Talagrand's inequality and equivalently, the Gaussian concentration, see \cite[Theorem $22.17$, Theorem $22.10$]{Vil08}. In particular we have the following theorem, see also \cite[Theorem $1.13$]{HN13}.

\begin{thm}[Gaussian concentration]
For any Ricci shrinker $(M^n,g,f)$ with its heat kernel $H(x,t,y,s)$ and reference measure $dv_s(y)=H(x,t,y,s)dV_s(y)$ and any $\sigma>0$
\begin{align*} 
v_s(A)v_s^{\frac{1}{\sigma}}(B)\le \exp{\lc-\frac{r^2}{4(1+\sigma)(t-s)}\rc}
\end{align*}
\label{T306}
where $A$ and $B$ are two sets on $M$ such that $d_s(A,B) \ge r>0$.
\end{thm}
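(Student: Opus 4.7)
The plan is to derive the Gaussian concentration inequality by invoking the standard chain LSI $\Rightarrow$ Talagrand $T_{2}$ transportation inequality $\Rightarrow$ two-set concentration, due to Otto--Villani and Marton. Concretely, Theorem \ref{T305} tells us that the probability measure $v_s$ satisfies the logarithmic Sobolev inequality with constant $\tfrac{1}{2(t-s)}$: for every probability density $u$ with respect to $v_s$ (of the regularity required by T305),
\[
H(u\,v_s \mid v_s) \;\le\; (t-s)\, I(u\,v_s \mid v_s),
\]
where $H$ is the relative entropy and $I$ the Fisher information. By the Otto--Villani theorem (Theorem 22.17 of \cite{Vil08}), this LSI implies the quadratic Talagrand inequality
\[
W_2^2(\mu, v_s) \;\le\; 4(t-s)\, H(\mu \mid v_s)
\]
for every probability measure $\mu \ll v_s$.

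Next I apply Marton's coupling argument. Given $A,B\subset M$ with $d_s(A,B)\ge r$, introduce the conditioned measures $\mu_A = v_s(A)^{-1}\mathbf{1}_A v_s$ and $\mu_B = v_s(B)^{-1}\mathbf{1}_B v_s$, whose relative entropies are
\[
H(\mu_A\mid v_s) = -\log v_s(A), \qquad H(\mu_B\mid v_s) = -\log v_s(B).
\]
Any coupling of $\mu_A$ with $\mu_B$ is supported on the set $\{(x,y): d_s(x,y)\ge r\}$, so $W_2(\mu_A,\mu_B)\ge r$. Combining the triangle inequality for $W_2$ with $T_{2}$ applied to each of $\mu_A$ and $\mu_B$ yields
\[
r \;\le\; W_2(\mu_A, v_s)+W_2(v_s,\mu_B) \;\le\; 2\sqrt{t-s}\Bigl(\sqrt{-\log v_s(A)}+\sqrt{-\log v_s(B)}\Bigr).
\]

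Finally, writing $a=-\log v_s(A)$ and $b=-\log v_s(B)$, the previous bound becomes $(\sqrt a + \sqrt b)^2 \ge r^2/(4(t-s))$. A weighted Cauchy--Schwarz inequality
\[
(\sqrt a + \sqrt b)^2 \;=\; \bigl(\sqrt a\cdot 1 + \sqrt{b/\sigma}\cdot\sqrt\sigma\bigr)^2 \;\le\; (1+\sigma)\Bigl(a+\tfrac{b}{\sigma}\Bigr)
\]
then gives $-\log v_s(A)-\tfrac{1}{\sigma}\log v_s(B)\;\ge\;\tfrac{r^2}{4(1+\sigma)(t-s)}$, which upon exponentiation is exactly the desired bound $v_s(A)\, v_s(B)^{1/\sigma}\le \exp\bigl(-r^2/(4(1+\sigma)(t-s))\bigr)$.

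The only real obstacle is ensuring that Otto--Villani applies in this non-compact, non-static setting with a reference measure $v_s$ whose density can decay rapidly. Since $v_s$ is smooth and strictly positive, and the LSI already holds in the form provided by Theorem \ref{T305}, a standard approximation of $\mu$ by compactly supported densities (paralleling the one used just before Theorem \ref{T305}) is enough; alternatively one can sidestep optimal transport entirely via the Herbst argument, applying LSI to $e^{\lambda f}$ for bounded $1$-Lipschitz truncations of $f(\cdot)=d_s(\cdot,A)$ and optimizing in $\lambda$, which directly produces the same two-set tail bound.
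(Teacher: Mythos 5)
Your proposal is correct and follows essentially the same route as the paper: Theorem \ref{T305} gives the LSI with constant $\tfrac{1}{2(t-s)}$, Otto--Villani (Theorem 22.17 of \cite{Vil08}) upgrades it to the $T_2$ inequality, and Marton's argument with the conditioned measures $\mathbf{1}_A v_s/v_s(A)$, $\mathbf{1}_B v_s/v_s(B)$, the triangle inequality for $W_2$, and the weighted Cauchy--Schwarz bound $(\sqrt a+\sqrt b)^2\le(1+\sigma)(a+b/\sigma)$ yield the stated two-set estimate. The paper writes the last step as $(1+\sigma)a+(1+\sigma^{-1})b$, which is the same quantity, so there is no substantive difference.
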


\begin{proof}
From Theorem \eqref{T305}, we have for any probability measure $\rho dv_s$,
\begin{align}\label{X605} 
\int \rho \log \rho \,dv_s \le (t-s)\int \frac{|\na \rho|^2}{\rho} \,dv_s.
\end{align}
By a further approximation, we can assume \eqref{X605} holds for any locally Lipschitz $\rho$. Now it follows from \cite[Theorem $22.17$]{Vil08} that $dv_s$ satisfies the $T_2$ Talagrand inequality, that is,
\begin{align} \label{X606}  
W_2(\eta, v_s) \le \sqrt{4(t-s)}\left(\int  \rho \log  \rho \,dv_s\right)^{1/2}
\end{align}
for any measure $\eta \in P_2(M)$, where $W_2$ is the Wasserstein distance of second order. For any two sets $A$ and $B$ on $M$ such that $d_s(A,B) \ge r>0$. We set $\eta=\frac{1_A}{v_s(A)}v_s$ and $v=\frac{1_B}{v_s(B)}v_s$. Then on the one hand,
\begin{align*} 
W_2(\eta,v) \le& W_2(\eta,v_s)+W_2(v,v_s) \\
\le& \sqrt{4(t-s)}\left(\left(\int  \frac{1_A}{v_s(A)} \log \frac{1_A}{v_s(A)} \,dv_s\right)^{1/2}+\left(\int  \frac{1_B}{v_s(B)} \log \frac{1_B}{v_s(B)}\,dv_s\right)^{1/2}\right) \\
= &\sqrt{4(t-s)}\left( \lc\log{\frac{1}{v_s(A)}}\rc^{1/2}+\lc\log{\frac{1}{v_s(B)}}\rc^{1/2}\right)
\end{align*}
and hence
\begin{align*} 
W^2_2(\eta,v) \le&  4(t-s)\left( \lc\log{\frac{1}{v_s(A)}}\rc^{1/2}+\lc\log{\frac{1}{v_s(B)}}\rc^{1/2}\right)^2 \\
\le&  4(t-s)\left((1+\sigma)\log{\frac{1}{v_s(A)}}+(1+\sigma^{-1})\log{\frac{1}{v_s(B)}}\right).
\end{align*}
On the other hand, it follows from the definition of $W_2$ that
\begin{align*} 
W^2_2(\eta,v) =\int d^2_s(x,y)\,d\pi(x,y) \ge r^2
\end{align*}
where $\pi$ is the optimal transport between $\eta$ and $v$.

Therefore by computation
\begin{align*}
v_s(A)v_s^{\frac{1}{\sigma}}(B)\le \exp{\lc-\frac{r^2}{4(1+\sigma)(t-s)}\rc}
\end{align*}
\end{proof}

In fact, with the Gaussian concentration, we can prove that $v_s$ has finite square-exponential moment.

\begin{cor}
For any Ricci shrinker $(M^n,g,f)$ with its heat kernel $H(x,t,y,s)$ and reference measure $dv_s(y)=H(x,t,y,s)dV_s(y)$, if $a< \frac{1}{4(t-s)}$, then
\begin{align*} 
\int e^{ad_s^2(p,x)}\,dv_s < \infty.
\end{align*}
\end{cor}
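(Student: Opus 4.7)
The plan is to deduce the finite square-exponential moment from the Gaussian concentration estimate of Theorem \ref{T306} in a completely standard way: we control the tail $v_s(\{d_s(p,\cdot)>r\})$ by a Gaussian of almost the optimal rate $\frac{1}{4(t-s)}$, and then integrate using the layer-cake formula.

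First, since $v_s$ is a probability measure on $M$, there exists $R_0=R_0(x,t,s)>0$ such that the set $A\coloneqq B_s(p,R_0)$ satisfies $v_s(A)\geq\tfrac12$. For $r>R_0+1$, set $B_r\coloneqq M\setminus B_s(p,r)$. Then $d_s(A,B_r)\geq r-R_0$, and Theorem \ref{T306} applied with this $A,B_r$ and any $\sigma>0$ yields
\begin{align*}
v_s(B_r)^{1/\sigma}\,v_s(A)\leq \exp\!\lc-\frac{(r-R_0)^2}{4(1+\sigma)(t-s)}\rc,
\end{align*}
which rearranges to
\begin{align*}
v_s(B_r)\leq 2^{\sigma}\exp\!\lc-\frac{\sigma(r-R_0)^2}{4(1+\sigma)(t-s)}\rc.
\end{align*}

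Now given $a<\frac{1}{4(t-s)}$, since $\frac{\sigma}{1+\sigma}\to 1$ as $\sigma\to\infty$, we may pick $\sigma$ large enough so that $b\coloneqq\frac{\sigma}{4(1+\sigma)(t-s)}>a$. Then there exists $a'\in(a,b)$ and a constant $C=C(a,\sigma,R_0,t-s)$ so that
\begin{align*}
v_s\lc\{x\in M\mid d_s(p,x)>r\}\rc\leq C e^{-a' r^2}\quad\text{for all }r>0.
\end{align*}

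Finally, by the layer-cake representation,
\begin{align*}
\int e^{a d_s^2(p,x)}\,dv_s(x)
=1+\int_0^{\infty}2ar\,e^{a r^2}\,v_s(\{d_s(p,\cdot)>r\})\,dr
\leq 1+ 2aC\int_0^{\infty}r\,e^{-(a'-a)r^2}\,dr<\infty,
\end{align*}
which is the claim. The only nontrivial ingredient is Theorem \ref{T306}; the rest is a mechanical reduction, with the small point to be checked being that the factor $\frac{\sigma}{1+\sigma}$ can be pushed past $a\cdot 4(t-s)<1$ by taking $\sigma$ sufficiently large, which is where the sharp threshold $a<\frac{1}{4(t-s)}$ enters. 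No truly hard step is expected.
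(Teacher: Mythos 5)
Your proof is correct and follows essentially the same route as the paper: both deduce from Theorem \ref{T306} a Gaussian tail bound $v_s(M\setminus B_s(p,r))\le Ce^{-a'r^2}$ with $a'>a$ and then integrate it up (you via the layer-cake formula, the paper via a sum over unit annuli). The only cosmetic difference is that you put the far set in the exponent-$1/\sigma$ slot and let $\sigma\to\infty$, while the paper puts it in the exponent-$1$ slot and takes $\sigma$ small; both squeeze the decay rate up to the threshold $\frac{1}{4(t-s)}$.
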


\begin{proof}
We choose a constant $\sigma>0$ such that $a < \frac{1}{4(1+\sigma)(t-s)} $.
It follows from Theorem \ref{T306} that for any integer $k \ge 2$, 
\begin{align*}
v_s(M \backslash B_s(p,k))\le \exp{\lc-\frac{(k-1)^2}{4(1+\sigma)(t-s)}\rc}
\end{align*}
Hence
\begin{align*}
\int e^{ad_s^2(p,x)}\,dv_s \le& C\lc1+\sum_{k=2}^{\infty}\int_{B_s(p,k+1)\backslash B_s(p,k)}e^{ad_s^2(p,x)}\,dv_s\rc \\
& \le C +C\sum_{k=2}^{\infty}(k+1)^n\exp{\lc a(k+1)^2-\frac{(k-1)^2}{4(1+\sigma)(t-s)}\rc}
\end{align*}
where we have used Lemma \ref{L101}. Since $a< \frac{1}{4(1+\sigma)(t-s)}$, it is easy to show that the last sum is finite.
\end{proof}


\section{Heat kernel estimates}

We first prove a pointwise upper bound for the heat kernel $H$. The idea of the proof is from \cite[Chapter $2$]{Dav89}, see also \cite{Zhangqi2}.

\begin{thm}[Ultracontractivity]
For any Ricci shrinker $(M^n,g,f)$,
\begin{align*} 
H(x,t,y,s) \le \frac{e^{-\boldsymbol{\mu}}}{(4\pi(t-s))^{\frac{n}{2}}}.
\end{align*}
\label{T301}
\end{thm}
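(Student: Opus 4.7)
The plan is to adapt Davies' entropy method (Chapter $2$ of \cite{Dav89}) to the Ricci flow setting. The crucial input is the log-Sobolev inequality on every scale, which is already available: since $\boldsymbol{\mu}(g(t),\sigma)=\boldsymbol{\mu}(g,\sigma/(1-t))\geq \boldsymbol{\mu}$ for every $\sigma>0$ by Proposition~\ref{prn:PK10_2} and the scaling behavior of $\boldsymbol{\mu}$ along the shrinker Ricci flow, \eqref{eqn:PK20_2} yields
\[
\int v^2\log v^2\,dV_t - \|v\|_2^2\log\|v\|_2^2 + \Bigl(\boldsymbol{\mu}+n+\tfrac{n}{2}\log(4\pi\sigma)\Bigr)\|v\|_2^2 \leq \sigma\int\bigl(4|\nabla v|^2+Rv^2\bigr)\,dV_t
\]
for every compactly supported Lipschitz $v$, at every $t<1$ and every scale $\sigma>0$.

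Fix $s<t_0$ in $(-\infty,1)$, set $T\coloneqq t_0-s$, and pick a nonnegative compactly supported smooth $\phi$ on $(M,g(s))$ with $\int\phi\,dV_s=1$. By duality it suffices to show $\|u(\cdot,t_0)\|_\infty\leq (4\pi T)^{-n/2}e^{-\boldsymbol{\mu}}$ for the bounded heat solution $u(x,t)\coloneqq\int H(x,t,y,s)\phi(y)\,dV_s(y)$. Choose the time-dependent exponent $p(t)\coloneqq T/(t_0-t)$, so that $p(s)=1$, $p(t_0)=+\infty$, $p'=p^2/T$, and monitor $N(t)\coloneqq \int u^{p(t)}\,dV_t$. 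Using $\partial_t u=\Delta u$, $\partial_t dV_t=-R\,dV_t$, and integration by parts,
\[
N'(t)=-\tfrac{4(p-1)}{p}\int|\nabla u^{p/2}|^2\,dV_t + \tfrac{p'}{p}\int u^p\log u^p\,dV_t - \int R u^p\,dV_t.
\]

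Apply the log-Sobolev inequality to $v\coloneqq u^{p/2}$ with the scale $\sigma\coloneqq(p-1)/p'=(t-s)(t_0-t)/T$; this choice cancels the gradient terms exactly. Since $R\geq 0$, the remaining curvature term has coefficient $-1/p<0$ and is dropped. Setting $\psi(t)\coloneqq \log\|u(\cdot,t)\|_{p(t)}=(1/p)\log N$ and using $\psi'=N'/(pN)-(p'/p^2)\log N$, the inequality collapses to
\[
\psi'(t)\leq -\tfrac{p'}{p^2}\Bigl(\boldsymbol{\mu}+n+\tfrac{n}{2}\log(4\pi\sigma)\Bigr) = -\tfrac{1}{T}\Bigl(\boldsymbol{\mu}+n+\tfrac{n}{2}\log\tfrac{4\pi(t-s)(t_0-t)}{T}\Bigr).
\]
Integrating over $t\in[s,t_0]$ and using the elementary identity $\frac{1}{T}\int_s^{t_0}\log\frac{4\pi(t-s)(t_0-t)}{T}\,dt=\log(4\pi T)-2$ gives $\psi(t_0)-\psi(s)\leq -\boldsymbol{\mu}-\tfrac{n}{2}\log(4\pi T)$; since $\psi(s)=\log\|\phi\|_1=0$, this is the required $L^\infty$ bound.

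The main technical obstacle is that $u(\cdot,t)$ need not be compactly supported, so neither the integration by parts in the derivation of $N'$ nor the application of the log-Sobolev inequality to $u^{p/2}$ is automatic. The cleanest remedy is to run the entire Davies iteration on the Dirichlet heat kernel $H_i$ of the exhaustion $\Omega_i$ constructed in the proof of Theorem~\ref{thm:PK14_1}: the associated $u_i$ vanish on $\partial\Omega_i$, extend by zero to compactly supported Lipschitz functions on $M$ to which \eqref{eqn:PK20_2} applies, and the only boundary contribution in the integration by parts has the favourable sign (using $\partial H_i/\partial\vec n\leq 0$). The iteration gives $\|u_i(\cdot,t_0)\|_\infty\leq (4\pi T)^{-n/2}e^{-\boldsymbol{\mu}}$ uniformly in $i$, and monotone convergence $H_i\nearrow H$ from \eqref{eqn:PK13_1} closes the argument.
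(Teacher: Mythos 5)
Your proposal is correct, and its core is the same as the paper's: Davies' $L^1\to L^\infty$ iteration with the time-dependent exponent interpolating between $1$ and $\infty$, fed by the logarithmic Sobolev inequality at every scale (which, exactly as you say, follows from $\boldsymbol{\mu}(g(t),\sigma)=\boldsymbol{\mu}(g,\sigma/(1-t))\geq\boldsymbol{\mu}$; this is also how the paper gets it in Corollary~\ref{cor234}). The computation of $N'$, the choice $\sigma=(p-1)/p'$ cancelling the gradient terms, the sign of the leftover scalar-curvature term, and the endpoint integral $\frac1T\int\log\sigma$ all match the paper's (E301)--(E309), up to the duality of running the scheme forward on $\square u=0$ from data at the earlier time rather than, as the paper does, on the conjugate equation $\square^*w=0$ from data at the later time. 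Where you genuinely diverge is the localization: the paper inserts the spacetime cutoff $\phi^r$ into the $L^{p(\tau)}$-norm and spends most of its proof bounding the resulting error term $U(\tau)$ (using $w\le Ce^{-f}$ and $\int|\nabla w|^2/w<\infty$), whereas you run the whole iteration on the Dirichlet kernels $H_i$ of the exhaustion $\Omega_i$ and pass to the limit by monotone convergence. Your route trades the cutoff error estimates for two boundary checks, both of which go through: the boundary term in the integration by parts is $\int_{\partial\Omega_i}pu_i^{p-1}\partial u_i/\partial\vec n$, which vanishes for $p>1$ and is $\le 0$ at $p=1$; and $u_i^{p/2}$ extended by zero, while not Lipschitz for $1<p<2$ (its gradient blows up like $d^{p/2-1}$ at $\partial\Omega_i$), is still a compactly supported element of $W_*^{1,2}$ with finite energy, to which \eqref{eqn:PK20_2} applies by the density statement \eqref{eqn:PK08_5}. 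You should state that last point rather than calling the extension Lipschitz, but it is a standard approximation and not a gap.
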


\begin{proof}
We fix $x \in M$ and two constants $s<T<1$. For notational simplicity, we assume that $\tau=T-t$ and $\partial_{\tau}=-\partial_t$. We also fix a function $p(\tau)=\frac{T-s}{T-s-\tau}$ for $\tau \in [0,T-s)$. For any nonnegative smooth function $h$ such compact support we  define
\begin{align} 
w(y,\tau)=\int H(x,T,y,T-\tau)h(x)\,dV_T(x),
\end{align}
then $\square^* w=0$.

 Now we compute,
\begin{align} 
&\partial_{\tau}||w\phi^r||_{p({\tau})}=\partial_{\tau}\left(\int (w\phi^r)^{p({\tau})}\,dV_{\tau}\right)^{\frac{1}{p({\tau})}} \notag \\
=&-\frac{p'({\tau})}{p({\tau})}||w\phi^r||_{p({\tau})} \log\left(\int  (w\phi^r)^{p({\tau})} \,dV_{\tau}\right) \notag\\
&+\frac{1}{p({\tau})}\left(\int  (w\phi^r)^{p({\tau})}\,dV_{\tau}\right)^{\frac{1}{p({\tau})}-1}\left(\int  (w\phi^r)^{p({\tau})}(\log w\phi^r) p'({\tau}) \,dV \right) \notag \\
&+\frac{1}{p({\tau})}\left(\int  (w\phi^r)^{p({\tau})}\,dV_{\tau}\right)^{\frac{1}{p({\tau})}-1}\left(\int p({\tau})(w\phi^r)^{p({\tau})-1}(w\phi^r)_{\tau}+R(w\phi^r)^{p(\tau)} \,dV \right).
\label{E301}
\end{align}

Since 
\begin{align} 
(w\phi^r)_{\tau}=\Delta w \phi^r-Rw\phi^r+w\phi^r_{\tau}=\Delta(w\phi^r)-Rw\phi^r-(\square_{\tau}\phi^r)w-2\langle \nabla w,\nabla \phi^r\rangle,
\label{E302}
\end{align}
if we multiply both sides of \eqref{E301} by $p^2({\tau})||w\phi^r||^{p({\tau})}_{p({\tau})}$, we have
\begin{align} 
&p^2({\tau})||w\phi^r||^{p({\tau})}_{p({\tau})}\partial_{\tau}||w\phi^r||_{p({\tau})} \notag\\
=&-p‘({\tau})||w\phi^r||^{p({\tau})+1}_{p({\tau})} \log\left(\int  (w\phi^r)^{p({\tau})} \,dV_{\tau}\right) \notag\\
&+p({\tau})p'({\tau})||w\phi^r||_{p({\tau})}\int (w\phi^r)^{p({\tau})}\log (w\phi^r) \,dV_{\tau} \notag \\
&-p^2({\tau})(p({\tau})-1)||w\phi^r||_{p({\tau})}\int (w\phi^r)^{p({\tau})-2}|\nabla(w\phi^r)|^2 \,dV_{\tau} \notag \\
&-(p({\tau})-1) ||w\phi^r||_{p({\tau})}\int R(w\phi^r)^{p({\tau})} \,dV_{\tau}+X.
\label{E303}
\end{align}
where 
\begin{align*} 
X=p^2({\tau})||w\phi^r||_{p({\tau})}\int (w\phi^r)^{p({\tau})-1}\left(-(\square_{\tau}\phi^r)w-2\langle \nabla w,\nabla \phi^r\rangle\right)\,dV_{\tau}. 
\end{align*}

Now we divide both sides of \eqref{E303} by $||w\phi^r||_{p({\tau})}$, then
\begin{align} 
&p^2({\tau})||w\phi^r||^{p({\tau})}_{p({\tau})}\partial_{\tau}\log||w\phi^r||_{p({\tau})}\notag \\
=&-p‘({\tau})||w\phi^r||^{p({\tau})}_{p({\tau})} \log\left(\int  (w\phi^r)^{p({\tau})} \,dV_{\tau}\right) \notag\\
&+p({\tau})p'({\tau})\int (w\phi^r)^{p({\tau})}\log (w\phi^r) \,dV_{\tau} \notag \\
&-4(p({\tau})-1)\int |\nabla(w\phi^r)^{\frac{p({\tau})}{2}}|^2 \,dV_{\tau} \notag \\
&-(p({\tau})-1) \int R(w\phi^r)^{p({\tau})} \,dV_{\tau}+Y.
\label{E304}
\end{align}
where 
\begin{align*} 
Y=p^2({\tau})\int (w\phi^r)^{p({\tau})-1}\left(-(\square_{\tau}\phi^r)w-2\langle \nabla w,\nabla \phi^r\rangle\right)\,dV_{\tau}. 
\end{align*}

We denote $v=(w\phi^r)^{\frac{p({\tau})}{2}}/||(w\phi^r)^{\frac{p({\tau})}{2}}||_2$ so that $||v||_2=1$. Now by direct computations,
\begin{align*} 
v^2\log v^2=p({\tau})v^2\log (w\phi^r)-2v^2\log ||(w\phi^r)^{\frac{p({\tau})}{2}}||_2.
\end{align*}

So \eqref{E304} becomes
\begin{align*} 
p^2({\tau})\partial_{\tau}\log||w\phi^r||_{p({\tau})}=& p'({\tau})\int v^2\log v^2 \,dV_{\tau}-4(p({\tau})-1)\int |\nabla v|^2 \,dV_{\tau}-(p({\tau})-1)\int Rv^2\,dV_{\tau}+Z
\end{align*}
where
\begin{align*} 
Z=\frac{p^2({\tau})}{||w\phi^r||^{p({\tau})}_{p({\tau})}}\int (w\phi^r)^{p({\tau})-1}\left(-(\square_{\tau}\phi^r)w-2\langle \nabla w,\nabla \phi^r\rangle\right)\,dV_{\tau}. 
\end{align*}

Now we obtain
\begin{align} 
p^2({\tau})\partial_{\tau}\log||w\phi^r||_{p({\tau})}=& p'({\tau})\left(\int v^2\log v^2 \,dV_{\tau}-\frac{p({\tau})-1}{p'({\tau})}\int 4|\nabla v|^2+ Rv^2\,dV_{\tau}\right)+Z.
\label{E305}
\end{align}

Since $\frac{p({\tau})-1}{p'({\tau})}=\frac{\tau(T-s-\tau)}{T-s}>0$, we have from \eqref{E305} 
\begin{align} 
p^2({\tau})\partial_{\tau}\log||w\phi^r||_{p({\tau})}\le& p'({\tau})\left(-\boldsymbol{\mu}-n-\frac{n}{2}\log(4\pi(p({\tau})-1)/p'({\tau}))\right)+Z.
\label{E306}
\end{align}

Now we divide both sides by $p^2(\tau)$, we have
\begin{align} 
\partial_{\tau}\log||w\phi^r||_{p({\tau})}\le \frac{p'({\tau})}{p^2(\tau)}\left(-\boldsymbol{\mu}-n-\frac{n}{2}\log(4\pi)-\frac{n}{2}\log\left(\frac{p({\tau})-1)}{p'({\tau})}\right)\right)+U(\tau),
\label{E307}
\end{align}
where
\begin{align*} 
U(\tau)=\frac{1}{||w\phi^r||^{p({\tau})}_{p({\tau})}}\int (w\phi^r)^{p({\tau})-1}\left(-(\square_{\tau}\phi^r)w-2\langle \nabla w,\nabla \phi^r\rangle\right)\,dV_{\tau}. 
\end{align*}

Now we integrate both sides of \eqref{E307} and estimate the two terms of right side separately.

For a number $L < T-s$, we integrate \eqref{E307} from $0$ to $L$ so that
\begin{align} 
&\log ||w\phi^r||_{p(L)}-\log||w\phi^r||_1 \notag \\
\le& \int_0^L \frac{p'({\tau})}{p^2(\tau)}\left(-\boldsymbol{\mu}-n-\frac{n}{2}\log(4\pi)-\frac{n}{2}\log\left(\frac{p({\tau})-1)}{p'({\tau})}\right)\right)\, d\tau+\int _0^LU(\tau)\,d\tau\notag\\
=&I(L)+II(L).
\label{E308}
\end{align}

By direct computations,
\begin{align} 
I(T-s)=& \int_0^{T-s} \frac{p'({\tau})}{p^2(\tau)}\left(-\boldsymbol{\mu}-n-\frac{n}{2}\log(4\pi)-\frac{n}{2}\log\left(\frac{p({\tau})-1)}{p'({\tau})}\right)\right)\, d\tau \notag\\
=&-\frac{n}{2}\log(T-s)-\boldsymbol{\mu}-\frac{n}{2}\log(4\pi)
\label{E309}
\end{align}

Now we consider the term $U(\tau)$.
\begin{align} 
|U(\tau)| \le&\frac{1}{||w\phi^r||^{p({\tau})}_{p({\tau})}}\int w^{p({\tau})}|\square_{\tau}\phi^r|+2|\nabla w||\nabla \phi^r|\,dV_{\tau}. 
\label{E310}
\end{align}

Since we construct $w$ through a smooth function with compact support, 
\begin{align*} 
w \le Ce^{-f}
\end{align*}
for a constant $C$ uniformly on $M \times [T-s-L,T-s]$. On the other hand, by Lemma \ref{L201c} $\sqrt w \in W_{*}^{1,2}$ for any $\tau>0$, in particular
\begin{align*} 
\int \frac{|\nabla w|^2}{w} \,dV < \infty.
\end{align*}

Now the second term in \eqref{E310} can be estimated as
\begin{align*} 
\int |\nabla w||\nabla \phi^r| \, dV=&\int \frac{|\nabla w|}{\sqrt w}|\nabla \phi^r|\sqrt w \, dV 
\le  \left(\int \frac{|\nabla w|^2}{ w} \, dV \right)^{\frac{1}{2}} \left(\int |\nabla \phi^r|^2 w \, dV \right)^{\frac{1}{2}}
\end{align*}

For any fixed $L$, it is easy to say $U(\tau)$ is uniformly bounded for any $\tau \in [T-s-L,T-s]$ and $r \ge 1$. By taking $r \to \infty$ in \eqref{E308}, from the dominated convergence theorem, 
\begin{align*} 
\log ||w||_{p(L)}-\log||w||_1 \le I(L).
\end{align*}

Now by taking $L \to T-s$ we have
\begin{align*} 
\log ||w||_{\infty}-\log||w||_1 \le -\frac{n}{2}\log(T-s)-\boldsymbol{\mu}-\frac{n}{2}\log(4\pi).
\end{align*}

Therefore,
\begin{align*} 
\int H(x,T,y,s)h(x)\,dV_T(x) &\le \frac{e^{-\boldsymbol{\mu}}}{(4\pi(T-s))^{n/2}} \iint H(x,T,y,s)h(x)\,dV_T(x)\,dV_s(y) \\
&=\frac{e^{-\boldsymbol{\mu}}}{(4\pi(T-s))^{n/2}} \int h(x)\,dV_T(x).
\end{align*}

Since $h(x)$ can be any smooth function with compact support, we derive that
\begin{align*} 
H(x,T,y,s) \le \frac{e^{-\boldsymbol{\mu}}}{(4\pi(T-s))^{n/2}}.
\end{align*}
\end{proof}


Now we derive the lower bound of $H$. Recall that the reduced distance between $(x,t)$ and $(y,s)$ are defined as 
\begin{align} 
l_{(x,t)}(y,s)=\frac{1}{2\sqrt{t-s}} \inf \left\{\mathcal L(\gamma): \, \gamma:[s,t] \to M \, \text{between $(x,t)$ and $(y,s)$}\right\},
\end{align}
where 
\begin{align} 
\mathcal L(\gamma)=\int_s^t \sqrt{t-z} \left(|\gamma'(z)|_{z}^2+R(\gamma(z),z)\right)\,dz.
\end{align}

Now we have the following important estimate, see Corollary $9.5$ of \cite{Pe1}. The proof is motivated by \cite[Proposition $1$]{CLY84}.

\begin{thm}
For any Ricci shrinker $(M^n,g,f)$,
\begin{align*} 
H(x,t,y,s) \ge \frac{e^{-l_{(x,t)}(y,s)}}{(4\pi(t-s))^{\frac{n}{2}}}.
\end{align*}
\label{T302}
\end{thm}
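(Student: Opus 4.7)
The plan is to prove the equivalent comparison $H(x,t,\cdot,\cdot)\ge \widetilde H$ on $M\times(-\infty,t)$, where
\begin{align*}
\widetilde H(y,s)\coloneqq (4\pi(t-s))^{-n/2}\,e^{-l_{(x,t)}(y,s)}.
\end{align*}
The crux is Perelman's subsolution property: at any point where $l$ is smooth,
\begin{align*}
\square^{*}_{y,s}\widetilde H \le 0,
\end{align*}
which is equivalent to $l_s-\Delta l+|\nabla l|^2-R+\frac{l-n/2}{t-s}\ge 0$, a pointwise computation along $L$-geodesics from $(x,t)$ requiring no curvature assumption (Lemma 7.1 of \cite{Pe1}). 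At the $L$-cut locus one applies Calabi's barrier trick: replace $l$ near the target point by the $L$-length of a fixed minimizing $L$-geodesic, obtaining a smooth upper barrier that satisfies the same inequality.

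With this subsolution in hand, I carry out a spacetime integration by parts parallel to Proposition~\ref{prn:PK15_1}. Fix $(y_0,s_0)$ with $s_0<t$, and let $h_0\in C_c^\infty(M)$ be nonnegative. By Theorem~\ref{thm:PK17_1}, let $u(y,\sigma)$ be the bounded integrable forward heat solution on $M\times[s_0,t)$ with $u(\cdot,s_0)=h_0$. Using the cutoff $\phi^r=\eta(F/r)$ together with the estimates in Lemma~\ref{lem:cutoff}, a direct computation gives
\begin{align*}
\frac{d}{d\sigma}\int_M u\,\widetilde H\,\phi^r\,dV_\sigma
=-\int_M u\,\widetilde H\,\square\phi^r\,dV_\sigma
+\int_M u\,(\square^{*}\widetilde H)\phi^r\,dV_\sigma
+2\int_M u\,\langle\nabla\widetilde H,\nabla\phi^r\rangle\,dV_\sigma.
\end{align*}
Sending $r\to\infty$, the middle term is nonpositive by Step~1, while the cutoff boundary terms vanish because $\widetilde H$ decays super-exponentially at spatial infinity (from $l\gtrsim F/(t-s)$ and Lemma~\ref{L100}, together with the polynomial volume growth of Lemma~\ref{L101}). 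Hence $\int_M u\,\widetilde H\,dV_\sigma$ is nondecreasing in $\sigma\in[s_0,t)$.

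Next I let $\sigma\to t^{-}$. Since $l(y,\sigma)\to d_t^2(x,y)/(4(t-\sigma))$ and the underlying metrics are uniformly equivalent on compact time intervals near $t$, the probability-like measure $\widetilde H(\cdot,\sigma)\,dV_\sigma$ concentrates at $x$, giving
\begin{align*}
\lim_{\sigma\to t^{-}}\int_M u(\cdot,\sigma)\,\widetilde H(\cdot,\sigma)\,dV_\sigma = u(x,t).
\end{align*}
Combined with the semigroup identity $u(x,t)=\int_M H(x,t,z,s_0)\,h_0(z)\,dV_{s_0}$, monotonicity yields
\begin{align*}
\int_M H(x,t,z,s_0)\,h_0(z)\,dV_{s_0}(z)\;\ge\;\int_M h_0(z)\,\widetilde H(z,s_0)\,dV_{s_0}(z).
\end{align*}
Taking $h_0$ to approximate $\delta_{y_0}$ then delivers the pointwise bound $H(x,t,y_0,s_0)\ge \widetilde H(y_0,s_0)$, which is the claim.

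\paragraph*{Main obstacle.} Everything is routine in the compact setting; the two real difficulties are noncompact in nature. First, the $L$-cut locus has to be handled by Calabi's barrier argument. Second, and more seriously, the integration by parts must be justified with no curvature hypothesis; this is precisely the reason the cutoff family $\phi^r=\eta(F/r)$ and the associated estimates of Lemma~\ref{lem:cutoff} were developed, and why the super-exponential decay of $\widetilde H$ (guaranteed by the quadratic lower bound on $l$ via $F$ from Lemma~\ref{L100}) is indispensable. The mass concentration of $\widetilde H$ as $\sigma\to t^{-}$ is a standard consequence of the short-time structure of $L$-geodesics and poses no real difficulty beyond bookkeeping.
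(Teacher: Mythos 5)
Your overall strategy coincides with the paper's: both proofs rest on the subsolution property $\square^{*}L\le 0$ for $L=(4\pi(t-s))^{-n/2}e^{-l}$, the monotonicity in time of the pairing of $L$ against a forward heat solution (justified with the cutoffs $\phi^r$), and the two delta-limits at the endpoints. The only structural variation is cosmetic: you pair $L$ with a solution started from a compactly supported $h_0$ and approximate $\delta_{y_0}$ at the very end, whereas the paper pairs $L(x,T,\cdot,\cdot)$ directly with $H(\cdot,\cdot,y,s)$ on $[s+\epsilon,T-\epsilon]$ and sends $\epsilon\to 0$ at both ends simultaneously.

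However, your treatment of the cutoff error terms has a genuine gap. After your integration by parts the cross term is $\int u\langle\nabla\widetilde H,\nabla\phi^r\rangle\,dV=-\int u\,\widetilde H\,\langle\nabla l,\nabla\phi^r\rangle\,dV$, and you kill it by invoking ``super-exponential decay of $\widetilde H$'' via a lower bound $l\gtrsim F/(t-s)$. No such bound is established (or needed) in the paper, and it is not obvious on a Ricci shrinker without curvature hypotheses: the Cauchy--Schwarz lower bound for $\mathcal L(\gamma)$ controls the length of $\gamma$ only in the time-varying metrics, which are not uniformly comparable to $g(s)$ globally, and the usual Perelman identities bounding $|\nabla l|^2$ involve integrals of curvature along $L$-geodesics that you cannot control here. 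Moreover $l$ is merely locally Lipschitz, so $\nabla\widetilde H$ on the annulus $\{r\le F\le 2r\}$ needs separate justification. The paper sidesteps all of this by arranging the integration by parts so that the gradient never falls on $L$: its cross term is $\int L\langle\nabla H,\nabla\phi^r\rangle$, handled by Cauchy--Schwarz using the energy estimate $\iint|\nabla H|^2\,dV\,dt<\infty$ (obtained by testing $\square H=0$ against $(\phi^r)^2H$, see \eqref{E310d}--\eqref{E310e}) together with $\int L\,dV_t\le 1$ and the uniform bound $L\le(4\pi(t-s))^{-n/2}$ coming from $l\ge 0$ (which holds since $R\ge 0$). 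You could repair your version the same way by moving all derivatives onto $u\phi^r$ and using $|\nabla u|\le C$ from Corollary~\ref{cly:PK15_1}, but as written the decisive decay claim is unsupported. (There is also a sign slip: with your stated identity the term $\int u(\square^{*}\widetilde H)\phi^r$ enters with the wrong sign to yield monotone \emph{increase}; the correct bookkeeping gives $-\int u\phi^r\square^{*}\widetilde H\ge 0$.)
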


\begin{proof}
We set 
\begin{align} 
L(x,t,y,s)= \frac{e^{-l_{(x,t)}(y,s)}}{(4\pi(t-s))^{\frac{n}{2}}}.
\end{align}

It follows from the definition of $l_{(x,t)}(y,s)$, see \cite{Pe1} and \cite{Ye04}, that 
\begin{align} 
-\partial_s L(x,t,y,s) \le \Delta_{y,s} L(x,t,y,s)-R(y,s)L(x,t,y,s)
\label{E310aa}
\end{align}
and
\begin{align} 
\lim_{s \nearrow t}L(x,t,y,s)= \delta_x.
\end{align}

For any $x,y \in M$, $s<T$ and small $\epsilon >0$ we have
\begin{align} 
&\int L(x,T,z,T-\epsilon)H(z,T-\epsilon,y,s)\phi^r(z,T-\epsilon)\,dV_{T-\epsilon}(z)\notag\\
&-\int L(x,T,z,s+\epsilon)H(z,s+\epsilon,y,s)\phi^r(z,s+\epsilon)\,dV_{s+\epsilon}(z) \notag \\
=&\int_{s+\epsilon}^{T-\epsilon} \partial_t\left(\int L(x,T,z,t)H(z,t,y,s)\phi^r(z,t)\, dV_t\right) \,dt \notag \\
=&\int_{s+\epsilon}^{T-\epsilon} \int L_tH\phi^r \, dV \,dt+\int_{s+\epsilon}^{T-\epsilon} \int LH_t\phi^r \,dV \,dt \notag \\
&+\int_{s+\epsilon}^{T-\epsilon} \int LH\phi^r_t\, dV\,dt-\int_{s+\epsilon}^{T-\epsilon} \int LH\phi^rR\, dV \,dt                         \notag \\
\ge &-\int_{s+\epsilon}^{T-\epsilon} \int \Delta LH\phi^r \, dV \,dt+\int_{s+\epsilon}^{T-\epsilon} \int L\Delta H\phi^r \,dV \,dt +\int_{s+\epsilon}^{T-\epsilon} \int LH\phi_t^r\, dV\,dt.
\label{E310a}
\end{align}

Here and after we omit all $z,t$ for notational simplicity.

By the integration by parts, we have
\begin{align} 
-\int_{s+\epsilon}^{T-\epsilon} \int \Delta LH\phi^r\, dV\,dt=&-\int_{s+\epsilon}^{T-\epsilon} \int L\left( \Delta H\phi^r+H\Delta \phi^r+2\langle \nabla H,\nabla \phi^r \rangle \right)\, dV\,dt
\label{E310b}
\end{align}

Therefore,
\begin{align} 
&\int L(x,T,z,T-\epsilon)H(z,T-\epsilon,y,s)\phi^r(z,T-\epsilon)\,dV_{T-\epsilon}(z)\notag\\
&-\int L(x,T,z,s+\epsilon)H(z,s+\epsilon,y,s)\phi^r(z,s+\epsilon)\,dV_{s+\epsilon}(z) \notag \\
\ge&\int_{s+\epsilon}^{T-\epsilon} \int LH \square \phi^r-2L\langle \nabla H,\nabla \phi^r \rangle\, dV\,dt.
\label{E310c}
\end{align}

Now we multiply both sides of $\square H=0$ by $(\phi^r)^2H$ and do the integration.
\begin{align} 
\int_{s+\epsilon}^{T-\epsilon}\int|\nabla (\phi^rH)|^2 \,dV\,dt \le& \int_{s+\epsilon}^{T-\epsilon}\int|\nabla \phi^r|^2H^2 \,dV\,dt+ 
\int_{s+\epsilon}^{T-\epsilon}\int \frac{H^2}{2}(\phi^r)^2_t \,dV\,dt \notag \\
&-\left.\left(\int (\phi^r)^2H^2 \,dV\right)\right|_{s+\epsilon}^{T-\epsilon}.
\label{E310d}
\end{align}
It is immediate by taking $r \to \infty$ that
\begin{align} 
\int_{s+\epsilon}^{T-\epsilon}\int|\nabla H|^2 \,dV\,dt <\infty.
\label{E310e}
\end{align}

For fixed $\epsilon$, we have
\begin{align} 
&\left|\int_{s+\epsilon}^{T-\epsilon} \int LH \square \phi^r-2L\langle \nabla H,\nabla \phi^r \rangle\, dV\,dt \right| \notag \\
 \le&\int_{s+\epsilon}^{T-\epsilon} \int LH |\square \phi^r|+2L|\nabla H||\nabla \phi^r|\, dV\,dt=I+II.
\label{E310f}
\end{align}

For the first term,
\begin{align*} 
\lim_{r\to \infty}I=\lim_{r\to \infty}\int_{s+\epsilon}^{T-\epsilon} \int LH |\square \phi^r|\, dV\,dt=0
\end{align*}
since $L$ is uniformly bounded on $M \times [s+\epsilon,T-\epsilon]$ and $H$ is integrable.

For the second term,
\begin{align*} 
II=\int_{s+\epsilon}^{T-\epsilon} \int L|\nabla H||\nabla \phi^r|\, dV\,dt \le 2\left( \int_{s+\epsilon}^{T-\epsilon} \int L^2|\nabla \phi^r|^2\, dV\,dt\right)^{\frac{1}{2}}\left(\int_{s+\epsilon}^{T-\epsilon} \int |\nabla H|^2\, dV\,dt\right)^{\frac{1}{2}}.
\end{align*}

It is easy to see that
\begin{align*} 
\int L\, dV_t \le 1
\end{align*}
for any $t <T$. The proof is almost identical with our proof of stochastic completeness (\ref{eqn:PK14_6}) by using \eqref{E310aa}.

Therefore, it is immediate from \eqref{E310e} that
\begin{align} 
\lim_{r \to \infty}II \le \lim_{r\to \infty}2\left( \int_{s+\epsilon}^{T-\epsilon} \int L^2|\nabla \phi^r|^2\, dV\,dt\right)^{\frac{1}{2}}\left(\int_{s+\epsilon}^{T-\epsilon} \int |\nabla H|^2\, dV\,dt\right)^{\frac{1}{2}}=0.
\label{E310h}
\end{align}

Now it follows from \eqref{E310c} that by taking $r \to \infty$,
\begin{align} 
\int L(x,T,z,T-\epsilon)H(z,T-\epsilon,y,s)\,dV_{T-\epsilon}(z)\ge \int L(x,T,z,s+\epsilon)H(z,s+\epsilon,y,s)\,dV_{s+\epsilon}(z).
\label{E310i}
\end{align}

As $\epsilon \to 0$, both $H(z,T-\epsilon,y,s)$ and $L(x,T,z,s+\epsilon)$ are uniformly bounded (in terms of $z$). We conclude from the definition of $\delta$ function that by taking $\epsilon \to 0$
\begin{align*} 
H(x,T,y,s)\ge L(x,T,y,s).
\end{align*}
\end{proof}

We also need the following gradient estimate from \cite{Zhangqi1}.

\begin{lem}
For any Ricci shrinker $(M^n,g,f)$, suppose $u$ is a positive bounded solution of the heat equation $\square u=0$ on $M \times [0,T]$, then
\begin{align*} 
\frac{|\nabla u|}{u} \le \sqrt{\frac{1}{t}}\sqrt{\log \frac{\Lambda}{u}}
\end{align*}
where $\Lambda=\max_{M \times [0,T]}u$.
\label{L301}
\end{lem}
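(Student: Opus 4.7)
The plan is to apply Hamilton's maximum principle method with the standard auxiliary function, and then invoke a non-compact maximum principle adapted to the drift equation that emerges.

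First, I would set $L \coloneqq \log(u/\Lambda) \le 0$, so $|\nabla L|^2 = |\nabla u|^2/u^2$ is the quantity to be controlled. From $\square u = 0$ a direct calculation gives $\square L = |\nabla L|^2$. Under the Ricci flow $g(t)$, a Bochner-type computation—in which the two contributions $2\,\text{Rc}(\nabla L, \nabla L)$ arising respectively from $\partial_t|\nabla L|^2_{g(t)}$ and from the Bochner formula for $\Delta|\nabla L|^2$ cancel exactly—produces
\begin{equation*}
\square |\nabla L|^2 \;=\; -2|\text{Hess}\,L|^2 + 2\langle \nabla\square L, \nabla L\rangle \;=\; -2|\text{Hess}\,L|^2 + 2\langle \nabla|\nabla L|^2, \nabla L\rangle.
\end{equation*}

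Next, I would introduce the Hamilton-type auxiliary function $P \coloneqq t|\nabla L|^2 + L$, which encodes the desired estimate via the equivalence $P\le 0 \Longleftrightarrow |\nabla L|^2 \le t^{-1}\log(\Lambda/u)$. Combining the two identities above with $\nabla P = t\nabla|\nabla L|^2 + \nabla L$, a routine calculation yields
\begin{equation*}
\square P - 2\langle \nabla L, \nabla P\rangle \;=\; -2t|\text{Hess}\,L|^2 \;\le\; 0,
\end{equation*}
while $P(\cdot,0) = L \le 0$ since $u\le\Lambda$. Thus the claim reduces to showing $P \le 0$ on $M\times(0,T]$.

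The main obstacle is that the noncompactness of $M$ and the presence of the drift term $-2\nabla L$ place the inequality outside the direct scope of Theorem \ref{T101}. My plan is to perturb to $P_\delta \coloneqq P - \delta t$ for $\delta>0$, so that $\square P_\delta - 2\langle\nabla L,\nabla P_\delta\rangle \le -\delta$, then multiply by $(P_\delta)_+ (\phi^r)^2 e^{-2f}$ and integrate by parts as in the proof of Theorem \ref{T101}. The advection term, after integration by parts, produces contributions proportional to $|\nabla L||\nabla (P_\delta)_+|$ and $|\nabla L||\nabla\phi^r|$, both of which I expect to absorb by Cauchy–Schwarz against the strictly negative $-\delta$ gain, using the cutoff estimates of Lemma \ref{lem:cutoff}. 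The required weighted integrability $\int_0^T\!\!\int (P_\delta)_+^2 e^{-2f}\,dV_t\,dt<\infty$ follows from $u\le\Lambda$ together with a preliminary localized gradient bound obtained by running Hamilton's argument first on the exhausting Dirichlet domains $\{\Omega_i\}$ used to construct the heat kernel in Theorem \ref{thm:PK14_1}, combined with the volume growth of Lemma \ref{L101} and the Gaussian decay $\int e^{-2f}\,dV <\infty$ from Lemma \ref{L100}. Sending $r\to\infty$ and then $\delta\to 0$ yields $P\le 0$, which is the desired estimate.
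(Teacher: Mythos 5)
Your evolution identities are correct: with $L=\log(u/\Lambda)$ and $P=t|\nabla L|^2+L$ one does get $\square P-2\langle\nabla L,\nabla P\rangle=-2t|\text{Hess}\,L|^2\le 0$, and $P\le 0$ is the desired estimate. The gap is in the maximum-principle step, where you propose to absorb the drift. After multiplying $\square P_\delta\le 2\langle\nabla L,\nabla P_\delta\rangle-\delta$ by $(P_\delta)_+(\phi^r)^2e^{-2f}$ and integrating, Cauchy--Schwarz on the advection term gives
\begin{align*}
2\int\langle\nabla L,\nabla (P_\delta)_+\rangle(P_\delta)_+(\phi^r)^2e^{-2f}\,dV
\le \int|\nabla (P_\delta)_+|^2(\phi^r)^2e^{-2f}\,dV+\int|\nabla L|^2(P_\delta)_+^2(\phi^r)^2e^{-2f}\,dV.
\end{align*}
The first piece can be absorbed by the good gradient term, but the second is \emph{quadratic} in $(P_\delta)_+$ with coefficient $|\nabla L|^2=|\nabla u|^2/u^2$ --- exactly the quantity you are trying to bound, and a priori unbounded --- whereas the $-\delta$ gain is \emph{linear} in $(P_\delta)_+$ and carries no such factor. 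These terms have different homogeneities, so the absorption cannot close; one would need $\sup|\nabla L|<\infty$ beforehand, which is circular. (Integrating the drift term by parts instead produces $\int (P_\delta)_+^2\left(\Delta L-2\langle\nabla L,\nabla f\rangle+\cdots\right)e^{-2f}$, and $\Delta L=\Delta u/u-|\nabla L|^2$ and $\langle\nabla L,\nabla f\rangle$ are equally uncontrolled since $u$ may be arbitrarily small.) Your fallback of first running Hamilton's argument on the Dirichlet domains $\Omega_i$ also fails, because $P$ is not controlled on the lateral boundary $\partial\Omega_i\times[0,T]$; and the weighted integrability $\int (P_\delta)_+^2e^{-2f}<\infty$ involves $|\nabla u|^4/u^4$, for which there is no evident bound when $u$ decays.

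The repair --- and what the paper actually does --- is to multiply your $P$ by $u$. Since $\square u=0$ and $\nabla u=u\nabla L$, one has
\begin{align*}
\square(uP)=u\left(\square P-2\langle\nabla L,\nabla P\rangle\right)=-\frac{2t}{u}\left|\text{Hess}\,u-\frac{du\otimes du}{u}\right|^2\le 0,
\end{align*}
so $Q\coloneqq t\frac{|\nabla u|^2}{u}-u\log\frac{\Lambda}{u}$ is a genuine subsolution of the heat equation with \emph{no} drift and $Q(\cdot,0)\le 0$. The linear maximum principle Theorem~\ref{T101} then applies directly, once the weighted integrability of $Q_+$ --- which involves only $|\nabla u|^2/u$, with a single power of $u$ in the denominator --- is checked by the integration-by-parts argument of Lemma~\ref{L201a}. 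Since $u>0$, $Q\le 0$ is equivalent to your $P\le 0$.
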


\begin{proof}
From a direction computation
\begin{align*} 
\square \left(t\frac{|\nabla u|^2}{u}-u\log\frac{\Lambda}{u}\right)=-\frac{2}{u}\left|\text{Hess}\,u-\frac{du \otimes du}{u}\right|^2 \le 0.
\end{align*}
Now the theorem follows from Theorem \ref{T101} if
\begin{align*} 
\int_0^T\int\frac{|\nabla u|^2}{u}e^{-2f}\,dV_t\,dt <\infty.
\end{align*}
Notice that this follows the same proof as Lemma \ref{L201a}.
\end{proof}

Now we have the following corollary of Lemma \ref{L301}, see \cite[Equation $(3.44)$]{Zhangqi1}.
\begin{cor}\label{cor:harn}
With the same conditions as Lemma \ref{L301}, for any $\sigma>0$,
\begin{align} 
u(y,t) \le \Lambda^{\frac{\sigma}{1+\sigma}}u(x,t)^{\frac{1}{1+\sigma}}\exp\lc \frac{d_t^2(x,y)}{4\sigma t} \rc.
\label{E311}
\end{align}
\end{cor}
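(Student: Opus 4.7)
The strategy is to convert Lemma \ref{L301} into a Lipschitz estimate and then integrate along a geodesic in the time slice $(M,g(t))$. Set $v(\cdot,t) \coloneqq \log\bigl(\Lambda/u(\cdot,t)\bigr)\geq 0$, so $|\nabla v|=|\nabla u|/u$ and Lemma \ref{L301} rewrites as $|\nabla v|^2\le v/t$. For any $\epsilon>0$,
$$
|\nabla\sqrt{v+\epsilon}|^2 \;=\;\frac{|\nabla v|^2}{4(v+\epsilon)}\;\le\;\frac{v}{4t(v+\epsilon)}\;\le\;\frac{1}{4t},
$$
so $\sqrt{v(\cdot,t)+\epsilon}$ is $\tfrac{1}{2\sqrt t}$-Lipschitz on $(M,g(t))$ uniformly in $\epsilon$; sending $\epsilon\downarrow 0$ shows $\sqrt{v(\cdot,t)}$ is itself $\tfrac{1}{2\sqrt t}$-Lipschitz.

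Next, given $x,y\in M$, I would choose a minimizing $g(t)$-geodesic from $y$ to $x$ and integrate the Lipschitz bound along it to obtain
$$
\sqrt{v(x,t)}\;\le\;\sqrt{v(y,t)}+\frac{d_t(x,y)}{2\sqrt{t}}.
$$
Squaring and applying Young's inequality $2AB\le \sigma A^2+\sigma^{-1}B^2$ to the cross term $2\sqrt{v(y,t)}\cdot\frac{d_t(x,y)}{2\sqrt{t}}$ gives
$$
v(x,t)\;\le\;(1+\sigma)\,v(y,t)+\frac{(1+\sigma)\,d_t^2(x,y)}{4\sigma\,t}.
$$
Substituting $v=\log(\Lambda/u)$, dividing by $1+\sigma$, and exponentiating yields exactly \eqref{E311}.

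The entire analytic content sits in Lemma \ref{L301}; once it is in hand, the corollary is a short chain of three routine steps (chain rule to produce $\sqrt v$, one-dimensional integration along a minimizing geodesic, and one application of Young's inequality). The only mild technical issue is that $|\nabla\sqrt v|$ is formally singular where $u=\Lambda$, which is why I pass through the $\epsilon$-regularization above; there is no genuine obstacle.
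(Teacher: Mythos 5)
Your proof is correct and follows essentially the same route as the paper: rewrite Lemma \ref{L301} as the statement that $\sqrt{\log(\Lambda/u)}$ is $\tfrac{1}{2\sqrt t}$-Lipschitz, integrate along a minimizing $g(t)$-geodesic, square, and absorb the cross term with Young's inequality. The $\epsilon$-regularization you add to handle the locus $u=\Lambda$ is a small tidy-up the paper leaves implicit, not a different argument.
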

\begin{proof}
We rewrite Lemma \ref{L301} as
\begin{align*} 
\left|\na \sqrt{\log {\frac{\Lambda}{u}}}\right| \le \frac{1}{2\sqrt{t}},
\end{align*}
and hence
\begin{align*} 
\sqrt{\log {\frac{\Lambda}{u(x,t)}}} \le& \sqrt{\log {\frac{\Lambda}{u(y,t)}}}+\frac{d_t(x,y)}{2\sqrt{t}}.
\end{align*}

By squaring both sides above, we have
\begin{align*} 
\log{\frac{\Lambda}{u(x,t)}} \le& \lc\sqrt{\log{\frac{\Lambda}{u(y,t)}}}+\frac{d_t(x,y)}{2\sqrt{t}} \rc^2 \\
\le& (1+\sigma) \log{\frac{\Lambda}{u(y,t)}}+\frac{1+\sigma}{\sigma}\frac{d_t^2(x,y)}{4t}.
\end{align*}
Then the conclusion follows immediately.
\end{proof}

We now prove the pointwise lower bound of the heat kernel $H$.

\begin{thm}\label{XT301}
For any Ricci shrinker $(M^n,g,f)$, $0<\delta<1$, $D>1$ and $0<\epsilon <4$, there exists a constant $C=C(n,\delta,D)>0$ such that
\begin{align*} 
H(x,t,y,s) \ge \frac{C^{\frac{4}{\ep}}e^{\boldsymbol{\mu}(\frac{4}{\ep}-1)}}{(4\pi(t-s))^{n/2}} \exp{\lc-\frac{d_t^2(x,y)}{(4-\epsilon)(t-s)}\rc}
\end{align*}
for any $t \in [-\delta^{-1},1-\delta]$ and $d_t(p,y)+\sqrt{t-s}\le D$.
\end{thm}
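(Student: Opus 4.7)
The plan is to follow the strategy indicated after Theorem~\ref{TI:4}: first derive an on-diagonal lower bound $H(y,t,y,s) \geq C_{0}(4\pi(t-s))^{-n/2}$ by directly estimating Perelman's reduced distance via Theorem~\ref{T302}, and then propagate it to the full off-diagonal bound using the spatial Harnack inequality \eqref{E311}.

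\textbf{Step 1 (on-diagonal bound).} I claim there is a constant $C_{0} = C_{0}(n,\delta,D) > 0$ such that $H(y,t,y,s) \geq C_{0}(4\pi(t-s))^{-n/2}$ under the hypotheses. By Theorem~\ref{T302} it suffices to show $l_{(y,t)}(y,s) \leq C(n,\delta,D)$. Using the constant spatial path $\gamma(z) \equiv y$ on $[s,t]$ yields
\[
l_{(y,t)}(y,s) \;\leq\; \frac{1}{2\sqrt{t-s}}\int_{s}^{t}\sqrt{t-z}\, R(y,z)\,dz.
\]
From \eqref{E107} one has $R(y,z) \leq f(y,z)/\bar{\tau}_{z}$, and \eqref{E105} gives $\partial_{z}f(y,z) = |\nabla f|_{g(z)}^{2} \geq 0$, so $z \mapsto f(y,z)$ is nondecreasing and hence $f(y,z) \leq f(y,t)$ for $z \in [s,t]$. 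The hypothesis $d_{t}(p,y) \leq D$ together with the quadratic growth estimate of Lemma~\ref{L100} and $\bar{\tau}_{t} \in [\delta,\,1+\delta^{-1}]$ yield $f(y,t) \leq C(n,\delta,D)$. Since $\bar{\tau}_{z} \geq \delta$ and $t-s \leq D^{2}$, the integrand is uniformly bounded by a constant depending only on $n,\delta,D$, which proves the claim.

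\textbf{Step 2 (off-diagonal propagation).} Fix a small $\eta > 0$ and view $u(\cdot,\cdot) \coloneqq H(\cdot,\cdot,y,s)$ as a bounded positive heat solution on $M \times [s+\eta,\,t]$. By Theorem~\ref{T301}, $\Lambda_{\eta} \coloneqq \sup u \leq e^{-\boldsymbol{\mu}}/(4\pi\eta)^{n/2}$. Applying the Harnack inequality \eqref{E311} on the time-shifted interval (elapsed time $t-s-\eta$) with parameter $\sigma > 0$ and rearranging gives
\[
u(x,t) \;\geq\; u(y,t)^{1+\sigma}\,\Lambda_{\eta}^{-\sigma}\, \exp\!\Big(-\frac{(1+\sigma)\, d_{t}^{2}(x,y)}{4\sigma\,(t-s-\eta)}\Big).
\]
Choose $\sigma$ and $\eta$ so that $(1+\sigma)/(4\sigma(t-s-\eta)) = 1/((4-\epsilon)(t-s))$; the critical value is $\sigma = 4/\epsilon - 1$, which produces the exponents $4/\epsilon$ and $4/\epsilon - 1$ on $C_{0}$ and $e^{\boldsymbol{\mu}}$ respectively. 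Substituting the on-diagonal bound of Step~1 for $u(y,t)$ and the ultracontractivity bound for $\Lambda_{\eta}$, the $(4\pi)^{n\sigma/2}/(4\pi(t-s))^{n(1+\sigma)/2}$ factors collapse cleanly to $(4\pi(t-s))^{-n/2}$, while the remaining factors $(\eta/(t-s))^{n\sigma/2}$ and the residue from the parameter tuning depend only on $n,\delta,D$ and are absorbed into a single constant $C = C(n,\delta,D)$, yielding the stated estimate.

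\textbf{Main obstacle.} The crux is Step~1: controlling the scalar curvature uniformly along a constant spatial path of duration up to $D^{2}$ given only the end-time bound $d_{t}(p,y) \leq D$. This uses essentially the self-similar structure of the shrinker Ricci flow through the monotonicity $\partial_{z}f(y,z) \geq 0$ of \eqref{E105}, which reduces the integrated estimate to the single end-time bound on $f(y,t)$ supplied by Lemma~\ref{L100}. Step~2 is a standard manipulation of the gradient estimate of Lemma~\ref{L301}; the only subtlety is the fine choice of the parameters $\sigma$ and $\eta$, calibrated so that both the target Gaussian exponent $-d_{t}^{2}(x,y)/((4-\epsilon)(t-s))$ and the specific $\epsilon$-dependence of the prefactor $C^{4/\epsilon}e^{\boldsymbol{\mu}(4/\epsilon - 1)}$ are reproduced.
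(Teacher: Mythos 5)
Your proposal follows essentially the same route as the paper's own proof: the on-diagonal bound via Theorem~\ref{T302} using the constant path, $R\le f/(1-z)$, the monotonicity $\partial_z f\ge 0$ and Lemma~\ref{L100}, followed by the off-diagonal propagation via \eqref{E311} combined with the ultracontractivity of Theorem~\ref{T301} and the choice $\sigma=4/\epsilon-1$. The only cosmetic difference is your explicit time-shift $\eta$ (the paper applies \eqref{E311} on $M\times[\tfrac{s+t}{2},t]$ and treats the elapsed time as $t-s$), which entails the same small bookkeeping on the exponents that the paper itself glosses over.
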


\begin{proof}
From Theorem \ref{T302},
\begin{align} 
H(y,t,y,s) \ge \frac{e^{-l_{(y,t)}(y,s)}}{(4\pi(t-s))^{\frac{n}{2}}}.
\label{E312}
\end{align}

By the definition of $l$ and $\partial_z f(y,z)=|\nabla f|^2 \ge 0$,
\begin{align} 
l_{(y,t)}(y,s) \le& \frac{1}{2\sqrt{t-s}}\int_s^t \sqrt{t-z} R(y,z)\,dz \notag\\
\le& \frac{1}{2\sqrt{t-s}}\int_s^t \frac{\sqrt{t-z}}{1-z} f(y,z)\,dz \notag \\
\le&\frac{f(y,t)}{2\sqrt{t-s}}\int_s^t \frac{\sqrt{t-z}}{1-z}\,dz \le\frac{(t-s)}{3(1-t)^2}F(y,t)
\label{E313}
\end{align}
and hence
\begin{align*} 
H(y,t,y,s) \ge \frac{C}{(4\pi(t-s))^{n/2}}
\end{align*}
 for some constant $C=C(n,\delta,D)>0$.

By using \eqref{E311} for the heat kernel on $M \times [\frac{t+s}{2},t]$, we obtain
\begin{align*} 
H(y,t,y,s) \le e^{-\boldsymbol{\mu} \frac{\sigma}{1+\sigma}} (4\pi(t-s))^{-\frac{n}{2}\frac{\sigma}{1+\sigma}} H^{\frac{1}{1+\sigma}}(x,t,y,s)\exp{\lc\frac{d_t^2(x,y)}{4\sigma(t-s)}\rc}
\end{align*}
where we have used the result in Theorem \ref{T301} for the upper bound.

Therefore,
\begin{align*} 
H(x,t,y,s) \ge \frac{C^{1+\sigma}e^{\boldsymbol{\mu} \sigma}}{(4\pi(t-s))^{n/2}} \exp{\lc-\frac{1+\sigma}{\sigma}\frac{d_t^2(x,y)}{4(t-s)}\rc}.
\end{align*}

The conclusion follows by choosing $\sigma=4/\epsilon-1$.
\end{proof}

\begin{rem}
From the proof a more precise bound is, for any $0<\ep<4$,
\begin{align}\label{XT301a} 
H(x,t,y,s) \ge \frac{e^{\boldsymbol{\mu}(\frac{4}{\ep}-1)}}{(4\pi(t-s))^{n/2}} \exp{\lc-\frac{d_t^2(x,y)}{(4-\epsilon)(t-s)}-\frac{4(t-s)}{3(1-t)^2\epsilon}F(y,t)\rc}.
\end{align}
\end{rem}


In order to further estimate the upper bound of $H$, it is crucial to compare distance functions from different time slices. We first prove the second order estimate of the heat equation soluton on Ricci shrinkers, see \cite[Lemma $3.1$]{BZ17}.

\begin{lem}\label{L502}
Let $(M^n,g(t)),\, t\in[0,1)$ be the Ricci flow solution of a Ricci shrinker and let $u$ be a postive solution to the heat equation $\square u=0$ and $u \le \Lambda$ on $M \times [0,T]$. Then there exists a constant $C =C(n)$ such that
\begin{align} 
|\Delta u|+\frac{|\nabla u|^2}{u}-\Lambda R \le \frac{C\Lambda}{t}.
\label{E501}
\end{align}
\end{lem}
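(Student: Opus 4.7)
The plan is to handle the two terms on the left-hand side separately. The first, $\frac{|\nabla u|^2}{u}\leq C\Lambda/t$, is immediate from Lemma~\ref{L301}: squaring the gradient bound gives $\frac{|\nabla u|^2}{u^2}\leq \frac{1}{t}\log(\Lambda/u)$, so $\frac{|\nabla u|^2}{u}=u\cdot\frac{|\nabla u|^2}{u^2}\leq \frac{u\log(\Lambda/u)}{t}\leq \frac{\Lambda}{et}$, using the elementary bound $x\log(\Lambda/x)\leq \Lambda/e$ on $(0,\Lambda]$.

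The main task is to control $|\Delta u|$ (equivalently $|u_t|$, since $\square u=0$) by $C\Lambda/t+\Lambda R$. This is a Hamilton-type second-order estimate under Ricci flow, and the natural approach is to apply the parabolic maximum principle Theorem~\ref{T101} to the auxiliary spacetime function
\begin{equation*}
Q^{\pm}=t\left(\pm\,\Delta u+a\,\frac{|\nabla u|^2}{u}-\Lambda R\right)-b\,\Lambda,
\end{equation*}
with positive constants $a=a(n),\,b=b(n)$ to be tuned. The key evolution identities are
\begin{equation*}
\square(\Delta u)=2\langle Rc,\nabla^2 u\rangle,\qquad \square R=2|Rc|^2,\qquad \square\!\left(\frac{|\nabla u|^2}{u}\right)=-\frac{2}{u}\left|\mathrm{Hess}\,u-\frac{du\otimes du}{u}\right|^2,
\end{equation*}
the last one being (X603) applied under Ricci flow. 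Combining these with the Cauchy--Schwarz bound $\pm 2\langle Rc,\nabla^2 u\rangle\leq \frac{a}{u}|\nabla^2 u|^2+\frac{u}{a}|Rc|^2\leq \frac{a}{u}|\nabla^2 u|^2+\frac{\Lambda}{a}|Rc|^2$, one checks that for $a$ large enough (depending only on $n$) the Bochner-type negative term absorbs the $|\nabla^2 u|^2$ contribution and a sufficiently large $b$ absorbs the lower-order $\pm\Delta u+a|\nabla u|^2/u$ terms using the Step~1 gradient bound. The outcome is $\square Q^{\pm}\leq 0$ on $M\times(0,T]$ with $Q^{\pm}(\cdot,0)\leq 0$ (formalized by shifting the initial time to $t_0>0$ and sending $t_0\to 0^+$), whence the maximum principle gives $Q^{\pm}\leq 0$, i.e.\ $t|\Delta u|\leq t\Lambda R+C(n)\Lambda$.

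The main obstacle is verifying the weighted integrability hypothesis \eqref{E116} of Theorem~\ref{T101} for $Q^{\pm}$, since $|\nabla^2 u|$ is not a priori controlled on a noncompact Ricci shrinker. My strategy is a bootstrap: first use the cutoff $\phi^r$ of Lemma~\ref{lem:cutoff} together with standard interior parabolic Schauder estimates on compact spacetime regions to get a qualitative (not yet quantitative in $\Lambda$) bound on $|\nabla^2 u|$, so that $Q^{\pm}e^{-2f}$ is integrable for each fixed $r$; then multiply $\square Q^{\pm}\leq 0$ by $(\phi^r)^2 e^{-2f}$, integrate, and let $r\to\infty$ using Lemmas~\ref{L100}--\ref{L103} to drop the cutoff error terms. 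The Step~1 bound on $|\nabla u|^2/u$ is essential at this stage to control the mixed terms that arise from differentiating $\phi^r$. Once the qualitative integrability is in place the quantitative bound on $Q^\pm$ follows directly from the maximum principle, completing the proof.
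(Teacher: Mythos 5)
Your Step 1 is fine: Lemma~\ref{L301} does give $\frac{|\nabla u|^2}{u}\le \frac{u}{t}\log\frac{\Lambda}{u}\le\frac{\Lambda}{et}$. The gap is in Step 2. With $Q^{\pm}=t(\pm\Delta u+a\frac{|\nabla u|^2}{u}-\Lambda R)-b\Lambda$ one has
\begin{equation*}
\square Q^{\pm}=\Bigl(\pm\Delta u+a\tfrac{|\nabla u|^2}{u}-\Lambda R\Bigr)+t\Bigl(\pm 2\langle Rc,\nabla^2u\rangle-\tfrac{2a}{u}\bigl|\mathrm{Hess}\,u-\tfrac{du\otimes du}{u}\bigr|^2-2\Lambda|Rc|^2\Bigr),
\end{equation*}
and your Cauchy--Schwarz step spends the \emph{entire} Bochner term on absorbing $\frac{a}{u}|\nabla^2u|^2$ (leaving, after expanding $|\mathrm{Hess}\,u-\frac{du\otimes du}{u}|^2$, a positive remainder of order $\frac{|\nabla u|^4}{u^3}\le C\Lambda/t^2$), while the zeroth-order bracket coming from $\partial_t t$ equals $\frac{Q^{\pm}+b\Lambda}{t}$. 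The best inequality your cancellations yield is therefore $\square Q^{\pm}\le \frac{Q^{\pm}}{t}+\frac{C(a,b)\Lambda}{t}$, not $\square Q^{\pm}\le 0$; and enlarging $b$ cannot help, since $\square$ of the constant $-b\Lambda$ is zero, so $b$ never appears in $\square Q^{\pm}$ except through the harmless identity above. A linear inequality of this form gives no contradiction at a first zero of $Q^{\pm}$, so the maximum principle does not close. There is also a circularity at the initial time: on a noncompact shrinker $\Delta u(\cdot,t_0)$ is not a priori uniformly bounded, so $Q^{\pm}(\cdot,t_0)\le 0$ is essentially the statement being proved.

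The missing ingredient is the \emph{quadratic} self-improvement term, which is exactly what the paper uses. Writing $L_1=-\Delta u+\frac{|\nabla u|^2}{u}-R$ (after normalizing $\Lambda=1$), the Bamler--Zhang computation keeps the trace part of the Bochner term, $\frac{2}{u}|\mathrm{Hess}\,u-\frac{du\otimes du}{u}|^2\ge\frac{2}{nu}(\Delta u-\frac{|\nabla u|^2}{u})^2$, and arrives at the Riccati inequality $\square L_1\le-\frac{1}{n}L_1^2+\frac{1}{e^2t^2}$ (and similarly for $L_2=\Delta u+\frac{|\nabla u|^2}{u}-R$). It is the $-\frac{1}{n}L_1^2$ term that manufactures the $1/t$ bound: the paper multiplies by the cutoff $\phi^r$, and at a maximum point of $L_1\phi^r$ the resulting quadratic inequality in $L_1\phi^r$ directly forces $L_1\phi^r\le C(n)(r^{-1}+t^{-1})$ — no integrability hypothesis for Theorem~\ref{T101} and no initial bound on $\Delta u$ are needed, because the quadratic term makes the localized maximum self-bounded. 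To repair your argument you would need to retain this quadratic term rather than discard it in the Cauchy--Schwarz absorption, at which point you are reproducing the paper's proof.
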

\begin{proof}
By rescaling, we assume that $\Lambda=1$. Let $L_1=-\Delta u+\frac{|\nabla u|^2}{u}-R$, then it follows from \cite[Equations $(3.3),(3.4)$]{BZ17} that
\begin{align} 
\square L_1 \le -\frac{1}{n}L_1^2+\frac{1}{e^2 t^2}.
\label{E502}
\end{align}

From \eqref{E502} we have
\begin{align} 
\square (L_1\phi^r) =&\phi^r\square L_1+L_1 \square \phi^r-2\langle \nabla \phi^r,\nabla L_1 \rangle  \notag \\
\le& \phi^r( -\frac{1}{n}L_1^2+\frac{1}{e^2 t^2})+L_1 \square \phi^r-2\langle \nabla \phi^r,\nabla L_1 \rangle \notag \\
=&\phi^r( -\frac{1}{n}L_1^2+\frac{1}{e^2 t^2})+L_1 \square \phi^r-2\frac{\langle \nabla (L_1\phi^r),\nabla \phi^r \rangle}{\phi^r}+2\frac{L_1|\nabla \phi^r|^2}{\phi^r}.
\label{E503}
\end{align}

Now at the maximum point of $L_1\phi^r$, we have
\begin{align} 
-\frac{1}{n}(L_1\phi^r)^2+(\phi^re^{-1}t^{-1})^2+(L_1\phi^r)\left( \square \phi^r+2\frac{|\nabla \phi^r|^2}{\phi^r}\right)\ge 0,
\label{E504}
\end{align}
so we obtain
\begin{align} 
L_1\phi^r \le n\left( \square \phi^r+2\frac{|\nabla \phi^r|^2}{\phi^r}\right)+\sqrt n\phi^re^{-1}t^{-1} \le C(n)(r^{-1}+t^{-1}).
\label{E505}
\end{align}

By taking $r \to \infty$, we have $L_1 \le C(n)t^{-1}$. Now if we set $L_2=\Delta u+\frac{|\nabla u|^2}{u}-R$, then similarly
\begin{align} 
\square L_2 \le -\frac{1}{2n}L_2^2+\frac{1+\frac{4}{n}}{e^2 t^2}.
\label{E506}
\end{align}

Therefore by the same method, we prove that $L_2 \le C(n)t^{-1}$.

Now the proof is complete.
\end{proof}

By applying the above lemma to the heat kernel, we immediately have from Theorem \ref{T301} that

\begin{lem}\label{L503}
For any Ricci shrinker $(M^n,g,f)$, there exists a constant $C =C(n)$ such that
\begin{align} 
|\partial_tH(x,t,y,s)|=|\Delta_xH(x,t,y,s)| \le C\frac{e^{-\boldsymbol{\mu}}}{(t-s)^{\frac{n}{2}}}\left (R(x,t)+\frac{1}{t-s}\right)
\label{E507}
\end{align}
for any $s<t<1$.
\end{lem}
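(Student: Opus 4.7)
The plan is to apply Lemma \ref{L502} directly to the heat kernel $H(\cdot,\cdot,y,s)$, viewed as a positive bounded heat solution, after restarting the clock at an intermediate time to avoid the delta singularity at $\tau=s$. The pointwise bound $\Lambda$ needed to feed into Lemma \ref{L502} will come from the ultracontractivity estimate in Theorem \ref{T301}.

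Concretely, fix $y\in M$ and $s<t<1$, and set $s_0\coloneqq (s+t)/2$, so $t-s_0=s_0-s=(t-s)/2$. Regard $u(x,\tau)\coloneqq H(x,\tau,y,s)$ as a positive solution of the heat equation on the slab $M\times[s_0,t]$. By Theorem \ref{T301}, for every $\tau\in[s_0,t]$ one has
\begin{align*}
u(x,\tau)\;\le\;\frac{e^{-\boldsymbol{\mu}}}{(4\pi(\tau-s))^{n/2}}\;\le\;\frac{e^{-\boldsymbol{\mu}}}{(4\pi(s_0-s))^{n/2}}\;=\;\frac{2^{n/2}e^{-\boldsymbol{\mu}}}{(4\pi(t-s))^{n/2}}\;\eqqcolon\;\Lambda,
\end{align*}
so $u$ is bounded by $\Lambda=C(n)e^{-\boldsymbol{\mu}}(t-s)^{-n/2}$ on the slab. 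Lemma \ref{L502} is a purely local-in-time parabolic estimate (the choice of initial time $0$ in its statement plays no role beyond fixing a time origin), so after translating the clock to $s_0$ it applies to $u$ on $M\times[s_0,t]$. Evaluating its conclusion at the terminal time $\tau=t$, which corresponds to elapsed time $(t-s)/2$, and dropping the nonnegative term $|\nabla u|^2/u$, we obtain
\begin{align*}
|\Delta_x H(x,t,y,s)|\;\le\;\Lambda R(x,t)+\frac{C\Lambda}{(t-s)/2}\;\le\;C(n)\,\frac{e^{-\boldsymbol{\mu}}}{(t-s)^{n/2}}\!\left(R(x,t)+\frac{1}{t-s}\right).
\end{align*}
Since $\square_{x,t}H=0$ gives the identity $\partial_tH=\Delta_xH$, the same estimate holds for $|\partial_tH|$, which is the asserted inequality \eqref{E507}.

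The only subtlety is the time-shift used to apply Lemma \ref{L502} on $[s_0,t]$ rather than on $[0,T]$. This is harmless: the proof of Lemma \ref{L502} is based on running the maximum principle for the parabolic inequality $\square L_1\le-\tfrac{1}{n}L_1^2+\tfrac{1}{e^2t^2}$ against the cutoff $\phi^r$, and both the differential inequality and the cutoff construction are invariant under translation in time on the Ricci shrinker spacetime $M\times(-\infty,1)$. Alternatively one may pull the problem back by the one-parameter family of diffeomorphisms $\psi^{t}$ of \eqref{E102} to reduce to the statement of Lemma \ref{L502} as written. Either way, the argument is routine and no further obstacle is expected.
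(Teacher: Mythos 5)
Your proposal is correct and is essentially the paper's own argument: the paper derives Lemma \ref{L503} in one line by applying Lemma \ref{L502} to the heat kernel with the $L^\infty$ bound supplied by Theorem \ref{T301}. Your write-up merely makes explicit the midpoint restart at $s_0=(s+t)/2$ and the harmless time translation that the paper leaves implicit.
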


Now we can prove the local distance distorsion on Ricci shrinkers. Notice that a similar estimate has been obtained on compact manifolds, see \cite[Theorem $1.1$]{{BZ17}}.

\begin{thm}[local distance distorsion estimate]
\label{T501a}
For any Ricci shrinker $(M^n,p,g,f) \in \mathcal M_n(A)$, $0<\delta<1$ and $D>1$, there exists a constant $Y=Y(n,A,\delta,D)>1$ such that for any two points $q$ and $z$ in $M$ with $d_t(p,q) \le D$ and $d_t(q,z)=r \le D$,
\begin{align*} 
Y^{-1}d_s(q,z) \le d_t(q,z) \le Y d_s(q,z)
\end{align*}
for any $t \in [-\delta^{-1},1-\delta-r^2]$ and $s \in [t-Y^{-1} r^2,t+Y^{-1}r^2]$.
\end{thm}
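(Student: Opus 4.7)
The plan is to follow the Bamler--Zhang strategy from \cite{BZ17}, using the heat kernel as a surrogate for the distance function: since the heat kernel upper bound (Theorem \ref{T301}), lower bound (Theorem \ref{XT301}), and time-derivative bound (Lemma \ref{L503}) are now all available on Ricci shrinker spacetimes, one can pin down the heat kernel mass distribution precisely enough to extract distance distortion at the parabolic scale. First I would fix the scale $\tau = \alpha r^2$ for a small constant $\alpha = \alpha(n,A,\delta,D)$ to be chosen, and consider the family of probability measures $d\nu_s(y) = H(q, t+\tau, y, s)\, dV_s(y)$ for $s$ ranging over the target window $|s-t| \le Y^{-1} r^2$. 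At $s = t$, combining Theorem \ref{T301}, Theorem \ref{XT301}, and the Gaussian concentration Theorem \ref{T306} shows that $\nu_t$ is concentrated in a ball $B_t(q, Kr)$ of comparable radius, and that $H(q, t+\tau, z, t)$ is comparable to $r^{-n}$ when $d_t(q,z) = r$.

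Next I would prove that $H(q, t+\tau, z, s)$ varies slowly as $s$ varies over the small window. Using the conjugate heat equation in $(z, s)$, one has $|\partial_s H(q, t+\tau, z, s)| \le |\Delta_z H| + R(z,s) H$, and Lemma \ref{L503} gives $|\partial_s H| \le C e^{-\boldsymbol{\mu}} (t+\tau-s)^{-n/2}\bigl(R(z,s) + (t+\tau-s)^{-1}\bigr)$. Since $d_t(p,z) \le 2D$ by the hypothesis, Lemma \ref{L100} bounds $F(z,t)$ and hence $R(z,t) \le C(1-t)^{-2}$ on the relevant region; the same bound persists on the full window by the evolution $\partial_t f = |\nabla f|^2 \ge 0$ and a short elementary argument. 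Integrating in $s$ over $|s-t| \le Y^{-1} r^2$ then gives $|H(q, t+\tau, z, s) - H(q, t+\tau, z, t)| \le C Y^{-1} \alpha^{-1-n/2} r^{-n}$, which is much smaller than the lower bound from Step~1 once $Y$ is chosen large.

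To close, I would argue by contradiction. Suppose $d_s(q,z) > Yr$ for some large $Y$. Then applying Theorem \ref{T306} at the time-$s$ slice to the measure $\nu_s$, the mass at the point $z$ would satisfy $H(q, t+\tau, z, s) \lesssim \tau^{-n/2} \exp\bigl(-(Yr)^2/(C\tau)\bigr)$, which is exponentially small in $Y^2$ and thus contradicts the lower bound on $H(q, t+\tau, z, s)$ established in Step~2. This yields the upper distortion $d_s(q,z) \le Y d_t(q,z)$. The reverse inequality $d_t(q,z) \le Y d_s(q,z)$ follows by the symmetric argument, centering the heat kernel at the time slice $s$ (and using the two-sided window $s \in [t - Y^{-1}r^2, t + Y^{-1}r^2]$ so the construction makes sense).

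The main obstacle is Step~2: controlling $|\partial_s H|$ on a noncompact shrinker where the scalar curvature is not a priori bounded. The resolution is to restrict attention to the region $\{d_t(p,\cdot) \le 2D\}$, on which Lemma \ref{L100} gives an absolute bound on $f$ and hence on $R$; and to choose $Y$ via a mild self-improvement: one first proves a weak distance distortion on a slightly shorter time interval to guarantee the relevant region stays inside $\{d_\sigma(p,\cdot) \le 3D\}$ for every intermediate time $\sigma$, then inserts this into the main estimate to produce the claimed $Y$ depending only on $n, A, \delta, D$.
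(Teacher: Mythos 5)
There is a genuine gap in your Step~2, and it propagates through the rest of the argument: you base the heat kernel at the \emph{later} spacetime point $(q,t+\tau)$ and try to control its variation in the \emph{second} (backward) variables $(z,s)$, but the tools you invoke only apply to the first (forward) variables. Lemma \ref{L503} bounds $|\partial_t H(x,t,y,s)|=|\Delta_x H(x,t,y,s)|$, i.e.\ the Laplacian in the \emph{first} spatial slot, because it is derived from Lemma \ref{L502}, which concerns solutions of the forward equation $\square u=0$; as a function of $(z,s)$ the kernel $H(q,t+\tau,z,s)$ solves the \emph{conjugate} equation, so $|\partial_s H|\le |\Delta_z H|+RH$ involves $\Delta_z$ in the second slot, which Lemma \ref{L503} does not control. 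No pointwise gradient or Laplacian estimate for the conjugate heat kernel is established in the paper (the authors explicitly arrange their arguments to avoid needing one). A related problem affects Step~1: the lower bound of Theorem \ref{XT301} carries $d_t(x,y)$ measured at the \emph{later} time in the exponent, so a lower bound for $H(q,t+\tau,z,t)$ would involve $d_{t+\tau}(q,z)$, and converting that to $d_t(q,z)=r$ is precisely the distortion estimate you are trying to prove. Finally, in Step~3, Theorem \ref{T306} controls measures of sets, not pointwise values of $H$ at $z$; passing from one to the other at the time-$s$ slice would again require a Harnack inequality in the conjugate variable.

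The fix is essentially to reverse your setup, which is what the paper does: anchor the pole at the \emph{earlier} point $(q,T-r^2)$ and study $w(y,t)=H(y,t,q,T-r^2)$, a genuine forward heat solution in the variables that move. Then Theorem \ref{XT301} gives $w\ge C_1 r^{-n}$ on $B_T(q,r)$ with the distance measured at the correct time slice, Lemma \ref{L503} legitimately yields $|\partial_t w|\le C r^{-n-2}$ on the region where $R$ is bounded (your localization to $\{d_t(p,\cdot)\le 2D\}$ via Lemma \ref{L100} is the right idea for that part), and Corollary \ref{cor:harn} spreads the lower bound to balls $B_t(y,r)$ centered on a geodesic from $q$ to $z$. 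The conclusion is then reached not by Gaussian concentration but by a packing argument: each such ball carries mass at least a definite constant by the non-collapsing Theorem \ref{thm:PL06_2}, while the total mass is at most $1$ by \eqref{Eqsto}, so the number of disjoint balls, and hence $d_t(q,z)$, is bounded; the reverse inequality then follows by a continuity argument on the time window. Your overall strategy --- propagate a heat kernel lower bound over the parabolic window and convert mass into distance --- is the right one, but as written the key derivative estimate is applied to the wrong variable.
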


\begin{proof}
In the proof, all constants $C_i$ and $c_i$ depend on $n,A,\delta$ and $D$. Fix a time $T \in [-\delta^{-1},1-\delta-r^2]$, a point $q$ with $d_T(p,q) \le D$ and $r \le D$, we set $u(x,t)=H(x,t,q,T-r^2)$. It follows from Theorem \ref{XT301} that $w(y,T) \ge C_1 r^{-n}$ for any $y$ with $d_T(q,y) \le r$. For any $y \in B_T(q,r)$, we have from Lemma \ref{L503} that 
\begin{align} \label{e501a1}
|\partial_t w(y,t)| \le C_2r^{-n}(R(y,t)+r^{-2})
\end{align}
for $t \in [T-r^2/2,T+r^2]$. Since $d_T(p,y) \le d_T(p,q)+d_T(q,y) \le 2D$, it is clear from Lemma \ref{L100} that $F(y,T) \le c_1$. Moreover, it follows from \eqref{E108} and \eqref{E110} that
\begin{align*} 
|\partial_t F(y,t)|=|(1-t)R(y,t)| \le \frac{F(y,t)}{1-t} \le c_2 F(y,t).
\end{align*}
Therefore, it is clear that for any $t \in [T-r^2/2,T+r^2]$, $F(y,t) \le c_3$ and hence $R(y,t) \le c_4$ from \eqref{E110}. Since $r \le D$, we have from \eqref{e501a1}
\begin{align} \label{e501a2}
|\partial_t w(y,t)| \le C_3r^{-n-2}.
\end{align}
Now we set $c_5=C_1(2C_3)^{-1}$, it follows from $w(q,T) \ge C_1r^{-n}$ and \eqref{e501a2} that $w(y,t) \ge \frac{C_1}{2}r^{-n}$ on $B_T(q,r) \times [T-c_5r^2,T+c_5r^2]$. On the one hand, it follows from Corollary \ref{cor:harn} that $w \ge C_4r^{-n}$ on $B_t(y,r) \times \{t\}$. On the other hand, by Lemma \ref{L100}, $F$ and hence $R$ is bounded on $B_t(y,r) \times \{t\}$, we conclude from Theorem \ref{thm:PL06_2} that
\begin{align*} 
|B_t(y,r)|_t \ge C_5r^n.
\end{align*}
For any point $z$ with $d_T(q,z)=r$, we consider a geodesic $\gamma$ connecting $q$ and $z$. We claim that for any $t \in [T-c_5r^2,T+c_5r^2]$, $d_t(q,z) \le C_6 r$, where $C_6=8(C_4C_5)^{-1}$. Otherwise, we take a maximal set $\{y_i\}_{i=1}^N \subset \gamma$ such that $B_t(y_i,r)$ are mutually disjoint. In particular, it implies that $\{B_t(y_i,2r)\}$ covers $\gamma$. Then it is easy to see $C_6 r \le 4Nr$ and hence $N \ge \frac{C_6}{4}$. However, it follows from \eqref{Eqsto} that
\begin{align*} 
1 \ge \int w \,dV_t \ge \sum_{i=1}^N \int_{B_t(y_i,r)}w\,dV_t \ge \sum_{i=1}^N C_4r^{-n}|B_t(y_i,r)|_t \ge NC_4C_5 \ge 2,
\end{align*}
which is a contradiction. Now we set $c_6=c_5(2C_6)^{-2}$ and claim that $d_t(y,z) \ge (2C_6)^{-1}r$ for any $t \in [T-c_6r^2,T+c_6r^2]$. Otherwise, we can find a time $t \in [T-c_6r^2,T+c_6r^2]$ such that $d_t(y,z)=(2C_6)^{-1}r$. Since $c_6r^2 = c_5(2C_6)^{-2}r^2$, the argument before shows that $r=d_T(q,z) \le C_6 d_t(q,z)=r/2$ and this is impossible.

Therefore, by choosing $Y=\max\{c_6^{-1},2C_6\}$, the conclusion follows.
\end{proof}

Now we prove that $H$ has the exponential decay in the integral sense.

\begin{thm}
\label{XT306}
For any Ricci shrinker $(M^n,p,g,f)\in \mathcal M_n(A)$, $0<\delta<1$, $D>1$ and $\epsilon>0$, there exists a constant $C=C(n,A,\delta,D,\epsilon)>1$ such that
\begin{align*} 
\int_{M \backslash B_s(x,r\sqrt{t-s})} H(x,t,y,s)\,dV_s(y) \le C\exp{\lc-\frac{(r-1)^2}{4(1+\epsilon)}\rc}
\end{align*}
for any point $x \in M$, $t \in [-\delta^{-1},1-\delta]$, $d_t(p,x)+\sqrt{t-s}\le D$ and $r \ge 1$.
\end{thm}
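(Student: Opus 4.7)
The plan is to apply the Gaussian concentration inequality (Theorem~\ref{T306}) to the probability measure $dv_s(y) = H(x,t,y,s)\,dV_s(y)$, with $A = M \setminus B_s(x, r\sqrt{t-s})$ and a carefully chosen seed set $B$ for which $v_s(B)$ admits a uniform lower bound. Choosing $\sigma = \epsilon$, Theorem~\ref{T306} gives
\[
v_s(A) \le v_s(B)^{-1/\epsilon}\exp\left(-\frac{d_s(A,B)^2}{4(1+\epsilon)(t-s)}\right),
\]
and the target form $C\exp(-(r-1)^2/(4(1+\epsilon)))$ will then follow by a routine case split: a trivial bound $v_s(A)\le 1$ when $r$ is below a dimensional threshold, together with $(r-K)^2 \ge (r-1)^2/4$ when $r \ge 2K$, all absorbed into the prefactor $C$.

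For the seed, I will take $B = B_t(x, \sqrt{Y(t-s)})$, where $Y = Y(n,A,\delta,D)$ is the distortion constant of Theorem~\ref{T501a}. This particular radius is dictated by the parabolic scaling of that theorem: it is the smallest scale at which the distortion theorem applies across the entire interval $[s,t]$, so that $B \subset B_s(x, Y^{3/2}\sqrt{t-s})$ and consequently $d_s(A,B) \ge (r-Y^{3/2})\sqrt{t-s}$. On $B$ the potential $F(\cdot,t)$ and the scalar curvature $R(\cdot,t)$ are uniformly bounded (via Lemma~\ref{L100} together with \eqref{E110}, since $d_t(p,y) \le D+\sqrt{Y}\,D$ on $B$), so the refined pointwise lower bound \eqref{XT301a} yields $H(x,t,y,s) \ge c_1(n,A,\delta,D)(t-s)^{-n/2}$ on $B$ after fixing a convenient value of the auxiliary parameter there. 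Combining this with the Ricci-flow volume monotonicity $dV_t \le dV_s$ (a consequence of $R \ge 0$ and $\partial_\tau dV_\tau = -R\,dV_\tau$), integration yields
\[
v_s(B) \ge c_1(t-s)^{-n/2}\,|B_t(x,\sqrt{Y(t-s)})|_t.
\]

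The remaining ingredient is a uniform non-collapsing estimate $|B_t(x,\sqrt{Y(t-s)})|_t \ge c_2(n,A,\delta,D)(t-s)^{n/2}$. This I obtain from the uniform Sobolev inequality Corollary~\ref{cor234} (whose constant depends only on $n$ and $\boldsymbol{\mu}\ge -A$) together with the uniform bound on $R$ over $B_t(p, 2D)$, via a standard Moser-type iteration anchored at the absolute unit-ball volume bound of Lemma~\ref{lem:unitvol}. The main obstacle is the delicate handling of Theorem~\ref{T501a}: since the distortion is parabolic in nature and applies directly only at radii of order $\sqrt{Y(t-s)}$, the seed $B$ cannot be taken at the naive scale $\sqrt{t-s}$; furthermore, for interior points of $B_t(x,\sqrt{Y(t-s)})$ where the distortion does not immediately apply, one must either extend the relevant $t$-geodesic to the boundary and invoke distortion there, or reapply the distortion estimate in a slightly enlarged region (with constants depending on $D' = D+\sqrt{Y}\,D$) to control $d_s$.
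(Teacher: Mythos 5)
Your proposal follows the paper's route in all essentials: apply the Gaussian concentration of Theorem~\ref{T306} with $\sigma=\epsilon$, and reduce the problem to a uniform positive lower bound on the $v_s$-measure of a small ball around $x$, obtained from the pointwise heat-kernel lower bound of Theorem~\ref{XT301} together with non-collapsing. The one real difference is the choice of seed set. The paper takes $B=B_s(x,\sqrt{t-s})$, so the separation $d_s(A,B)\ge (r-1)\sqrt{t-s}$ is immediate and the exponent $(r-1)^2$ drops out exactly with no case split; Theorem~\ref{T501a} is then invoked only in the direction $d_s(x,y)\le\sqrt{t-s}\Rightarrow d_t(x,y)\le Y\sqrt{t-s}$, which is what the heat-kernel lower bound needs, and the volume lower bound is for $|B_s(x,\sqrt{t-s})|_s$ at time $s$ (again Theorem~\ref{thm:PL06_2} applied to the time-slice, which is a rescaled shrinker). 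Your choice $B=B_t(x,\sqrt{Y(t-s)})$ makes the heat-kernel lower bound and the volume bound immediate at time $t$ (and the monotonicity $dV_t\le dV_s$ is a clean way to transfer the volume), but it pushes all the difficulty into the containment $B\subset B_s(x,K\sqrt{t-s})$ and costs you an extra constant-absorption step at the end. Net, it is the same proof with the distortion theorem applied in the opposite direction; the paper's orientation is slightly more economical. Also, for the volume lower bound you do not need to rerun a Moser-type iteration: Theorem~\ref{thm:PL06_2}, applied to $(M,g(t))$ with the local scalar curvature bound coming from Lemma~\ref{L100} and \eqref{E110}, gives $|B_t(x,\rho)|_t\ge c\,e^{\boldsymbol{\mu}}(1+\Lambda\rho^2)^{-n/2}\rho^n$ directly.

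One caution on the obstacle you flagged. For interior points $y$ of $B_t(x,\sqrt{Y(t-s)})$, with $\rho=d_t(x,y)\ll\sqrt{t-s}$, the time window $|s-t|\le Y^{-1}\rho^2$ of Theorem~\ref{T501a} does not reach $s$, and your first fix does not close the gap as stated: knowing $d_s(x,y')\le Y^{3/2}\sqrt{t-s}$ for a boundary point $y'$ on the extended geodesic gives no bound on $d_s(x,y)$, since $d_s$ need not be monotone along a $d_t$-geodesic. A correct repair, in the spirit of your second fix, is a first-exit argument: if some $y\in B_t(x,\sqrt{Y(t-s)})$ had $d_s(x,y)>K\sqrt{t-s}$, follow the $d_t$-minimal geodesic from $x$ to $y$ to the first point $y_1$ with $d_s(x,y_1)=K\sqrt{t-s}$, and apply Theorem~\ref{T501a} \emph{based at time $s$} to the pair $(x,y_1)$ (with $D$ enlarged to control $d_s(p,x)$); for $K$ a suitable multiple of $Y^{3/2}$ the window condition $t-s\le Y^{-1}K^2(t-s)$ holds and one gets $d_t(x,y_1)>\sqrt{Y(t-s)}$, a contradiction. (Alternatively, route the triangle inequality through an auxiliary point at $d_t$-distance comparable to $\sqrt{Y(t-s)}$ from both $x$ and $y$, so that both pairs fall inside the admissible window.) With that repair, and the routine enlargement of $D$ and $\delta$ that every application of Theorem~\ref{T501a} in this paper quietly requires, your argument is complete.
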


\begin{proof}
It follows from Theorem \eqref{T306} with $\sigma=\epsilon$ that
\begin{align} 
\left(\int_{B_s(x,\sqrt{t-s})} H(x,t,y,s)\,dV_s(y)\right)^{\frac{1}{\epsilon}} \left(\int_{M \backslash B_s(x,r\sqrt{t-s})} H(x,t,y,s)\,dV_s(y)\right) \le \exp{\lc-\frac{(r-1)^2}{4(1+\epsilon)}\rc}
\label{E316a}
\end{align}
for any $r \ge 1$. So we only need to prove the first integral to be bounded below.

Theorem \ref{T501a} shows that there exists a constant $Y=Y(n,A,\delta,D)>1$ such that for any $y$ with $d_s(x,y) \le \sqrt{t-s}$, we have $d_t(x,y) \le Y\sqrt{t-s}$. Therefore, it follows from Theorem \ref{XT301} that
\begin{align*} 
H(x,t,y,s) \ge C(t-s)^{-n/2}
\end{align*}
for any $y$ with $d_s(x,y) \le \sqrt{t-s}$.

It implies that
\begin{align*}
\int_{B_s(x,\sqrt{t-s})} H(x,t,y,s)\,dV_s(y) \ge C(t-s)^{-n/2}|B_s(x,\sqrt{t-s})|_s \ge C
\end{align*}
where we have used the fact that $R$ is locally bounded.
\end{proof}

As we have proved that all distance functions to the base point $p$ are comparable, we prove the following weaker upper bound.

\begin{thm}
For any Ricci shrinker $(M^n,p,g,f)\in \mathcal M_n(A)$, $x \in M$ and $s<t<1$, there exist constants $C=C(n,A,x,t,s)>1$ and $c=c(n,A,x,t,s)>0$ such that
\begin{align*} 
H(x,t,y,s) \le Ce^{-cd_0^2(p,y)}.
\end{align*}
\label{T304}
\end{thm}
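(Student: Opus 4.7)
The plan is to combine the heat kernel integral estimate of Theorem~\ref{XT306} with a local parabolic mean-value inequality to obtain a pointwise Gaussian upper bound on $H(x,t,y,s)$ in terms of $d_s(p,y)$, and then to convert to $d_0(p,y)$ using the flow structure of the Ricci shrinker. The case of bounded $d_0(p,y)$ is already handled by the ultracontractivity bound of Theorem~\ref{T301}, so the focus is on large $d_0(p,y)$.

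First, I would compare $d_s(p,y)$ to $d_0(p,y)$ for fixed $s<t<1$. By Lemma~\ref{L100}, $F(y,s)\sim d_s^2(p,y)/4$ and $f(y)=F(y,0)\sim d_0^2(p,y)/4$ for $d_0(p,y)$ large. The flow identity $F(y,s)=(1-s)f(\psi^s(y))$, together with the differential inequality
\begin{align*}
0 \le \frac{d}{dt}\log f(\psi^t(y)) = \frac{|\nabla f|^2(\psi^t(y))}{f(\psi^t(y))(1-t)} \le \frac{1}{1-t},
\end{align*}
(lower bound from $R\ge 0$, upper bound from $|\nabla f|^2\le f$) integrated between $0$ and $s$ yields $f(\psi^s(y))\ge f(y)$ for $s\ge 0$ and $f(\psi^s(y))\ge f(y)/(1-s)$ for $s<0$; in either case $F(y,s)\ge \min(1,1-s)\,f(y)$. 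Consequently there exist constants $c_1=c_1(n,s)>0$ and $C_1=C_1(n,s)>0$ such that
\begin{align*}
d_s(p,y) \ge c_1\,d_0(p,y) - C_1.
\end{align*}

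Next, for a fixed $y_0$ with $d_0(p,y_0)$ large, set $R:=d_s(x,y_0)\ge c_1 d_0(p,y_0)-C_1-d_s(p,x)$ and apply Theorem~\ref{XT306} with $r=R/\sqrt{t-s}$:
\begin{align*}
\int_{M\setminus B_s(x,R)} H(x,t,y,s)\,dV_s(y) \le C_2 \exp\!\left(-\frac{(r-1)^2}{4(1+\epsilon)}\right) \le C_3 \exp\!\left(-c_3\,d_0^2(p,y_0)\right).
\end{align*}
To promote this integral decay to a pointwise bound at $(y_0,s)$, I would invoke a parabolic mean-value inequality for the conjugate heat equation on a small neighborhood $U_{y_0}:=B_s(y_0,\rho)\times[s,s+\rho^2]$. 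Because $F$, and hence $R=\bar\tau^{-2}(F-|\nabla F|^2)$, is uniformly bounded on $U_{y_0}$ for $\rho=\rho(y_0,s,n,A)$ small enough (again by Lemma~\ref{L100}), and since the local Sobolev inequality of Corollary~\ref{cor234} is available, a Moser iteration yields
\begin{align*}
H(x,t,y_0,s) \le \frac{C_4}{|U_{y_0}|}\int_{U_{y_0}} H(x,t,y,\tau)\,dV_\tau(y)\,d\tau \le C\exp\!\left(-c\,d_0^2(p,y_0)\right).
\end{align*}

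The main obstacle is the last step: justifying Moser iteration for the conjugate heat equation in a neighborhood where the ambient geometry is only locally, not globally, controlled. This requires a careful rescaling argument, together with the local distance-distortion estimate of Theorem~\ref{T501a} to ensure that the parabolic neighborhood remains well-defined under the Ricci flow during the short time window in which curvature stays bounded. Once the pointwise bound in $d_s(p,y_0)$ is established, the distance comparison from the first step converts it directly into the advertised bound in $d_0(p,y_0)$.
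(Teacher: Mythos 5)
Your first two steps are sound: the ODE comparison $F(y,s)\ge \min(1,1-s)\,f(y)$ along the flow lines of $\tfrac{\nabla f}{1-t}$ is correct, and Theorem~\ref{XT306} does give the integrated Gaussian decay you want (for fixed $x,t,s$ the hypothesis $d_t(p,x)+\sqrt{t-s}\le D$ is harmless, and the polynomial volume factor $|U_{y_0}|^{-1}\sim d_0(p,y_0)^{n+2}$ coming from $\rho\sim d_0(p,y_0)^{-1}$ would be absorbed by the Gaussian). But the paper proves the theorem by a different and much lighter route that avoids local regularity theory entirely: it writes $H(x,t,y,s)=\int H(x,t,z,l)H(z,l,y,s)\,dV_l(z)$ at the midpoint $l=\tfrac{s+t}{2}$ and splits the $z$-integral according to whether $d_0(p,z)\ge \epsilon d_0(p,y)$ or not. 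On the far region it bounds $H(z,l,y,s)$ by the ultracontractivity constant of Theorem~\ref{T301} and applies Theorem~\ref{XT306} to $\int H(x,t,z,l)\,dV_l(z)$, getting $Ce^{-c\epsilon^2 d_0^2(p,y)}$. On the near region it bounds $H(x,t,z,l)$ by the same constant and observes that $\int_B H(z,l,y,s)\,dV_l(z)$ is a bounded conjugate heat solution started from a cutoff of $\mathbf{1}_B$, which Lemma~\ref{L202} dominates by $Ce^{C\epsilon^2 d_0^2(p,y)}\bar v(y,s)\le Ce^{(C\epsilon^2-c)d_0^2(p,y)}$; choosing $\epsilon$ small balances the two terms. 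The decay in $y$ thus comes from the explicit conjugate supersolution $e^{-f}$ rather than from any interior estimate — this is the device your argument is missing.

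The step you flag as "the main obstacle" is a genuine gap, not a routine citation. A Moser iteration for $\square^*$ on $B_s(y_0,\rho)\times[s,s+\rho^2]$ with a cutoff $\phi$ fixed at time $s$ produces terms $\int u^p|\nabla\phi|^2_{g(\tau)}\,dV_\tau$, so you need $g(\tau)$ and $g(s)$ to be comparable as quadratic forms on the cylinder; scalar curvature bounds do not give this, and pointwise Ricci bounds are not available on a general shrinker. The only substitute in the paper is Theorem~\ref{T501a}, but its constant $Y(n,A,\delta,D)$ requires $d_t(p,q)\le D$ for the center $q$, so for $y_0$ at distance $D\sim d_0(p,y_0)$ from $p$ you must track how $Y$ degenerates. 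Unwinding its proof, the constants come from the off-diagonal lower bound of Theorem~\ref{XT301} and the Harnack inequality \eqref{E311}, both of which cost factors of order $e^{Cd^2}$ at distance $d$ from the reference point; so the distortion constant near $y_0$ is only controlled up to $e^{Cd_0^2(p,y_0)}$, which is exactly the order of the Gaussian decay you are trying to extract and could cancel it entirely. To make your route rigorous you would need a mean value inequality (or distance distortion estimate) whose constants grow at most polynomially in $d_0(p,y_0)$ — a substantial piece of work not contained in the paper, and precisely what Lemma~\ref{L202} lets the authors sidestep.
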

\begin{proof}
Fix $s<t<1$ and $x$ and we require that all constants in the proof depend on $n,x,s,t$ and $A$. Notice that since $s$ and $t$ are fixed, $f$ is comparable to $d_0^2(p,\cdot)$ by Lemma \ref{L101}.

For an $\epsilon>0$ to be chosen later, we have from the semigroup property
\begin{align*} 
H(x,t,y,s)=&\int H(x,t,z,l)H(z,l,y,s) \,dV_{l}(z) \notag \\
=&\int_{d_0(p,z) \ge \epsilon d_0(p,y) } H(x,t,z,l)H(z,l,y,s) \,dV_{l}(z) \\
&+\int_{d_0(p,z) \le \epsilon d_0(p,y)} H(x,t,z,l)H(z,l,y,s) \,dV_{l}(z)=I+II
\end{align*}
where $l=\frac{s+t}{2}$.

Now from Theorem \ref{XT306}
\begin{align} 
I =&\int_{d_0(p,z) \ge \epsilon d_0(p,y) } H(x,t,z,l)H(z,l,y,s) \,dV_{l}(z) \notag\\
\le& C_1 \int_{d_0(p,z) \ge \epsilon d_0(p,y) } H(x,t,z,l) \,dV_{l}(z) \le C_2e^{-c_1 \epsilon^2d_0^2(p,y)}.
\label{E318}
\end{align}
Note that here we can always assume that $\epsilon d_0(p,y)$ is large.

We choose $\phi$ which is identical $1$ on $B_l(p,c_2\epsilon d_0(p,y))$ and supported on $B_l(p,2c_2\epsilon d_0(p,y))$ where we choose $c_2$ that $B_0(p,\epsilon d_0(p,y)) \subset B_l(p,c_2\epsilon d_0(p,y))$.

If we set $w=\frac{e^{-f}}{(4\pi\tau)^{n/2}}$, there are $c_3$ and $c_4$ that
\begin{align*} 
c_3e^{c_4\epsilon^2d_0^2(p,y)}w(\cdot,l) \ge \phi(\cdot,l).
\end{align*}

Now from Lemma \ref{L202} that 
\begin{align*} 
II=& \int_{B_0(p,\epsilon d_0(p,y))} H(x,t,z,l)H(z,l,y,s) \,dV_{l}(z) \le c_5\int_{B_l(p,c_2\epsilon d_0(p,y))} H(z,l,y,s)\,dV_l(z) \notag \\
\le & c_5\int H(z,l,y,s)\phi(z,l)\,dV_l(z) \le c_6e^{c_4\epsilon^2d_0^2(p,y)}w(y,s).
\end{align*}

By the definition of $w$,
\begin{align*} 
w(y,s) \le c_7e^{-c_8d_0^2(p,y)}.
\end{align*}

Hence,
\begin{align} 
II \le c_9e^{-(c_8-c_4 \epsilon^2)d_0^2(p,y)}.
\label{E319}
\end{align}

If we choose $\epsilon=\sqrt{\frac{c_8}{2c_4}}$, it follows from \eqref{E318} and \eqref{E319} that
\begin{align*} 
H(x,t,y,s) \le Ce^{-cd_0^2(p,y)}.
\end{align*}
\end{proof}


\section{Differential Harnack inequality on Ricci shrinkers}

In this subsection, we prove that Perelman's differential Harnack inequality holds on Ricci shrinkers.

For any Ricci shrinker $(M^n,g,f)$, we fix a point $q \in M$ and a time $T <1$. Moreover, we set 
\begin{align} 
w(x,t)=H(q,T,x,t)=\frac{e^{-b(x,t)}}{(4\pi(T-t))^{n/2}}
\end{align}
and $\tau=T-t$.

We first prove

\begin{lem}\label{L401}
For any $r$ such that $\phi^r=1$ on an open neighborhood of $(q,T)$,
\begin{align} 
\lim_{t \nearrow T} \int bw\phi^r \,dV = \frac{n}{2}.
\label{E400}
\end{align}
\end{lem}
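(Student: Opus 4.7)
The plan is to derive an integration-by-parts identity linking $\int bw\phi^r\,dV$ to Perelman's $v$-function (cf.\ \eqref{E217a}) and then pass to the limit termwise.  Starting from $v=\bigl(\tau(2\Delta b-|\nabla b|^2+R)+b-n\bigr)w$ and using $w=e^{-b}/(4\pi\tau)^{n/2}$, hence $\nabla w=-w\nabla b$, to integrate the $2\tau\Delta b$ piece against $w\phi^r$ by parts, I would obtain
\begin{equation*}
\int bw\phi^r\,dV
 = \int v\phi^r\,dV
 - \tau\!\int\frac{|\nabla w|^2}{w}\phi^r\,dV
 + 2\tau\!\int w\,\nabla b\!\cdot\!\nabla\phi^r\,dV
 - \tau\!\int Rw\phi^r\,dV
 + n\!\int w\phi^r\,dV.
\end{equation*}
The vanishing of boundary contributions at infinity in the integration by parts is ensured by the exponential decay of $w$ provided by Lemma \ref{L202}, together with the cutoff estimates of Lemma \ref{lem:cutoff}.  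Note also that the same identity $\nabla w=-w\nabla b$ plus one more integration by parts gives $\int w\,\nabla b\!\cdot\!\nabla\phi^r\,dV = \int w\,\Delta\phi^r\,dV$, which I use below to control the cross term.

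I then take the limit $t\nearrow T$ of each of the five terms.  Three are comparatively routine.  First, $\int w\phi^r\,dV\to 1$: a direct calculation gives $\partial_t\!\int w\phi^r\,dV = \int w\,\square\phi^r\,dV$, which is $O(r^{-1})$ by \eqref{E115a} and \eqref{Eqsto}; combined with $w(\cdot,t)\to\delta_q$ from \eqref{eqn:PK14_4} and $\phi^r(q,T)=1$, this yields the limit.  Second, $\tau\!\int Rw\phi^r\,dV\to 0$: Lemma \ref{L100} and \eqref{E110} bound $R$ on the compact support of $\phi^r$, while $\tau\to 0$ and $\int w\,dV\le 1$.  Third, $2\tau\!\int w\,\nabla b\!\cdot\!\nabla\phi^r\,dV = 2\tau\!\int w\,\Delta\phi^r\,dV$: by \eqref{E115} we have $\tau|\Delta\phi^r|\le C(1+\tau/r)$ uniformly, and $\Delta\phi^r$ is supported where $r\le F\le 2r$, a region bounded away from $q$ (since $F(q,T)<r$ by our assumption on $\phi^r$); Gaussian concentration of $w$ (Theorem \ref{XT306}) then gives $\int w\,|\Delta\phi^r|\,dV\to 0$.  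The lemma thus reduces to the two limits
\begin{equation*}
\int v\phi^r\,dV\longrightarrow 0, \qquad \tau\!\int\frac{|\nabla w|^2}{w}\phi^r\,dV\longrightarrow \frac{n}{2}.
\end{equation*}

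These two limits are the heart of the proof and where I expect the main obstacle; both express the fact that near $(q,T)$ the conjugate heat kernel is asymptotically Euclidean.  Because $\phi^r\equiv 1$ on a fixed spacetime neighborhood of $(q,T)$, the contribution from outside a small geodesic ball $B_t(q,\delta)$ is negligible: this uses Theorem \ref{XT306} for the tail mass, together with the envelope $\boldsymbol{\mu}\le b\le C_\epsilon + d_t^2(q,\cdot)/((4-\epsilon)\tau)$ coming from the upper bound Theorem \ref{T301} and the lower bound Theorem \ref{XT301}, to bound the tails of both integrals by moments of the form $\int (d_t^2/\tau) w\,dV$, themselves controlled by Theorem \ref{XT306}.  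Inside $B_t(q,\delta)$, I would pass to rescaled geodesic normal coordinates $y=\exp_q^{-1}(\cdot)/\sqrt\tau$ at $q$; combining the matching Gaussian upper and lower bounds of Theorems \ref{T301}--\ref{XT301} with the local distance distortion of Theorem \ref{T501a} and the volume comparison of Lemma \ref{L101}, I would show that $(4\pi\tau)^{n/2}w$ converges pointwise to $e^{-|y|^2/4}$ with an integrable Gaussian majorant.  Dominated convergence then produces the Euclidean values $n/2$ and $0$ respectively.  Substituting all five limits into the displayed identity gives $\lim_{t\nearrow T}\int bw\phi^r\,dV = 0-\tfrac{n}{2}+0-0+n\cdot 1 = \tfrac{n}{2}$, completing the proof.
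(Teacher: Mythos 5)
Your integration-by-parts identity is algebraically correct and your treatment of the three routine terms ($\int w\phi^r\to 1$, $\tau\int Rw\phi^r\to 0$, and the cross term via $\int w\,\nabla b\cdot\nabla\phi^r=\int w\,\Delta\phi^r$) is fine. The problem is that the reduction is circular in disguise and the two residual limits are not provable with the tools you cite. After the very integration by parts you perform, $\int v\phi^r\,dV$ \emph{equals} $\tau\int\frac{|\nabla w|^2}{w}\phi^r\,dV+\int bw\phi^r\,dV-n\int w\phi^r\,dV+o(1)$; so asserting ``$\int v\phi^r\to 0$ and $\tau\int\frac{|\nabla w|^2}{w}\phi^r\to\frac n2$'' is exactly the conjunction of the lemma itself with the Fisher-information asymptotic. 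Nothing has been reduced — you must still prove $\int bw\phi^r\to\frac n2$, plus an additional, strictly harder statement.

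That additional statement is where the argument breaks. The limit $\tau\int\frac{|\nabla w|^2}{w}\phi^r\,dV\to\frac n2$ (equivalently $\tau\int|\nabla b|^2w\to\frac n2$) requires convergence of $\nabla b$, and two-sided pointwise bounds on $w$ give no control whatsoever on $\nabla w$; a Gaussian majorant for $w$ cannot dominate $w|\nabla b|^2$. Indeed the paper never proves this full limit — Lemma \ref{L403} only extracts a subsequence $\tau_i$ along which $\tau_i\int(|\nabla b|^2+R)w\,dV$ is \emph{bounded}, precisely because the genuine limit is delicate. Moreover, the bounds you invoke do not ``match'': Theorem \ref{T301} gives a flat upper bound $e^{-\boldsymbol{\mu}}(4\pi\tau)^{-n/2}$ with no Gaussian factor, and Theorem \ref{XT301} a lower bound with exponent $(4-\epsilon)^{-1}$ and a constant $C^{4/\epsilon}e^{\boldsymbol{\mu}(4/\epsilon-1)}$, so squeezing does not identify the pointwise limit of $(4\pi\tau)^{n/2}w$ as $e^{-|y|^2/4}$. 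The paper's proof avoids derivatives entirely: it bounds $b$ above by $C(1+d_t^2(q,\cdot)/\tau)$ via Theorem \ref{XT301} and below by $\boldsymbol{\mu}$ via Theorem \ref{T301}, kills the region $\{d_t\ge 2A\sqrt\tau\}$ with the Gaussian concentration of Theorem \ref{XT306} and dyadic moment sums, and then performs a parabolic blow-up $w_i=\tau_i^{n/2}w(\cdot,T-\tau_i t)$ whose smooth limit is identified as the Euclidean heat kernel by Grigor'yan's uniqueness theorem, reducing the near-region contribution to the explicit computation $\int\frac{|x|^2}{4}(4\pi)^{-n/2}e^{-|x|^2/4}\,dx=\frac n2$. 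If you drop the detour through $v$ and apply your tail/blow-up scheme directly to $\int bw\phi^r$ — which needs only convergence of $w$, not of its derivatives — the argument can be repaired along exactly those lines.
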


\begin{proof}
We set $K_r=\text{supp}\, \phi^r \bigcap M \times [T-1,T]$. Since we only care about the integral on the compact set $K_r$ when $t$ is sufficiently close to $T$, we can assume that the distances on different time slices from $t$ to $T$ are uniformly comparable. Now all constants $C$'s in the rest of the proof depend on $q,T,\boldsymbol{\mu}$ and the geometry on $K_r$. In particular, they are independent of $\tau$. 

Now we have from Theorem \ref{XT306} that
\begin{align}\label{E402}
\int_{d_t(q,x)\ge 2A\sqrt{\tau}} w(x,t)\,dV_t \le Ce^{-A^2/2}.
\end{align}

Moreover, from Theorem \ref{XT301},

\begin{align} 
b(x,t) \le C\left (1+\frac{d_t^2(q,x)}{\tau}\right)
\label{E403a}
\end{align}
for $(x,t) \in K_r$.

Now we set $d_t=d_t(q,x)$, then for any $A \ge 1$, we have

\begin{align*} 
\int_{K_r \cap \{d_t\ge 2A\sqrt{\tau}\}} bw\,dV_t &\le C\int_{K_r \cap \{d_t \ge 2A\sqrt{\tau}\}}w+\tau^{-1}d^2_tw\,dV_t \le Ce^{-A^2/2}+C\tau^{-1}\int_{K_r \cap \{d_t \ge 2A\sqrt{\tau}\}}d^2_tw\,dV_t.
\end{align*}

Now we have 
\begin{align*} 
\int_{K_r \cap \{d_t \ge 2A\sqrt{\tau}\}}d^2_tw\,dV_t & = \sum_{k=1}^{\infty}\int_{K_r \cap \{2^kA \sqrt{\tau} \le d_t \le 2^{k+1}A\sqrt{\tau}\}}d^2_tw\,dV_t \notag \\
&\le \sum_{k=1}^{\infty}2^{2k+2}A^2\tau\int_{K_r \cap \{2^kA \sqrt{\tau} \le d_t \le 2^{k+1}A\sqrt{\tau}\}}w\,dV_t \notag\\
& \le  \sum_{k=1}^{\infty}2^{2k+2}A^2e^{-2^{2k-3}A^2}\tau. \notag
\end{align*}

Therefore, we conclude that
\begin{align} \label{E404a} 
 \int_{K_r \cap \{d_t\ge 2A\sqrt{\tau}\}} bw\phi^r \,dV \le \int_{K_r \cap \{d_t\ge 2A\sqrt{\tau}\}} bw\,dV_t \le \eta(A)
\end{align}
where $\eta(A) \to 0$ if $A \to +\infty$.

In addition, it follows from Theorem \ref{T301} that $ b(x,t) \ge \boldsymbol{\mu}$ and hence
\begin{align} \label{E404b} 
 \int_{K_r \cap \{d_t\ge 2A\sqrt{\tau}\}} bw\phi^r \,dV \ge \boldsymbol{\mu}\int_{K_r \cap \{d_t\ge 2A\sqrt{\tau}\}} w \,dV \ge -Ce^{-A^2/2}
\end{align}
where the last inequality is from \eqref{E402}.

The inequalities \eqref{E404a} and \eqref{E404b} indicates that the integral $\int bw\phi^r$ is concentrated on the scale $\sqrt{\tau}$.

We take a sequence $\tau_i \to 0$ and set $g_i(t)=\tau_i^{-1}g(T-\tau_it)$ and $w_i(\cdot,t)=\tau_i^{n/2}w(\cdot,T-\tau_it)$. Then we have
\begin{align*} 
\partial_t w_i=\Delta_i w_i-R_iw_i,
\end{align*}
where $\Delta_i$ and $R_i$ are with respect to $g_i$.

Since $g_i$ is a blow-up sequence for the metric $g$ and $K_r$ has bounded geometry, it is easy to show that $(M,g_i,q)$ subconverges to $(\R^n,g_E,0)$ and $w_i$ converges a positive smooth function $w_{\infty}$ on $\R^n \times (0,\infty)$ such that
\begin{align*} 
\partial_t w_{\infty}=\Delta_{g_E} w_{\infty}.
\end{align*}

Now we can show as (\ref{eqn:PK14_4}) that $w_{\infty}$ is in fact a fundamental solution of the heat equation on the Euclidean space. Moreover it is easy to see by Fatou's  inequality that
\begin{align*} 
\int w_{\infty} \,dx \le 1
\end{align*}
for any time $t>1$. Now it follows from \cite[Corollary $9.6$]{Gr09} that $w_{\infty}$ is the heat kernel based at $0$, that is,
\begin{align*} 
w_{\infty}(x,t)=\frac{e^{-\frac{|x|^2}{4t}}}{(4\pi t)^{n/2}}. 
\end{align*}

From the smooth convergence,
\begin{align} \label{E405a} 
\lim_{i \to \infty}\left.\int_{K_r \cap \{d_t\le 2A\sqrt{\tau_i}\}} bw\,dV_t \right|_{t=T-\tau_i}=\int_{|x|\le 2A} \frac{|x|^2}{4}\frac{e^{-\frac{|x|^2}{4}}}{(4\pi)^{n/2}}\,dx.
\end{align}

By direct computations, 
\begin{align*} 
\int \frac{|x|^2}{4}\frac{e^{-\frac{|x|^2}{4}}}{(4\pi)^{n/2}}\,dx=\frac{n}{2}.
\end{align*}

Therefore, it is straightforward from \eqref{E404a}, \eqref{E404b} and the fact that $\phi^r$ is equal to $1$ on a neighborhood of $(q,T)$ that 
\begin{align*} 
\lim_{t \nearrow T} \int bw\phi^r \,dV = \frac{n}{2}.
\end{align*}
\end{proof}

\begin{rem}\label{rem:R401}
The same proof of Lemma \ref{L401} shows that if $u$ is a bounded smooth function on $M \times [T-1,T]$, then
\begin{align} 
\lim_{t \nearrow T} \int bwu\phi^r \,dV = \frac{n}{2}u(q,T).
\label{E400x}
\end{align}
\end{rem}

Now we set $d=d_T(q,\cdot)$, it follows from \eqref{XT301a} that
\begin{align} 
H(q,T,x,t) \ge C \frac{e^{-c_1\frac{d^2}{\tau}-c_2\tau F}}{\tau^{\frac{n}{2}}}.
\label{E402a}
\end{align}
In terms of $b$, we have
\begin{align} 
b(x,t) \le c_1\frac{d^2}{\tau}+c_2\tau F(x,T)+c_3
\label{E402b}
\end{align}

We denote $K^r_t=\{r \le F(\cdot, t) \le 2r\}$, then we have
\begin{lem}\label{L401a}
There exist constants $C_0$ and $C_1$ which depend only on $\boldsymbol{\mu},q$ and $T$ such that
\begin{align} 
\int_{T-1}^T \int_{K_t^r} |b|w \,dV\,dt \le C_0
\end{align}
for any $r \ge C_1$.
\end{lem}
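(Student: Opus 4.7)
The strategy is to bound $|b|w$ pointwise on $K_t^r$ (so that a sup-norm on $|b|$ can be pulled out of the spatial integral), then use the Gaussian concentration bound of Theorem \ref{XT306} to show that $\int_{K_t^r}w\,dV_t$ is exponentially small in $r/\tau$ (with $\tau = T-t$). This exponential decay will swamp the polynomial growth of $|b|$ in $r/\tau$.

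For the pointwise bound on $|b|$, Theorem \ref{T301} gives $b \ge \boldsymbol{\mu}$, so $|b| \le (b-\boldsymbol{\mu}) + |\boldsymbol{\mu}|$. For the upper bound on $b$, apply the heat kernel lower bound in the form \eqref{E402b} (with $\epsilon=2$, say): $b(x,t) \le c_1 d_T^2(q,x)/\tau + c_2\tau F(x,T) + c_3$. On $K_t^r$, the monotonicity $\partial_t F = -\bar\tau R \le 0$ from \eqref{E108} forces $F(x,T) \le F(x,t) \le 2r$, and then Lemma \ref{L100} at time $T$ gives $d_T(p,x) \le 2\sqrt{2r}+C$. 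Since $\partial_t f = |\nabla f|^2 \ge 0$ by \eqref{E105}, $f(q,t)$ is non-decreasing in $t$, so $F(q,t) \le \tfrac{2-T}{1-T}F(q,T)$ for $t\in[T-1,T]$, and Lemma \ref{L100} gives $d_t(p,q) \le C_q$ uniformly on this interval. Together these yield $d_T(q,x) \le C(1+\sqrt r)$ on $K_t^r$, hence
\begin{align*}
|b(x,t)| \le C\!\left(\tfrac{r}{\tau}+1\right) \qquad \text{on } K_t^r \times [T-1,T],
\end{align*}
with $C$ depending only on $\boldsymbol{\mu},q,T$. In the opposite direction, Lemma \ref{L100} at time $t$ also yields $d_t(p,x) \ge 2\sqrt r - C$ on $K_t^r$, so $d_t(q,x) \ge \sqrt r$ once $r$ exceeds a threshold $C_1=C_1(\boldsymbol{\mu},q,T)$. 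Thus $K_t^r \subset M\setminus B_t(q,\sqrt r)$, and Theorem \ref{XT306} (with base point $q$, terminal time $T$, source time $t$, dilation $r' = \sqrt{r/\tau}$, and $\epsilon=1$) yields $\int_{K_t^r} w\,dV_t \le C\exp(-r/(32\tau))$ whenever $r/\tau \ge 4$; the hypotheses $T\in[-\delta^{-1},1-\delta]$ and $d_T(p,q)+\sqrt\tau \le D$ are satisfied for $\delta,D$ determined by $\boldsymbol{\mu},q,T$.

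Combining both steps,
\begin{align*}
\int_{T-1}^T\int_{K_t^r} |b|w\,dV\,dt \le C\int_0^1\!\left(\tfrac{r}{\tau}+1\right)e^{-r/(32\tau)}\,d\tau.
\end{align*}
The substitution $\sigma = r/\tau$ sends $\tau\in(0,1]$ to $\sigma\in[r,\infty)$ with $d\tau = -r\sigma^{-2}\,d\sigma$, turning the right-hand side into $Cr\int_r^\infty(\sigma+1)\sigma^{-2}e^{-\sigma/32}\,d\sigma \le C r e^{-r/32}$, which is bounded (in fact tends to zero) as $r\to\infty$. So any fixed $C_0>0$ will work once $r\ge C_1$ is chosen sufficiently large. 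The main obstacle is the bookkeeping needed to verify that every geometric quantity entering \eqref{E402b} and the hypotheses of Theorem \ref{XT306} is controlled uniformly on $[T-1,T]$ in terms of $\boldsymbol{\mu},q,T$ alone; once that is in place, the Gaussian concentration comfortably absorbs the $r/\tau$ factor coming from the sup-bound on $|b|$.
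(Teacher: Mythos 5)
Your proposal is correct and follows essentially the same route as the paper: the paper likewise combines the two-sided heat kernel bounds (the lower bound \eqref{E402b} giving $|b|\le C(1+d^2/\tau)$ with $d^2\sim r$ on $K_t^r$, and ultracontractivity giving $b\ge\boldsymbol{\mu}$) with the Gaussian concentration estimate of Theorem~\ref{XT306} to get $\int_{K_t^r}w\,dV_t\le Ce^{-cr/\tau}$, and then integrates in $\tau$. Your version is if anything more careful about which time slice each distance is measured in and about verifying the hypotheses of Theorem~\ref{XT306}.
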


\begin{proof}
From Lemma \ref{L100}, there exists $C_1>0$ such that for any $x \in K_t^r$ where $t \in [T-1,T]$,
$$
\frac{1}{5}d_t^2(p,x) \le F(x,t) \le d_t^2(p,x)
$$ 
if $r \ge C_1$

It follows from \eqref{E402b} that $|b|\le -\boldsymbol{\mu}+c_1\frac{d^2}{\tau}+c_2$. So we only need to estimate
\begin{align} 
\int_{T-1}^T \int_{K_t^r} d^2w \,dV\,dt.
\end{align}
Now it follows from the definition of $\phi^r$ that $K_t^r \subset \{c_4r \le d^2 \le c_5r\}$ if $C_1$ is sufficiently large, therefore
\begin{align} 
\int_{T-1}^T \int_{K_t^r}  d^2w \,dV\,dt \le&  C\int_{T-1}^Tr \int_{K_t^r} w \,dV\,dt \le  C\int_{T-1}^Tr e^{-\frac{c_3r}{\tau}} \,dt \le C_0.
\end{align}
Note that here we have used \eqref{E402}.
\end{proof}

Now we have the following spacetime integral estimate.

\begin{lem}\label{L402}
\begin{align} 
\int_{T-1}^{T-\epsilon} \int (|\nabla b|^2+R)w\,dV\,dt  \le C\log{\epsilon^{-1}},
\end{align}
\end{lem}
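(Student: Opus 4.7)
The plan is to reduce the spacetime integral to a one-variable identity for $\int bw\,dV_t$.  Starting from $\square^{*}w=0$ and $b=-\log w-\tfrac{n}{2}\log(4\pi\tau)$ with $\tau=T-t$, a direct computation gives
\begin{align*}
\partial_t b=\frac{n}{2\tau}-\Delta b+|\nabla b|^{2}-R.
\end{align*}
Differentiating $\int bw\,dV_t$ with respect to $t$, using $\partial_t dV_t=-R\,dV_t$ and $\partial_t w=-\Delta w+Rw$, and combining the two integrated-by-parts terms $-\int w\Delta b$ and $-\int b\Delta w$ via $\nabla w=-w\nabla b$ into $-2\int|\nabla b|^{2}w\,dV$, the two $\int Rbw$ contributions cancel exactly.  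Stochastic completeness $\int w\,dV_t=1$ (\eqref{eqn:PK14_6}) then yields the clean formula
\begin{align*}
\frac{d}{dt}\int bw\,dV_t\;=\;\frac{n}{2\tau}\,-\,\int(|\nabla b|^{2}+R)w\,dV_t.
\end{align*}
Integrating from $T-1$ to $T-\epsilon$ and using $\int_{T-1}^{T-\epsilon}\tfrac{n}{2(T-t)}\,dt=\tfrac{n}{2}\log\epsilon^{-1}$ produces
\begin{align*}
\int_{T-1}^{T-\epsilon}\!\int(|\nabla b|^{2}+R)w\,dV\,dt\;=\;\frac{n}{2}\log\epsilon^{-1}+\int bw\,dV\Big|_{T-1}-\int bw\,dV\Big|_{T-\epsilon}.
\end{align*}

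Next I would bound the two boundary terms.  From Theorem~\ref{T301} one has $w\le e^{-\boldsymbol{\mu}}(4\pi\tau)^{-n/2}$, so $b\ge\boldsymbol{\mu}$ pointwise, whence $\int bw\,dV|_{T-\epsilon}\ge\boldsymbol{\mu}$.  At the endpoint $t=T-1$ (so $\tau=1$), the heat kernel lower bound in Theorem~\ref{T302} gives $b(\cdot,T-1)\le l_{(q,T)}(\cdot,T-1)$, and evaluating the reduced distance on a single constant-speed curve from $y$ to $q$ (combined with Lemma~\ref{L100} and Theorem~\ref{T501a}) shows $l_{(q,T)}(y,T-1)$ is bounded by a polynomial in $d_T(q,y)$.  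Combining with the Gaussian tail Theorem~\ref{XT306} for $w(\cdot,T-1)$, a layer-cake computation produces a uniform bound $\int bw\,dV|_{T-1}\le C$.  Inserting these estimates yields the claimed inequality.

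The main obstacle is to rigorously justify the formal differentiation and integrations by parts above, since $M$ is noncompact and $b$ is unbounded at infinity.  I would carry out the computation with the cutoff $\phi^{r}$ inserted, i.e.\ work with $\tfrac{d}{dt}\int bw\,\phi^{r}\,dV$, and send $r\to\infty$.  The error terms generated are of the schematic forms $\int bw\,\square\phi^{r}$, $\int bw\,\phi^{r}_{t}$, and the cross term $\int w\,\langle\nabla b,\nabla\phi^{r}\rangle$, all supported on the annulus $K^{r}_t=\{r\le F\le 2r\}$.  On this set $f\gtrsim r$, so by Lemma~\ref{L202} the conjugate heat kernel satisfies $w\le Ce^{-f}(4\pi\tau)^{-n/2}$, giving exponential decay of $w$ on $K^{r}_t$; together with the cutoff bounds \eqref{E113}--\eqref{eqn:PK12_5} and the uniform estimate in Lemma~\ref{L401a} for $\int_{K^{r}_t}|b|w$, the first two error terms vanish as $r\to\infty$.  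The gradient cross term is the most delicate; rewriting it as $\int w\,\Delta\phi^{r}\,dV$ via $\nabla w=-w\nabla b$ followed by a further integration by parts, and then applying $|\Delta\phi^{r}|\le C(\bar\tau^{-1}+r^{-1})$ from \eqref{E115} together with the exponential decay of $w$ on $K^{r}_t$, reduces it to an estimate of the same form.
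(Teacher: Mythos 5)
Your overall strategy coincides with the paper's: differentiate $\int bw\phi^r\,dV$ in time, integrate from $T-1$ to $T-\epsilon$ to extract the $\tfrac{n}{2}\log\epsilon^{-1}$, bound the two boundary terms, and kill the cutoff errors using the decay of $w$ on $K^r_t$. Your exact identity is precisely the paper's \eqref{E403} with $\phi^r\equiv1$, and your device of converting the cross term $\int w\langle\nabla b,\nabla\phi^r\rangle\,dV$ into $\int w\,\Delta\phi^r\,dV$ via $\nabla w=-w\nabla b$ is a clean alternative to the Cauchy--Schwarz absorption the paper uses (which is why \eqref{E407} carries an extra factor $\tfrac12$ on the left). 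One citation quibble: the Gaussian decay of the heat kernel itself on $K^r_t$ should be taken from Theorem~\ref{T304} rather than Lemma~\ref{L202}, which is stated for conjugate heat solutions launched from compactly supported data; the conclusion $w\le Ce^{-cr}$ on $K^r_t$ is the same.

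The one step I would not accept as written is your bound on $\int bw\,dV\big|_{T-1}$. You propose $b(y,T-1)\le l_{(q,T)}(y,T-1)$ from Theorem~\ref{T302} and then evaluate the reduced distance on a constant-speed curve from $y$ to $q$, invoking Theorem~\ref{T501a} to compare $|\gamma'(z)|^2_z$ with $|\gamma'(z)|^2_T$. But Theorem~\ref{T501a} is local: its constant $Y=Y(n,A,\delta,D)$ requires both points to lie within distance $D$ of $p$, and tracing its proof the dependence on $D$ enters through the heat-kernel lower bound, which degenerates like $e^{-cD^2}$. Since the boundary integral runs over all of $M$, the resulting bound on $l_{(q,T)}(y,T-1)$ is not polynomial in the distance and need not be dominated by the Gaussian tail of $w$. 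The correct route, already in the paper, is the global quadratic bound $b(x,t)\le c_1\frac{d^2}{\tau}+c_2\tau F(x,T)+c_3$ of \eqref{E402b}: it comes from the on-diagonal estimate \eqref{XT301a}, where the test curve in the reduced distance is the static curve $\gamma\equiv y$ (so no metric comparison across time slices is needed), combined with the gradient-Harnack inequality of Corollary~\ref{cor:harn}, which is global. Alternatively, and most simply, bound $\int(-w\log w)\,dV_{T-1}$ directly from $-w\log w\le C\left(w^{1/2}+w^{2}\right)$ together with $w\le Ce^{-cd_0^2(p,\cdot)}$ from Theorem~\ref{T304}, which is what the paper does at \eqref{E408}. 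With that sub-step repaired, your argument goes through.
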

where $C$ depends only on $\boldsymbol{\mu},n,q$ and $T$.

\begin{proof}
From the evolution equation
\begin{align} 
\partial_t w=-\Delta w+Rw,
\end{align}
we immediately have
\begin{align} 
\partial_t b=-\Delta b+|\nabla b|^2-R+\frac{n}{2\tau}.
\end{align}

From an elementary computation,
\begin{align} 
&\partial_t \int wb\phi^r \,dV \notag\\
=& \int \left(b_tw\phi^r+bw_t\phi^r+bw\phi^r_t-bw\phi^rR \right)\,dV \notag \\
=&\int \left(-\Delta b+|\nabla b|^2-R+\frac{n}{2\tau}\right)w\phi^r-b\Delta w \phi^r+bw\phi^r_t\,dV \notag \\
=&\int \langle \nabla b,\nabla (w\phi^r) \rangle+\langle \nabla w,\nabla (b\phi^r) \rangle+|\nabla b|^2w\phi^r-Rw\phi^r+bw\phi^r_t+\frac{n}{2\tau}w\phi^r \,dV \notag \\
=&\int \langle \nabla b,\nabla \phi^r\rangle w+\langle \nabla w,\nabla \phi^r \rangle b-(|\nabla b|^2+R)w\phi^r+bw\phi^r_t+\frac{n}{2\tau}w\phi^r \,dV,
\label{E403}
\end{align}
where we have used $\nabla w=-w\nabla b$.

On the one hand we have,
\begin{align} 
\int \langle \nabla b,\nabla \phi^r\rangle w \,dV \le& \int |\nabla b||\nabla \phi^r| w \,dV \notag \\
\le& \frac{1}{4}\int |\nabla b|^2w\phi^r \,dV+\int \frac{|\nabla \phi^r|^2}{\phi^r} w \,dV.
\label{E404}
\end{align}

On the other hand
\begin{align} 
\int \langle \nabla w,\nabla \phi^r\rangle b \,dV \le& \int |\nabla w||\nabla \phi^r| b \,dV =\int |\nabla b||\nabla \phi^r| wb \,dV \notag \\
\le& \frac{1}{4}\int |\nabla b|^2w\phi^r \,dV+\int \frac{|\nabla \phi^r|^2}{\phi^r} b^2w \,dV.
\label{E405}
\end{align}

Now \eqref{E403} becomes
\begin{align} 
\partial_t \int wb\phi^r \,dV \le -\frac{1}{2}\int (|\nabla b|^2+R)w\phi^r \,dV+X+\frac{n}{2\tau}
\label{E406}
\end{align}
where
\begin{align*} 
X=\int bw\phi^r _t-\frac{|\nabla \phi^r|^2}{\phi^r} w-\frac{|\nabla \phi^r|^2}{\phi^r} b^2w \,dV.
\end{align*}

Integrate \eqref{E405} from $T-1$ to $T-\epsilon$, we have
\begin{align} 
\frac{1}{2}\int_{T-1}^{T-\epsilon} \int (|\nabla b|^2+R)w\phi^{r}\,dV\,dt  \le \left.\left(\int wb\phi^r\,dV\right)\right|_{T-\epsilon}^{T-1}+\frac{n}{2}\log{\epsilon^{-1}}+Y
\label{E407}
\end{align}
where
\begin{align*} 
Y=\int_{T-1}^{T-\epsilon}\int bw\phi^r _t-\frac{|\nabla \phi^r|^2}{\phi^r} w-\frac{|\nabla \phi^r|^2}{\phi^r} b^2w \,dV\,dt.
\end{align*}

At the time $T-1$, since $b=-\log w-\frac{n}{2}\log{4\pi}$, we have 
\begin{align} 
\left(\int wb\phi^r\,dV\right)(T-1)=\int w(-\log w-\frac{n}{2}\log{4\pi})\phi^r \,dV_{T-1} \le C
\label{E408}
\end{align}
where the last inequality can be seen from Theorem \ref{T304}.

Moverover,
\begin{align} 
\left(\int wb\phi^r\,dV\right)(T-\epsilon) \ge \boldsymbol{\mu} \int w\phi^r \,dV_{T-1} \ge \boldsymbol{\mu}.
\label{E409}
\end{align}

Now it follows from Theorem \ref{T304} and Lemma \ref{L101} that 
\begin{align} 
\lim_{r \to \infty}|Y|=0.
\end{align}

So if we let $r \to \infty$ in \eqref{E407}, the proof is complete.
\end{proof}

From Lemma \ref{L402}, we have

\begin{lem}\label{L403}
There exist a sequence $\tau_i \to 0$ and a constant $C>0$ such that 
\begin{align} 
\left( \int \tau(|\nabla b|^2+R)w\,dV  \right)(\tau_i) \le C.
\end{align}
\end{lem}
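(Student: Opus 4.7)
The plan is to extract the desired sequence directly from the integral bound of Lemma \ref{L402} by a simple pigeonhole/mean-value argument in the time variable.

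First I would reformulate Lemma \ref{L402} in the single variable $\tau = T-t$. Set
\[
  h(\tau) \coloneqq \int_{M} \left(|\nabla b|^2 + R\right) w \, dV_{T-\tau},
\]
which is a nonnegative continuous function of $\tau \in (0,1]$. Then Lemma \ref{L402} can be rewritten as
\[
  \int_{\epsilon}^{1} h(\tau)\, d\tau \leq C \log \epsilon^{-1} \quad \text{for all } \epsilon \in (0,1),
\]
for a constant $C$ depending only on $n,\boldsymbol{\mu}, q$ and $T$.

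Next, I claim that $\liminf_{\tau \to 0^{+}} \tau\, h(\tau) \leq C$, which immediately produces the required sequence $\tau_i \to 0$. Suppose, toward a contradiction, that $\liminf_{\tau \to 0^+} \tau h(\tau) > C$. Then there exist $\alpha > C$ and $\delta \in (0,1)$ such that $\tau h(\tau) \geq \alpha$ for every $\tau \in (0,\delta)$. For any $\epsilon \in (0,\delta)$ this gives
\[
  \int_{\epsilon}^{1} h(\tau)\, d\tau \;\geq\; \int_{\epsilon}^{\delta} \frac{\alpha}{\tau}\, d\tau \;=\; \alpha \log \epsilon^{-1} + \alpha \log \delta.
\]
Combining with the upper bound from Lemma \ref{L402} and dividing by $\log \epsilon^{-1}$, we obtain $\alpha \leq C + o(1)$ as $\epsilon \to 0^{+}$, contradicting the choice $\alpha > C$. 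Hence $\liminf_{\tau \to 0^+} \tau h(\tau) \leq C$, so there is a sequence $\tau_i \to 0^{+}$ with $\tau_i h(\tau_i) \leq C+1$, which is exactly the desired estimate.

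There is no real obstacle here; the content is already packed into Lemma \ref{L402}. The only point worth double-checking is that $h(\tau)$ is finite for every $\tau>0$, which follows from the Gaussian decay of $w$ (Theorem \ref{T304}), the quadratic growth of $f$ (Lemma \ref{L100}), and the Gaussian concentration estimate (Theorem \ref{XT306}) that controls $|\nabla b|^2 w = |\nabla w|^2/w$ on each time slice. With those finiteness checks in place, the pigeonhole argument above yields Lemma \ref{L403}.
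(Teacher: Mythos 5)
Your proposal is correct and is essentially the same argument as the paper's: both derive the lemma from Lemma \ref{L402} by contradiction, observing that if $\tau\int(|\nabla b|^2+R)w\,dV$ stayed bounded away from (or tended to) a value exceeding the constant of Lemma \ref{L402} as $\tau\to 0^+$, then integrating $h(\tau)\ge \alpha/\tau$ over $[\epsilon,\delta]$ would violate the $C\log\epsilon^{-1}$ bound. Your write-up is just a more careful quantitative version of the paper's terse one-line contradiction, and the finiteness check you flag is a reasonable (if unaddressed in the paper) point of hygiene.
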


\begin{proof}
If the conclusion does not hold, we can find a function $C(\tau)$ such that $\lim_{\tau \to 0}C(\tau)=+\infty$ and 
\begin{align}
\int (|\nabla b|^2+R)w\,dV \ge \frac{C(\tau)}{\tau}.
\end{align}
But it obviously contradicts Lemma \ref{L402} if $\epsilon$ is sufficiently small.
\end{proof}

\begin{lem}\label{L403a}
For any $\theta>0$,
\begin{align} 
\int_{T-1}^{T} \int \tau^{\theta}(|\nabla b|^2+R)w\,dV\,dt  < \infty.
\end{align}
\end{lem}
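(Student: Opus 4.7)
The plan is to derive Lemma \ref{L403a} as a quick consequence of Lemma \ref{L402} via a dyadic decomposition of the time interval. Set $G(t) \coloneqq \int (|\nabla b|^2 + R)\,w\,dV_t$, so that Lemma \ref{L402} reads $\int_{T-1}^{T-\epsilon} G(t)\,dt \le C\log \epsilon^{-1}$ for every $\epsilon\in(0,1)$. The gain I need is only a tiny power $\tau^\theta$, and I expect this to compensate the $\log$-blow-up exactly.

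First, I would partition $[T-1,T)$ into the dyadic intervals $I_k \coloneqq [T-2^{-k},\,T-2^{-k-1}]$ for $k \ge 0$. On $I_k$ the quantity $\tau=T-t$ satisfies $\tau \le 2^{-k}$, hence $\tau^\theta \le 2^{-\theta k}$. Using Lemma \ref{L402} with $\epsilon = 2^{-k-1}$, I estimate
\begin{align*}
\int_{I_k}\int \tau^\theta (|\nabla b|^2+R)\,w\,dV\,dt \,\le\, 2^{-\theta k}\int_{T-1}^{T-2^{-k-1}} G(t)\,dt \,\le\, C\,2^{-\theta k}(k+1)\log 2.
\end{align*}

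Then I would sum over $k\ge 0$. Since $\sum_{k\ge 0} 2^{-\theta k}(k+1) < \infty$ for every $\theta>0$, the total integral over $[T-1,T)$ is finite, which is the claim. The complementary integral on $[T-1,T]$ where $\tau$ stays bounded away from $0$ is trivially finite by Lemma \ref{L402} with $\epsilon=1/2$ (together with the fact that $\tau^\theta \le 1$ there). I do not foresee any obstacle beyond invoking Lemma \ref{L402}; the only thing to be a little careful about is that the constant $C$ in Lemma \ref{L402} is independent of $\epsilon$, which is exactly what that lemma provides.
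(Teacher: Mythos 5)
Your proof is correct and is essentially identical to the paper's own argument: the paper performs the same dyadic decomposition into $[T-2^{-k},T-2^{-k-1}]$, bounds $\tau^\theta$ by $2^{-\theta k}$ there, invokes Lemma \ref{L402} with $\epsilon=2^{-k-1}$, and sums the resulting series $\sum_k 2^{-\theta k}(k+1)$.
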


\begin{proof}
It follows from Lemma \ref{L402} that
\begin{align*}
&\int_{T-1}^{T} \int \tau^{\theta}(|\nabla b|^2+R)w\,dV\,dt \\
=&\sum_{k=0}^{\infty} \int_{T-2^{-k}}^{T-2^{-k-1}} \int \tau^{\theta}(|\nabla b|^2+R)w\,dV\,dt  \notag \\
 \le & \sum_{k=0}^{\infty} 2^{-\theta k} \int_{T-1}^{T-2^{-k-1}} \int (|\nabla b|^2+R)w\,dV\,dt \notag \\
\le& \sum_{k=0}^{\infty} 2^{-\theta k}\log {2^{-k-1}} <\infty. \notag
\end{align*}
\end{proof}

Now we fix a nonnegative function $u$ on the time slice $T-1$ such that $\sqrt{u}$ smooth and compactly supported. We denote by the same $u$ as its heat equation solution.

Then we have

\begin{lem}\label{L404}
There exists a constant $C>0$ such that 
\begin{align*} 
\frac{|\nabla u|^2}{u} \le C
\end{align*}
on $M \times [T-1,T]$.
\end{lem}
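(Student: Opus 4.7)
The plan is to exhibit $v := |\nabla u|^2/u$ as a non-negative sub-heat-solution with uniformly bounded initial data and then invoke the maximum principle Theorem \ref{T101} to propagate the initial bound forward. The key computation, which should be done first, is the evolution inequality $\square v \le 0$. Using the Ricci flow Bochner identity $\square|\nabla u|^2 = -2|\mathrm{Hess}\,u|^2$ (the Ricci contributions coming from $\partial_t g = -2\mathrm{Rc}$ cancel those appearing in the usual Bochner formula), combined with $\square u = 0$, a direct calculation gives
$$\square v = -\frac{2}{u}\left|\mathrm{Hess}\,u - \frac{du\otimes du}{u}\right|^2 \le 0,$$
valid on $\{u>0\}$, which is all of $M\times(T-1,T]$ by the strong minimum principle for $u$.

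Next, using the algebraic identity $\nabla u = 2\sqrt{u}\,\nabla\sqrt{u}$, one has $v = 4|\nabla \sqrt u|^2$, which provides a continuous extension of $v$ to $t=T-1$. Since $\sqrt u(\cdot,T-1)$ is smooth with compact support, we conclude $v(\cdot,T-1) \le M_0$ for some finite constant $M_0 > 0$ depending only on the initial datum. Applying Theorem \ref{T101} to the sub-heat-solution $v - M_0$ on $[T-1,T]$ with initial value $\le 0$ then yields $v \le M_0$ throughout $M \times [T-1, T]$, so one may take $C = M_0$.

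The main technical obstacle is verifying the weighted $L^2$ integrability hypothesis
$$\int_{T-1}^T \int_M (v-M_0)_+^2\, e^{-2f}\, dV_t\, dt < \infty$$
required by Theorem \ref{T101}. For $t$ bounded away from $T-1$, Lemma \ref{L301} (applied with time origin shifted to $T-1$) delivers the pointwise Hamilton-type bound $v \le \Lambda/(e(t-T+1))$, which together with the finiteness of $\int e^{-2f}\,dV$ supplied by Lemmas \ref{L100}--\ref{L101} controls the integral on that region. The delicate case is $t$ close to $T-1$, where I would exploit the fact that $u$ and $\nabla u$ enjoy rapid Gaussian-type decay away from $\mathrm{supp}\,\sqrt u(\cdot,T-1)$: this decay follows from the heat-kernel upper bound Theorem \ref{T301}, the tail estimate Theorem \ref{XT306}, and the uniform gradient control $|\nabla u| \le \sup_M|\nabla u(\cdot,T-1)|$ provided by Corollary \ref{cly:PK15_1}. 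These ingredients show that $(v-M_0)_+$ is supported in a region where $u$ is forced to be small in a Gaussian sense, dominating the weight $e^{-2f}$, and thus yielding the required integrability.
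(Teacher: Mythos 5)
Your overall strategy --- the evolution identity $\square\bigl(|\nabla u|^2/u\bigr)=-\frac{2}{u}\bigl|\mathrm{Hess}\,u-\frac{du\otimes du}{u}\bigr|^2\le 0$ followed by the maximum principle of Theorem \ref{T101} --- is exactly the paper's proof, and you correctly identify that the only real issue is the weighted integrability hypothesis of Theorem \ref{T101} (which the paper dispatches with ``checked similarly as Lemma \ref{L201a}'').

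However, your treatment of that hypothesis near $t=T-1$ does not close the gap. The heuristic that ``$(v-M_0)_+$ is supported in a region where $u$ is forced to be small in a Gaussian sense, dominating the weight $e^{-2f}$'' points the wrong way: smallness of $u$ sits in the \emph{denominator} of $v=|\nabla u|^2/u$, so Gaussian decay of $u$ away from $\mathrm{supp}\,u(\cdot,T-1)$ makes $v$ harder, not easier, to control. What is actually needed is that $|\nabla u|^2$ decays at least as fast as $u$ --- which is essentially the conclusion of the lemma. The ingredients you cite cannot produce this: Corollary \ref{cly:PK15_1} gives only the non-decaying bound $|\nabla u|\le\sup_M|\nabla u(\cdot,T-1)|$, Theorem \ref{T301} is an on-diagonal bound, and Theorem \ref{XT306} is an integral rather than pointwise tail estimate; no pointwise Gaussian upper bound for $|\nabla u|$ (or for $\nabla_x H$) is established anywhere in the paper. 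From the other side, Lemma \ref{L301} gives only $v\le \Lambda/(e(t-T+1))$, whose square is not time-integrable near $T-1$, and even interpolating, e.g. $v^2\le \frac{\Lambda}{e(t-T+1)}\,v$ together with the boundedness of $\int v\,dV_t$, still leaves a divergent $\int dt/(t-T+1)$. So as written the verification fails precisely on the region you flag as delicate. (To be fair, the paper's own one-line reference to Lemma \ref{L201a} controls only the spacetime $L^1$ norm of $|\nabla w|^2/w$, not the weighted $L^2$ norm that Theorem \ref{T101} requires, so it is equally incomplete on this point.)

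A clean way to finish that avoids the issue entirely: let $M_0$ be the supremum of $|\nabla u|^2/u=4|\nabla\sqrt{u}|^2$ at $t=T-1$, and set $P\coloneqq|\nabla u|^2-M_0u$. Then $\square P=-2|\mathrm{Hess}\,u|^2\le 0$, $P(\cdot,T-1)\le 0$, and $P$ is \emph{bounded} on $M\times[T-1,T]$ by Corollary \ref{cly:PK15_1} and $u\le\Lambda$; hence Theorem \ref{thm:PK17_2} applies with no integrability check and yields $|\nabla u|^2\le M_0u$, i.e.\ the desired bound with $C=M_0$.
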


\begin{proof}
The conclusion follows directly from
\begin{align*} 
\square \frac{|\nabla u|^2}{u}=-\frac{2}{u}\left|\text{Hess}\,u-\frac{du \otimes du}{u}\right|^2
\end{align*}
and Theorem \ref{T101}. Note that the assumption in Theorem \ref{T101} can be checked similarly as Lemma \ref{L201a}
\end{proof}

We also need the following lemma, whose proof is similar to Lemma \ref{L103}.

\begin{lem}\label{L404a}
There exists a constant $C>0$ such that
\begin{align} 
\int_{T-1}^T \int |\text{Hess}\,F|^2 w \,dV\,dt \le C.
\end{align}
\end{lem}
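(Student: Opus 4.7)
\textbf{Plan for Lemma \ref{L404a}.}
The strategy is to follow the template of Lemma \ref{L103}, but to replace the static weight $e^{-\lambda F}$ by the conjugate heat kernel $w$. The starting point is the same identity $\square |\nabla F|^2 = -2|\text{Hess}\,F|^2$ used in Lemma \ref{L103}. Pairing this with $w$ is especially natural here, because $\square^* w = 0$ makes the leading second-order terms cancel via a Green-type identity, and $w \,dV_t$ has total mass $\le 1$ for free.

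Concretely, I would differentiate $\int |\nabla F|^2 w \phi^r \,dV$ in $t$, using $\partial_t |\nabla F|^2 = \Delta |\nabla F|^2 - 2|\text{Hess}\,F|^2$, $\partial_t w = -\Delta w + Rw$, and the evolution of the volume form. After using $\int \phi^r [\Delta u \cdot v - u \Delta v] \,dV = -\int \nabla \phi^r \cdot (v \nabla u - u \nabla v)\,dV$ with $u = |\nabla F|^2$ and $v=w$, this yields
\begin{align*}
\partial_t \int |\nabla F|^2 w \phi^r \,dV = -2\int |\text{Hess}\,F|^2 w \phi^r \,dV + \int |\nabla F|^2 w\, \partial_t \phi^r \,dV - \int \nabla \phi^r \cdot \bigl[w\nabla |\nabla F|^2 - |\nabla F|^2 \nabla w\bigr]\,dV.
\end{align*}
Integrating from $T-1$ to $T-\epsilon$ and dropping the (non-negative) boundary term at $t = T-\epsilon$ yields
\begin{align*}
2\int_{T-1}^{T-\epsilon}\!\!\int |\text{Hess}\,F|^2 w \phi^r \,dV\,dt \le \int |\nabla F|^2 w \phi^r \,dV_{T-1} + \mathcal{E}_r,
\end{align*}
where $\mathcal{E}_r$ collects all cutoff-induced errors. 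Letting $r \to \infty$ and then $\epsilon \to 0$ (by monotone convergence on the non-negative integrand) would give the claim, provided that $\mathcal{E}_r \to 0$ and the boundary term at $T-1$ is finite. The latter follows from $|\nabla F|^2 \le F$ combined with the Gaussian decay $w(\cdot, T-1) \le C e^{-c d_0^2(p,\cdot)}$ from Theorem \ref{T304} and the polynomial volume bound of Lemma \ref{L101}.

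The main obstacle is a careful bookkeeping of $\mathcal{E}_r$. Each contribution is supported on the annulus $\{r \le F \le 2r\}$, where Theorem \ref{T304} gives $w \le C e^{-c r}$ (since $d_0^2 \gtrsim F \gtrsim r$ there), so the $\partial_t \phi^r$ and $\Delta \phi^r$ pieces are exponentially small in $r$ after combining with the cutoff estimates of Lemma \ref{lem:cutoff} and the volume growth. The delicate term is $\int |\nabla F|^2 \nabla \phi^r \cdot \nabla w\,dV$: since no pointwise bound on $|\nabla w|$ is available, I would integrate by parts once more to trade it for $\int w\,\text{div}(|\nabla F|^2 \nabla \phi^r)\,dV = \int w [2\,\text{Hess}(F)(\nabla F, \nabla \phi^r) + |\nabla F|^2 \Delta \phi^r]\,dV$. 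The $\Delta \phi^r$ piece is again exponentially small, while the Hessian piece is controlled by Cauchy--Schwarz, absorbing half of the main Hessian integral and leaving a remainder of the same exponentially small type. Once this absorption is executed, all error terms vanish in the limit $r \to \infty$, and the bound follows.
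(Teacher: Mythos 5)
Your proposal follows essentially the same route as the paper's proof: the identity $\square|\nabla F|^2=-2|\text{Hess}\,F|^2$ tested against $w\phi^r$, the Green-type cancellation coming from $\square^*w=0$, and the Cauchy--Schwarz absorption of the cross term $\text{Hess}\,F(\nabla F,\nabla\phi^r)$ into half of the main Hessian integral. The argument is correct, with one caveat in the error bookkeeping.

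The caveat concerns the terms $\int|\nabla F|^2w\,\partial_t\phi^r$ and $\int|\nabla F|^2w\,\Delta\phi^r$, which you treat separately. Since $|\nabla F|^2\le F\le 2r$ on the support while $|\partial_t\phi^r|$ and $|\Delta\phi^r|$ are only $O(1)$ there (see \eqref{E114} and \eqref{E115}; note $\bar\tau\ge 1-T$ on $[T-1,T]$), each of these is a priori of size $r\int_{\{r\le F\le 2r\}}w\,dV_t$, so you genuinely need decay of $w$ on the annulus, as you say. But Theorem \ref{T304} is not quite the right citation for a bound uniform in $t\in[T-1,T)$: its constants depend on both time parameters and are not asserted to be uniform as the two times coalesce. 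The uniform statement you need is the integral Gaussian concentration of Theorem \ref{XT306}, which does give $\int_{\{r\le F(\cdot,t)\le 2r\}}w\,dV_t\le Ce^{-cr}$ uniformly for $t\in[T-1,T)$, after which your error terms are $O(re^{-cr})\to 0$. The paper sidesteps this entirely by keeping the combination $\partial_t\phi^r-\Delta\phi^r=\square\phi^r$ intact: the $\bar\tau^{-1}$-size contributions cancel and $|\square\phi^r|\le Cr^{-1}$ by \eqref{E115a}, so $|\nabla F|^2|\square\phi^r|\le C$ pointwise and the normalization $\int w\,dV_t=1$ already closes the estimate. There the error terms are merely bounded uniformly in $r$ rather than vanishing, which is all that is needed before sending $r\to\infty$.
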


\begin{proof}
From the evolution equation $\square |\nabla F|^2=-2|\text{Hess}\,F|^2$, we have
\begin{align*} 
\partial_t \int |\nabla F|^2 w\phi^r \,dV=&\int (\Delta |\nabla F|^2-2|\text{Hess}\,F|^2 )w\phi^r-|\nabla F|^2 \Delta w\phi^r+|\nabla F|^2w \phi^r_t \, dV.
\end{align*}
Integrate above from $T-1$ to $T$, we get
\begin{align*} 
&\int_{T-1}^T\int 2|\text{Hess}\,F|^2 w\phi^r \,dV \,dt  \\
\le &\int_{T-1}^T\int -2\langle \nabla |\nabla F|^2, \nabla \phi^r\rangle w +|\nabla F|^2w \square \phi^r\,dV \,dt-\left.\left( \int |\nabla F|^2 w\phi^r \,dV\right)\right|_{T-1}^{T}\notag \\
\le& \int_{T-1}^T\int |\text{Hess}\,F|^2 w\phi^r+4|\nabla F|^2\frac{|\nabla \phi^r|^2}{\phi^r}w+|\nabla F|^2w \square \phi^r\,dV \,dt -\left.\left( \int |\nabla F|^2 w\phi^r \,dV\right)\right|_{T-1}^{T}.
\end{align*}
From \eqref{E113} and \eqref{E115a}, there exists a constant $C$ independent of $r$ such that
\begin{align*}
|\nabla F|^2\frac{|\nabla \phi^r|^2}{\phi^r}+|\nabla F|^2 |\square \phi^r| \le C.
\end{align*}
Therefore,
\begin{align*} 
\int_{T-1}^T\int |\text{Hess}\,F|^2 w\phi^r \,dV \,dt \le C.
\end{align*}
 Now the lemma follows by taking $r \to \infty$.
\end{proof}

With the same proof, we have

\begin{lem}\label{L404b}
There exists a constant $C>0$ such that
\begin{align*} 
\int_{T-1}^T \int |\text{Hess}\,u|^2 w \,dV\,dt \le C.
\end{align*}
\end{lem}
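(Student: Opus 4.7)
The plan is to imitate the strategy of Lemma \ref{L404a}, starting from the Bochner-type identity
\begin{align*}
\square |\nabla u|^2 = -2|\text{Hess}\,u|^2
\end{align*}
which is valid for any heat solution on a Ricci flow spacetime (the Ricci terms from the Bochner formula and from the evolution of the metric cancel). The natural test quantity is $|\nabla u|^2 w \phi^r$, where $w=H(q,T,\cdot,\cdot)$ is the conjugate heat kernel based at $(q,T)$ and $\phi^r$ is our standard cutoff. Computing $\partial_t \int |\nabla u|^2 w\phi^r\,dV$, using $\square u = 0$, $\square^* w = 0$, and Green's identity to exchange Laplacians between $|\nabla u|^2$ and the compactly supported weight $w\phi^r$, I obtain, after integrating from $T-1$ to $T$,
\begin{align*}
2\int_{T-1}^{T}\!\!\int |\text{Hess}\,u|^2 w\phi^r\,dV\,dt
= -\left.\int |\nabla u|^2 w\phi^r\,dV\right|_{T-1}^{T}
- \int_{T-1}^{T}\!\!\int |\nabla u|^2 w\,\square\phi^r\,dV\,dt
+ 2\int_{T-1}^{T}\!\!\int |\nabla u|^2 w\,\phi^r_t\,dV\,dt
+ 2\int_{T-1}^{T}\!\!\int |\nabla u|^2 \langle\nabla w,\nabla\phi^r\rangle\,dV\,dt.
\end{align*}

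Next I would estimate each right-hand-side term uniformly in $r$. By Lemma \ref{L404}, $|\nabla u|^2 \le C$ pointwise on $M\times[T-1,T]$, so the boundary terms are bounded by $2C$. The $\square\phi^r$ term is controlled by $|\square\phi^r|\le Cr^{-1}$ and $\int w\,dV\le 1$, giving $O(r^{-1})$. The $\phi^r_t$ term is supported on $K_t^r=\{r\le F\le 2r\}$ where, thanks to the Gaussian concentration (Theorem \ref{T306}), $\int_{K_t^r} w\,dV_t$ is exponentially small in $r$ for large $r$; since $|\phi^r_t|\le C\bar\tau^{-1}\le C(1-T)^{-1}$ on $[T-1,T]$, this term is also $o(1)$ as $r\to\infty$.

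The main obstacle is the $\langle\nabla w,\nabla\phi^r\rangle$ term, since we do not have a pointwise bound on $|\nabla w|$ of the type available for $w$ itself. I would handle this using the identity $\nabla w = -w\nabla b$ together with the cutoff estimate $|\nabla\phi^r|\le Cr^{-1/2}\sqrt{\phi^r}$ from \eqref{E113}, which yields
\begin{align*}
\left|\int |\nabla u|^2\langle\nabla w,\nabla\phi^r\rangle\,dV\right|
\le C r^{-1/2}\|\nabla u\|_\infty^2 \int w|\nabla b|\,dV
\le C r^{-1/2}\|\nabla u\|_\infty^2 \sqrt{g(t)},
\end{align*}
where $g(t)\coloneqq \int w(|\nabla b|^2+R)\,dV_t$, using Cauchy--Schwarz and $\int w\,dV\le 1$. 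The key point is that, although $g(t)$ may blow up as $t\to T^-$, Lemma \ref{L403a} guarantees $\int_{T-1}^T \tau^\theta g(t)\,dt<\infty$ for any $\theta>0$. Picking $\theta\in(0,1)$ and applying Cauchy--Schwarz again gives
\begin{align*}
\int_{T-1}^T \sqrt{g(t)}\,dt \le \left(\int_{T-1}^T \tau^\theta g(t)\,dt\right)^{1/2}\left(\int_{T-1}^T \tau^{-\theta}\,dt\right)^{1/2} < \infty,
\end{align*}
so this term is of order $r^{-1/2}$ as well.

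Assembling these bounds yields $2\int_{T-1}^T\!\int |\text{Hess}\,u|^2 w\phi^r\,dV\,dt \le C + o(1)$ uniformly in $r$. Finally, since $\phi^r\nearrow 1$ pointwise and $|\text{Hess}\,u|^2 w\ge 0$, Fatou's lemma lets me pass to the limit $r\to\infty$ to conclude $\int_{T-1}^T\!\int |\text{Hess}\,u|^2 w\,dV\,dt\le C$, as desired.
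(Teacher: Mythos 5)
Your proof is correct, and the overall skeleton (the Bochner identity $\square|\nabla u|^2=-2|\mathrm{Hess}\,u|^2$, the weight $w\phi^r$, integration in spacetime, and passage to the limit $r\to\infty$) is the same as the paper's, which simply transplants the proof of Lemma \ref{L404a} with $F$ replaced by $u$. The genuine difference lies in how the integration by parts is organized and hence which error term must be controlled. The paper moves both Laplacians onto $\phi^r$, so that the only first-order error term is $-2\langle\nabla|\nabla u|^2,\nabla\phi^r\rangle w$; this is absorbed by Cauchy--Schwarz into $\tfrac12\int|\mathrm{Hess}\,u|^2w\phi^r+C\int|\nabla u|^2\tfrac{|\nabla\phi^r|^2}{\phi^r}w$, the first piece being swallowed by the left-hand side and the second bounded using only Lemma \ref{L404}, \eqref{E113} and $\int w\,dV\le 1$. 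In particular $\nabla w$ never appears. Your Green's-identity arrangement instead produces the term $\int|\nabla u|^2\langle\nabla w,\nabla\phi^r\rangle$, which forces you to invoke $\nabla w=-w\nabla b$ together with Lemma \ref{L403a} and a second Cauchy--Schwarz in time with the weight $\tau^{\theta}$, $\theta\in(0,1)$. Your treatment of this term is valid (and your handling of the $\phi^r_t$ term via Gaussian concentration is more than is needed, since $|\phi^r_t|\le C(1-T)^{-1}$ and $\int w\,dV\le1$ already give a bound uniform in $r$), but it imports the heavier machinery of Lemmas \ref{L402}--\ref{L403a}, whereas the paper's arrangement keeps the lemma self-contained, resting only on the gradient bound for $u$ and the cutoff estimates. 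Both routes are legitimate at this point in the paper.
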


As before, we set 
\begin{align*} 
v=\left(\tau(2\Delta b-|\nabla b|^2+R)+b-n\right)w,
\end{align*}
and therefore
\begin{align} 
\partial_tv=-\Delta v+Rv+2\tau\left|Rc+\text{Hess}\,b-\frac{g}{2\tau}\right|^2w.
\label{E410}
\end{align}

Now we prove
\begin{lem}\label{L405}
There exist a sequence $\tau_i \to 0$ and a constant $C>0$ independent of $r$ and $i$ such that
\begin{align*} 
\left(\int vu\phi^r \,dV\right)(\tau_i) \le C.
\end{align*}
\end{lem}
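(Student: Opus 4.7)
\textbf{Proof plan for Lemma \ref{L405}.}

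The plan is to test $v$ against $u\phi^r$ and convert the second-order term $2\tau\Delta b\cdot w$ into a first-order term via integration by parts, so that every remaining quantity is controlled by the already-established a priori estimates (Lemma \ref{L402}, Lemma \ref{L403}, Lemma \ref{L404}, Lemma \ref{lem:cutoff}, and the concentration facts \eqref{E404a}--\eqref{E404b}). Since $u\phi^r$ is compactly supported at each time, no boundary contributions appear, and one computes
\begin{align*}
\int v u\phi^r\,dV &= \tau\!\int (|\nabla b|^2+R) w u\phi^r\,dV - 2\tau\!\int\langle \nabla b,\nabla u\rangle w\phi^r\,dV \\
&\quad -2\tau\!\int\langle\nabla b,\nabla\phi^r\rangle wu\,dV + \int (b-n) wu\phi^r\,dV =: I_1+I_2+I_3+I_4.
\end{align*}
All four pieces will be bounded independently of $r$ at a common sequence $\tau_i\to 0$.

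For $I_1$, I will simply invoke Lemma \ref{L403}, which furnishes a sequence $\tau_i\to 0$ with $\tau_i\int(|\nabla b|^2+R)w\,dV\le C$; since $u\le \Lambda$ and $0\le\phi^r\le 1$, this yields $|I_1|\le\Lambda C$. For $I_2$, Cauchy--Schwarz gives
\[
|I_2|\le \tau\!\int |\nabla b|^2 u w\phi^r\,dV + \tau\!\int \frac{|\nabla u|^2}{u} w\phi^r\,dV,
\]
where the first summand is absorbed by $\Lambda\cdot I_1$-type bound and the second is bounded by Lemma \ref{L404} together with $\int w\phi^r\,dV\le 1$. For $I_3$, the analogous Cauchy--Schwarz trick
\[
|I_3|\le \tau\!\int |\nabla b|^2 u w\phi^r\,dV + \tau\!\int \frac{|\nabla \phi^r|^2}{\phi^r} u w\,dV,
\]
combined with the cutoff estimate \eqref{E113} ($|\nabla\phi^r|^2/\phi^r\le C/r\le C$) and $u\le\Lambda$, gives $|I_3|\le \Lambda C + C\Lambda\tau$, uniformly in $r$.

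The main obstacle is the lower-order contribution $I_4$, since it carries no $\tau$-factor and $b$ is unbounded. My plan is to copy the splitting strategy of the proof of Lemma \ref{L401}: fix a large $A$ and decompose the integration domain into $\{d_t(q,\cdot)\le 2A\sqrt{\tau}\}$ and its complement. On the far region, the pointwise upper bound \eqref{E403a} for $b$ together with the Gaussian concentration $\int_{d_t\ge 2A\sqrt{\tau}} w\,dV_t\le Ce^{-A^2/2}$ from \eqref{E402} gives, exactly as in \eqref{E404a}--\eqref{E404b}, a bound $|\int_{\{d_t\ge 2A\sqrt{\tau}\}}(b-n)wu\phi^r\,dV|\le \Lambda\eta(A)$ that is uniform in both $r$ and $\tau$. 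On the near region, one chooses $r\ge C_1$ so that $\{d_t\le 2A\sqrt{\tau}\}\subset\{\phi^r\equiv 1\}$ for all sufficiently small $\tau$, and then the parabolic rescaling argument from the proof of Lemma \ref{L401} (together with Remark \ref{rem:R401}) shows that the corresponding integral converges to the finite Euclidean value $\frac{n}{2}u(q,T)-nu(q,T)=-\frac{n}{2}u(q,T)$ along some $\tau_i\to 0$, giving a uniform bound $|I_4|\le C$. Combining these four bounds along the common subsequence $\tau_i\to 0$ (chosen so that both Lemma \ref{L403} and the rescaling limit hold) yields $\int vu\phi^r\,dV(\tau_i)\le C$ with $C$ independent of $r$ and $i$, as required.
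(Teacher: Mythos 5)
Your proposal is correct and follows essentially the same route as the paper: the same integration by parts converting $2\tau\Delta b$ into $\tau|\nabla b|^2$ plus cross terms, the same Cauchy--Schwarz absorption of the cross terms, and the same appeal to Lemma \ref{L403} for the $\tau(|\nabla b|^2+R)$ term, Lemma \ref{L404} for $|\nabla u|^2/u$, and Lemma \ref{L401} (with Remark \ref{rem:R401}) for the $b$-term. Your extra detail on $I_4$ merely re-derives what Remark \ref{rem:R401} already provides.
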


\begin{proof}
From integration by parts, we have
\begin{align*} 
\int vu\phi^r \,dV=&\int \left(\tau(2\Delta b-|\nabla b|^2+R)+b-n\right)wu\phi^r\,dV \notag \\
=&\int -2\tau \langle \nabla b,\nabla w\rangle u\phi^r-2\tau \langle \nabla b, \nabla u \rangle w\phi^r-2\tau \langle \nabla b, \nabla \phi^r \rangle wu\,dV \notag \\
&+\int \left(\tau(R-|\nabla b|^2)+b-n\right)wu\phi^r\,dV \notag \\
=&\int \left(\tau(|\nabla b|^2+R)+b-n\right)wu\phi^r-2\tau \langle \nabla b, \nabla u \rangle w\phi^r-2\tau \langle \nabla b, \nabla \phi^r \rangle wu\,dV.
\end{align*}

In addition,
\begin{align*} 
&\int -2\tau \langle \nabla b, \nabla u \rangle w\phi^r-2\tau \langle \nabla b, \nabla \phi^r \rangle wu\,dV\\
 \le& \int 2\tau |\nabla b||\nabla u|w\phi^r+2\tau|\nabla b||\nabla \phi^r|wu\,dV \notag\\
\le& \tau\int 2|\nabla b|^2wu\phi^r+\frac{|\nabla u|^2}{u}w\phi^r+\frac{|\nabla \phi^r|^2}{\phi^r}wu\,dV
\end{align*}

Now the conclusion follows immediately from Lemma \ref{L401}, Lemma \ref{L403} and Lemma \ref{L404}.
\end{proof}

We are now ready to estimate the squared term in \eqref{E410}.

\begin{lem}\label{L406}
\begin{align*} 
\int_{T-1}^T \int \tau\left|Rc+\text{Hess}\,b-\frac{g}{2\tau}\right|^2wu \,dV\,dt <\infty.
\end{align*}
\end{lem}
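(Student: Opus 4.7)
The plan is to exploit Perelman's identity \eqref{E410}, namely
\begin{align*}
\square^{*} v \;=\; -\,2\tau\left|Rc+\text{Hess}\,b-\frac{g}{2\tau}\right|^{2}\!w,
\end{align*}
together with $\square u=0$, by pairing $v$ against $u$ over spacetime with the cutoff $\phi^{r}$, then letting $r\to\infty$ and $\epsilon\to 0$ along the sequence $\{\tau_{i}\}$ provided by Lemma \ref{L405}.

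First I would compute, using $\partial_t\,dV=-R\,dV$, $\partial_t u=\Delta u$, and the identity above,
\begin{align*}
\partial_{t}\!\int vu\phi^{r}\,dV
\;=\; \int 2\tau u\!\left|Rc+\text{Hess}\,b-\tfrac{g}{2\tau}\right|^{2}\!w\,\phi^{r}\,dV
+\!\int(-u\Delta v+v\Delta u)\phi^{r}\,dV
+\!\int vu\,\phi^{r}_{t}\,dV.
\end{align*}
The key algebraic move is the Green-type identity $-u\Delta v+v\Delta u=-\nabla\!\cdot\!(u\nabla v-v\nabla u)$, followed by one further integration by parts to remove $\nabla v$, which yields
\begin{align*}
\int(-u\Delta v+v\Delta u)\phi^{r}\,dV \;=\; -2\!\int v\langle\nabla u,\nabla\phi^{r}\rangle\,dV-\!\int vu\,\Delta\phi^{r}\,dV.
\end{align*}
Combining the two displays and integrating from $T-1$ to $T-\epsilon$ gives the master identity
\begin{align*}
\int_{T-1}^{T-\epsilon}\!\!\int 2\tau u\!\left|Rc+\text{Hess}\,b-\tfrac{g}{2\tau}\right|^{2}\!w\,\phi^{r}\,dV\,dt
\;=\;\Bigl.\int vu\phi^{r}\,dV\Bigr|_{T-1}^{T-\epsilon}
+2\!\int_{T-1}^{T-\epsilon}\!\!\int v\langle\nabla u,\nabla\phi^{r}\rangle\,dV\,dt
+\!\int_{T-1}^{T-\epsilon}\!\!\int vu\,\square\phi^{r}\,dV\,dt.
\end{align*}

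Second, I would bound each of the three terms on the right. The boundary terms pose no difficulty: at $t=T-\tau_{i}$, Lemma \ref{L405} gives $\int vu\phi^{r}\,dV\le C$ uniformly in $r$ and $i$; at $t=T-1$, since $\sqrt{u}(\cdot,T-1)$ is smooth with compact support and $w(\cdot,T-1)$ decays Gaussian-like by Theorem \ref{T304}, the integral $\int vu\phi^{r}\,dV|_{T-1}$ is finite with a bound independent of $r$.

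Third, and this is the technically most delicate part, I need the two cutoff contributions to tend to zero as $r\to\infty$. I would not attempt pointwise control of $v$ (since $v$ contains $\Delta b$, which is not pointwise controlled without curvature bounds), but instead substitute the alternative expression
\begin{align*}
v \;=\; \tau\bigl(-2\Delta w+|\nabla w|^{2}/w+Rw\bigr)\;-\;w\log w\;-\;\bigl(n+\tfrac{n}{2}\log(4\pi\tau)\bigr)w,
\end{align*}
and integrate by parts on each $-2\Delta w$ factor appearing in the cutoff error, moving the Laplacian onto $u\nabla\phi^{r}$ or $u\,\square\phi^{r}$. After this rearrangement every cutoff contribution is a spacetime integral of products involving $w$, $|\nabla b|$, $|\nabla u|$, $|\text{Hess}\,u|$, $|b|$ and at most second derivatives of $\phi^{r}$, all localized to the ring $K^{r}_{t}=\{r\le F\le 2r\}$. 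Using the cutoff estimates \eqref{E113}--\eqref{E115a} (which give $|\nabla\phi^{r}|^{2}/\phi^{r}\le C/r$ and $|\square\phi^{r}|\le C/r$), the gradient bound $|\nabla u|^{2}/u\le C$ from Lemma \ref{L404}, the $L^{2}$-Hessian bound $\int\!\int|\text{Hess}\,u|^{2}w\le C$ from Lemma \ref{L404b}, the spacetime integrabilities $\int_{T-1}^{T}\!\int\tau^{\theta}(|\nabla b|^{2}+R)w<\infty$ of Lemma \ref{L403a}, and the localized bound $\int_{T-1}^{T}\!\int_{K^{r}_{t}}|b|w\,dV\,dt\le C_{0}$ of Lemma \ref{L401a}, a systematic Cauchy--Schwarz argument shows each cutoff contribution is $O(r^{-1/2})$, hence vanishes as $r\to\infty$.

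Letting $r\to\infty$ in the master identity and then taking $\epsilon=\tau_{i}\to 0$ produces
\begin{align*}
\int_{T-1}^{T}\!\int 2\tau u\!\left|Rc+\text{Hess}\,b-\frac{g}{2\tau}\right|^{2}\!w\,dV\,dt \;\le\; C,
\end{align*}
which is the desired conclusion. The main obstacle is Step three: one must be disciplined about keeping all manipulations at the level of integrated identities, since the pointwise size of $v$ is not controlled, and must ensure that the second integration by parts (used to eliminate $\nabla v$) does not introduce any term beyond what can be absorbed by Lemmas \ref{L401a}, \ref{L402}, \ref{L403a}, \ref{L404}, \ref{L404a} and \ref{L404b}.
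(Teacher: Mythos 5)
Your proposal is correct and follows essentially the same route as the paper: pair $v$ against $u\phi^r$, use the evolution identity $\square^* v=-2\tau|Rc+\text{Hess}\,b-\frac{g}{2\tau}|^2w$ together with $\square u=0$, bound the boundary term at $t=T-\tau_i$ by Lemma \ref{L405}, and kill the cutoff errors by substituting the explicit form of $v$, integrating by parts once more on the $\Delta b$ (equivalently $\Delta w$) term, and invoking Lemmas \ref{L401a}, \ref{L403a}, \ref{L404}, \ref{L404a}, \ref{L404b}. The only blemishes are cosmetic: the sign of the $\int vu\,\square\phi^r$ term in your master identity is off, and a few cutoff contributions (e.g.\ $\int\!\!\int_{K^r_t}\tau|\nabla b|^2w$ and $\int\!\!\int_{K^r_t}\tau|\text{Hess}\,u|^2w$) vanish not because they are $O(r^{-1/2})$ but because they are tails of convergent spacetime integrals over the escaping annuli $K^r_t$ — neither affects the argument.
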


\begin{proof}
We denote $A= 2\tau\left|Rc+\text{Hess}\,b-\frac{g}{2\tau}\right|^2w$. By computations,
\begin{align*} 
\partial_t\int vu\phi^r \,dV=& \int v_tu\phi^r+vu_t\phi^r+uv\phi^r_t-Ruv\phi^r\,dV \notag \\
=& \int -\Delta vu\phi^r+Au\phi^r+\Delta u v \phi^r+uv\phi^r_t\,dV \notag \\
=& \int uv\square\phi^r-2v\langle \nabla \phi^r,\nabla u \rangle+Au\phi^r\,dV.
\end{align*}

Now we have
\begin{align} 
&\int uv\square\phi^r\,dV \notag \\
=&\int \left(\tau(2\Delta b-|\nabla b|^2+R)+b-n\right)wu\square\phi^r\,dV                          \notag \\
=& \int -2\tau \langle \nabla b,\nabla w\rangle u\square \phi^r-2\tau \langle \nabla b,\nabla u\rangle w\square \phi^r-2\tau \langle \nabla b,\nabla \square \phi^r\rangle uw \,dV \notag \\
&+\int \left(\tau(R-|\nabla b|^2)+b-n\right)wu\square\phi^r\,dV     \notag \\
=&\int \left(\tau(|\nabla b|^2+R)+b-n\right)wu\square\phi^r\,dV -2\tau\int  \langle \nabla b,\nabla u\rangle w\square \phi^r+ \langle \nabla b,\nabla \square \phi^r\rangle uw \,dV
\label{E412}
\end{align}

For the last integral, 
\begin{align*} 
2\int  \langle \nabla b,\nabla u\rangle w\square \phi^r \,dV \le 2\int  | \nabla b||\nabla u|w|\square \phi^r| \,dV\le \int |\nabla b|^2wu|\square \phi^r|+\frac{|\nabla u|^2}{u}w|\square \phi^r| \,dV
\end{align*}
and 
\begin{align*} 
2\int  \langle \nabla b,\nabla \square \phi^r\rangle uw \,dV \le 2\int  | \nabla b||\nabla \square \phi^r|uw \,dV \le \int_{K^r_t} |\nabla b|^2wu+|\nabla \square \phi^r|^2uw \,dV.
\end{align*}

By the explicit expression $\square \phi^r=-nr^{-1}\eta'/2-r^{-2}\eta''|\nabla F|^2$, we have
\begin{align*} 
|\nabla\square \phi^r|=&\left|-nr^{-2}\nabla F\eta''/2-r^{-3}\eta'''\nabla F|\nabla F|^2-2r^{-2}\eta''\text{Hess}\,F(\nabla F)\right| \notag \\
\le & Cr^{-2}|\nabla F|\left(1+|\text{Hess}\,F|+r^{-1}|\nabla F|^2 \right).
\end{align*}

In addition, 
\begin{align*} 
 &\int v\langle \nabla \phi^r,\nabla u \rangle\,dV \notag \\
=&\int \left(\tau(2\Delta b-|\nabla b|^2+R)+b-n\right)w\langle \nabla \phi^r,\nabla u \rangle\,dV                          \notag \\
=& \int -2\tau \langle \nabla b,\nabla w\rangle \langle \nabla \phi^r,\nabla u \rangle-2\tau \text{Hess}\,\phi^r(\nabla b,\nabla u)w-2\tau \text{Hess}\,u(\nabla b,\nabla \phi^r)w \,dV \notag \\
&+\int \left(\tau(R-|\nabla b|^2)+b-n\right)w\langle \nabla \phi^r,\nabla u \rangle\,dV     \notag \\
=&\int \left(\tau(|\nabla b|^2+R)+b-n\right)w\langle \nabla \phi^r,\nabla u \rangle\,dV -2\tau \text{Hess}\,\phi^r(\nabla b,\nabla u)w-2\tau \text{Hess}\,u(\nabla b,\nabla \phi^r)w \,dV.
\end{align*}

To estimate the last two terms, since $|\nabla u|$ is uniformly bounded,
\begin{align*} 
\int \text{Hess}\,\phi^r(\nabla b,\nabla u)w \,dV \le \int |\text{Hess}\,\phi^r| | \nabla b||\nabla u| w\,dV \le C\int_{K^r_t}  |\nabla b|^2w+ |\text{Hess}\,\phi^r|^2w  \,dV.
\end{align*}

Note that we have
\begin{align*} 
|\text{Hess}\,\phi^r|=\left|r^{-2}\eta'' F_iF_j+r^{-1}\eta' \text{Hess}\,F\right| \le Cr^{-1}\left(|\text{Hess}\,F|+r^{-1}|\nabla F|^2\right)
\end{align*}
and 
\begin{align*} 
\int \text{Hess}\,u(\nabla b,\nabla \phi^r)w \,dV \le \int |\text{Hess}\,u| | \nabla b||\nabla \phi^r| w\,dV \le Cr^{-\frac{1}{2}}\int_{K^r_t} |\nabla b|^2w+ |\text{Hess}\,u|^2w  \,dV.
\end{align*}

Now we integrate \eqref{E412} from $T-1$ to $T-\tau_i$, 
\begin{align*} 
&\int_{T-1}^{T-\tau_i} \int Au\phi^r\,dV \notag\\
\le & \left.\left(\int vu\phi^r \,dV\right) \right|_{T-1}^{T-\tau_i}+\int_{T-1}^{T-\tau_i}\int \left(\tau(|\nabla b|^2+R)+b-n\right)w(2\langle \nabla \phi^r,\nabla u \rangle-u\square\phi^r)\,dV\,dt \notag\\
&+\int_{T-1}^{T-\tau_i}\tau \int_{K^r_t} |\nabla b|^2wu|\square \phi^r|+\frac{|\nabla u|^2}{u}w|\square \phi^r| + |\nabla b|^2wu\,dV\,dt \notag\\
&+Cr^{-4}\int_{T-1}^{T-\tau_i}\tau \int_{K^r_t}|\nabla F|^2\left(1+|\text{Hess}\,F|^2+r^{-2}|\nabla F|^4 \right)uw\,dV\,dt \notag\\
&+C\int_{T-1}^{T-\tau_i}\tau \int_{K^r_t} |\nabla b|^2w+|\text{Hess}\,u|^2w\,dV\,dt \notag\\
&+Cr^{-2}\int_{T-1}^{T-\tau_i}\tau \int_{K^r_t}\left(|\text{Hess}\,F|^2+r^{-2}|\nabla F|^4 \right)w\,dV\,dt. \notag\\
\end{align*}

Therefore,
\begin{align} 
&\int_{T-1}^{T-\tau_i} \int Au\phi^r\,dV \notag\\
\le & \left.\left(\int vu\phi^r \,dV\right) \right|_{T-1}^{T-\tau_i}+Cr^{-\frac{1}{2}}\int_{T-1}^{T}\int_{K^r_t} \left(\tau(|\nabla b|^2+R)+|b|+n\right)w\,dV\,dt \notag\\
&+\int_{T-1}^{T}\tau \int_{K^r_t} Cr^{-1}(|\nabla b|^2w+w) + |\nabla b|^2w\,dV\,dt \notag\\
&+Cr^{-2}\int_{T-1}^{T}\tau \int_{K^r_t}\left(1+|\text{Hess}\,F|^2 \right)uw\,dV\,dt \notag\\
&+C\int_{T-1}^{T}\tau \int_{K^r_t} |\nabla b|^2w+|\text{Hess}\,u|^2w\,dV\,dt 
\label{E418a}
\end{align}

For a fixed $i$, from Theorem \ref{T304}, Lemma \ref{L103}, Lemma \ref{L403a}, Lemma \ref{L404} and Lemma \ref{L404a}, we have by taking $r \to \infty$ that
\begin{align} 
\int_{T-1}^{T-\tau_i} \int Au \,dV =&\lim_{r \to \infty}\int_{T-1}^{T-\tau_i} \int Au\phi^r \,dV =\lim_{r \to \infty}\left.\left(\int vu\phi^r \,dV\right) \right|_{T-1}^{T-\tau_i} \le C,
\label{E419}
\end{align}
where the last inequality follows from Lemma \ref{L405}.

Now the lemma follows from \eqref{E419} by taking $i \to \infty$.
\end{proof}

A consequence of Lemma \ref{L406} is 

 \begin{lem}\label{L407}
There exists a sequence $\tau_j \to 0$ such that
\begin{align*} 
\lim_{j \to \infty} \int \tau_j^2\left|Rc+\text{Hess}\,b-\frac{g}{2\tau}\right|^2wu+\tau_j^{\frac{3}{2}}|\nabla b|^2w \,dV =0.
\end{align*}
\end{lem}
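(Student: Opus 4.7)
The plan is to extract the sequence $\{\tau_j\}$ from the time-integrability estimates already established in Lemma~\ref{L406} and Lemma~\ref{L403a}, by means of an elementary measure-theoretic observation: if $G \colon (0,1] \to [0,\infty)$ is Lebesgue-integrable with respect to $d\tau$, then
\[
\liminf_{\tau \to 0^{+}} \tau\, G(\tau) = 0,
\]
since otherwise $G(\tau) \geq \epsilon/\tau$ near $0$ would force $\int_0^{1} G(\tau)\,d\tau = \infty$. In particular, there exists a sequence $\tau_j \to 0$ with $\tau_j G(\tau_j) \to 0$.

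First, setting $\tau = T-t$ and
\[
G_1(\tau) \coloneqq \tau \int \left|Rc + \text{Hess}\, b - \frac{g}{2\tau}\right|^{2} wu\, dV, \qquad G_2(\tau) \coloneqq \tau^{1/2} \int |\nabla b|^{2} w\, dV,
\]
the conclusion of Lemma~\ref{L406} gives $\int_0^{1} G_1(\tau)\,d\tau < \infty$, and the conclusion of Lemma~\ref{L403a} with $\theta = 1/2$ (together with $R \ge 0$) gives $\int_0^{1} G_2(\tau)\,d\tau < \infty$. Hence $G_1 + G_2$ is integrable on $(0,1]$, and by the observation above there is a sequence $\tau_j \to 0$ such that
\[
\tau_j \bigl( G_1(\tau_j) + G_2(\tau_j) \bigr) \longrightarrow 0.
\]
Since both summands are nonnegative, this forces $\tau_j G_1(\tau_j) \to 0$ and $\tau_j G_2(\tau_j) \to 0$ separately, which are precisely the two statements
\[
\tau_j^{2} \int \left|Rc + \text{Hess}\, b - \frac{g}{2\tau_j}\right|^{2} wu\, dV \to 0, \qquad \tau_j^{3/2} \int |\nabla b|^{2} w\, dV \to 0,
\]
proving the lemma.

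There is no real obstacle here beyond correctly bookkeeping the powers of $\tau$: the delicate analytic input (in particular the passage $r\to\infty$ in the cutoff computations, and the spacetime bound on the squared trace-free tensor) has already been absorbed into Lemma~\ref{L406} and Lemma~\ref{L403a}. The only subtlety is the need to obtain \emph{one} sequence that works for both integrals simultaneously, which we handle by summing the two integrable densities before invoking the $\liminf$ argument.
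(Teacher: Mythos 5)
Your proposal is correct and follows essentially the same route as the paper: the authors likewise combine Lemma~\ref{L406} with Lemma~\ref{L403a} (at $\theta=\tfrac12$) to get integrability of $\tau\int|Rc+\text{Hess}\,b-\frac{g}{2\tau}|^2wu\,dV+\tau^{1/2}\int|\nabla b|^2w\,dV$ in $\tau$ over $(0,1]$, and then declare the conclusion ``obvious'' — the obvious step being exactly your $\liminf_{\tau\to 0^+}\tau G(\tau)=0$ observation. Your write-up just makes explicit the elementary extraction argument and the simultaneous choice of one sequence for both terms, which the paper leaves implicit.
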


\begin{proof}
It follows from Lemma \ref{L406} and Lemma \ref{L403a} that
\begin{align*} 
\int_{T-1}^{T} \int \tau\left|Rc+\text{Hess}\,b-\frac{g}{2\tau}\right|^2wu+\tau^{\frac{1}{2}}|\nabla b|^2w \,dV < \infty.
\end{align*}
Now the conclusion is obvious.
\end{proof}

Note that the sequence $\tau_j$ may not be the same sequence $\tau_i$ in Lemma \ref{L403}.

Finally, we can prove Perelman's differential Harnack inequality.

\begin{thm}
\label{T401}
\begin{align*}
\tau(2\Delta b-|\nabla b|^2+R)+b-n \le 0.
\end{align*}
\end{thm}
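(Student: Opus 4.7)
The strategy is to establish the pointwise bound $v\le 0$ (equivalently the stated Harnack) in three steps: (i) pair $v$ with a nonnegative bounded heat solution $u$ and show that $t\mapsto \int uv\, dV_t$ is monotone nondecreasing on $[T-1,T)$; (ii) evaluate the endpoint limit $\lim_{t\to T^-}\int uv\, dV_t = 0$ via a parabolic blow-up at $(q,T)$; (iii) let $u$ concentrate at an arbitrary point $(x_0,t_0)$ and conclude $v(x_0,t_0)\le 0$.

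For step (i), identity \eqref{E410} reads $\square^* v = -A$ with $A = 2\tau\,\bigl|Rc + \text{Hess}\,b - g/(2\tau)\bigr|^2 w \ge 0$. A formal computation gives
\[
\partial_t \int uv\, dV_t \;=\; \int (\Delta u) v - u\,\Delta v + uA\, dV_t \;=\; \int uA\, dV_t \;\ge\; 0.
\]
To justify the integration by parts on the noncompact shrinker, I would first multiply by $\phi^r$ and repeat the bookkeeping in the proof of Lemma~\ref{L406}: the error terms produced involve $uv\,\square\phi^r$, $v\langle\nabla u,\nabla\phi^r\rangle$, and $\tau$-weighted pairings of $\nabla b$, $\text{Hess}\,F$, $\text{Hess}\,u$ with derivatives of $\phi^r$, all supported on $K_t^r$. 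The cutoff estimates \eqref{E113}, \eqref{E115a}, together with the exponential decay of $w$ (Theorem~\ref{T304}), the bound $|\nabla u|\le C$ (Corollary~\ref{cly:PK15_1}), and the spacetime $L^2$-bounds collected in Lemmas~\ref{L103}, \ref{L403a}, \ref{L404a} and \ref{L404b}, make each error term vanish as $r\to\infty$; this yields the monotonicity.

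For step (ii), take the sequence $\tau_j\to 0$ produced by Lemma~\ref{L407} and decompose
\[
\int uv\, dV_{T-\tau_j}\;=\;\int u\,\tau_j\bigl(2\Delta b - |\nabla b|^2 + R\bigr) w\, dV_{T-\tau_j}\;+\;\int u\,(b-n)\,w\, dV_{T-\tau_j}.
\]
On the parabolic blow-up $(M,\tau_j^{-1} g(T-\tau_j t),q)$ the measure $w\,dV$ and the potential $b$ are preserved by the rescaling, while $\tau_j\cdot 2\Delta b\to n$, $\tau_j|\nabla b|^2\to |\nabla b_\infty|^2$ and $\tau_j R\to 0$. Smooth convergence on compact parabolic neighborhoods of $q$, exactly as in the blow-up step of the proof of Lemma~\ref{L401}, identifies the limiting objects with integration against the standard Gaussian $w_\infty=(4\pi)^{-n/2}e^{-|x|^2/4}$ and potential $b_\infty=|x|^2/4$ on $\mathbb R^n$. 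Uniform tail control, so that the rescaled integrals are dominated off compact sets, is supplied by Remark~\ref{rem:R401}, the Gaussian concentration Theorem~\ref{XT306}, the pointwise bound \eqref{E403a} on $b$, and the tail estimate for $bw$ in Lemma~\ref{L401a}. The elementary Gaussian identities
\[
\int_{\mathbb R^n}\!\bigl(n-\tfrac{|x|^2}{4}\bigr)w_\infty\, dx=\tfrac n2, \qquad \int_{\mathbb R^n}\!\bigl(\tfrac{|x|^2}{4}-n\bigr)w_\infty\, dx=-\tfrac n2,
\]
then force the two pieces to converge to $\tfrac n2 u(q,T)$ and $-\tfrac n2 u(q,T)$ respectively, and they cancel, giving $\int uv\, dV_{T-\tau_j}\to 0$.

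Combining (i) and (ii) yields $\int uv\, dV_{t_0}\le 0$ for every $t_0\in[T-1,T)$ and every admissible $u$. For step (iii), fix $(x_0,t_0)$ with $t_0<T$; applying the same argument on the interval $[t_0,T)$ in place of $[T-1,T)$ and taking $u=u_\epsilon$ the bounded heat solution on $M\times[t_0,T)$ starting from a smooth nonnegative bump $\phi_\epsilon$ concentrated at $x_0$ (Theorem~\ref{thm:PK17_1}) gives $\int u_\epsilon v\, dV_{t_0}=\int \phi_\epsilon\, v(\cdot,t_0)\, dV\to v(x_0,t_0)$ as $\epsilon\to 0$, so $v(x_0,t_0)\le 0$; dividing by $w>0$ concludes the proof. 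The main obstacle is step (ii): the rigorous passage to the Euclidean blow-up for $\int u\,\tau(2\Delta b - |\nabla b|^2 + R) w\, dV$ requires uniform integrability at a scale collapsing to zero, which is available only along the special subsequence $\tau_j$ and only through the suite of $L^2$-Hessian and gradient estimates developed in Lemmas~\ref{L402}--\ref{L407}.
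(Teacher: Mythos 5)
Your overall architecture --- (i) monotonicity of $\int uv\,dV_t$ via $\square^* v\le 0$ against a heat solution $u$, (ii) showing the limit at $t\to T^-$ vanishes, (iii) letting $u$ be an arbitrary nonnegative compactly supported test function to conclude $v\le 0$ pointwise --- is exactly the paper's strategy, and your steps (i) and (iii) are justified by the same lemmas the paper uses. The problem is in step (ii), and it is a genuine gap rather than a presentational one.

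You decompose $\int uv\,dV_{T-\tau_j}$ into $\int u\,\tau_j(2\Delta b-|\nabla b|^2+R)w\,dV$ and $\int u(b-n)w\,dV$ and claim each piece converges, via parabolic blow-up, to $\tfrac n2 u(q,T)$ and $-\tfrac n2 u(q,T)$ respectively. For the second piece this is precisely Lemma~\ref{L401} and Remark~\ref{rem:R401}, and the tail estimates you cite (Theorem~\ref{XT306}, \eqref{E403a}, Lemma~\ref{L401a}) do control $bw$ off compact sets. But none of the cited estimates controls the second-order term: there is no pointwise bound on $\Delta b$ and no tail estimate for $\tau|\Delta b|\,w$ (or $|\mathrm{Hess}\,b|\,w$) at scale $\sqrt{\tau}$ anywhere in the paper, so the ``dominated off compact sets'' step fails for the first piece. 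Moreover, even the first-order part is problematic: the individual limit of $\int \tau|\nabla b|^2 wu\,dV$ is never established --- Lemma~\ref{L403} only gives a \emph{bound} along a sequence $\tau_i$ (a priori different from $\tau_j$), and Lemma~\ref{L407} gives $\int\tau_j^{3/2}|\nabla b|^2w\to0$, which says nothing about $\int\tau_j|\nabla b|^2 w$. So neither summand of your first piece has a controlled limit on its own; only specific combinations do.

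The paper's proof is built around exactly those combinations. One integration by parts (using $\nabla w=-w\nabla b$) converts $\tau(2\Delta b-|\nabla b|^2)wu\phi^r$ into $\tau\Delta b\,wu\phi^r$ plus the cross terms $-\tau\langle\nabla b,\nabla u\rangle w\phi^r-\tau\langle\nabla b,\nabla\phi^r\rangle wu$; the quantity $\tau(\Delta b+R)-\tfrac n2$ is then recognized as the trace of $\tau\bigl(Rc+\mathrm{Hess}\,b-\tfrac{g}{2\tau}\bigr)$ and killed by Cauchy--Schwarz against the $L^2(wu)$ bound of Lemma~\ref{L407} (inequality \eqref{E421}); the cross terms are killed by the extra factor $\tau_j^{1/4}$ in \eqref{E422}; and the Gaussian blow-up is invoked only for the zeroth-order quantity $(b-\tfrac n2)wu\phi^r$ in \eqref{E422a}. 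If you want to salvage your step (ii), you must regroup your first piece as $2\bigl[\tau(\Delta b+R)-\tfrac n2\bigr]+n-\tau|\nabla b|^2-\tau R$ after the integration by parts, at which point you are reproducing the paper's argument; the direct term-by-term blow-up you describe cannot be closed with the estimates available.
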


\begin{proof}
As $T-1$ can be any time $S<T$, we just need to prove $v \le 0$ on $T-1$.

For the chosen $\tau_j$ obtained in Lemma \ref{L407}, we have
\begin{align} 
&\left(\int vu\phi^r \,dV\right)(\tau_j) \notag \\
=&\int \left(\tau_j(2\Delta b-|\nabla b|^2+R)+b-n\right)wu\phi^r\,dV            \notag \\
=&\int \tau_j(\Delta b+R-\frac{n}{2\tau_j})wu\phi^r-\tau_j\langle \nabla b,\nabla u \rangle w\phi^r-\tau_j\langle \nabla b,\nabla \phi^r \rangle uw+(b-\frac{n}{2})wu\phi^r\,dV
\label{E420}
\end{align}

On the one hand,
\begin{align} 
\int \tau_j(\Delta b+R-\frac{n}{2\tau_j})wu\phi^r \,dV \le& \tau_j \left( \int \left|Rc+\text{Hess}\,b-\frac{g}{2\tau}\right|^2wu \,dV \right)^{\frac{1}{2}}\left(\int wu \, dV \right)^{\frac{1}{2}} \notag \\
\le& C\left( \int \tau_j^2\left|Rc+\text{Hess}\,b-\frac{g}{2\tau}\right|^2wu \,dV \right)^{\frac{1}{2}}.
\label{E421}
\end{align}

On the other hand,
\begin{align} 
&\int -\tau_j\langle \nabla b,\nabla u \rangle w\phi^r-\tau_j\langle \nabla b,\nabla \phi^r \rangle uw \, dV \notag \\
\le& C\tau_j\int |\nabla b|w \,dV \le C\left( \int \tau_j^{\frac{3}{2}}|\nabla b|^2 \,dV \right)^{\frac{1}{2}}\left(\int \tau_j^{\frac{1}{2}}w \, dV \right)^{\frac{1}{2}} \notag \\
=&C\tau_j^{\frac{1}{4}}\left( \int \tau_j^{\frac{3}{2}}|\nabla b|^2 \,dV \right)^{\frac{1}{2}}.
\label{E422}
\end{align}

In addition, it follows from Lemma \ref{L401} and Remark \ref{rem:R401} that
\begin{align} 
\int (b-\frac{n}{2})wu\phi^r\,dV=0.
\label{E422a}
\end{align}

Combining \eqref{E421}, \eqref{E422} and \eqref{E422a}, it follows immediately from Lemma \ref{L407} and Lemma \ref{L401} that
\begin{align*} 
\lim_{j \to \infty}\left(\int vu\phi^r \,dV\right)(\tau_j)=0.
\end{align*}

Now we consider \eqref{E418a}, with $\tau_i$ replaced by $\tau_j$, and let $j \to \infty$. 
\begin{align*} 
 &\left(\int vu\phi^r \,dV\right)(T-1) \notag \\
\le& Cr^{-\frac{1}{2}}\int_{T-1}^{T}\int_{K^r_t} \left(\tau(|\nabla b|^2+R)+|b|+n\right)w\,dV\,dt \notag\\
&+\int_{T-1}^{T}\tau \int_{K^r_t} Cr^{-1}(|\nabla b|^2w+w) + |\nabla b|^2w\,dV\,dt \notag\\
&+Cr^{-2}\int_{T-1}^{T}\tau \int_{K^r_t}\left(1+|\text{Hess}\,F|^2 \right)uw\,dV\,dt \notag\\
&+C\int_{T-1}^{T}\tau \int_{K^r_t} |\nabla b|^2w+|\text{Hess}\,u|^2w\,dV\,dt 
\end{align*}

It is easy to see all integrals above are finite, by Lemma \ref{L401a}, Lemma \ref{L404}, Lemma \ref{L404a} and Lemma \ref{L404b}.

By the definition of $K^r_t$, if we let $r \to \infty$, then
\begin{align*} 
\left(\int vu \,dV\right)(T-1) \le 0.
\end{align*}

By the arbitrary choice of $u$ at $T-1$, we have proved that $v \le 0$.
\end{proof}

\begin{rem}
Note that as in Perelman's paper \cite{Pe1}, Theorem \ref{T302} is a corollary of Theorem \ref{T401}. Our proof of Theorem \ref{T401} is different from most literature, for instance \cite{Ni06} and \cite{CTY}, in that we do not need a pointwise gradient estimate of the conjugate heat kernel, see \cite[Lemma $2.2$]{Ni06}.
\end{rem}

\begin{rem}
The proof of Theorem \ref{T401} shows the following identity. For any $S<T<1$,
\begin{align*} 
\left(\int vu \,dV\right)(S)=-\int_{S}^T \int 2\tau\left|Rc+\text{Hess}\,b-\frac{g}{2\tau}\right|^2wu \,dV\,dt.
\end{align*}
\end{rem}


\section{The no-local-collapsing theorems}
\label{sec:nolocal}

We need to use the local entropy in~\cite{BW17A}. Let us first recall some notations. 
Let $\Omega$ be a domain in $M$. Then we define (cf. \eqref{eqn:PL21_4} and \eqref{eqn:PL21_5} and Section 2 of~\cite{BW17A}):
\begin{align}
   &\boldsymbol{\mu}(\Omega, g, \tau) \coloneqq  \inf\left\{\overline{\mathcal W}(g,u,\tau) \left| u \in \mathcal \mathcal W_{*}^{1,2}(M), \; u \; \textrm{is supported on} \; \Omega \right. \right\}, \label{eqn:PL24_2}\\
   &\boldsymbol{\nu}(\Omega, g, \tau) \coloneqq  \inf_{s \in (0, \tau)} \boldsymbol{\mu}(\Omega, g, s).   \label{eqn:PL24_3}
\end{align}
When the meaning is clear in the context,  the metric $g$ may be dropped.  Note that if $\Omega$ does not appear, it means the default set is $M$. 
We shall exploit the argument in~\cite{BW17A} to obtain volume ratio estimate. 

\begin{theorem}
Suppose $(M^n, g, f)$ is a Ricci shrinker and $B=B(x,r) \subset M$ is a geodesic ball with $R \leq \Lambda$, then we have
\begin{align}
 &r^{-n} |B| \geq   c \cdot e^{\boldsymbol{\mu} -\Lambda r^2},
 \label{eqn:PL06_1}   
\end{align}
for some $c=c(n)$. 
If $r \in (0, 1)$, then (\ref{eqn:PL06_1}) can be improved to
\begin{align}
 &r^{-n} |B| \geq   c \cdot e^{\boldsymbol{\mu}(g, r^2) -\Lambda r^2}. 
 \label{eqn:PL06_2}   
\end{align}
\label{thm:PL06_1}
\end{theorem}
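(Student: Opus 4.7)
The strategy is to apply the sharp logarithmic Sobolev inequality of Theorem \ref{thm:A} at scale $\tau=r^2$ to a cutoff test function supported in $B$, and then eliminate the resulting self-referential volume ratio by an induction on the radius along the lines of Munteanu--Wang (see Remark \ref{rmk:PK29_1}).

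Concretely, fix a Lipschitz cutoff $\phi$ with $0\le\phi\le 1$, $\phi\equiv 1$ on $B(x,r/2)$, $\mathrm{supp}(\phi)\subset B$, and $|\nabla\phi|\le 2/r$. Setting $u\coloneqq\phi/\|\phi\|_{L^2}$ gives $\int u^2\,dV=1$, so by definition of $\boldsymbol{\mu}(g,r^2)$ we have $\overline{\mathcal W}(g,u,r^2)\ge \boldsymbol{\mu}(g,r^2)$. A direct computation using $R\le\Lambda$ on $B$, $|\nabla\phi|\le 2/r$, the elementary bound $\phi^2\log\phi^2\ge -1/e$ on $[0,1]$, and $\|\phi\|_{L^2}^2\ge|B(x,r/2)|$ yields
\[
\log\!\bigl(r^{-n}|B|\bigr) \;\ge\; \boldsymbol{\mu}(g,r^2) \;-\; \Lambda r^2 \;-\; C_n \;-\; K\,\frac{|B(x,r)|}{|B(x,r/2)|}
\]
for dimensional constants $C_n$ and $K$. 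If the doubling ratio $|B(x,r)|/|B(x,r/2)|$ is bounded by an a priori threshold $D=D(n)$, this immediately produces (\ref{eqn:PL06_2}); the weaker estimate (\ref{eqn:PL06_1}) then follows from the monotonicity $\boldsymbol{\mu}(g,r^2)\ge\boldsymbol{\mu}$ in Theorem \ref{thm:A}.

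When the doubling ratio is large, we iterate the same argument on the balls $B(x,2^{-k}r)$ at scales $(2^{-k}r)^2$. Corollary \ref{c:mono} guarantees $\boldsymbol{\mu}(g,(2^{-k}r)^2)\ge\boldsymbol{\mu}(g,r^2)$ for $r\le 1$, so the entropy term does not degrade as the scale shrinks. Either the iteration terminates at a finite depth with a controlled doubling ratio (and one propagates the bound back up to $B$ through the accumulated doubling factors), or it continues indefinitely, in which case $|B(x,2^{-k}r)|$ decays at rate at least $D^{-k}$; choosing the threshold $D>2^n$ renders this incompatible with the Euclidean asymptotics $|B(x,\rho)|/\rho^n\to\omega_n$ as $\rho\to 0$, forcing the iteration to stop.

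The main obstacle is precisely this inductive bookkeeping: the threshold $D$ and the constant $K$ from the one-step inequality must be compatible, and the accumulated factor $D^k$ must dominate the radius loss $2^{nk}$ when transferring the estimate from the terminating scale back up to scale $r$. Verifying that all these constants combine to preserve the form $c\,r^n e^{\boldsymbol{\mu}(g,r^2)-\Lambda r^2}$ is the refinement of Munteanu--Wang's original argument highlighted in Remark \ref{rmk:PK29_1}.
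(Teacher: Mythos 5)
Your argument is correct, but it takes a genuinely different route from the paper. The paper's proof is essentially a two-step citation: it invokes Theorem 3.3 of~\cite{BW17A}, which already packages the implication ``local entropy lower bound on $B$ at scale $r^2$ plus $R\le\Lambda$ implies $r^{-n}|B|\ge c(n)e^{\boldsymbol{\nu}(B,r^2)-\Lambda r^2}$,'' and then observes that $\boldsymbol{\nu}(B,r^2)\ge\boldsymbol{\nu}(M,r^2)\ge\boldsymbol{\mu}$ by (\ref{eqn:PK20_1}), with the improvement $\boldsymbol{\nu}(B,r^2)\ge\boldsymbol{\mu}(g,r^2)$ for $r<1$ coming from the monotonicity in Theorem~\ref{thm:A}. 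You instead reprove the volume estimate from scratch out of the global logarithmic Sobolev inequality, handling the self-referential doubling ratio by a dyadic descent. I checked your bookkeeping and it closes: the one-step inequality holds with $K=17$ (say), and in the descent each step at which $V_k/V_{k+1}>D$ gains a factor $D$ in volume against a loss of only $2^n$ in the normalization $\rho^{-n}$, so any fixed threshold $D>2^n$ (e.g.\ $D=2^{n+1}$) makes the accumulated factor $(D2^{-n})^{k_0}\ge 1$ harmless, while the infinite-descent alternative is excluded by $|B(x,\rho)|\rho^{-n}\to\omega_n$; the entropy and $\Lambda\rho^2$ terms only improve as the scale shrinks, by Corollary~\ref{c:mono} for (\ref{eqn:PL06_2}) and by (\ref{eqn:PK20_1}) for (\ref{eqn:PL06_1}) when the scales cross $1$. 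What each approach buys: the paper's route is shorter but outsources the hard analytic step to an external theorem; yours is self-contained and structurally parallel to the paper's own proof of Theorem~\ref{thm:PL06_2}, which removes the same self-reference more cleanly by choosing $\rho_0$ minimizing $s^{-n}|B(q,s)|$ over $s\in[0,r]$ rather than by dyadic iteration. Two small corrections: the ``refinement of Munteanu--Wang'' in Remark~\ref{rmk:PK29_1} concerns the linear volume growth of Proposition~\ref{prn:PK29_1}, not this theorem; and for (\ref{eqn:PL06_1}) with $r\ge 1$ you should appeal to $\inf_\tau\boldsymbol{\mu}(g,\tau)=\boldsymbol{\mu}$ rather than to the monotonicity on $(0,1)$, since the scales $(2^{-k}r)^2$ need not stay below $1$.
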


\begin{proof}
  We first show (\ref{eqn:PL06_1}). 
  By Theorem 3.3 of~\cite{BW17A}, we know that
  \begin{align}
 &r^{-n} |B| \geq   c(n) e^{\boldsymbol{\nu}(B, r^2)} e^{-\Lambda r^2},
 \label{eqn:PL06_4}   
 \end{align}
 where $\boldsymbol{\nu}(B,r^2)$ is the local $\boldsymbol{\nu}$-functional of $B$ on the scale $r^2$. 
 Since $(M, g)$ is a Ricci shrinker, it follows from (\ref{eqn:PK20_1}) in Theorem~\ref{thm:A} that
 \begin{align}
   \boldsymbol{\nu}(B, r^2) \geq \boldsymbol{\nu}(M, r^2)=\inf_{\tau \in (0,r^2)} \boldsymbol{\mu}(M, g, \tau) \geq \boldsymbol{\mu}.    \label{eqn:PL06_5}
 \end{align}
 If $r \in (0, 1)$, then $r^2 \in (0, 1)$. By the monotonicity in Theorem~\ref{thm:A}, the above inequality can be written as
 \begin{align}
   \boldsymbol{\nu}(B, r^2) \geq  \boldsymbol{\mu}(g, r^2).    \label{eqn:PL06_6}
 \end{align}
 Therefore, we obtain (\ref{eqn:PL06_1}) and (\ref{eqn:PL06_2}), after we plugging (\ref{eqn:PL06_5}) and (\ref{eqn:PL06_6}) into (\ref{eqn:PL06_4}) respectively. 

\end{proof}

\begin{theorem}
Suppose $(M^n, g, f)$ is a Ricci shrinker and $B=B(x,r) \subset M$ is a geodesic ball with $R \leq \Lambda$, then we have
\begin{align}
 r^{-n}|B| \ge c \cdot e^{\boldsymbol{\mu}}(1+\Lambda r^2)^{-\frac{n}{2}}.     \label{eqn:PL06_3}
\end{align}
\label{thm:PL06_2}
\end{theorem}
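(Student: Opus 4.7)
The plan is to deduce Theorem~\ref{thm:PL06_2} from Theorem~\ref{thm:PL06_1} by a simple case split on whether $\Lambda r^2$ is small or large, combined with a subball trick in the large-$\Lambda r^2$ regime. The new bound is polynomial in $\Lambda r^2$ rather than exponential, so the essential improvement can only come by not applying the exponential estimate $e^{\boldsymbol{\mu} - \Lambda r^2}$ on the full ball $B(x,r)$ when $\Lambda r^2$ is large.

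First I would handle the easy case where $\Lambda r^2 \le 1$. Here $(1+\Lambda r^2)^{-n/2} \ge 2^{-n/2}$, while (\ref{eqn:PL06_1}) from Theorem~\ref{thm:PL06_1} gives $r^{-n}|B| \ge c(n)\, e^{\boldsymbol{\mu} - \Lambda r^2} \ge c(n) e^{-1} e^{\boldsymbol{\mu}}$, and the claimed inequality follows after adjusting the dimensional constant.

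The main step is the case $\Lambda r^2 > 1$. Here I would set $r' \coloneqq \Lambda^{-1/2} \le r$ and apply Theorem~\ref{thm:PL06_1} to the concentric subball $B(x,r')$; since $B(x,r') \subset B(x,r)$, the curvature bound $R \le \Lambda$ still holds on it. Inequality (\ref{eqn:PL06_1}) then yields
\begin{align*}
   (r')^{-n}|B(x,r')| \ge c(n)\, e^{\boldsymbol{\mu} - \Lambda (r')^2} = c(n)\, e^{\boldsymbol{\mu} - 1},
\end{align*}
so $|B(x,r')| \ge c(n)\, e^{\boldsymbol{\mu}}\Lambda^{-n/2}$. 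Monotonicity of volume in the radius gives $|B| \ge |B(x,r')|$, hence
\begin{align*}
   r^{-n}|B| \ge c(n)\, e^{\boldsymbol{\mu}}\,(\Lambda r^2)^{-n/2}.
\end{align*}
Since $\Lambda r^2 > 1$ implies $1+\Lambda r^2 \le 2\Lambda r^2$, we obtain $(\Lambda r^2)^{-n/2} \ge 2^{-n/2}(1+\Lambda r^2)^{-n/2}$ and the conclusion follows.

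There is no real obstacle here; the only subtlety is choosing $r'=\Lambda^{-1/2}$ so that the exponential penalty $e^{-\Lambda (r')^2}$ is a harmless constant, while the volume $|B(x,r')|$ can be trivially bounded below by $|B|$. In effect, the improvement from exponential to polynomial decay comes from applying the Euclidean-type lower bound at the natural ``curvature scale'' $\Lambda^{-1/2}$ rather than at the given scale $r$. No monotonicity of $\boldsymbol{\mu}(g,\tau)$ is needed; only (\ref{eqn:PL06_1}) of Theorem~\ref{thm:PL06_1} is invoked.
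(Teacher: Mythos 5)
Your proof is correct, and it is genuinely different from the one in the paper. You derive Theorem~\ref{thm:PL06_2} as a quick corollary of Theorem~\ref{thm:PL06_1}: when $\Lambda r^2\le 1$ the exponential penalty in (\ref{eqn:PL06_1}) is harmless, and when $\Lambda r^2>1$ you apply (\ref{eqn:PL06_1}) at the curvature scale $r'=\Lambda^{-1/2}$ and throw away the rest of the ball, turning $e^{-\Lambda (r')^2}$ into the constant $e^{-1}$ and picking up exactly the factor $(\Lambda r^2)^{-n/2}$. All the steps check out: $B(x,r')\subset B(x,r)$ preserves the hypothesis $R\le\Lambda$, (\ref{eqn:PL06_1}) carries no restriction on the radius, and the constants match after $1+\Lambda r^2\le 2\Lambda r^2$. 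The paper instead runs a Croke--Carron-type argument: it picks the radius $\rho_0$ minimizing $s\mapsto s^{-n}|B(q,s)|$ on $[0,r]$, feeds a cutoff supported on $B(q,\rho_0)$ into the scalar-Sobolev inequality (\ref{E234}), and uses the doubling inequality $|B(q,\rho_0/2)|\ge 2^{-n}|B(q,\rho_0)|$ forced by the choice of $\rho_0$; the degenerate case $\rho_0=0$ needs the separate observation $\boldsymbol{\mu}\le 0$, which in turn rests on the blow-up argument giving $\limsup_{\tau\to 0^+}\boldsymbol{\mu}(g,\tau)\le 0$. Your route is shorter and avoids that degenerate case entirely, but it makes Theorem~\ref{thm:PL06_2} logically dependent on Theorem~\ref{thm:PL06_1} and hence on the local-entropy machinery of~\cite{BW17A}; the paper's version is self-contained modulo the Sobolev inequality of Corollary~\ref{cor234}, which is why the authors emphasize afterwards that one theorem rests on the logarithmic Sobolev inequality and the other on the Sobolev inequality, giving two independent non-collapsing mechanisms.
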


\begin{proof}

Choose $\rho_0 \in [0,r]$ such that $\displaystyle \inf_{s \in [0, r]}s^{-n}|B(q,s)|$  is achieved  at $\rho_0$. There are two cases $\rho_0=0$ and $\rho_0>0$, which we shall discuss separately.

 \textit{Case 1. $\rho_0=0$.}
 
 In this case, we have
 \begin{align}
     |B(q,r)|\ge \omega_n r^n,   \label{eqn:PK29_5}
 \end{align}
 where $\omega_n$ is the volume of the unit Euclidean ball.  Actually,  it is not hard to observe that 
 \begin{align}
    \boldsymbol{\mu} \leq 0.   \label{eqn:PK29_6}
 \end{align}
 Let $\tau \to 0^{+}$, it is clear that $(M^n,p,\tau^{-1}g)$ converges to $(\R^n,0,g_E)$ in the Cheeger-Gromov sense.
  By Lemma 3.2 of ~\cite{Li17}, we have
\begin{align} 
\limsup_{\tau \to 0^+}\boldsymbol{\mu}(g,\tau)=\limsup_{\tau \to 0^+}\boldsymbol{\mu}(\tau^{-1}g,1) \le \boldsymbol{\mu}(g_E,1)=0.    
\label{eqn:PL21_10}
\end{align}
As $\boldsymbol{\mu}(g,\tau)$ is decreasing  on $(0, 1)$ by Lemma~\ref{lma:PK10_3}, then (\ref{eqn:PK29_6}) follows from the above inequality. 
Consequently, (\ref{eqn:PL06_3}) follows from the combination of (\ref{eqn:PK29_5}) and (\ref{eqn:PK29_6}). 

  \textit{Case 2. $\rho_0>0$.}
 
We choose a nonincreasing smooth function $\eta$ on $\R$ such that $\eta=1$ on $(\infty,1/2]$ and $0$ on $[1,\infty)$. We also define $u(x)=\eta(\frac{d(q,x)}{\rho_0})$. From \eqref{E234} in Corollary~\ref{cor234},
we obtain
\begin{align*}
|B(q,\rho_0/2)|^{\frac{n-2}{n}} \le& Ce^{-\frac{2 \boldsymbol{\mu}}{n}}\int \left\{ 4|\nabla u|^2+Ru^2 \right\} dV \\
\le& Ce^{-\frac{2 \boldsymbol{\mu}}{n}}\lc \rho_0^{-2}|B(q,r)|+\int Ru^2 \,dV \rc  \\
\le& Ce^{-\frac{2 \boldsymbol{\mu}}{n}} \rho_0^{-2}(1+\Lambda r^2)|B(q,\rho_0)| 
\end{align*}
where the last inequality follows from $R \le \Lambda \le \Lambda r^2 \rho_0^{-2}$.
According to the choice of $\rho_0$, we obtain
\begin{align*}
|B(q,\rho_0/2)|  \ge 2^{-n}|B(q,\rho_0)|.
\end{align*}
Combining the previous two steps yields  that
\begin{align*}
|B(q,\rho_0)| \ge 2^n |B(q,\rho_0/2)| \geq Ce^{\boldsymbol{\mu}}(1+\Lambda r^2)^{-\frac{n}{2}}\rho_0^n.
\end{align*}
Recall that $r^{-n}|B(q,r)| \ge \rho_0^{-n} |B(q,\rho_0)|$ by our choice of $\rho_0$. Therefore, (\ref{eqn:PL06_3}) follows directly from the above inequality. 
\end{proof}

Note that Theorem~\ref{thm:PL06_1} is based on the Logarithmic Sobolev inequality, and Theorem~\ref{thm:PL06_2} relies on the Sobolev inequality. 
Each of Theorem~\ref{thm:PL06_1} and Theorem~\ref{thm:PL06_2} has its own advantage and will be used in the remainder of the section.
Bascially, Theorem~\ref{thm:PL06_1} is sharper when $r$ is very small and Theorem~\ref{thm:PL06_2} is more accurate in the situation when $\Lambda r^2$ is large.

Using the Sobolev constant estimate in Corollary \ref{cor234}, we can further improve   Theorem 6.1 of ~\cite{MW12} stating that for any noncompact Ricci shrinker, the volume increases at least linearly. 

\begin{proposition}
For any noncompact Ricci shrinker $(M^n,p,g,f)$, there exist big positive constant $r_0=r_0(n)$ and small positive constant $\epsilon_0=\epsilon_0(n)$ such that
\begin{align}
|B(p,r)| \ge \epsilon_0 e^{\boldsymbol{\mu}} r, \quad \forall \; r \geq r_0.   \label{eqn:PK29_2}
\end{align}
\label{prn:PK29_1}
\end{proposition}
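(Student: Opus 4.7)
The plan is to follow the scheme of Munteanu--Wang~\cite{MW12}, which establishes at-least-linear volume growth for any noncompact shrinker, but with careful bookkeeping so that the explicit $e^{\boldsymbol{\mu}}$ dependence survives. The new ingredient absent from~\cite{MW12} is the sharp Sobolev inequality \eqref{E234}, which through its geometric consequence Theorem~\ref{thm:PL06_2} produces ball-volume lower bounds with an explicit $e^{\boldsymbol{\mu}}$ prefactor whenever the scalar curvature is controlled on the ball.

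First I would set the base case from Lemma~\ref{lem:unitvol}: $|B(p, 1)| \geq c(n)^{-1} e^{\boldsymbol{\mu}}$. Next, using the shrinker identities $\Delta f = n/2 - R$ and $|\nabla f|^2 = f - R \leq f$, together with the quadratic-growth bound $f \leq C(n) d^2(p, \cdot)$ from Lemma~\ref{L100}, integrating by parts over $B(p,r)$ gives control of the scalar-curvature integral $\int_{B(p, r)} R\,dV$ in terms of the boundary flux $\int_{\partial B(p,r)} |\nabla f|\,dA$; combined with the upper bound $|B(p,r)| \leq C e^{\boldsymbol{\mu}} r^n$ from Lemma~\ref{L101}, this shows that the average of $R$ on $B(p,r)$ is uniformly dimensional. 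In particular, for each large $r$ the sublevel set $\{R \leq C(n)\}$ occupies a positive fraction of $B(p, r)$.

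For the inductive step, I would locate, in each annular shell $B(p, k+1) \setminus B(p, k)$ with $k \geq r_0$, a point $x_k$ lying in this low-curvature set, and apply Theorem~\ref{thm:PL06_2} to $B(x_k, \delta)$ for a dimensional radius $\delta = \delta(n)$ chosen small enough that a standard gradient-type estimate for $R$ keeps $R \leq C'(n)$ throughout the ball; Theorem~\ref{thm:PL06_2} then gives $|B(x_k, \delta)| \geq c(n) e^{\boldsymbol{\mu}}$. Selecting the $x_k$ along a radial geodesic so that the balls $B(x_k, \delta/2)$ are mutually disjoint and contained in $B(p, r)$, and summing over $k$, produces $|B(p, r)| \geq \epsilon_0(n)\, e^{\boldsymbol{\mu}}\, r$ for $r \geq r_0$.

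The main obstacle is the quantitative production of low-curvature points within each annular shell (rather than merely globally in $B(p,r)$); this is precisely the point highlighted in Remark~\ref{rmk:PK29_1}, and is handled by running the induction in concentric shells, at each step exploiting the previously-established volume information to guarantee that the low-$R$ set meets the current shell with measure comparable to $e^{\boldsymbol{\mu}}$. A secondary technical point is propagating a pointwise upper bound for $R$ at $x_k$ to a bound on the ball $B(x_k, \delta)$; this uses the evolution equation for $R$ along the self-similar Ricci flow induced by the shrinker, i.e.\ $\square R = -2|Rc|^2$, together with a localized maximum principle built from the cutoff function $\phi^r$ of \eqref{eqn:PK11_8} and Theorem~\ref{T101}. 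The explicit $e^{\boldsymbol{\mu}}$ factor in Theorem~\ref{thm:PL06_2} is what allows the per-shell contribution, and therefore the final constant $\epsilon_0$, to be purely dimensional rather than to depend on the local geometry of $g$ near $p$.
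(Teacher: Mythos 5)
Your proposal takes a genuinely different route from the paper (a direct ball-packing argument in annular shells, versus the paper's contradiction argument), but it has a gap that I do not see how to close. The crux is your claim that in each shell $B(p,k+1)\setminus B(p,k)$ you can find a point $x_k$ and a \emph{dimensional} radius $\delta=\delta(n)$ with $R\le C'(n)$ on all of $B(x_k,\delta)$, so that Theorem~\ref{thm:PL06_2} yields $|B(x_k,\delta)|\ge c(n)e^{\boldsymbol{\mu}}$. Two steps here fail. First, the bound $\chi(r)=\int_{D(r)}R\,dV\le \frac{n}{2}V(r)$ is a \emph{global} average over the ball; by Chebyshev it produces a positive-measure low-curvature set in $B(p,r)$, but gives no control of where that set sits, and in particular does not place it in every unit shell. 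The ``induction exploiting previously-established volume information'' that you invoke to fix this is exactly the missing argument, not a reduction of it. Second, and more fatally, even granting a point $x_k$ with $R(x_k)\le C(n)$, there is no mechanism to propagate this to a ball of definite radius: on a shrinker $\nabla_i R=2R_{ij}\nabla_j f$, so a gradient bound for $R$ requires a Ricci bound, which is precisely what is unavailable; and the evolution equation is $\square R=2|Rc|^2\ge 0$ (your sign is reversed), which makes $R$ a supersolution of the heat equation --- the maximum principle then propagates \emph{lower} bounds forward in time and says nothing about upper bounds on a spatial neighborhood. The only bound actually available on the $k$-th shell is $R\le f\le C(n)k^2$ from \eqref{E107} and Lemma~\ref{L100}; feeding $\Lambda=C(n)k^2$ into Theorem~\ref{thm:PL06_2} gives only $|B(x_k,\delta)|\gtrsim e^{\boldsymbol{\mu}}(1+k^2\delta^2)^{-n/2}\delta^n$, and the sum over $k$ of these contributions converges, so the packing yields a bounded quantity rather than linear growth.

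The paper's proof avoids pointwise curvature control entirely. It works with $V(r)=|D(r)|$ for the sublevel sets $D(r)=\{2\sqrt f\le r\}$, assumes for contradiction that $V(r)\le \epsilon_0 e^{\boldsymbol{\mu}}r$ at some $r\ge 2r_0$, and plugs annular cutoff functions into the Sobolev inequality \eqref{E234}. This bounds $|A(t_m,t_{m+1})|^{\frac{n-2}{n}}$ by $Ce^{-2\boldsymbol{\mu}/n}$ times neighboring annulus volumes plus the \emph{integrated} scalar curvature $\chi(t_{m+2})-\chi(t_{m-1})$, which is then absorbed using $\chi\le \frac{n}{2}V$ --- no localization of $R$ is ever needed. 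An induction on shells (using $V(t+1)-V(t)\le C_1V(t)/t$ from \cite{MW12}) shows the smallness $V(t_m)\le 2\epsilon_0e^{\boldsymbol{\mu}}t_m$ persists for all $m$, forcing the total volume to be finite and contradicting the infinite-volume theorem for noncompact shrinkers. If you want to salvage a direct packing argument, you would need an integrated, shell-localized substitute for the pointwise curvature control, which is essentially what the paper's Sobolev-on-annuli computation provides.
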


\begin{proof}
Similar to the proof of Lemma~\ref{L101}, we follow the notation of~\cite{MW12} to denote
\begin{align*}
   &\rho \coloneqq 2\sqrt{f}, \quad D(r)  \coloneqq \{x\in M \mid \rho \le r\}, \quad A(s,r) \coloneqq D(r)\backslash D(s);\\
   &V(r) \coloneqq |D(r)|, \quad \chi(r) \coloneqq \int_{D(r)}R \,dV. 
\end{align*}
From Lemma~\ref{L100},  $V(r)$ is almost the volume of geodesic ball $B(p,r)$, with the advantage that the estimate of $V(r)$ is relatively easier than the estimate of $|B(p, r)|$. 
Actually, by equations (6.24) and (6.25) of~\cite{MW12},  we know that 
\begin{align}
&V(t+1) \le 2V(t), \label{T206e1}\\
&V(t+1)-V(t) \le C_1\frac{V(t)}{t},  \label{T206e2}
\end{align}
whenever $t \geq C_1$ for some dimensional constant $C_1=C_1(n)$. 
Now we define
\begin{align}
   r_0 \coloneqq \max \{100n,10C_1\}.   \label{eqn:PK29_7}
\end{align}
Therefore,  in order to prove (\ref{eqn:PK29_2}), it suffices to show that 
\begin{align}
       V(r) \geq \epsilon_0 e^{\boldsymbol{\mu}} r, \quad \forall \; r \geq r_0,       \label{eqn:PK29_3}
\end{align}
where $\epsilon_0=\epsilon_0(n)$ will be determined later.

We shall prove (\ref{eqn:PK29_3}) by a contradiction argument.
If (\ref{eqn:PK29_3}) were wrong,  then there exists an $r \ge 2r_0$ such that $V(r) \le \ep_0 e^{\boldsymbol{\mu}}r$ for $\ep_0$ to be determined later, we claim that 
\begin{align}
   V(t_m)\le 2\ep_0e^{\boldsymbol{\mu}}t_m, \quad t_m=r+m, \quad \forall m \in \mathbb N.   \label{eqn:PK29_4}
\end{align}
Indeed, by our assumption the case $m=0$ is true. We assume that 
the conclusion is true for all $m=0,1,2,\cdots,k$ and proceed to show it holds for $m=k+1$. 

For any $t \ge r_0$, we define
\begin{align*}
u(x) \coloneqq
\begin{cases}
1 \quad\quad &\text{on} \quad A(t,t+1),\\
t+2-\rho(x) \quad\quad &\text{on} \quad A(t+1,t+2),\\
\rho(x)-(t-1) \quad\quad &\text{on} \quad A(t-1,t),\\
0 \quad\quad&\text{otherwise}.
\end{cases}
\end{align*}
Let $t=t_m$ and plug the above $u$ into the Sobolev inequality~\eqref{E234}.
We obtain 
\begin{align}
\label{T206e3}
|A(t_m,t_{m+1})|^{\frac{n-2}{n}} \le C_3e^{-\frac{2\boldsymbol{\mu}}{n}} \lc |A(t_{m-1},t_m)|+|A(t_{m+1},t_{m+2})|+\chi(t_{m+2})-\chi(t_{m-1}) \rc
\end{align}
for some $C_3=C_3(n)$.  For $0\le m\le k$, it follows from our induction assumption and \eqref{T206e2} that
\begin{align}
\label{T206e4}
   |A(t_m,t_{m+1})|=V(t_{m+1})-V(t_m) \le C_1\frac{V(t_m)}{t_m} \le 2C_1\ep_0e^{\boldsymbol{\mu}}. 
\end{align}
Summing \eqref{T206e3} from $m=0$ to $m=k$, we have
\begin{align*}
 \sum_{m=0}^k|A(t_m,t_{m+1})|^{\frac{n-2}{n}}\le& C_3e^{-\frac{2\boldsymbol{\mu}}{n}} \sum_{m=0}^k\lc |A(t_{m-1},t_m)|+|A(t_{m+1},t_{m+2})|+\chi(t_{m+2})-\chi(t_{m-1}) \rc \\
 \le& 3C_3e^{-\frac{2\boldsymbol{\mu}}{n}} \lc |A(t_{-1},t_{k+2})|+\chi(t_{k+2})\rc.
\end{align*}
Recall that $\chi(t) \le \frac{n}{2}V(t)$ by (3.4) of~\cite{CZ10}.  Plugging this fact into the above inequality yields that
\begin{align}
  \sum_{m=0}^k|A(t_m,t_{m+1})|^{\frac{n-2}{n}} \le 3C_3e^{-\frac{2\boldsymbol{\mu}}{n}} \lc V(t_{k+2})+\frac{n}{2}V(t_{k+2})\rc \leq C_4e^{-\frac{2\boldsymbol{\mu}}{n}} V(t_{k+1}),   \label{T206e5}
\end{align}
where $C_4=(6+3n)C_3=C_4(n)$. 
Now we choose 
\begin{align}
  \ep_0 \coloneqq (2C_1)^{-1}(2C_4)^{-\frac{n}{2}}.     \label{eqn:PK29_8}
\end{align}
Clearly, $\epsilon_0=\epsilon_0(n)$. Then it follows from \eqref{T206e4} that 
\begin{align*}
2C_4e^{-\frac{2\boldsymbol{\mu}}{n}}|A(t_m,t_{m+1})| \le |A(t_m,t_{m+1})|^{\frac{n-2}{n}}, \quad \forall m \in \{ 1, 2, \cdots, k\}. 
\end{align*}
It is clear from~\eqref{T206e5} that 
\begin{align*}
2C_4e^{-\frac{2\boldsymbol{\mu}}{n}}(V(t_{k+1})-V(r)) \le C_4e^{-\frac{2\boldsymbol{\mu}}{n}} V(t_{k+1})
\end{align*}
and hence
\begin{align*}
V(t_{k+1}) \le 2V(r) \le 2\ep_0e^{\boldsymbol{\mu}}r \le 2\ep_0e^{\boldsymbol{\mu}}t_{k+1}.
\end{align*}
Therefore, the induction is complete and (\ref{eqn:PK29_4}) is proved.  
By the arbitrary choice of $m$, the total volume of the Ricci shrinker is finite, which contradicts Lemma 6.2 of~\cite{MW12}(See also Theorem 3.1 of~\cite{Cao092} by Cao-Zhu). 
Therefore, the proof of (\ref{eqn:PK29_3})  is established by this contradiction. 
Consequently, (\ref{eqn:PK29_2}) holds by Lemma~\ref{L100}.   
Note that $r_0$ and $\epsilon_0$ are defined in (\ref{eqn:PK29_7}) and (\ref{eqn:PK29_8}). 
Both of them can be calculated explicitly. 
\end{proof}

\begin{remark}
In \cite[Theorem $6.1$]{MW12}, the authors have obtained a weaker lower bound
\begin{align*}
|B(p,r)| \ge Ce^{c \boldsymbol{\mu}}r
\end{align*}
for two constants $C>0$ and $c>1$ depending only on $n$. 

\label{rmk:PK29_1}
\end{remark}

We are now ready to prove the improved no-local-collapsing, i.e., Theorem~\ref{thm:B}. 

\begin{proof}[Proof of Theorem~\ref{thm:B}]
It follows from Lemma~\ref{L101} and Proposition~\ref{prn:PK29_1} that
\begin{align*}
      \epsilon_0 r \leq   |B(p,r)|e^{-\boldsymbol{\mu}} \leq C r^n.        
\end{align*}
By Lemma~\ref{lem:unitvol}, we know $e^{\boldsymbol{\mu}}|B(p,1)|^{-1}$ is uniformly bounded from above and from below.  
Multiplying each term of the above inequality by $e^{\boldsymbol{\mu}}|B(p,1)|^{-1}$ and adjusting $C$ if necessary, we arrive at 
\begin{align*}
      \frac{1}{C}r  \leq \frac{|B(p,r)|}{|B(p,q)|} \leq Cr^n,
\end{align*}
which is nothing but (\ref{eqn:PL05_2a}).   We proceed to prove (\ref{eqn:PL05_2b}).  Recall that $q \in \partial B(p,r)$ and $\rho \in (0, r^{-1})$ for some $r>1$.  Triangle inequality implies that
\begin{align*}
   B(q,\rho) \subset B(p, 2r). 
\end{align*}
It follows from Lemma~\ref{L100} that $f \leq C r^2$ for some $C=C(n)$ on $B(p, 2r)$.   Since $R+|\nabla f|^2=f$ and $R \geq 0$, it follows that $R \leq Cr^2$ on $B(p, 2r)$.
In particular, we have $R\rho^2 \leq Rr^{-2} \leq C(n)$ on $B(q,\rho)$.  Consequently, we can apply Theorem~\ref{thm:PL06_2} on the ball $B(q,\rho)$ to obtain (\ref{eqn:PL05_2b}). 
\end{proof}

\section{The pseudolocality theorems}
\label{sec:pseudo}

In this section, we prove the pseudo-locality theorems on Ricci shrinker and discuss their applications.

Based on the Harnack estimate, following a classical point-picking, or maximum principle argument, we are able to obtain the following pseudo-locality theorem.

\begin{theorem}
There exist positive numbers $\epsilon_0=\epsilon_0(n)$ and $\delta_0=\delta_0(n)$ with the following properties. 

Let $\{(M^n, g(t)), -\infty < t < 1\}$ be the Ricci flow induced from a Ricci shrinker $(M^n, p, g)$.   Suppose  $t_0 \in (-\infty, 1)$ and $B_{g(t_0)}(x,r) \subset M$ is a geodesic ball satisfying
  \begin{align}
       \boldsymbol{\nu} (B_{g(t_0)}(x,r), g(t_0), r^2) > -\delta_0.   \label{eqn:PL03_0}
  \end{align}
Then for each $t \in (t_0,  \min\{t_0+\epsilon_0^2 r^2, 1\})$ and $y \in B_{g(t)}(x, 0.5r)$, we have
\begin{subequations}
\begin{align}[left = \empheqlbrace \,]
&|Rm|(y, t) \leq  (t-t_0)^{-1};  \label{eqn:PL03_1a}\\
&\inf_{0<\rho<\sqrt{t-t_0}} |B_{g(t)}(y, \rho)|\rho^{-n} \geq  \frac{1}{2} \omega_n.    \label{eqn:PL03_1b}
\end{align}
\label{eqn:PL03_1}
\end{subequations}
\\
\noindent
\label{thm:PL03_1}  
\end{theorem}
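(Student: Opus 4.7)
\textbf{Proof plan for Theorem \ref{thm:PL03_1}.} The strategy is to adapt Perelman's classical pseudo-locality argument to the Ricci shrinker spacetime, exploiting the tools developed earlier in the paper: the heat kernel of Theorem \ref{thm:PK14_1}, the differential Harnack inequality of Theorem \ref{T401}, the bounded maximum principle of Theorem \ref{T101}, and the localized entropy functional $\boldsymbol{\nu}(\Omega,g,\tau)$ of Section \ref{sec:nolocal}. By parabolic scaling $(x,t)\mapsto (x, t_0+r^2 t)$ and using the invariance of $\boldsymbol{\nu}$ under such scaling, we may assume $t_0=0$ and $r=1$.

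First I would argue by contradiction. Suppose for given small $\delta_0,\epsilon_0$ one can find a counterexample, i.e., a point $(y_1,t_1)$ with $t_1\in(0,\epsilon_0^2)$, $y_1\in \overline{B_{g(t_1)}(x,1/2)}$, and either $|Rm|(y_1,t_1)>t_1^{-1}$ or $|B_{g(t_1)}(y_1,\rho)|\rho^{-n}<\tfrac{1}{2}\omega_n$ for some $\rho<\sqrt{t_1}$. A point-picking/maximum-principle argument in the spirit of Perelman (which remains valid on $M\times[0,\epsilon_0^2]$ because the cutoff estimates of Lemma \ref{lem:cutoff} control everything at spatial infinity) upgrades this to a bad point $(\bar y,\bar t)$ where the curvature is maximal on a controlled backward parabolic neighborhood, together with a definite distance from the parabolic boundary $\partial B_{g(0)}(x,1)\times[0,\bar t]$.

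Next, I would take $(\bar y,\bar t)$ as a base and consider the conjugate heat kernel $w(\cdot,t)\coloneqq H(\bar y,\bar t,\cdot,t)$ on $M\times(-\infty,\bar t)$, with $w=(4\pi(\bar t-t))^{-n/2}e^{-b}$. Theorem \ref{T401} gives the Perelman Harnack estimate
\[
(\bar t-t)(2\Delta b-|\nabla b|^2+R)+b-n\le 0,
\]
and Theorem \ref{XT306} gives the integral upper bound that forces $w$ to concentrate rapidly near $\bar y$ on the natural curvature scale. Combining these with the on-diagonal bounds of Theorem \ref{T301} and Theorem \ref{XT301}, evaluating Perelman's $\mathcal W$-functional against $w$ at a time $\bar t-\sigma$ slightly smaller than $\bar t$ (with $\sigma$ chosen comparable to the curvature scale at $(\bar y,\bar t)$) produces a very negative number: precisely $\mathcal W(g(\bar t-\sigma),w,\sigma)\le -C(n)$ whenever either the curvature lower bound is violated or the volume ratio drops below $\tfrac{1}{2}\omega_n$, by the standard almost-rigidity argument comparing with the Euclidean heat kernel.

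The final step is monotonicity: I would propagate $w$ back to time $0$ and show
\[
\mathcal W\!\bigl(g(0),\sqrt{w(\cdot,0)},\bar t-0+\sigma\bigr)\;\le\;\mathcal W\!\bigl(g(\bar t-\sigma),w,\sigma\bigr)\;\le\;-C(n),
\]
using the integral monotonicity identity established at the end of the proof of Theorem \ref{T401}. Since the conjugate heat kernel $w(\cdot,0)$ is overwhelmingly concentrated inside $B_{g(0)}(x,1)$ (this is where the extra care is needed: one must cut off $w$ by a smooth spatial bump supported in $B_{g(0)}(x,1)$, estimate the error via Theorem \ref{XT306} combined with the distance-distortion estimate Theorem \ref{T501a} applied to the parabolic ball around $(\bar y,\bar t)$, and control the change in $\mathcal W$ by $o(1)$ as $\epsilon_0\to 0$), the resulting cut-off function is a competitor in the local entropy $\boldsymbol{\nu}(B_{g(0)}(x,1),g(0),\bar t+\sigma)\ge \boldsymbol{\nu}(B_{g(0)}(x,1),g(0),1)>-\delta_0$. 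Choosing $\delta_0\ll C(n)$ and $\epsilon_0$ small enough that the cutoff error is $\le C(n)/2$ yields the contradiction.

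The main obstacle will be the localization step: unlike the compact case, one must simultaneously cut off in space (to land in the local entropy $\boldsymbol{\nu}(B,\cdot,\cdot)$) and confirm that the Gaussian tail of $w$ outside $B_{g(0)}(x,1)$ contributes a negligible amount to the $\mathcal W$-integrand, including the $u^2\log u^2$ and $Ru^2$ terms. Here the scalar curvature tail bound $R(y,t)\le \bar\tau^{-2}F(y,t)$ from \eqref{E110} combined with the exponential decay of $w$ in $F$ from Lemma \ref{L202}, together with the integral concentration Theorem \ref{XT306} (applied with a small $\epsilon$), is exactly what makes the error terms vanish. Once this localization is carried out cleanly, the contradiction and hence both \eqref{eqn:PL03_1a} and \eqref{eqn:PL03_1b} follow simultaneously.
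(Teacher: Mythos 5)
Your proposal follows essentially the route the paper itself takes: the paper does not write out a proof of Theorem~\ref{thm:PL03_1} but explicitly reduces it to Perelman's standard point-picking/contradiction argument once the differential Harnack inequality (Theorem~\ref{T401}) and the associated integration-by-parts identities are available on the shrinker spacetime, deferring the details to \cite{KL08}, \cite{CTY}, \cite{CCGG} and \cite{BW17B}, and your outline is precisely that argument with the paper's heat-kernel, cutoff and distance-distortion machinery inserted at the points where the noncompactness must be handled. The only caveat is structural: the volume conclusion \eqref{eqn:PL03_1b} with the sharp constant $\tfrac12\omega_n$ is not obtained by making $\mathcal W$ of the conjugate heat kernel very negative at the bad point, but rather follows from \eqref{eqn:PL03_1a} combined with the forward propagation of the almost-Euclidean local entropy and a compactness/almost-rigidity argument (as in Proposition 3.2 of \cite{TiWa}, which the paper cites as the source of the idea), so it should be treated as a separate second step rather than a second branch of the same contradiction.
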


The statement in Theorem~\ref{thm:PL03_1} is a slight improvement of Theorem 10.1 of~\cite{Pe1}.   The basic idea of the proof is already contained in  Proposition 3.1 and Proposition 3.2 of Tian-Wang~\cite{TiWa}.
Note that the isoperimetric constant estimate in Peleman's statement is only used to (cf. Lemma 3.5 of~\cite{BW17A}) estimate the local entropy (i.e., (\ref{eqn:PL24_2}) and (\ref{eqn:PL24_3})) $\boldsymbol{\nu} (B_{g(t_0)}(x,r), g(t_0), r^2)$.
The statement (\ref{eqn:PL03_0}) seems to be more straightforward.  The conclusion (\ref{eqn:PL03_1}) follows from a standard point-picking argument,  
whenever the differential Harnack estimate,  i.e., Theorem \ref{T401} holds. 
More details can be found in~\cite[Section $30$]{KL08}, \cite[Section $8$]{CTY}, ~\cite[Chapter $21$]{CCGG}, or~\cite{BW17B}.

As Ricci shrinker Ricci flows are self-similar, we can improve the estimate (\ref{eqn:PL03_1}) by the following property. 

\begin{theorem}
Suppose $(M^n, p, g, f)$ is a Ricci shrinker, $B=B(q,r) \subset M$ is a geodesic ball satisfying
  \begin{align}
       \boldsymbol{\nu} (B, g, r^2) > -\delta_0.   \label{eqn:PL06_7}
  \end{align}
Then we have 
\begin{align}
   \sup_{x \in B(q, 0.5\epsilon_0 r)} |Rm|(x)  \leq \max\{ 1, \epsilon_0 D r\} \cdot (\epsilon_0 r)^{-2},   \label{eqn:PL15_0}
\end{align}
where $D=d(p,q)+\sqrt{2n}$. 

\label{thm:PL16_1}  
\end{theorem}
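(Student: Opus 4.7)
The plan is to deduce~\eqref{eqn:PL15_0} by applying Theorem~\ref{thm:PL03_1} to the self-similar Ricci flow $g(t)=(1-t)(\psi^{t})^{*}g$ at a suitably chosen negative time $t_0$.  Set
\[
  T \coloneqq \frac{(\epsilon_0 r)^2}{\max\{1,\,\epsilon_0 Dr\}}, \qquad t_0 \coloneqq -T, \qquad \tilde q \coloneqq (\psi^{t_0})^{-1}(q), \qquad \rho_0 \coloneqq \sqrt{1+T}\,r.
\]
Then $1/T$ is exactly the right-hand side of~\eqref{eqn:PL15_0}, and a short check in the two regimes $\epsilon_0 Dr \gtrless 1$ confirms that $t_0+\epsilon_0^{2}\rho_0^{2}>0$, so $t=0$ lies in the pseudo-locality window.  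Because $g(t_0)=(1+T)(\psi^{t_0})^{*}g$, and both $\boldsymbol{\mu}$ and the localized $\boldsymbol{\nu}$ of~\eqref{eqn:PL24_3} are invariant under diffeomorphisms and under the simultaneous rescaling $g\mapsto c^{2}g$, $\tau\mapsto c^{2}\tau$, the hypothesis~\eqref{eqn:PL06_7} transfers as
\[
  \boldsymbol{\nu}\bigl(B_{g(t_0)}(\tilde q,\rho_0),\,g(t_0),\,\rho_0^{2}\bigr) \;=\; \boldsymbol{\nu}(B,g,r^{2}) \;>\; -\delta_0.
\]
Theorem~\ref{thm:PL03_1} at time $t_0$ then yields $|Rm|_{g(0)}(y,0)\le(0-t_0)^{-1}=1/T$ for every $y\in B_{g(0)}(\tilde q,\tfrac12\rho_0)$.

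The substantive step is to verify the inclusion $B(q,\tfrac12\epsilon_0 r)\subset B_g(\tilde q,\tfrac12\rho_0)$, which reduces to bounding the displacement $d_g(q,\tilde q)$ produced by $\psi^{t_0}$.  The key observation is that the generating field $(1-t)^{-1}\nabla f$ is a scalar multiple of the fixed field $\nabla f$, so $\psi^{t}=\varphi_{-\log(1-t)}$ where $\varphi_{\tau}$ denotes the flow of $\nabla f$; hence $\tilde q=\varphi_{\log(1+T)}(q)$.  Along the path $\beta(\tau)=\varphi_{\tau}(q)$ we have $\frac{d}{d\tau}f(\beta)=|\nabla f|^{2}\le f$ (using $R\ge 0$), and thus $f(\beta(\tau))\le e^{\tau}f(q)$; integrating,
\[
  d_g(q,\tilde q) \;\le\; \int_{0}^{\log(1+T)}|\nabla f(\beta(\tau))|\,d\tau \;\le\; \sqrt{f(q)}\int_{0}^{\log(1+T)}\!e^{\tau/2}\,d\tau \;=\; 2\bigl(\sqrt{1+T}-1\bigr)\sqrt{f(q)}.
\]
By Lemma~\ref{L100} at $t=0$, $\sqrt{f(q)}\le D/2$, and the elementary estimate $2(\sqrt{1+T}-1)\le T$ then gives $d_g(q,\tilde q)\le DT/2$.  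A direct case check shows $DT/2\le\tfrac12\epsilon_0 r$ in both regimes of $\max\{1,\epsilon_0 Dr\}$; then, assuming $\epsilon_0\le\tfrac12$ (which is automatic after possibly shrinking the constant from Theorem~\ref{thm:PL03_1}), the triangle inequality yields $B(q,\tfrac12\epsilon_0 r)\subset B_g(\tilde q,\epsilon_0 r)\subset B_g(\tilde q,\tfrac12\sqrt{1+T}\,r)=B_g(\tilde q,\tfrac12\rho_0)$, completing the proof.

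The main obstacle is the displacement estimate: the naive bound treating $|\nabla f|$ as bounded along $\varphi_{\tau}$ fails because $f$ grows with $\tau$; one must use the differential inequality $\frac{df}{d\tau}\le f$ to control the growth exponentially and combine it with $|\nabla f|\le\sqrt f$ to obtain $d_g(q,\tilde q)\le D(\sqrt{1+T}-1)$, which is sharp enough to absorb both the small-$r$ (where $T=\epsilon_0^{2}r^{2}$) and large-$r$ (where $T=\epsilon_0 r/D$) regimes simultaneously and produce the single bound claimed in~\eqref{eqn:PL15_0}.
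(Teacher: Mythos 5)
Your proposal is correct and follows essentially the same route as the paper's proof: pull $q$ back by the diffeomorphism to a negative time $t_0=-(\xi r)^2$ (your $T$ equals the paper's $(\xi r)^2$ with $\xi=\epsilon_0/\sqrt{\max\{1,\epsilon_0 Dr\}}$), transfer the local entropy bound by scaling/diffeomorphism invariance, apply Theorem~\ref{thm:PL03_1} at $s=0$, and control the displacement $d(q,\tilde q)\le D(\sqrt{1+T}-1)\le \tfrac12\epsilon_0 r$ via $|\nabla f|\le\sqrt f$ and the differential inequality for $f$ along the flow line. The only cosmetic differences are that you reparametrize by the flow of $\nabla f$ itself and package the two regimes into one formula for $T$, whereas the paper keeps the original time parameter and defines $\xi$ by cases.
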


\begin{proof}[Proof of Theorem~\ref{thm:PL16_1}]

We fix $\xi \leq \epsilon_0$ a small positive number, whose value will be determined later (i.e., (\ref{eqn:PL20_6})). 
We set 
\begin{align}
  t \coloneqq -(\xi r)^2, \quad \tilde q=(\psi^{t})^{-1}(q), \quad D \coloneqq d(p,q)+\sqrt{2n},   \label{eqn:PL20_2}
\end{align}
where $\psi^{s}$ is the diffeormorphism (i.e., (\ref{E102})) generated by $\frac{\nabla f}{1-s}$.

\begin{claim}
By choosing $\xi$ properly, we have 
\begin{align}
      d(q, \tilde{q}) \leq  \frac{ \epsilon_0 r}{2}.    \label{eqn:PL20_5}
\end{align}
\label{clm:PL20_1}
\end{claim}

By (\ref{E102}) and (\ref{E101}),  along the flow line $\psi^s(\tilde q)$ where $s$ goes from $t$ to $0$, we compute
\begin{align}\label{eqTI52}
d(q,\tilde q) \le \int_t^0 \frac{|\na f|(\psi^s(\tilde q))}{1-s}\,ds \le \int_t^0 \frac{\sqrt{f(\psi^s(\tilde q))}}{1-s}\,ds.
\end{align}
From the definition of $\psi^s$, we have
\begin{align*}
\frac{d}{ds} f(\psi^s(\tilde q))=\frac{|\na f|^2(\psi^s(\tilde q))}{1-s} \le \frac{f(\psi^s(\tilde q))}{1-s}. 
\end{align*}
For each $s \ge t=-(\xi r)^2$, the integration of the above inequality yields that
\begin{align*}
f(\psi^s(\tilde q)) \le \frac{1-t}{1-s}f(\psi^t(\tilde q))=\frac{1-t}{1-s}f(q)  \leq  \frac{1-t}{1-s} \cdot \frac{D^2}{4}, 
\end{align*}
where we applied (\ref{eqn:PL20_1}) in the last step. 
Therefore, it follows from (\ref{eqTI52}) that
\begin{align*}
 d(q,\tilde q)\le& D\sqrt{1-t}\int_t^0 \frac{1}{2}(1-s)^{-3/2}\,ds=D \left( \sqrt{1-t}-1\right). 
\end{align*}
Plugging the fact that $t=-(\xi r)^2$ into the above inequality, we arrive at
\begin{align}
\label{eqTI53}
  d(q,\tilde q) \leq D \left( \sqrt{1+(\xi r)^2}-1\right). 
\end{align}
Now we define $\xi$ as follows.
\begin{align}
\xi \coloneqq
\begin{cases}
\epsilon_0, & \textrm{if} \; Dr \leq \epsilon_0^{-1};\\
\sqrt{\frac{\epsilon_0}{Dr}} & \textrm{if} \; Dr > \epsilon_0^{-1}. 
\end{cases}
\label{eqn:PL20_6}
\end{align}
Therefore, if $Dr \leq \epsilon_0^{-1}$, it follows from (\ref{eqTI53}) that
\begin{align*}
d(q,\tilde q)\le D \left( \sqrt{1+(\epsilon_0 r)^2}-1\right)
\le \frac{D(\epsilon_0r)^2}{2}\le \frac{\epsilon_0 r}{2}.
\end{align*}
If $Dr >\epsilon_0^{-1}$, it also follows from (\ref{eqTI53}) that
\begin{align*}
d(q,\tilde q)\le D \left( \sqrt{1+(\xi r)^2}-1\right) \leq \frac{D}{2} \cdot (\xi r)^2=\frac{\epsilon_0 r}{2}. 
\end{align*}
Therefore, no matter what the value of $r$ is, we always have (\ref{eqn:PL20_5}). 
The proof of the Claim is complete.

We proceed to prove (\ref{eqn:PL15_0}). 
Since $g(t)=(1-t)(\psi^t)^*g$, it is clear that
\begin{align*}
\psi^t\lc B_t \left(\tilde q,\sqrt{1-t}\,r \right)\rc=B(q,r).
\end{align*}
It follows from the scaling property of $\boldsymbol{\nu}$ that
\begin{align*}
   \boldsymbol{\nu} \left(B_{g(t)} \left(\tilde q,\sqrt{1-t}\,r \right), g(t), (1-t)r^2 \right)=\boldsymbol{\nu} (B, g, r^2)>-\delta_0. 
\end{align*}
Therefore,  we can apply Theorem~\ref{thm:PL03_1}.   For each $s \in (t, \min\{t+(\epsilon_0r)^2, 1\}]$ and $x \in B_{g(s)}(\tilde q, 0.5 r)$, we have
\begin{subequations}
\begin{align}[left = \empheqlbrace \,]
&|Rm|(x, s) \leq  (s-t)^{-1};  \label{eqn:PL20_3a}\\
&\inf_{0<\rho<\sqrt{s-t}} |B_{g(s)}(x, \rho)|\rho^{-n} \geq  \frac{1}{2} \omega_n.    \label{eqn:PL20_3b}
\end{align}
\label{eqn:PL20_3}
\end{subequations}
\\
\noindent
In particular, we can choose $s=0$. 
Since $g=g(0)$,  for each $x \in B(\tilde{q}, 0.5r)$,  we obtain
\begin{subequations}
\begin{align}[left = \empheqlbrace \,]
&|Rm|(x) \leq  (\xi r)^{-2};  \label{eqn:PL20_4a}\\
&\inf_{0<\rho<\xi r} |B_{g}(x, \rho)|\rho^{-n} \geq  \frac{1}{2} \omega_n.    \label{eqn:PL20_4b}
\end{align}
\label{eqn:PL20_4}
\end{subequations}
\\
\noindent
Note that $B(q, 0.5 \epsilon_0 r) \in B(\tilde{q}, \epsilon_0 r) \subset B(\tilde{q}, 0.5r)$ by (\ref{eqn:PL20_5}).
Plugging (\ref{eqn:PL20_6}) into (\ref{eqn:PL20_4a}), we obtain (\ref{eqn:PL15_0}). 
\end{proof}

 Now we apply Theorem~\ref{thm:PL03_1} and Theorem~\ref{thm:PL16_1} to study the geometric properties of $(M, g)$ in terms of $\boldsymbol{\mu}$.
 In particular, we are ready to finish the proof of Theorem~\ref{thm:C}. 
 
 \begin{proof}[Proof of Theorem~\ref{thm:C}]
 We divide the proof into several steps.
 
 \textit{Step 1. The gap property (\ref{eqn:PK20_4}) holds.}

 It suffices to show that $\boldsymbol{\mu} \geq -\delta_0$ implies that $(M, g)$ is isometric to the Euclidean space.

 Following directly from its definition,  as $B(x,r) \subset M$, it is clear that
\begin{align*}
  \boldsymbol{\nu} (B(x,r), g, r^2)  \geq \boldsymbol{\nu} (M, g, r^2) =\boldsymbol{\nu} (g, r^2). 
\end{align*}
Combining the above inequality with the optimal Logarithmic Sobolev inequality, we obtain
\begin{align}
  \boldsymbol{\nu} (B(x,r), g, r^2)  \geq \boldsymbol{\mu}.   \label{eqn:PL03_3} 
\end{align}
Therefore, if $\boldsymbol{\mu} \geq -\delta_0$, then each ball $B(x,r)$ will satisfy the condition (\ref{eqn:PL06_7}).   
By choosing $r>>D$,  we can apply (\ref{eqn:PL15_0}) to obtain that
\begin{align*}
    |Rm|(x)  \leq \epsilon_0 D r \cdot (\epsilon_0 r)^{-2}=D \epsilon_0^{-1} r^{-1}.   
\end{align*}
Let $r \to \infty$, we obtain that $|Rm|(x) \equiv 0$.  By the arbitrary choice of $x$, we obtain that $|Rm| \equiv 0$. 
In particular, $Rc \equiv 0$.  Then the Ricci shrinker equation implies that $f_{ij}=\frac{g_{ij}}{2}$.  
Therefore,  $(M, g)$ is isometric to a metric cone which is also a smooth manifold.
This forces that $(M, g)$ is isometric to the standard Euclidean space $(\R^n, g_{E})$. Thus, the proof of (\ref{eqn:PK20_4}) is complete. 
 
 \textit{Step 2.  The inequality (\ref{eqn:PL05_4}) and (\ref{eqn:PL05_0}) imply the curvature and injectivity radius bound (\ref{eqn:PL05_1}).}
 
 Recall that  (\ref{eqn:PK20_4}) means $\boldsymbol{\mu}(g, 1)<-\delta_0$.  If (\ref{eqn:PL05_4}) holds, by continuity and monotonicity of $\boldsymbol{\mu}(g, \tau)$, it is clear that there exists some $\tau \in (0,1)$ such that
 \begin{align*}
     \boldsymbol{\mu}(g,\tau)=-\delta_0. 
 \end{align*} 
 Then the $\tau_0$ in (\ref{eqn:PL05_1}) is well defined. Namely, $\tau_0$ is the largest $\tau \in (0, 1)$ such that the above equality holds. 
 It follows from the definition of $\tau_0$ and $\boldsymbol{\nu}$ that
 \begin{align}
  \boldsymbol{\nu}(g, \tau_0) = \boldsymbol{\mu}(g, \tau_0)=-\delta_0.     \label{eqn:PL05_5}
 \end{align}
 For each ball $B_{g(0)}(x,r) \subset M$, we know $ \boldsymbol{\nu}(B_{g(0)}(x,r), g, \tau_0) \geq -\delta_0$. 
 In particular, we can choose $r=\sqrt{\tau_0}$.  
Now we apply Theorem~\ref{thm:PL03_1} on the time slice $t_0=0$, with scale $\sqrt{\tau_0}$, to obtain that
\begin{align*}
     |Rm|(x,t)  \leq  t^{-1},  \quad \forall \; x \in M, \quad \forall \; t \in (0, \epsilon_0^2 r^2]. 
\end{align*}
In particular, we have
\begin{align*}
   \sup_{x \in M} |Rm|(x, \epsilon_0^2 \tau_0) \leq \epsilon_0^{-2} \tau_0^{-1}. 
\end{align*}
Up to rescaling, since $g(0)=g$, we arrive atl
\begin{align*}
  \sup_{x \in M} |Rm|_{g}(x) \leq \epsilon_0^{-2} \tau_0^{-1} (1-\epsilon_0^2 \tau_0)=\epsilon_0^{-2} \tau_0^{-1}-1< C(n) \tau_0^{-1},
\end{align*}
which is nothing but (\ref{eqn:PL05_1a}).  Plugging (\ref{eqn:PL05_5}) into (\ref{eqn:PL06_2}) of Theorem~\ref{thm:PL06_1}, we obtain that each geodesic ball $B(\cdot, \sqrt{\tau_0})$
has volume bounded below by $c(n) \tau_0^{\frac{n}{2}}$.  Therefore, the injectivity radius estimate of Cheeger-Gromov-Taylor~\cite{CGT} applies and we arrive at (\ref{eqn:PL05_1b}). 
The proof of (\ref{eqn:PL05_1}) is complete.

 \textit{Step 3. The bounded geometry estimate (\ref{eqn:PL05_1}) implies the equality (\ref{eqn:PL05_3}), i.e., $\displaystyle \lim_{\tau \to 0^+}\boldsymbol{\mu}(g,\tau) =0$. }
 
We shall argue in the way similar to that in Theorem 1.1 of~\cite{Z11}, with more details on the regularity estimate.  

Assume otherwise that there exists a sequence $\tau_i \to 0^+$ such that
\begin{align}
   \lim_{i \to \infty}\boldsymbol{\mu}(g,\tau_i)=\boldsymbol{\mu}_{\infty}<0. \label{eqn:PL21_8}
\end{align}
If we set $g_i=\tau_i^{-1}g$, then all metrics $g_i$ have uniformly bounded geometry.
More precisely, there exist positive constants $K$ and $v_0$ such that
\begin{subequations}
\begin{align}[left = \empheqlbrace \,]
& |Rm_i| \le K\tau_i, \label{eqn:PL20_7a}\\
& |B(q,r)|_{g_i} \ge v_0 r^n(1+\tau_iKr^2)^{-\frac{n}{2}}.    \label{eqn:PL20_7b}
\end{align}
\label{eqn:PL20_7}
\end{subequations}
\\
\noindent
Notice that for any $i$, there exists a large domain 
\begin{align}
  B_i \coloneqq  \left \{x \left|2\sqrt{f} \le r_i \right. \right\}    \label{eqn:PL20_9}
\end{align}
for some large $r_i>>1$ such that 
 \begin{align}\label{eq:equality4}
\boldsymbol{\mu}(B_i, g_i,1)-\boldsymbol{\mu}(g_i,1)=\boldsymbol{\mu}(B_i, g_i,1)-\boldsymbol{\mu}(g,\tau_i)<i^{-1}.
\end{align}
The geometry bound (\ref{eqn:PL20_7}) actually implies higher order derivatives of curvatures and $\sqrt{f}$ are also uniformly bounded (cf. Section 4 of~\cite{LLW18}).
Therefore,  it is not hard to see that $\partial B_i$ is smooth. 
All the covariant derivatives of second fundamental forms of $\partial B_i$ are bounded independent of $i$.

It follows from \cite{Ro81} that a minimizer $u_i$ of $\boldsymbol{\mu}(B_i, g_i,1)$ exists. More precisely, $u_i \in W_0^{1,2}(B_i)$ is a positive smooth function on $B_i$  satisfying the normalization condition
 \begin{align}\label{eq:equality5}
\int_{B_i} u_i^2\,dV_i=1
\end{align}
and solve the Dirichelet problem
\begin{subequations}
\begin{align}[left = \empheqlbrace \,]
& -4\Delta_i u_i+R_iu_i-2u_i\log u_i-\lambda_i u_i=0, \quad \text{in}\quad B_i; \label{eqn:PL20_8a}\\
&u_i=0, \quad \text{on}\quad  \partial B_i.   \label{eqn:PL20_8b}
\end{align}
\label{eqn:PL20_8}
\end{subequations}
\\
\noindent
Here $dV_i$, $\Delta_i$ and $R_i$ denote the volume form, Laplacian operator and scalar curvature with respect to $g_i$ respectively.  
The number $\lambda_i$ is defined by 
\begin{align*}
\lambda_i \coloneqq n+\frac{n}{2}\log(4\pi)+\boldsymbol{\mu}(B_i, g_i,1).
\end{align*}
Recall that $\displaystyle \lim_{\tau \to 0^{+}} \boldsymbol{\mu}(g,\tau) \leq 0$ by (\ref{eqn:PL21_10}). 
Then it follows from (\ref{eq:equality4})  that $\lambda_i$ is uniformly bounded.  
Since curvature is uniformly bounded, the classical $L^2$-Sobolev constant of $(B_i, g_i)$ is uniformly bounded. 
In light of (\ref{eqn:PL20_8}), the Moser iteration then implies $\norm{u_i}{C^0}$ is uniformly bounded, see \cite[Lemma $2.1$(a)]{Z11} or the proof of Proposition 3.1 of~\cite{TiWa}. Then it follows from \cite[Corollary $8.36$]{GT01} that $\|u_i\|_{C^{1,\frac{1}{2}}(\bar B_i)}$ are uniformly bounded. Since all $\partial B_i$ have uniformly higher regularities, the bootstrapping, see \cite[Theorem $6.19$]{GT01},  shows that $\|u_i\|_{C^{k,\frac{1}{2}}(\bar B_i)}$ are uniformly bounded for any $k \ge 2$.

Let $q_i$ be a point where $u_i$ achieves maximum value in $B_i$.  
By \eqref{eqn:PL20_8},  at $q_i$ we have
 \begin{align*}
R_iu_i-2u_i\log u_i-\lambda_i u_i \le 0, 
\end{align*}
whence we derive
\begin{align}\label{eq:equality8}
u_i(q_i) \ge \exp\lc\frac{R_i-\lambda_i}{2} \rc \ge c_0
\end{align}
for some uniform constant $c_0$. 

In light of (\ref{eqn:PL20_7}) and the discussion below (\ref{eq:equality4}), we know that $(M^n, q_i, g_i)$ subconverges to Euclidean space $(\R^n, 0, g_{E})$ in
$C^{\infty}$-Cheeger-Gromov topology.   The set $B_i$ converges to a limit set $B_{\infty}$.
If $d(q_i, \partial B_i) \to \infty$, then $B_{\infty}=\R^n$.   Otherwise, by the estimate of second fundamental form and its covariant derivatives, $\partial B_i$ converge to a smooth
$(n-1)$-dimensional set $\partial B_{\infty}$.  In light of the uniform bound of $\norm{u_i}{C^{k,\frac{1}{2}}}$ and the uniform regularity of $\partial B_i$, by taking subsequence if necessary, we can assume that $u_i$ converges in smooth topology to a smooth function $u_{\infty} \in C^{\infty}(\bar B_{\infty})$. Furthermore, $u_{\infty} \equiv 0$ on $\partial B_{\infty}$.

In view of (\ref{eq:equality8}), the convergence process implies that
\begin{align}
  0<c^2 =  \int_{B_{\infty}} u_{\infty}^2 dV_{\infty} \leq 1.     \label{eqn:PL21_9}
\end{align}
Furthermore, we have on $B_{\infty}$ that 
 \begin{align}\label{eq:equality10}
-4\Delta_{g_E} u_{\infty}-2u_{\infty}\log u_{\infty}-\lambda_{\infty} u_{\infty} =0,
\end{align}
where $\lambda_{\infty}=n+\frac{n}{2}\log(4\pi)+\boldsymbol{\mu}_{\infty}$.  Let $\tilde{u}=c^{-1}u_{\infty}$. Then $\int_{B_{\infty}} \tilde{u}^2 dV_{\infty}=1$.  The above equation becomes
\begin{align*}
  -4\Delta_{g_E} \tilde{u}-2\tilde{u} \log \tilde{u} -\left(n+\frac{n}{2}\log(4\pi)+\boldsymbol{\mu}_{\infty}+2\log c\right)\tilde{u}=0.
\end{align*}
Since $c \in (0, 1)$ by (\ref{eqn:PL21_9}) and $\boldsymbol{\mu}_{\infty}<0$ by (\ref{eqn:PL21_8}), then an integration by parts shows that
\begin{align*}
\boldsymbol{\mu}(g_E,1) \le  \overline{\mathcal{W}}(g_{E}, \tilde{u}, 1)=\boldsymbol{\mu}_{\infty}+2\log c<0,
\end{align*}
which is a contradiction.
So we finish the proof of Step 3.

 \textit{Step 4.  The three properties are equivalent.}
 
 By Step 2, it is clear that $(c) \Rightarrow (a)$.  Then Step 3 means that $(a) \Rightarrow (b)$. 
 It is obvious that $(b) \Rightarrow (c)$.   Therefore, we obtained the equivalence of properties (a), (b) and (c) in Theorem~\ref{thm:C}.
 The proof of the Theorem is complete.
 \end{proof}

\begin{cor}\label{T601}
There exists a small positive number $\epsilon=\epsilon(n)>0$ such that for any nonflat Ricci shrinker $(M^n, p, g, f)$, we have
\begin{align}
  d_{PGH} \left\{ (M^n,p,g), (\R^n, 0,g_{E})\right\}>\epsilon.    \label{eqn:PL06_8}
\end{align} 
\end{cor}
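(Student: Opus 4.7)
The plan is to argue by contradiction, combining the gap inequality $\boldsymbol{\mu}<-\delta_0$ for non-flat Ricci shrinkers (Theorem~\ref{thm:C}) with the non-collapsed compactness theory for Ricci shrinkers. Suppose the conclusion fails: then there is a sequence of non-flat Ricci shrinkers $(M_i^n,p_i,g_i,f_i)$ with $d_{PGH}\big((M_i,p_i,g_i),(\R^n,0,g_E)\big)\to 0$. The goal is to show that this forces $\boldsymbol{\mu}(g_i)\to 0$, contradicting Theorem~\ref{thm:C}.

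The first step is to verify that the sequence is non-collapsed. By Lemma~\ref{lem:unitvol}, $e^{\boldsymbol{\mu}(g_i)}$ is dimensionally comparable to $|B(p_i,1)|_{g_i}$. Pointed GH convergence to the full-dimensional manifold $\R^n$ prevents dimension drop: by a standard net-packing argument, combined with the Euclidean-type volume lower bound (\ref{eqn:PL06_1}) of Theorem~\ref{thm:PL06_1} applied on small scales where $R$ is controlled via \eqref{E107} and Lemma~\ref{L100}, one obtains $|B(p_i,1)|_{g_i}\to\omega_n$, so $\boldsymbol{\mu}(g_i)\ge -A$ for some dimensional $A$. Hence $(M_i,g_i,f_i)\in\mathcal M_n(A)$.

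The second step invokes the non-collapsed compactness theorem for Ricci shrinkers (Haslhofer-M\"uller~\cite{HM11}, Li-Li-Wang~\cite{LLW18}): after passing to a subsequence, $(M_i,p_i,g_i,f_i)$ converges in the pointed Cheeger-Gromov sense to a limit Ricci shrinker $(M_\infty,p_\infty,g_\infty,f_\infty)$, with smooth convergence off a singular set of codimension at least four. Since the underlying pointed GH limit is the smooth manifold $(\R^n,0,g_E)$, no singular point can appear in the limit, and convergence is smooth everywhere. Identifying the limit then forces it to be the Gaussian shrinker $(\R^n,g_E,|x|^2/4)$, for which $\boldsymbol{\mu}(g_E)=0$.

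The final step is continuity of the entropy along this convergence: the explicit minimizer $e^{-f_0/2}$ from Proposition~\ref{pro:logsob1}, together with uniform decay of $f_i$ from Lemma~\ref{L100} and the volume bounds of Lemma~\ref{L101}, yields $\boldsymbol{\mu}(g_i)\to\boldsymbol{\mu}(g_\infty)=0$. This contradicts the uniform gap $\boldsymbol{\mu}(g_i)<-\delta_0$ of Theorem~\ref{thm:C}, completing the proof. The main obstacles are the non-collapsing in step one and the continuity of $\boldsymbol{\mu}$ under conifold convergence in step three; both are essentially standard from~\cite{HM11} and~\cite{LLW18}, but must be quoted carefully to rule out any hidden boundary behaviour at the (here vacuous) singular set.
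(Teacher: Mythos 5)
Your overall strategy coincides with the paper's: argue by contradiction, upgrade the pointed Gromov--Hausdorff convergence to smooth Cheeger--Gromov convergence via the compactness theory of~\cite{HM11} and~\cite{LLW18}, use continuity of $\boldsymbol{\mu}$ under this convergence to get $\boldsymbol{\mu}(g_i)\to\boldsymbol{\mu}(g_E)=0$, and contradict the gap $\boldsymbol{\mu}<-\delta_0$ from Theorem~\ref{thm:C}. The paper runs exactly this argument, citing Proposition 5.8 of~\cite{LLW18} for the uniform entropy lower bound, Theorem 1.1 of~\cite{LLW18} for the improvement to smooth convergence, and Theorem 1.2(c) of~\cite{LLW18} for the continuity of $\boldsymbol{\mu}$.

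The one step whose justification fails as written is your step one. Inequality \eqref{eqn:PL06_1} reads $r^{-n}|B|\geq c\,e^{\boldsymbol{\mu}-\Lambda r^2}$, so its right-hand side already carries the factor $e^{\boldsymbol{\mu}}$. Summing it over a $\rho$-packing of $B(p_i,1)$ and comparing with Lemma~\ref{lem:unitvol} only returns a tautology of the form $Ce^{\boldsymbol{\mu}_i}\geq c'e^{\boldsymbol{\mu}_i}$ and yields no lower bound on $\boldsymbol{\mu}_i$; likewise, pointed GH convergence to $\R^n$ by itself does not give $|B(p_i,1)|\to\omega_n$ absent an a priori non-collapsing hypothesis or a uniform Ricci lower bound (volume convergence is not a metric consequence of GH convergence). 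So the uniform bound $\boldsymbol{\mu}(g_i)\geq -A$ cannot be extracted from the internal estimates of this paper in the way you propose; it genuinely requires the external structure theory, namely Proposition 5.8 of~\cite{LLW18}, which is precisely what the paper cites. Once that citation replaces your circular step one, the remainder of your argument goes through as in the paper.
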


\begin{proof}
We argue by contradiction. 

If (\ref{eqn:PL06_8}) were wrong,  then we can have a sequence of nonflat Ricci shrinkers $(M_i, p_i, g_i)$ such that
\begin{align*}
   d_{PGH} \left( (M_i, p_i, g_i), \left(\R^n,0, g_{E} \right)\right) \to 0. 
\end{align*}
By Proposition 5.8 of~\cite{LLW18},   it is clear that  $\boldsymbol{\mu}_i = \boldsymbol{\mu} (M_i, p_i, g_i)$ is uniformly bounded from below.
Using Theorem 1.1 of~\cite{LLW18}, the above convergence can be improved to be in the smooth topology
\begin{align*}
    (M_i, p_i, g_i)  \longright{C^{\infty}-Cheeger-Gromov} \left(\R^n, 0, g_{E}\right). 
\end{align*}
It is not hard to see that $\boldsymbol{\mu}$ is continuous with respect to the above convergence (cf. Theorem 1.2(c) of~\cite{LLW18}).
Therefore,  we have
\begin{align*}
   \boldsymbol{\mu}_i = \boldsymbol{\mu} (M_i, p_i, g_i)  \to \boldsymbol{\mu} (\R^n, 0, g_{E}) =0. 
\end{align*}
It follows that $\boldsymbol{\mu}_i >-\delta_0$ for large $i$.  
Therefore, each $(M_i, g_i)$ is isometric to Euclidean space by Theorem~\ref{thm:C}. 
This contradicts our choice of $(M_i, g_i)$.  The proof of (\ref{eqn:PL06_8}) is established by this contradiction. 
\end{proof}

\begin{cor}
\label{C501b}
Let $(M^n,g,f)$ be a Ricci shrinker and let $q \in M$ be a point such that 
\begin{align*}
\nu(B(q,\ep_0^{-1}),g,\ep_0^{-2})>-\delta_0.
\end{align*}
Then there exist a positive constant $C=C(n)$ such that
\begin{align*}
|Rm|(\psi^t(x)) \le CD(1-t)\le CD\frac{f(x)}{f(\psi^t(x))}
\end{align*}
for any $x \in B(q,\frac{1}{2}e^{-CD}D^{-\frac{1}{2}})$ and $t \in [0,1)$, where $D=d(p,q)+\sqrt{2n}$.
\end{cor}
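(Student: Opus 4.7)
The plan is to reduce the first inequality to a uniform curvature bound on the induced Ricci flow, and then to dispose of the second inequality by a short ODE computation along the $\psi^t$-flow. Using $g(t)=(1-t)(\psi^t)^{*}g$, I would begin by noting the scaling identity
\[
   |Rm|_g(\psi^t(x)) = (1-t)\,|Rm|_{g(t)}(x),
\]
so that proving $|Rm|(\psi^t(x)) \le CD(1-t)$ is equivalent to establishing the uniform bound $|Rm|_{g(t)}(x) \le CD$ for every $t \in [0,1)$.

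To obtain this uniform bound, I would split the time interval into two regimes. For $t \ge (CD)^{-1}$, I would apply Theorem~\ref{thm:PL03_1} at $t_0=0$ using the hypothesis $\boldsymbol{\nu}(B(q,\epsilon_0^{-1}),g,\epsilon_0^{-2}) > -\delta_0$; this directly yields $|Rm|_{g(t)}(y) \le t^{-1} \le CD$ for $y \in B_{g(t)}(q,0.5\,\epsilon_0^{-1})$. For the short interval $t \in [0,(CD)^{-1}]$, I would start from the curvature bound $|Rm|_g \le CD$ on $B_g(q,1/2)$ supplied by Theorem~\ref{thm:PL16_1} with $r=\epsilon_0^{-1}$, and propagate it via the standard ODE comparison $\partial_t |Rm| \le \Delta |Rm| + c_n |Rm|^2$ to conclude $|Rm|_{g(t)} \le 2CD$ on a slightly smaller ball during this short interval.

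The main obstacle will be to verify, via a bootstrap argument, that the fixed point $x$ stays inside the pseudo-locality ball $B_{g(t)}(q,0.5\,\epsilon_0^{-1})$ for every $t \in [0,1)$. Under the provisional uniform bound $|Rm|_{g(t)} \le CD$, Hamilton's distance distortion estimate gives $d_{g(t)}(x,q) \le e^{c_n CD\,t}\,d_g(x,q) \le e^{c_n CD}\,d_g(x,q)$ on $[0,1)$, which is the source of the $e^{-CD}$ factor in the permissible initial radius. The auxiliary $D^{-1/2}$ factor reflects the natural Riemannian length scale $(CD)^{-1/2}$ associated with the curvature bound $CD$, and is needed to ensure that the distortion comparison is valid within a convex curvature-bounded region throughout the bootstrap and that $x$ sits inside the regime where the pseudo-locality conclusion is effective.

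Finally, the second inequality reduces to $(1-t) \le f(x)/f(\psi^t(x))$. This follows by a direct ODE computation along the flow line: combining $\partial_t f = |\nabla f|^2$ from \eqref{E105} with $|\nabla f|^2 \le f/(1-t)$ from \eqref{E107}, one gets $\tfrac{d}{dt}\log f(\psi^t(x)) \le (1-t)^{-1}$, and integrating from $0$ to $t$ yields $f(\psi^t(x)) \le f(x)/(1-t)$, as desired.
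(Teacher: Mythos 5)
Your overall strategy is the same as the paper's: the scaling identity $|Rm|(\psi^t(x))=(1-t)|Rm|_{g(t)}(x)$, the pseudo-locality bound $|Rm|(\cdot,t)\le t^{-1}$ on $B_{g(t)}(q,\tfrac12\epsilon_0^{-1})$ from Theorem~\ref{thm:PL03_1}, the initial bound $|Rm|\le D$ on $B(q,\tfrac12)$ from Theorem~\ref{thm:PL16_1}, a distance-distortion argument to keep $x$ in the good region (the source of the factor $e^{-CD}D^{-1/2}$), and the ODE $\tfrac{d}{dt}f(\psi^t(x))\le f(\psi^t(x))/(1-t)$ for the second inequality. That last computation is correct and identical to the paper's.

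The one step that is not adequately justified is your short-time regime $t\in[0,(CD)^{-1}]$. You propose to propagate the initial bound $|Rm|\le CD$ by ``the standard ODE comparison $\partial_t|Rm|\le\Delta|Rm|+c_n|Rm|^2$.'' A pointwise reaction--diffusion inequality does not localize by itself: to conclude a doubling bound on a smaller ball you need a maximum principle, and here the relevant maximum could occur on the boundary of the ball (where you have no control) or escape to infinity (the manifold is noncompact and no global curvature bound is assumed --- avoiding exactly this is a running theme of the paper). What closes this step is a genuinely local curvature estimate that combines the a priori bound $|Rm|\le t^{-1}$ on the parabolic neighborhood with the initial bound $r_0^{-2}=D$ on $B(q,D^{-1/2})$ to produce a uniform bound $CD$ on the smaller ball $B_t(q,\tfrac12 D^{-1/2})$; this is precisely Chen's local estimate, \cite[Theorem~3.1]{CBL07}, which is what the paper invokes in one stroke (making the two-regime time splitting unnecessary). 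As written, your short-time propagation is a gap; citing or reproving Chen's theorem repairs it and simultaneously explains the scale $D^{-1/2}$ more directly than your heuristic. A minor related point: the containment you ultimately need for all $t\in[0,1)$ is $x\in B_t(q,\tfrac12 D^{-1/2})$ (the ball on which the uniform bound $CD$ is established), not merely $x\in B_{g(t)}(q,\tfrac12\epsilon_0^{-1})$; your distortion estimate under $|Rc|\le c_nCD$ does give this for $d(q,x)\le\tfrac12 e^{-CD}D^{-1/2}$ once the constants are arranged consistently with the open--closed bootstrap.
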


\begin{proof}
By the assumption, it follows from Theorem \ref{thm:PL03_1} by choosing $r=\ep_0^{-1}$ that
\begin{align}\label{eqt5011}
|Rm|(x,t) \le \frac{1}{t}
\end{align}
for any $t \in (0,1)$ and $d_{g(t)}(q,x) \le \frac{1}{2}\ep_0^{-1}$. In addition, from Theorem \ref{thm:PL16_1} we have
\begin{align}\label{eqt5012}
|Rm|(x) \le D
\end{align}
for any $x \in B(q,\frac{1}{2})$. From \eqref{eqt5011}, \eqref{eqt5012} and \cite[Theorem $3.1$]{CBL07} that there exist a positive constant $C=C(n)$ such that for any $x \in B_t(q,\frac{1}{2}D^{-\frac{1}{2}})$,
\begin{align}\label{eqt5012a}
|Rm|(x,t) \le CD.
\end{align}
From \eqref{eqt5012a}, it is easy to see by comparing the distances that
\begin{align}\label{eqt5012b}
B \lc q, \frac{1}{2}e^{-CD}D^{-\frac{1}{2}} \rc \subset B_t \lc q,\frac{1}{2}D^{-\frac{1}{2}} \rc
\end{align}
for any $t \in [0,1)$. 

Therefore, for any $x \in B(q,\frac{1}{2}e^{-CD}D^{-\frac{1}{2}})$,
\begin{align}
|Rm|(\psi^t(x))=(1-t)|Rm|(x,t)\le CD(1-t).\label{eqt5013}
\end{align}

Along the flow line of $\psi^t(x)$,
\begin{align}
    \frac{d}{dt}f(\psi^t(x))=\frac{|\nabla f|^2(\psi^t(x))}{1-t}\le \frac{f(\psi^t(x))}{1-t},
\end{align}
and hence by solving the corresponding ODE,
\begin{align}
f(\psi^t(x)) \le \frac{f(x)}{1-t}.\label{E501b}
\end{align}
Combining \eqref{eqt5013} and \eqref{E501b}, the conclusion follows.
\end{proof}

Since $f$ is almost $\frac{d^2}{4}$ by Lemma \ref{L100}, Corollary \ref{C501b} shows that the curvature is quadratically decaying along the flow line. Next we prove that if there exists a tubular neighborhoold of some level set of $f$ whose isoperimetric constant is almost Euclidean, then globally the curvature is quadratically decaying.

\begin{cor}\label{C501c}
For any Ricc shrinker $(M^n,g,f)$, if there exists an $a>0$ such that for any $x \in f^{-1}(a)$, 
\begin{align*}
\nu(B(x,\ep_0^{-1}),g,\ep_0^{-2})>-\delta_0,
\end{align*}
then the curvature is quadratically decaying and each end has a unique smooth tangent cone at infinity.
\end{cor}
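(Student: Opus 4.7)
The strategy is to transport the local almost-Euclidean geometry from the compact level set $\Sigma := f^{-1}(a)$ to a neighborhood of infinity along the self-similar flow $\psi^{t}$, use Corollary~\ref{C501b} pointwise to deduce quadratic curvature decay, and then invoke a standard tangent-cone uniqueness theorem for asymptotically conical shrinkers.

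Since $f$ is proper by Lemma~\ref{L100}, $\Sigma$ is compact. For each $q\in\Sigma$, the hypothesis $\nu(B(q,\epsilon_{0}^{-1}),g,\epsilon_{0}^{-2}) > -\delta_{0}$ makes Corollary~\ref{C501b} applicable at $q$, so we can cover $\Sigma$ by finitely many balls $B(q_{i},r(q_{i}))$ with $r(q_i) = \tfrac12 e^{-CD(q_i)}D(q_i)^{-1/2}$ and $D(q_{i}) = d(p,q_{i})+\sqrt{2n}$. Set $\Omega = \bigcup_{i=1}^{N} B(q_{i},r(q_{i}))$, and let $D^{*} := \max_{i} D(q_{i})$ and $M^{*} := \sup_{\Omega} f$; both are finite. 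Applying Corollary~\ref{C501b} inside each $B(q_{i},r(q_{i}))$, every $y = \psi^{t}(x)$ with $x\in\Omega$ and $t\in [0,1)$ satisfies
\[
|Rm|(y) \;\le\; C D^{*}\,\frac{f(x)}{f(y)} \;\le\; \frac{C D^{*} M^{*}}{f(y)}.
\]
Hence the ``flow-tube'' $V := \bigcup_{t\in[0,1)}\psi^{t}(\Omega)$ enjoys quadratic curvature decay $|Rm|(y) \le C_{1}/f(y)$, which by Lemma~\ref{L100} is the same as $|Rm|(y) = O(d(p,y)^{-2})$.

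The key point is to show that $V$ contains the complement of a compact set. Reparametrizing time by $s = -\log(1-t)$ identifies $\psi^{t}$ with the autonomous gradient flow $\tilde\psi^{s}$ of $f$, and $\partial_{s} f = |\nabla f|^{2} \ge 0$ makes $f$ monotone along orbits. The hypothesis together with Theorem~\ref{thm:PL16_1} gives bounded curvature on a uniform neighborhood of $\Sigma$; combined with $|\nabla f|^{2} = f-R$ and $f \equiv a$ on $\Sigma$, this forces $|\nabla f| \ge \tfrac12\sqrt{a}$ on $\Sigma$ once $a$ dominates the small curvature contribution, so $\tilde\psi^{s}$ is transverse to $\Sigma$ and carries a one-sided tubular neighborhood of $\Sigma$ in $\Omega$ diffeomorphically onto an open set. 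Bootstrapping on the bound already established on $V$, $R \to 0$ along $V$ as $f\to\infty$, so $|\nabla f|^{2} = f-R \to \infty$ there and no critical point of $f$ can lie in $V$ above some level $A > a$. Consequently, the backward orbit of any $y$ with $f(y) > A$ descends monotonically in $f$ without being trapped and must cross $\Sigma$, so $y\in V$. This establishes quadratic curvature decay in the sense of the corollary on the complement of a compact set.

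Finally, quadratic decay of $|Rm|$ combined with the shrinker identity $Rc + \mathrm{Hess}\,f = \tfrac12 g$ implies, via the asymptotic-conical uniqueness theorem of Kotschwar--Wang, that each end of $(M,g)$ is asymptotic to a unique smooth Ricci-flat metric cone. The main obstacle is the third step above: confirming that the flow tube $V$ actually reaches every end. A naive argument is circular, since non-existence of critical points at infinity is precisely what one would like to use. The bootstrap outlined resolves this: the curvature bound on $V$ is valid a priori regardless of how large $V$ is, and it is exactly this bound that forces $|\nabla f|$ to grow in $V$, allowing a standard ODE continuity argument along the gradient flow of $f$ to extend $V$ to all of $\{f > A\}$.
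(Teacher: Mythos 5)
Your overall strategy coincides with the paper's: push the estimate of Corollary~\ref{C501b} along the integral curves of $\nabla f$ emanating from a tubular neighborhood of $f^{-1}(a)$, and then cite an asymptotic-cone uniqueness theorem. The problem is that the one step you yourself flag as the main obstacle --- that the flow tube $V$ actually exhausts $\{f>A\}$ --- is not resolved by your bootstrap, and this is precisely where the paper has to work hardest. Your argument is: no critical points of $f$ lie \emph{in $V$} above some level, hence the backward gradient orbit of an arbitrary $y$ with $f(y)>A$ is not trapped and must reach $f^{-1}(a)$. But until you know $y\in V$, you have no curvature bound and no lower bound on $|\nabla f|$ anywhere along the backward orbit of $y$; that orbit could perfectly well limit onto a critical point of $f$ lying \emph{outside} $V$ at some level strictly above $a$. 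Knowing that $|\nabla f|$ is large on $V$ says nothing about the orbit of a point not yet known to belong to $V$, so the circularity is not broken. The paper's fix is genuinely different and is missing from your proposal: it picks an arbitrary path $\gamma$ from $p$ to $y$, lets $I$ be the set of parameters $z$ for which $\gamma(z)$ lies on a gradient orbit through an open tube $U$ around $f^{-1}(a)$, and shows $I$ is nonempty, open (because $U$ is open) and closed. Closedness is the delicate point: for $z_i\in I$ one writes $\gamma(z_i)=\tilde\psi^{s_i}(x_i)$ with $x_i\in f^{-1}(a)$ and uses the decay $|Rm|(\tilde\psi^{s_i}(x_i))\le CD_1e^{-s_i}$ --- valid because these points \emph{are} in the tube --- to force the $s_i$ to stay bounded, after which compactness of $f^{-1}(a)$ closes the argument. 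Some such connectedness mechanism is indispensable, and your write-up does not supply one.

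A secondary error: your claim that $|\nabla f|\ge\tfrac12\sqrt a$ on $\Sigma=f^{-1}(a)$ "once $a$ dominates the small curvature contribution" does not hold for general $a>0$. Theorem~\ref{thm:PL16_1} only gives $|Rm|\le\max\{1,\epsilon_0 Dr\}(\epsilon_0 r)^{-2}$ with $D\sim 2\sqrt a+C(n)$, so $R$ on $\Sigma$ is controlled by a quantity of order $\sqrt a+C(n)$, which is not $o(a)$ when $a$ is small; $|\nabla f|^2=a-R$ could then vanish on $\Sigma$. The paper instead excludes stationary points of the gradient flow in a tubular neighborhood by applying the decay of Corollary~\ref{C501b} to a putative fixed point (letting $s\to\infty$ forces $|Rm|=0$ there, contradicting non-flatness), which works for every $a>0$. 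Finally, two small points: the unique tangent cone at infinity is a smooth metric cone but need not be Ricci-flat, and to get smoothness of the cone one needs the decay of all covariant derivatives of $Rm$ via Shi's local estimates, which you should state before invoking Kotschwar--Wang or Chow--Lu.
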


\begin{proof}
We can assume that $(M,f)$ is nonflat, otherwise there is nothing to prove. Now we reparametrize $\psi^t$ by defining for any $s \in (-\infty,\infty)$
\begin{align*}
\tilde \psi^s=\psi^{1-e^{-s}}.
\end{align*}
It is clear from the definition of $\psi^t$ that 
\begin{align*}
\frac{d}{ds}\tilde \psi^s(x)=\na f(\tilde \psi^s(x)).
\end{align*}
In other words, $\tilde \psi^s$ is the one-parameter group of diffeomorphisms generated by $\na f$. Now we set
\begin{align*}
\ep_1=\ep_1(a,n)=\frac{1}{2}e^{-CD_1}D_1^{-\frac{1}{2}},
\end{align*}
where $D_1=2\sqrt a+5n+\sqrt{2n}+4$.

We claim that any $x \in T_{\ep_1}(f^{-1}(a))\coloneqq \bigcup_{q\in f^{-1}(a)}B(q,\ep_1)$ is not a stationary point of $\tilde \psi^s$. Otherwise, it follows from Corollary \ref{C501b} that
\begin{align*}
|Rm|(x)=|Rm|(\tilde \psi^s(x))\le CD_1e^{-s}
\end{align*}
for any $s \ge 0$. However, when $s \to \infty$, $|Rm|(x)=0$ and this contradicts our nonflatness assumption. 

Now we choose $c<a<d$ such that for any $x \in \partial T_{\frac{\ep_1}{2}}(f^{-1}(a))$, either $f(x) \le c$ or $f(x) \ge d$. By continuity, there exists a positive constant $\ep \ll \ep_1$ such that for any $x \in T_{\ep}(f^{-1}(a))$, $f(x) \in (c+\ep,d-\ep)$. We set $U \coloneqq T_{\ep}(f^{-1}(a))$ and claim that for any $y \in U$, there exists an $x \in f^{-1}(a)$ such that $\tilde \psi^{s}(x)=y$ for some $s$. If $f(y)=a$, then the claim is obvious. If $f(y) <a$, we consider the flow line $\tilde \psi^{s}(y)$ for $s \ge 0$. Notice that by the definition of $\tilde \psi^{s}$,
\begin{align*}
\frac{d}{ds}f(\tilde \psi^{s}(y))=|\na f|^2(\tilde \psi^{s}) \ge 0.
\end{align*}
Therefore, by the local compactness and our previous no stationary argument, the flow will continue and along the flow $f$ is strictly increasing as long as $\tilde \psi^{s}(y)$ stays in $T_{\ep_1}(f^{-1}(a))$. We set $s_0$ to be the first time such that $\tilde \psi^{s}(y)$ reaches $\partial T_{\frac{\ep_1}{2}}(f^{-1}(a))$. In particular, $f(\tilde \psi^{s}(y)) \le c$ or $f(\tilde \psi^{s}(y)) \ge d$. Since $f(y) \in (c+\ep,d-\ep)$, it must be $f(\tilde \psi^{s}(y)) \ge d$. As $f(y) <a<f(\tilde \psi^{s_0}(y))$, there exists an $s \in (0,s_0)$ such that $f(\tilde \psi^{s}(y))=a$ by continuity. Therefore, if we set $x=\tilde \psi^{s}(y) \in f^{-1}(a)$, then $\tilde \psi^{-s}(x)=y$ and the claim follows. Similarly, for the case $f(y)>a$, the claim is also true.

Next we prove that for any $y$ such that $f(y)>a$, there exists an $x \in U$ such that $\tilde \psi^{s}(x)=y$ for some $s$. Fix such $y$, we choose any curve $\{\gamma (z):\,z\in [0,1]\}$ such that $\gamma (0)=p$ and $\gamma (1)=y$. In particular, since $p$ is the minimum point of $f$, there exists a $z_0 \in [0,1)$ such that $\gamma (z_0) \in f^{-1}(a)$ and for all $z \in (z_0,1]$, $f(\gamma(z))>a$. Now we define $I \subset [z_0,1]$ such that $z \in I$ if and only if there exists an $x \in U$ such that $\tilde \psi^{s}(x)=\gamma(z)$ for some $s$. In particular, $I$ is not empty as $z_0 \in I$. It is clear that $I$ is open, since $U$ is open. Now we prove the closedness of $I$. For a sequence $z_i \in I$ such that $z_i \to z_{\infty} \in [z_0,1]$, $f(z_i)>a$ if $i$ is sufficiently large. By our definition of $I$ and the claim with its proof, there exists $x_i \in f^{-1}(a)$ and $s_i >0$ such that $\tilde \psi^{s_i}(x_i)=\gamma(z_i)$. Note that $s_i$ must be bounded. Indeed, by Corollary \ref{C501b},
\begin{align*}
|Rm|(\gamma(z_i))=|Rm|(\tilde \psi^{s_i}(x_i))\le CD_1e^{-s_i}.
\end{align*}
If $s_i \to \infty$, then it forces $|Rm|(\gamma(z_{\infty}))=0$ and this is a contradiction. By compactness and taking the subsequence, there exist $x_{\infty} \in f^{-1}(a)$ and $s_{\infty} \ge 0$ such that $x_i \to x_{\infty}$ and $s_i \to s_{\infty}$. By continuity, $\tilde \psi^{s_{\infty}}(x_i)=\gamma(z_{\infty})$. To summarize, $I=[z_0,1]$ and in particular, $\tilde \psi^{s}(x)=\gamma(1)=y$ for some $x \in U$ and $s \in \R$. By the claim again, we have proved that for any $y$ with $f(y) \ge a$, there exists an $x \in f^{-1}(a)$ such that $\psi^s(x)=y$ for some $s \ge 0$. 

Therefore, for any point $y$ outside the compact set $\{f \le a\}$, it follows from Corollary \ref{C501b} that
\begin{align} \label{eq:quad}
|Rm|(y) \le \frac{CD_1a}{f(y)} \le C\frac{ \max\{1,a^{\frac{3}{2}}\}}{f(y)}.
\end{align}

See Figure~\ref{fig:flowline} for intuition in the case $a>1$. 

 \begin{figure}[h]
    \begin{center}
     \psfrag{A}[c][c]{$\textcolor{red}{f=a}$}
      \psfrag{B}[c][c]{$\textcolor{green}{f=b}$}
      \psfrag{C}[c][c]{\textcolor{blue}{flow line of $\nabla f$}}
      \psfrag{D}[c][c]{$B(x, \epsilon_0^{-1})$}
      \psfrag{E}[c][c]{$\textcolor{green}{|Rm| \leq Ca^{\frac32} f^{-1}}$}
      \psfrag{F}[c][c]{$x$}
      \psfrag{G}[c][c]{$\boldsymbol{\nu}(B(x, \epsilon_0^{-1}), g, \epsilon_0^{-2})>-\delta_0$}
      \includegraphics[width=0.5\columnwidth]{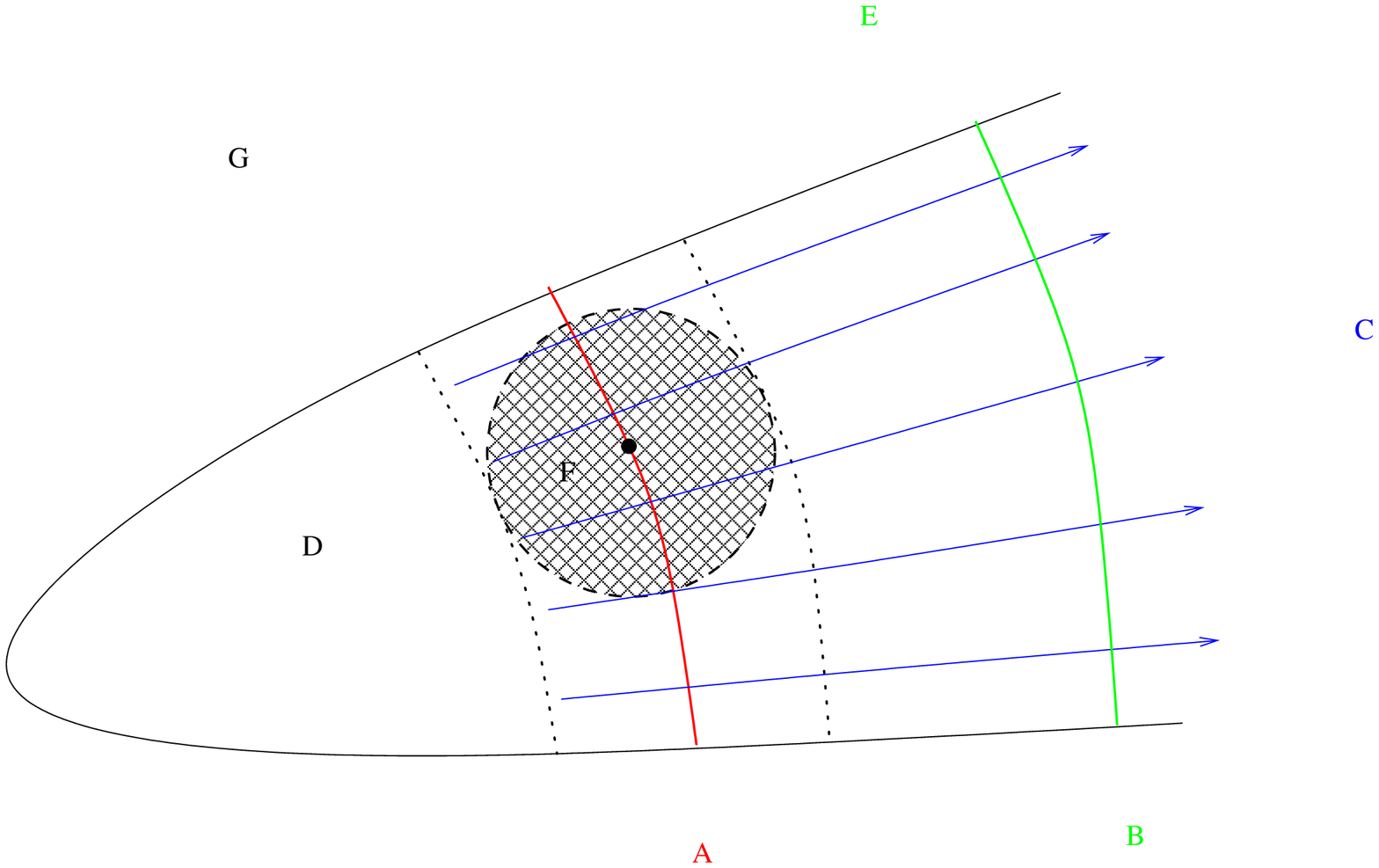}
    \end{center}
    \caption{The quadratic decay of curvature}
    \label{fig:flowline}
  \end{figure}

 In other words, the curvature is quadratically decaying. Since a Ricci shrinker can be regarded as an ancient Ricci flow, it follows from Shi's local estimates \cite{Shi89B} that 
\begin{align*}
|\na ^kRm|(y) \le \frac{C_k}{d^{k+2}(p,y)}
\end{align*}
for all $k=1,2,\cdots$. It follows immediately that any tangent cone at infinity must be smooth. Finally, the uniqueness follows from \cite[Theorem $2$]{CL15}, see also \cite[Lemma $A.3$]{KW15}.
\end{proof}

\begin{rem}
The proof of Corollary \ref{C501c} shows that the manifold $\{x\in M \mid\,f(x)  \ge a\}$ is diffeomorphic to $f^{-1}(a) \times [0,1)$.
\end{rem}

\section{Strong maximum principle for curvature operator}
\label{sec:L2}

The purpose of this section is to prove Theorem \ref{thm:D}.  We remind the readers that all constants $C$'s in this section depend only on the dimension $n$.

We first show an $L^2$-integral estimate of Riemannian curvature. 

\begin{theorem}
 Suppose $(M^n,p,g,f)$ is a Ricci shrinker satisfying $\boldsymbol{\mu} \geq -A$, and $\lambda$ is a positive number. 
 Then we have 
\begin{align}
  \int_{M} |Rm|^2e^{-\lambda f}\,dV \le I    \label{eqn:PK20_8}
\end{align}
for some $I=I(n,A,\lambda)<\infty$.  
\label{thm:PL21_1}
\end{theorem}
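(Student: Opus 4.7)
The plan is to decompose $M$ dyadically by level sets of $f$, establish a local $L^2$-curvature bound on each annular piece by combining the improved no-local-collapsing with a conformal reduction to a bounded Ricci setting (as in~\cite{LLW18}) and the $L^2$-regularity theorem of Jiang-Naber, and then sum the contributions against the Gaussian weight $e^{-\lambda f}$.

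First, I would write $M = \{f \leq 1\} \sqcup \bigsqcup_{k\geq 1} A_k$ with $A_k \coloneqq \{2^{k-1} \leq f < 2^{k}\}$. By Lemma~\ref{L100}, $A_k$ is contained in the annular shell $B(p, C 2^{k/2})\setminus B(p, c 2^{k/2})$, so Lemma~\ref{L101} gives $|A_k| \leq C e^{\boldsymbol{\mu}} 2^{kn/2}$, while on $A_k$ one has $e^{-\lambda f} \leq e^{-\lambda 2^{k-1}}$ and $R \leq f \leq 2^{k}$ by \eqref{E107}. Using the Lipschitz bound on $\sqrt{f}$ in Lemma~\ref{L100} to propagate these estimates to a neighborhood of $A_k$, the scale $\rho_k \coloneqq 2^{-k/2-1}$ satisfies $R\rho_k^{2} \leq C$ on every ball $B(q, 2\rho_k)$ centered at $q \in A_k$.

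Next, for any such $q$ the estimate~\eqref{eqn:PL05_2b} with $r = d(p,q) \sim 2^{k/2}$, combined with Lemma~\ref{lem:unitvol} and $\boldsymbol{\mu} \geq -A$, yields the uniform non-collapsing $|B(q, \rho_k)| \geq c(n,A) \rho_k^{n}$. Rescaling $\tilde g \coloneqq \rho_k^{-2}g$, the ball $B^{\tilde g}(q, 1)$ has scalar curvature bounded by a universal constant and volume bounded below by $c(n,A)$. Following~\cite{LLW18, ZhangZL}, I would apply a local conformal transformation to produce, on this rescaled ball, an auxiliary metric with $|\text{Ric}| \leq C(n,A)$ while preserving uniform non-collapsing, whereupon the $L^{2}$-curvature estimate of Jiang-Naber~\cite{JN16} gives $\int_{B^{\tilde g}(q, 1/2)} |Rm|^{2}_{\tilde g}\, dV_{\tilde g} \leq C(n,A)$. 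Unwinding the conformal change and the rescaling delivers the local bound
\begin{align*}
\int_{B(q, \rho_k/2)} |Rm|^{2} \, dV \leq C(n,A) \rho_k^{n-4}.
\end{align*}

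Finally, by Vitali covering and the non-collapsing above, $A_k$ can be covered by at most $C(n,A)\, 2^{kn}$ balls of radius $\rho_k/2$, so that $\int_{A_k} |Rm|^{2} \, dV \leq C(n,A)\, 2^{k(n+4)/2}$. Multiplying by the Gaussian factor,
\begin{align*}
\int_M |Rm|^{2} e^{-\lambda f}\, dV \leq C(n,A) + C(n,A) \sum_{k\geq 1} 2^{k(n+4)/2} e^{-\lambda 2^{k-1}} \leq I(n,A,\lambda),
\end{align*}
where the series converges because the doubly-exponential decay dominates any polynomial growth in $k$. The principal obstacle is the local $L^{2}$-curvature estimate on each $B^{\tilde g}(q,1)$: one has only a scalar curvature bound (not a Ricci bound), so the conformal reduction of~\cite{LLW18} is indispensable to enter the Cheeger-Colding-Naber regime in which the Jiang-Naber theorem applies, and care is needed to track how the conformal factor distorts the $L^{2}$ integrand when transferring the estimate back to the original metric.
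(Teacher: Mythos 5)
Your proposal follows essentially the same route as the paper: a conformal reduction to a bounded-Ricci, non-collapsed ball at the scale $\rho \sim d(p,q)^{-1}$, Jiang--Naber's $L^2$-curvature theorem, a Vitali covering of each dyadic shell, and summation against the Gaussian weight; decomposing by level sets of $f$ rather than by geodesic annuli is immaterial thanks to Lemma~\ref{L100}. The one step you acknowledge but do not carry out is the transfer of the $L^2$ bound back to $g$: the conformal identity \eqref{eqn:con2} gives $|Rm|^2 \le C\left(|\overline{Rm}|^2 + |\nabla f|^4 + |Rc|^2\right)$, and while $|\nabla f|^4 \le f^2$ is harmless (polynomial in $2^{k}$), the $|Rc|^2$ term cannot be absorbed pointwise, so your claimed local bound $\int_{B(q,\rho_k/2)}|Rm|^2\,dV \le C\rho_k^{n-4}$ does not follow from Jiang--Naber alone. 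The paper supplies the missing ingredient as Lemma~\ref{lma:PL21_0}: pairing $Rc=\tfrac{g}{2}-\text{Hess}\,f$ against $\eta^2 Rc\, e^{-f}$ and using $\text{div}(Rc\,e^{-f})=0$ yields $\int_{A(D,2D)}|Rc|^2e^{-f}\,dV \le Ce^{\boldsymbol{\mu}}D^{n+2}e^{-D^2/5}$, which is then added to the Jiang--Naber contribution on each shell before summing. With that weighted $L^2$-Ricci estimate inserted, your argument closes and the dyadic sum converges exactly as you describe.
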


Theorem~\ref{thm:PL21_1} is the consequence of the improved no-local-collapsing theorem (i.e., Theorem~\ref{thm:B}), the local conformal transformation technique (cf. section 3 of~\cite{LLW18}), and the curvature estimate
of Jiang-Naber (i.e., ~\cite{JN16}).

\begin{lemma} 
For any Ricci shrinker $(M^n,p,g,f)$ and any constant $D >100n$, we have
\begin{align}
\int_{A(D,2D)} |Rc|^2e^{-f}\,dV \le   Ce^{\boldsymbol{\mu}}D^{n+2}e^{-D^2/5}       \label{eqn:PL21_1}
\end{align}
where $A(D, 2D)$ is the annulus $B(p,2D) \backslash B(p, D)$.  
\label{lma:PL21_0}
\end{lemma}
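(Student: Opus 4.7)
The plan is to combine two well-known Ricci-shrinker identities to set up a weighted integration by parts, and then to choose a cutoff whose support is confined to a region where $f$ is comparable to $D^2$. First, contracting the shrinker equation $Rc + \text{Hess}\,f = g/2$ with $Rc$ gives the pointwise identity
$$|Rc|^2 = \frac{R}{2} - \langle Rc,\text{Hess}\,f\rangle.$$
Second, the contracted second Bianchi identity $\nabla_i R^{ij} = \tfrac12\nabla^j R$ together with the relation $\nabla^j R = 2 R^{ij}\nabla_i f$ (obtained by differentiating $R+|\nabla f|^2 = f$ and substituting the shrinker equation) yields the divergence identity
$$\nabla_i(e^{-f}R^{ij}) = 0.$$

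The strategy is then as follows. For any smooth compactly supported $\phi$ on $M$, integration by parts (using the divergence identity to cancel the unwanted $\nabla_i R^{ij}$ term) gives
$$\int \phi^2 \langle Rc,\text{Hess}\,f\rangle\, e^{-f}\, dV = -2\int \phi\,\langle\nabla\phi, Rc(\nabla f)\rangle\, e^{-f}\, dV = -\int \phi\,\langle\nabla\phi,\nabla R\rangle\, e^{-f}\, dV,$$
where the second equality uses $Rc(\nabla f) = \tfrac12\nabla R$. Combining with the pointwise identity and applying Cauchy--Schwarz via $|\nabla R|\le 2|Rc||\nabla f|$ and absorption produces
$$\int \phi^2 |Rc|^2 e^{-f}\, dV \le \int \phi^2 R\, e^{-f}\, dV + 4\int |\nabla\phi|^2|\nabla f|^2 e^{-f}\, dV.$$
I would then take $\phi = \psi(\sqrt f)$ where $\psi\in C^\infty_c(\mathbb{R})$ is a standard bump equal to one on a short interval containing $[(D-5n-4)/2,\,(2D+\sqrt{2n})/2]$ and vanishing outside a slightly larger interval, with $|\psi'|\le C$. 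By Lemma \ref{L100} this forces $\phi\equiv 1$ on $A(D,2D)$. Since $|\nabla f|\le\sqrt f$, the chain rule gives $|\nabla\phi|\le|\psi'(\sqrt f)|/2\le C$. Using $R\le f\le CD^2$ and $|\nabla f|^2\le f\le CD^2$ on $\operatorname{supp}(\phi)\subset B(p,3D)$, together with $|B(p,3D)|\le Ce^{\boldsymbol{\mu}} D^n$ from Lemma \ref{L101} and the pointwise bound $e^{-f}\le e^{-D^2/5}$ on $\operatorname{supp}(\phi)$, both integrals on the right are bounded by $Ce^{\boldsymbol{\mu}} D^{n+2}e^{-D^2/5}$, which is the desired estimate.

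The delicate step is calibrating the cutoff: one must simultaneously arrange that $\phi \equiv 1$ on all of $A(D,2D)$ (whose $f$-values range over $[(D-5n-4)^2/4,\,(2D+\sqrt{2n})^2/4]$ by Lemma \ref{L100}) while keeping $\operatorname{supp}(\phi)$ entirely inside $\{f\ge D^2/5\}$ so that the Gaussian factor $e^{-f}$ provides the decay $e^{-D^2/5}$. The hypothesis $D>100n$ is precisely what makes $(D-5n-4)^2/4$ exceed $D^2/5$ by a definite margin (comfortably larger than the $O(1)$ width of the $\sqrt f$-transition region), so that this choice is consistent. Everything else is a routine quantitative bookkeeping of volume and pointwise bounds.
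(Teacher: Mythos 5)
Your argument is correct and follows essentially the same route as the paper: the identity $|Rc|^2=\langle \tfrac{g}{2}-\mathrm{Hess}\,f,Rc\rangle$, the weighted divergence identity $\mathrm{div}(Rc\,e^{-f})=0$ to integrate by parts, and Cauchy--Schwarz absorption yielding $\int\phi^2|Rc|^2e^{-f}\le\int\phi^2R\,e^{-f}+4\int|\nabla\phi|^2|\nabla f|^2e^{-f}$ are exactly the paper's computation. The only difference is cosmetic: the paper takes the cutoff $\eta=\psi(d(p,\cdot)/D)$ supported in $A(D/2,3D)$, whereas you take $\phi=\psi(\sqrt f)$ calibrated via Lemma~\ref{L100} so that $\mathrm{supp}\,\phi\subset\{f\ge D^2/5\}$ --- a choice that is smooth and in fact delivers the stated exponent $e^{-D^2/5}$ more directly.
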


\begin{proof}
Fix a cutoff function $\psi$ on $\R$ such that $\psi=1$ on $[1,2]$ and $\psi=0$ outside $[\frac{1}{2},3]$. By defining $\eta(x)=\psi(\frac{d(p,x)}{D})$, we compute
\begin{align*} 
\int \eta^2|Rc|^2e^{-f} \,dV=&\int \eta^2\la \frac{g}{2}-\text{Hess}\,f,Rc \ra e^{-f} \,dV \\
=& \int \lc \frac{1}{2}\eta^2 R+2\eta Rc(\na \eta,\na f) \rc e^{-f} \,dV \\
\le & \int \lc \frac{1}{2}\eta^2 R+\frac{1}{2}\eta^2|Rc|^2+2|\na \eta|^2|\na f|^2 \rc e^{-f} \,dV 
\end{align*}
where for the second line we have used $\text{div}(Rc\,e^{-f})=0$. Consequently, by Lemma~\ref{L100} and Lemma~\ref{L101}, we have
\begin{align*} 
\int \eta^2|Rc|^2e^{-f} \,dV \le& \int \lc\eta^2 R+4|\na \eta|^2|\na f|^2\rc e^{-f} \,dV 
\le C\int_{A(D/2,3D)} fe^{-f}\,dV. 
\end{align*}
Plugging the estimates in Lemma~\ref{L100} and Lemma~\ref{L101} into the above inequality, we arrive at (\ref{eqn:PL21_1}). 
\end{proof}

In the proof of Lemma~\ref{lma:PL21_0}, if we choose $\psi$ such that $\psi=1$ on $(-\infty,1]$ and $\psi=0$ on $[2,\infty)$,  then a similar argument shows the following Lemma.
\begin{lemma} 
For any Ricci shrinker $(M^n,p,g,f)$, we have
\begin{align}
\int |Rc|^2e^{-f} \,dV \le Ce^{\boldsymbol{\mu}}.
\label{eqn:PL21_2}
\end{align} 
\label{lma:PL21_1}
\end{lemma}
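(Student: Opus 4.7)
The plan is to essentially repeat the argument of Lemma~\ref{lma:PL21_0} with a one-sided cutoff and then send the cutoff scale to infinity. Fix a smooth function $\psi\colon\R \to [0,1]$ with $\psi \equiv 1$ on $(-\infty,1]$, $\psi \equiv 0$ on $[2,\infty)$, and $|\psi'| \le C$. For $D > 100n$, set $\eta(x)\coloneqq \psi\!\left(\tfrac{d(p,x)}{D}\right)$, so that $\eta \equiv 1$ on $B(p,D)$, is supported in $B(p,2D)$, and satisfies $|\nabla \eta| \le C/D$.

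First, I would run the exact computation of Lemma~\ref{lma:PL21_0}: use the Ricci shrinker equation $Rc+\text{Hess}\,f=\tfrac{1}{2}g$ to write $|Rc|^2 = \langle \tfrac{g}{2}-\text{Hess}\,f, Rc\rangle$, integrate by parts against $\eta^2 e^{-f}$ exploiting $\operatorname{div}(Rc\,e^{-f})=0$, and use $2\eta\,Rc(\nabla\eta,\nabla f)\le \tfrac{1}{2}\eta^2|Rc|^2+2|\nabla\eta|^2|\nabla f|^2$ to absorb the curvature term. This yields
\begin{align*}
\int \eta^2|Rc|^2 e^{-f}\,dV \;\le\; \int \eta^2 R\, e^{-f}\,dV \;+\; 4\int |\nabla\eta|^2|\nabla f|^2 e^{-f}\,dV.
\end{align*}

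Next, I would estimate both right-hand terms using $R\le f$ and $|\nabla f|^2\le f$ (both from \eqref{E101}) to reduce everything to controlling $\int f e^{-f}\,dV$. Lemma~\ref{L100} gives $f \le \tfrac{1}{4}(d(p,\cdot)+\sqrt{2n})^2$ and Lemma~\ref{L101} gives $|B(p,r)| \le Ce^{\boldsymbol{\mu}} r^n$ for $r \ge 2\sqrt{n}$; dyadically decomposing $M$ into annuli $A(2^k,2^{k+1})$ makes the Gaussian tail $e^{-f}$ dominate the polynomial factors $f\cdot \operatorname{vol}$, giving the uniform bound $\int f e^{-f}\,dV \le Ce^{\boldsymbol{\mu}}$. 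The first term is then immediately bounded by $Ce^{\boldsymbol{\mu}}$, while the second is controlled by $\tfrac{C}{D^2}\int_{A(D/2,2D)} f e^{-f}\,dV$, which even goes to $0$ as $D\to\infty$ (the tail estimate used in Lemma~\ref{lma:PL21_0} gives it is $\le Ce^{\boldsymbol{\mu}}D^n e^{-D^2/5}$).

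Finally, since $\eta \equiv 1$ on $B(p,D)$, monotone convergence as $D\to\infty$ upgrades the inequality on $B(p,D)$ to the global statement $\int |Rc|^2 e^{-f}\,dV \le Ce^{\boldsymbol{\mu}}$. There is no real obstacle here; the only point requiring a line of justification is the bound $\int f e^{-f}\,dV \le Ce^{\boldsymbol{\mu}}$, which is immediate from the quadratic growth of $f$ and the volume growth estimate already established.
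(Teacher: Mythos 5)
Your proposal is correct and is exactly the argument the paper intends: the paper states that the proof is obtained from Lemma~\ref{lma:PL21_0} by replacing the annular cutoff with the one-sided cutoff $\psi\equiv 1$ on $(-\infty,1]$, $\psi\equiv 0$ on $[2,\infty)$, which is precisely what you do. The integration by parts via $\operatorname{div}(Rc\,e^{-f})=0$, the absorption of the cross term, the reduction to $\int f e^{-f}\,dV\le Ce^{\boldsymbol{\mu}}$ via Lemmas~\ref{L100} and~\ref{L101}, and the limit $D\to\infty$ all match the paper's route.
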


The details of the proof of Lemma~\ref{lma:PL21_1} is almost identical to that of Lemma~\ref{lma:PL21_0}. So we leave it to interested readers. 
Note that  Lemma~\ref{lma:PL21_1} provides an explicit upper bound of~\cite[Theorem 1.1]{MS13}. 
Starting from Lemma~\ref{lma:PL21_0} and Lemma~\ref{lma:PL21_1}, we are ready to prove Theorem~\ref{thm:PL21_1}.

\begin{proof}[Proof of Theorem~\ref{thm:PL21_1}:]

 We only prove the case when $\lambda=1$.
 The general case is similar and is left to interested readers. 
 
 For any point $q \in M$ such that $d(p,q)=D>100n$, we set $r=\frac{1}{D}$, $\bar{f}=f-f(q)$, then under the conformal transformation $\bar{g}\coloneqq e^{-\frac{2\bar{f}}{n-2}}g$, we have
\begin{align}
    \overline{Rc}&=\frac{1}{n-2} \left\{ df \otimes df + (n-1-f) e^{\frac{2\bar{f}}{n-2}} \bar{g} \right\},  \label{eqn:con1} \\
\overline{Rm}&=e^{-\frac{2\bar{f}}{n-2}}\left[Rm+\frac{1}{n-2}\left(\frac{df \otimes df}{n-2}+\frac{g}{2}\lc1-\frac{|\nabla f|^2}{n-2}\rc-\frac{Rc}{n-2}\right)\KN g \right],\label{eqn:con2}
\end{align}
where the proof and the definition of the Kulkarni-Nomizu product $\KN$ can be found in \cite[Theorem 1.165]{Besse}. 
It follows from \cite[Lemma $3.5$]{LLW18} that
\begin{align}
   B_{\bar{g}}\left(q,   e^{-\frac{1}{n-2}} r \right) \subset B(q, r) \subset B_{\bar{g}}\left(q,  e^{\frac{1}{n-2}}r \right). \label{eqn:PE18_4}
\end{align}
Therefore, by the same proof as in \cite[Lemma $3.7$]{LLW18}, we have
  \begin{align}
   |\bar{f}| \le C \quad \text{and} \quad  \snorm{\overline{Rc}}{\bar{g}} \le CD^2 \quad \textrm{on} \quad B_{\bar{g}}\left(q, e^{\frac{1}{n-2}}r \right).  \label{eqn:PE06_7}
  \end{align}
Since $R \le CD^2$ on $B(q,r)$, it follows from Theorem \ref{thm:PL06_2} that $|B(q,r)| \ge Ce^{\boldsymbol{\mu}}r^n$ and hence
  \begin{align}
\left|B_{\bar{g}}\left(q,  e^{\frac{1}{n-2}}r \right)\right |_{\bar{g}} \ge Ce^{\boldsymbol{\mu}}r^n.   \label{eqn:PL21_3}
  \end{align}
One can also use Theorem~\ref{thm:B} to obtain the above estimate directly. 

By defining $\tilde{g}\coloneqq r^{-2}\bar{g}$, we have $|\widetilde{Rc}|_{\tilde{g}} \le C$ on $B_{\tilde{g}}(q,e^{\frac{1}{n-2}})$ and $|B_{\tilde{g}}(q,e^{\frac{1}{n-2}})|_{\tilde{g}} \ge Ce^{\boldsymbol{\mu}}$.
By shrinking balls to its half size if necessary,  it follows from~\cite[Theorem $1.6$]{JN16} that
\begin{align}
r^{4-n}\int_{B_{\bar{g}}(q,e^{\frac{1}{n-2}}r)} |\overline{Rm}|^2 \,dV_{\bar g}=\int_{B_{\tilde{g}}(q,e^{\frac{1}{n-2}})} |\widetilde{Rm}|^2 \,dV_{\tilde g} \le I_0 \label{eq:int1}
 \end{align}
for some constant $I_0=I_0(n,A)$.

From \eqref{eqn:con2}, we have on $B(q,r)$,
  \begin{align*}
|Rm|^2 \le C\lc |\overline{Rm}|^2+|\na f|^4+|Rc|^2 \rc \le C\lc |\overline{Rm}|^2+f^2+|Rc|^2 \rc.
  \end{align*}
Therefore, we have
 \begin{align*}
&\quad \int_{B(q,r)} |Rm|^2 e^{-f}\,dV\\
&\leq C \lc \int_{B_{\bar{g}}(q,e^{\frac{1}{n-2}}r)} |\overline{Rm}|^2 e^{-f}\,dV_{\bar g}+\int_{B(q,r)} f^2 e^{-f}\,dV+\int_{B(q,r)} |Rc|^2e^{-f}\,dV   \rc \\
&\leq Ce^{-\frac{D^2}{5}} \left(D^{4-n}I_0+D^{n+2}e^{\boldsymbol{\mu}} \right)
  \end{align*}
where we have used Lemma \ref{lma:PL21_0} and \eqref{eq:int1}.
Consequently,  there exists $I_1=I_1(n,A)$ such that
\begin{align}
\int_{B(q,r)} |Rm|^2 e^{-f}\,dV \le I_1D^{n+2}e^{-\frac{D^2}{5}} \label{eq:int2}.
 \end{align}

For any constant $D >100n$, we apply Vitali's lemma for the covering $\{B(q,\frac{1}{4D})\}_{q\in A(D,2D)}$. 
If we assume that $\{B(q_i,\frac{1}{4D})\}_{1 \le i \le k}$ is a maximal collection of mutually disjoint sets, 
then $\{B(q_i,\frac{1}{2D})\}_{1 \le i \le k}$ cover $A(D,2D)$.
It is clear from definition that
  \begin{align*}
\sum_{i=1}^k \left|B\left(q_i,\frac{1}{4D} \right) \right| \le |A(D,2D)| \le |B(p,2D)|.
  \end{align*}
By Lemma \ref{L101} and (\ref{eqn:PL21_3}), we obtain $k \le CD^{2n}$.
Combining \eqref{eq:int2} with the above inequality implies that
  \begin{align}
\int_{A(D,2D)} |Rm|^2 e^{-f}\,dV \le \sum_{i=1}^k \int_{B(q_i,\frac{1}{2D})} |Rm|^2 e^{-f}\,dV \le kI_1D^{n+2}e^{-\frac{D^2}{5}} \le CI_1D^{3n+2}e^{-\frac{D^2}{5}}.\label{eq:int3}
  \end{align}
Similarly, by exploiting Lemma~\ref{lma:PL21_1}, we have 
  \begin{align}
\int_{B(p,D_0)} |Rm|^2 e^{-f}\,dV \le I_2\label{eq:int4}
  \end{align}
where $D_0=100n$ and $I_2=I_2(n,A)$.

Now we set $D_i=2^i D_0$ and decompose the integral as
\begin{align*}
\int_{M} |Rm|^2 e^{-f}\,dV=\int_{B(p,D_0)} |Rm|^2 e^{-f}\,dV+\sum_{i \ge 0}\int_{A(D_i,2D_i)} |Rm|^2 e^{-f}\,dV
\end{align*}
Plugging (\ref{eq:int3}) and (\ref{eq:int4}) into the above equation, we arrive at  
  \begin{align*}
   \int_{M} |Rm|^2 e^{-f}\,dV \le & I_2+\sum_{i \ge 0}CI_1D_i^{3n+2}e^{-\frac{D_i^2}{5}}=I_2+ CI_1 \sum_{i \ge 0} 2^{i(3n+2)}D_0^{3n+2}e^{-\frac{4^i D_0^2}{5}} \coloneqq I.
  \end{align*}
Since both $I_1$ and $I_2$ depend only on $n$ and $A$, it is clear that $I$ relies only on $n$ and $A$ and we arrive at (\ref{eqn:PK20_8}). 
The proof of Theorem~\ref{thm:PL21_1} is complete. 
\end{proof}

From (\ref{eqn:PK20_8}) and \cite[Theorem $1.2$]{MS13}, a direct corollary of Theorem~\ref{thm:PL21_1} is the following estimate. 

\begin{cor}
For any Ricci shrinker $(M^n, g, f)\in \mathcal M_n(A)$, there exists a constant $I=I(n,A)<\infty$ such that
  \begin{align*}
\int |\na Rc|^2 e^{-f}\,dV = \int |div(Rm)|^2 e^{-f}\,dV \le I.
  \end{align*}
\end{cor}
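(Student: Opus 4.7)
The plan is to combine the weighted $L^2$-Riemann bound of Theorem~\ref{thm:PL21_1} with Munteanu-Sesum's Theorem~1.2 in~\cite{MS13}. The latter establishes the identity $\int |\nabla Rc|^2 e^{-f}\,dV = \int |\text{div}(Rm)|^2 e^{-f}\,dV$ on any complete Ricci shrinker, together with a quantitative control of this quantity in terms of the $L^2$-Riemann integral. Once both ingredients are in hand, the corollary is immediate.

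To see the identity, I would start from the second Bianchi identity $\text{div}(Rm)_{jkl} = \nabla_k R_{jl} - \nabla_l R_{jk}$. Expanding and using symmetries yields
\begin{align*}
|\text{div}(Rm)|^2 = 2|\nabla Rc|^2 - 2(\nabla_k R_{jl})(\nabla_l R_{jk}).
\end{align*}
The cross term is integrated against $e^{-f}\,dV$ and one derivative is shifted via integration by parts. Using the shrinker equation $\nabla_i\nabla_j f = \tfrac{1}{2}g_{ij} - R_{ij}$ together with the contracted Bianchi identity $\nabla_k R_{jk} = \tfrac{1}{2}\nabla_j R = R_{jk}\nabla_k f$ (valid on shrinkers since $R+|\nabla f|^2=f$), the cross term collapses back to $\int|\nabla Rc|^2 e^{-f}\,dV$, producing the claimed equality. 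A rearrangement of the same calculation bounds both integrals by a dimensional multiple of $\int(|Rm|^2+|Rc|^2)e^{-f}\,dV$.

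Plugging in the estimates already available, Theorem~\ref{thm:PL21_1} gives $\int|Rm|^2 e^{-f}\,dV \le I(n,A)$, and Lemma~\ref{lma:PL21_1} gives $\int|Rc|^2 e^{-f}\,dV \le Ce^{\boldsymbol{\mu}} \le C$ (recall $e^{\boldsymbol{\mu}}$ is comparable to $|B(p,1)|$, which is uniformly bounded above by Lemma~\ref{L101}). This yields the required bound $\int|\nabla Rc|^2 e^{-f}\,dV \le I(n,A)$. The main technical obstacle in carrying this out from scratch is justifying the integration by parts on a non-compact manifold without any pointwise curvature bound; this is handled by inserting the cutoff $\phi^r$ from Lemma~\ref{lem:cutoff}, controlling all cutoff-error and boundary terms via the quadratic growth of $f$ (Lemma~\ref{L100}), the volume estimate (Lemma~\ref{L101}), and the $L^2$-curvature bound from Theorem~\ref{thm:PL21_1}, and then passing to the limit $r\to\infty$ by dominated convergence, exactly as in the rest of the paper.
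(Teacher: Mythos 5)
Your proposal is correct and follows the same route as the paper: the paper simply invokes Theorem~1.2 of Munteanu--Sesum together with the weighted $L^2$-curvature bound of Theorem~\ref{thm:PL21_1}, exactly as you do. The extra detail you supply on how the Bianchi-identity computation and the cutoff-justified integration by parts would go is consistent with the cited result and with the techniques used elsewhere in the paper.
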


Theorem~\ref{thm:PL21_1} is an important step for verifying maximum principle on curvature operators. 
The curvature operator on two-forms are defined as $\mathcal{R}: \Lambda^2 \to \Lambda^2: \mathcal R(e^i \wedge e^j,e^k \wedge e^l)=R_{ijkl}$. The two-form $e^i \wedge e^j \coloneqq e^i \otimes e^j-e^j\otimes e^i$ and the inner product on $\Lambda^2$ is defined as $\la A,B\ra \coloneqq -\frac{1}{2}tr(AB)$ for $A,B\in \Lambda^2=\mathfrak{so}(n)$. In other words, for $w=\frac{1}{2}\sum_{i,j}w_{ij}e^i \wedge e^j$, we have
\begin{align*}
\mathcal R(w)_{ij}=\frac{1}{2}R_{ijkl}w_{kl}.
\end{align*}
In the setting of Ricci shrinker $(M^n,g,f)$, the following equation (see ~\cite{Ham86}) holds:
\begin{align*}
\Delta_f \mathcal R=\mathcal R-2Q(\mathcal R).
\end{align*}
Here $Q(\mathcal R) \coloneqq \mathcal R^2+\mathcal R^{\#}$ and $\mathcal R^{\#}$ is defined as
\begin{align*}
\mathcal R^{\#}(u,v)=-\frac{1}{2}tr(ad_u \,\mathcal R\, ad_v\, \mathcal R)
\end{align*}
for any $u,v \in \Lambda^2$. If we choose an orthonormal basis $\{\phi_i\}$ of $\Lambda^2$, then
\begin{align*}
\mathcal R^{\#}(u,v)=-\frac{1}{2}\sum_{i,j}\la [\mathcal R(\phi_i),\phi_j],u \ra \la [\mathcal R(\phi_j),\phi_i],v \ra.
\end{align*}

If we assume $\lambda_1 \le \lambda_2 \le \cdots$ are all eigenvalues of $\mathcal R$ on $\Lambda^2$, then we have the following rigidity theorem.

\begin{thm}
\label{T:rigidity}
There exists a constant $\ep=\ep(n)>0$ such that for any Ricci shrinkers $(M^n,g,f)$, if $\lambda_2 \ge -\ep \dfrac{\lambda_1^2}{|R-2\lambda_1|}$, then $\lambda_1 \geq 0$.
Consequently, $(M^n,g)$ is isometric to a quotient of $N^k \times \R^{n-k}$ for some $0 \le k \le n$, where $N^k$ is a closed symmetric space.
\end{thm}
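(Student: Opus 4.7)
The plan is to establish Theorem~\ref{T:rigidity} in three stages, with the bulk of the work occurring in an algebraic eigenvalue calculation.

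First, I would use Theorem~\ref{thm:PL21_1} to record $\int_M |Rm|^2 e^{-\lambda f}\,dV < \infty$ for all $\lambda>0$. Combined with the maximum principle of Theorem~\ref{T101} and the cutoff estimates of Lemma~\ref{lem:cutoff}, this puts us in the functional-analytic regime in which the noncompact Hamilton-type strong maximum principle applies to the elliptic PDE
\begin{align*}
\Delta_f \mathcal R = \mathcal R - 2Q(\mathcal R).
\end{align*}
Concretely, the minimum eigenvalue $\lambda_1$ of $\mathcal R$ satisfies $\Delta_f \lambda_1 \le \lambda_1 - 2Q(\mathcal R)_{11}$ in the viscosity sense, and integration by parts against $-\lambda_1$ times a weighted cutoff $(\phi^r)^2 e^{-f}$ is legitimate by the $L^2$-integrability above.

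The heart of the argument, carried out in stage two, is to show that the hypothesis forces $\lambda_1 \ge 0$. Fix an orthonormal eigenbasis $\{\phi_i\}$ of $\mathcal R$ with $\mathcal R\phi_i = \lambda_i\phi_i$, and let $c^k_{ij} = \langle [\phi_i,\phi_j],\phi_k\rangle$ denote the structure constants of $\mathfrak{so}(n)$. A direct computation gives
\begin{align*}
Q(\mathcal R)_{11} = \lambda_1^2 + \tfrac{1}{2}\sum_{i,j}(c^1_{ij})^2 \lambda_i \lambda_j.
\end{align*}
Splitting off the indices meeting $\{1\}$, and using that $\operatorname{tr}\mathcal R = R/2$ so that $\sum_{i\ge 2}\lambda_i = R/2 - \lambda_1$, the cross term takes the form $\lambda_1 \sum_{j\ge 2}\alpha_j \lambda_j$ with $\alpha_j \ge 0$ of total size comparable to $(R-2\lambda_1)$; the remaining piece $\tfrac12\sum_{i,j\ge 2}(c^1_{ij})^2\lambda_i\lambda_j$ is bounded below in terms of $\lambda_2(R-2\lambda_1)$. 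The hypothesis $\lambda_2 \ge -\varepsilon \lambda_1^2/|R-2\lambda_1|$ is precisely designed so that each such product $\lambda_2(R-2\lambda_1)$ contributes at worst $-\varepsilon\lambda_1^2$. Consequently
\begin{align*}
Q(\mathcal R)_{11} \le \lambda_1^2 + C\,\varepsilon\,\lambda_1^2
\end{align*}
for some $C=C(n)$. If $\lambda_1$ were negative at some point, then after a cutoff and localization argument of the type used in Theorem~\ref{T101}, the inequality $\Delta_f(-\lambda_1) \ge -\lambda_1 + 2Q(\mathcal R)_{11}$ at a negative minimum of $\lambda_1$ produces a right-hand side of the form $-\lambda_1(1 - 2\lambda_1 - 2C\varepsilon\lambda_1) > 0$ for $\varepsilon=\varepsilon(n)$ small, forcing a contradiction with the minimum principle.

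Once $\lambda_1 \ge 0$ is established, stage three applies the Petersen--Wylie strong maximum principle~\cite{PW10}, whose hypotheses are met thanks to the $L^2$-integral bound from Theorem~\ref{thm:PL21_1}, to conclude that the kernel of $\mathcal R$ is parallel and splits off a Euclidean factor, reducing $(M^n,g)$ to a quotient of $N^k \times \mathbb R^{n-k}$. Because the induced Ricci flow is self-similar and has nonnegative curvature operator, Hamilton's holonomy/classification argument identifies $N^k$ as a closed symmetric space. The main obstacle throughout is the algebraic bookkeeping in stage two: the precise denominator $|R-2\lambda_1|$ in the hypothesis is calibrated so that the trace-like sum arising from the Jacobi-identity relation $\sum_{i,j}(c^k_{ij})^2$ (essentially $2(n-2)$) cancels cleanly against the bound on $\lambda_2$, leaving only an $O(\varepsilon)\lambda_1^2$ error. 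A secondary but nontrivial difficulty is ensuring that the cutoff/integration-by-parts scheme of Section~3 is strong enough to substitute for Hamilton's ODE-on-sections technique, which is unavailable on the noncompact Ricci shrinker spacetime without curvature bounds at infinity.
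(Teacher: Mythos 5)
Your overall architecture matches the paper's: reduce to showing $\lambda_1\ge 0$, compute $2Q(\mathcal R)(\phi_1,\phi_1)=2\lambda_1^2+\sum_{i,j}C_{ij}^2\lambda_i\lambda_j$ with $C_{ij}=\la[\phi_i,\phi_j],\phi_1\ra$, exploit the trace constraint $\sum_i\lambda_i=R/2$ together with the hypothesis on $\lambda_2$, and then use the $L^2(e^{-f}dV)$ curvature bound of Theorem~\ref{thm:PL21_1} plus \cite{PW10} and \cite{MW17} for the rigidity. But your stage two has the key inequality pointing the wrong way. Since the evolution inequality is $\Delta_f\lambda_1\le\lambda_1-2Q(\mathcal R)_{11}$, what you must prove is a \emph{lower} bound $2Q(\mathcal R)_{11}\ge 0$; your displayed conclusion $Q(\mathcal R)_{11}\le\lambda_1^2+C\ep\lambda_1^2$ is an upper bound and gives no control on $\Delta_f\lambda_1$ (and the alleged cross term $\lambda_1\sum_{j\ge2}\alpha_j\lambda_j$ in fact vanishes, because $C_{1j}=\la[\phi_1,\phi_j],\phi_1\ra=0$ by cyclicity of the trace). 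The paper proves exactly the lower bound $P:=2\lambda_1^2+\sum_{i,j}C_{ij}^2\lambda_i\lambda_j\ge 0$, and this is where the real work is: using $|C_{ij}|\le2$, one minimizes over configurations with $s$ negative eigenvalues subject to the trace constraint, reduces to the quadratic inequality $\lambda_1^2+4s\lambda_2\bigl(\tfrac R2-\lambda_1-s\lambda_2\bigr)\ge0$, and obtains the explicit threshold $\lambda_2\ge\bigl(\tfrac R2-\lambda_1-\sqrt{(\tfrac R2-\lambda_1)^2+\lambda_1^2}\,\bigr)/(2s)$, whence $\ep=\tfrac{1}{(1+\sqrt2)(c_n-2)}$ works. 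Your heuristics (``comparable to $R-2\lambda_1$'', ``essentially $2(n-2)$'') are not enough to certify a dimensional $\ep$.

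The more serious gap is the concluding step. Arguing ``at a negative minimum of $\lambda_1$'' is not available: $M$ is noncompact, $\lambda_1$ need not attain its infimum, and Theorem~\ref{T101} is a parabolic $L^2$ maximum principle, not a device for producing interior minima; moreover $\Delta_f u\le u$ has the wrong sign of zeroth-order term for a naive pointwise minimum principle. The paper's route, and the reason Theorem~\ref{thm:PL21_1} is indispensable, is different: once $2Q(\mathcal R)_{11}\ge0$ one has the clean barrier inequality $\Delta_f\lambda_1\le\lambda_1$, and since $\lambda_1\in L^2(e^{-f}dV)$ one invokes \cite[Theorem $4.4$]{PW10} --- in essence, pair the inequality with the negative part of $\lambda_1$ times $(\phi^r)^2e^{-f}$, integrate by parts, and let $r\to\infty$ --- to conclude $\lambda_1\ge0$. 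You wrote down precisely this weighted pairing in your stage one; you should carry it through there rather than pivoting to a pointwise argument. With these corrections your stage three agrees with the paper, which cites \cite[Corollary $4$]{MW17} for the splitting and the identification of $N^k$.
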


\begin{proof}
It suffices to prove $\lambda_1 \ge 0$. Namely,  $(M^n,g)$ has nonnegative curvature operator. The further conclusion follows from~\cite[Corollary $4$]{MW17}.

We fix a point $q$ and assume that $\phi_1$ is an eigenvector of $\lambda_1$. Extending $\phi_1$ by parallel transport on a small neighborhood of $q$, we have
\begin{align*}
\Delta_f \mathcal R(\phi_1,\phi_1)=\mathcal R(\phi_1,\phi_1)-2Q(\mathcal R)(\phi_1,\phi_1).
\end{align*}
Therefore if we assume that $\phi_i$ are eigenvectors of $\lambda_i$, then in the barrier sense,
\begin{align}
\Delta_f \lambda_1 \le& \lambda_1-\lc2\lambda_1^2-\sum_{i,j}\la [\mathcal R(\phi_i),\phi_j],\phi_1 \ra\la [\mathcal R(\phi_j),\phi_i],\phi_1 \ra\rc \notag \\
=&\lambda_1-\lc 2\lambda_1^2+\sum_{i,j}C_{ij}^2 \lambda_i\lambda_j \rc \label{eq:rig1}
\end{align}
where $C_{i,j}=\la [\phi_i,\phi_j],\phi_1 \ra$. Notice that $C_{i,j}=0$ if $i=1$ or $j=1$.

We claim that $|C_{i,j}| \le 2$. Indeed, if we assume that $\phi_i,\phi_j$ and $\phi_1$ are represented by the antisymmetric matrices $A,B$ and $C$ respectively, then $C_{i,j}=-\frac{1}{2}tr((AB-BA)C)=-tr(ABC)$.
By choosing a basis such that $A_{2k-1,2k}=a_k=-A_{2k,2k-1}$ for $k \le [n/2]$ and $0$ otherwise, we have
\begin{align*}
|tr(ABC)|\le& \sum_{k,l}|a_k||B_{2k,l}C_{l,2k-1}-B_{2k-1,l}C_{l,2k}| \\
\le& \frac{1}{2}\sum_{k,l} (B^2_{2k,l}+C^2_{l,2k-1}+B^2_{2k-1,l}+C^2_{l,2k}) \\
\le& \frac{1}{2}(|B|^2+|C|^2)=2.
\end{align*}
Here we have used the fact that $|A|^2=|B|^2=|C|^2=2$.

Next we prove that if $\ep$ is properly chosen, then we have
\begin{align*}
  P \coloneqq 2\lambda_1^2+\sum_{i,j}C_{ij}^2 \lambda_i\lambda_j \ge 0. 
\end{align*}
 From the definition of $\lambda_i$, we notice that $\sum \lambda_i=R/2$. Therefore, we fix $\lambda_1$ and $\lambda_2$ and minimize $P$ under the restriction $\sum \lambda_i=R/2$. We can assume that $\lambda_2<0$, otherwise $P \ge 0$ from its definition. We also set $c_n=n(n-1)/2$ and assume that $\lambda_1 \le \lambda_2 \le \cdots \le \lambda_{s+1}$ are all eigenvalues smaller than $0$. Therefore,
\begin{align*}
P\ge P_1 \coloneqq 2\lambda_1^2+2\sum_{\substack{2\le i\le s+1 \\s+2 \le j \le c_n}}C_{ij}^2 \lambda_i\lambda_j.
\end{align*}
It is easy to show that $P_1$ is minimized when $\lambda_2=\lambda_3=\cdots=\lambda_{s+1}$ and $\lambda_{s+2}=\cdots=\lambda_{c_n}$. It follows that
\begin{align*}
\frac{P_1}{2}\ge& \lambda_1^2+\sum_{\substack{2\le i\le s+1 \\s+2 \le j \le c_n}}\frac{1}{c_n-s-1}C^2_{i,j}\lambda_2(R/2-\lambda_1-s\lambda_2) \\
\ge&\lambda_1^2+4s\lambda_2(R/2-\lambda_1-s\lambda_2).
\end{align*}
By solving the above quadratic inequality, we obtain that $P_1$ and hence $P$ are nonnegative if
\begin{align*}
\lambda_2 \ge \frac{\frac{R}{2}-\lambda_1-\sqrt{(\frac{R}{2}-\lambda_1)^2+\lambda_1^2}}{2s}.
\end{align*}
If we choose $\ep=\frac{1}{(1+\sqrt 2)(c_n-2)}$, then it is clear that for any $1\le s \le c_n-2$,
\begin{align*}
\lambda_2 \ge -\ep \frac{\lambda_1^2}{R-2\lambda_1} \ge \frac{\frac{R}{2}-\lambda_1-\sqrt{(\frac{R}{2}-\lambda_1)^2+\lambda_1^2}}{2(c_n-2)}\ge \frac{\frac{R}{2}-\lambda_1-\sqrt{(\frac{R}{2}-\lambda_1)^2+\lambda_1^2}}{2s}.
\end{align*}
Therefore, from \eqref{eq:rig1} we obtain $\Delta_f \lambda_1 \le \lambda_1$. Since $\lambda_1 \in L^2(e^{-f}\,dV)$ by (\ref{eqn:PK20_8}), then it follows from \cite[Theorem $4.4$]{PW10} that $\lambda_1 \ge 0$.
\end{proof}

We conclude this section by the proof of Theorem \ref{thm:D}.

\emph{Proof of Theorem \ref{thm:D}}: Since $\lambda_2 \geq 0$, we can apply Theorem~\ref{T:rigidity} to obtain $\lambda_1 \geq 0$.
Therefore, $M^n$ is a finite quotient of $N^k \times \R^{n-k}$. Note that only the case $k=n$ is possible. For otherwise the second smallest eigenvalue must be $0$. 
Since $N^n$ is a compact Einstein manifold such that the curvature operator is $2$-positive, it follows from \cite{BW08} that its universal covering must be $S^n$.

  

\vskip10pt

Yu Li, Department of Mathematics, Stony Brook University, Stony Brook, NY 11794, USA; yu.li.4@stonybrook.edu.\\

Bing Wang, School of Mathematical Sciences, University of Science and Technology of China, No. 96 Jinzhai Road, Hefei, Anhui Province, 230026, China; Department of Mathematics, University of Wisconsin-Madison, Madison, WI 53706, USA; bwang@math.wisc.edu.\\


\begin{thebibliography}{99}

  \bibitem{Bak94} D. Bakry, \emph{L'hypercontractivit\'e et son utilisation en th\'eorie des semigroupes}, Lectures on probability theory, volume 1581 of Lecture Notes in Math., pages 1-114. Springer, Berlin, 1994.

  \bibitem{BE85} D. Bakry, M. \'Emery, \emph{Diffusions hypercontractives}, S\'eminaire de Probabilit\'es, XIX 1983/84, pp 177-206.

  \bibitem{BZ17} R. H. Bamler, Q. S. Zhang, \emph{Heat kernel and curvature bounds in Ricci flows with bounded scalar curvature}, Adv.  Math., 319(Oct 15, 2017), 396-450.

  \bibitem{Besse} A. L. Besse, \emph{Einstein manifolds}, classics in Mathematics, Springer-Verlag, Berlin, 2008.    

  \bibitem{BW08} C. B\"ohm, B. Wilking, \emph{Manifolds with positive curvature operator are space forms}, Ann. of Math. 167(2008), 1079-1097.

  \bibitem{Cao09} H. D. Cao, \emph{Recent progress on Ricci solitons}, Adv. Lect. Math., 11 (2009), 1-38.

  \bibitem{Cao092} H. D. Cao, \emph{Geometry of complete gradient shrinking Ricci solitons}, arXiv:0903.3927v1.  
  
  \bibitem{CCZ08} H. D. Cao, B. L. Chen, X. P. Zhu, \emph{Recent developments on Hamilton’s Ricci flow}, Surveys in differential geometry, Vol. XII, 47–112, Surv. Differ. Geom., XII, Int. Press, Somerville, MA, 2008.

  \bibitem{CZ10} H. D. Cao, D. Zhou, \emph{On complete gradient shrinking Ricci solitons}, J. Differ. Geom.,  85(2010),  n. 2, 175-186.

  \bibitem{CN09} J. Carrillo, L. Ni, \emph{Sharp logarithmic Sobolev inequalities on gradient solitons and applications}, Comm. Anal. Geom., 17(4):721-753, 2009.

  \bibitem{CTY} A. Chau, L. F. Tam, C. J. Yu, \emph{Pseudolocality for the Ricci flow and applications}, Canad. J. Math., Vol. 63(1), 2011, 55-85.	
  
  \bibitem{CC97} J. Cheeger,  T. H. Colding, \emph{On the structure of spaces with Ricci curvature bounded below. I},  J. Diff. Geom. 46(1997), no. 3, 406-480.
  
  \bibitem{CGT}  J. Cheeger, M. Gromov, M. Taylor, \emph{Finite propagation speed, kernel estimates for functions of the Laplace operator, and the geometry of complete Riemannian manifolds}, J. Diff. Geom., 17 (1982), no.1, 15-53.
  
 \bibitem{CN2} J. Cheeger,  A. Naber, \emph{Regularity of Einstein manifolds and the codimension 4 conjecture}, Ann. Math. 182 (2015),  no.3, 1093-1165.
  
 \bibitem{CBL07} B. L. Chen, \emph{Strong uniqueness of the Ricci flow}, J. Differ. Geom., 82(2009), no. 2, 363-382.     
  
  

 \bibitem{CLY84} S. Y. Cheng, P. Li, S. T. Yau, \emph{Heat equations on minimal submanifolds and their applications}, Amer. J. Math. 103 (1984), no. 5, 1033-1065.    

\bibitem{CL15} B. Chow, P. Lu, \emph{Uniqueness of asymptotic cones of complete noncompact shrinking gradient Ricci solitons with Ricci curvature decay}, Comptes Rendus Mathematique, Volume 353, Issue 11, November 2015, Pages 1007-1009.

 \bibitem{CLN06} B. Chow, P. Lu, L. Ni, \emph{Hamilton's Ricci Flow}, Lecture in Contemporary Mathematics, 3, Science Press and Graduate Studies in Mathematics, 77, American Mathematical Society, 2006.
  
 \bibitem{CCGG} B. Chow, S. C. Chu, D. Glickenstein, C. Guenther, J. Isenberg, T. Ivey,  D. Knopf, P. Lu, F. Luo, L. Ni, 
    \emph{The Ricci flow: techniques and applications, part I-IV}, Mathematical surveys and Monographs, vol. 206, American Mathematical Society. 
    
 \bibitem{ColdNa} T. H. Colding, A. Naber,  \emph{Sharp H\"older continuity of tangent cones for spaces with a lower Ricci curvature bound and applications},
  Ann. Math.176(2012),  no. 2, 1173-1229.  

 \bibitem{Dav89} E. B. Davies, \emph{Heat kernels and spectral theory}, Cambridge Tracts in Mathematics, 92,  Cambridge University Press, Cambridge, 1989.

 \bibitem{Gr09} A. Grigor'yan, \emph{Heat Kernel and Analysis on Manifolds}, AMS/IP Studies in Advanced Mathematics Volume: 47; 2009.
 
  \bibitem{GT01} D. Gilbarg, N. S. Trudinger,	\emph{Elliptic partial differential equations of second order}, Springer.

 \bibitem{Ham86} R.S. Hamilton, \emph{Four-manifolds with positive curvature operator}, J. Differ. Geom. 24(1986), no.2,  153-179.
 
 
 \bibitem{Ham95} R. S. Hamilton, \emph{The formation of singularities in Ricci flow}, Surveys in differential geometry, Vol. II (Cambridge, MA, 1993), 7–136, Int. Press, Cambridge, MA, 1995.
 
 \bibitem{Ham99} R. S. Hamilton, \emph{Non-singular solutions to the Ricci flow on three manifolds}, Commu. Anal. Geom. 1 (1999), 695–729.

 \bibitem{HM11} R. Haslhofer, R. M\"{u}ller, \emph{A compactness theorem for complete Ricci shrinkers},	Geom. Funct. Anal., 21(2011), 1091-1116.

 \bibitem{HN13} H. J. Hein, A. Naber, \emph{New Logarithmic Sobolev Inequalities and an $\epsilon$-Regularity Theorem for the Ricci Flow}, Comm. Pure. Appl. Math., 67(2014), no. 9, 1543-1561.
  
 \bibitem{HLW18} S. Huang, Y. Li, B. Wang, \emph{On the regular-convexity of Ricci shrinker limit spaces}, arXiv:1809.04386.
 
 \bibitem{Ivey93} T. Ivey, \emph{Ricci solitons on compact three-manifolds}, Differential Geom. Appl. 3(1993), no. 4, 301-307.

 \bibitem{JN16} W. Jiang, A. Naber, \emph{$L^2$ curvature bounds on manifolds with bounded Ricci curvature}, arXiv:1605.05583.

 \bibitem{KL08} B. Kleiner, J. Lott, \emph{Notes on Perelman's papers}, Geometry \& Topology 12 (2008), 2587-2855.
     
 \bibitem{KW15} B. Kotschwar, L. Wang, \emph{Rigidity of asymptotically conical shrinking gradient Ricci solitons}, J. Differ. Geom., 100(2015), no. 1, 55-108.   

 \bibitem{LLW18}  H. Li, Y. Li, B. Wang, \emph{On the structure of Ricci shrinkers}, arXiv:1809.04049.

 \bibitem{Peter12} P. Li, \emph{Geometric Analysis}, Cambridge Studies in Advanced Math, vol. 134, Cambridge University Press, New York, 2012.

 \bibitem{Li17} Y. Li, \emph{Ricci flow on asymptotically Euclidean manifolds}, Geom. Topol. 22 (2018), no. 3, 1837-1891.

 \bibitem{LHW18} Y. Li, S.S. Huang, B. Wang, \emph{On the regular-convexity of Ricci shrinker limit spaces}, arXiv:1809.04386. 
	
 \bibitem{LWs1} Y. Li, B. Wang, \emph{The rigidity of Ricci shrinkers of dimension four},  arXiv:1701.01989, to appear in Trans. Amer. Math. Soc. 

 \bibitem{LY86} P. Li, S. T. Yau, \emph{On the parabolic kernel of the Schr\"odinger operator}, Acta. Math. 156 (1986), no. 3-4, 153-201.

 \bibitem{LV09} J. Lott, C. Villani, \emph{Ricci curvature for metric-measure spaces via optimal transport}, Ann. Math., 169(2009), no.3, 903-991.

 \bibitem{MS13} O. Munteanu, N. Sesum, \emph{On gradient Ricci solitons}, J. Geom. Anal. 23 (2013), no. 2, 539-561.
 	
 \bibitem{MW12} O. Munteanu, J. Wang, \emph{Analysis of weighted Laplacian and applications to Ricci solitons}, Commun. Anal. Geom. 20(2012), no.1,  55-94.

 \bibitem{MW17} O. Munteanu, J. Wang, \emph{Positively curved shrinking Ricci solitons are compact}, J. Differ. Geom.,106(2017), no. 3, 499-505.

 \bibitem{Naber} A. Naber, \emph{Noncompact shrinking four solitons with nonnegative curvature}, J. Reine Angew. Math. 645(2010), 125-153. 

 \bibitem{Ni06} L. Ni, \emph{A note on Perelman's LYH-type inequality}, Commun.  Anal.  Geom., 14(2006), no. 5, 883-905.
 
 \bibitem{NW10} L. Ni, N. Wallach, \emph{On a classification of gradient shrinking solitons}, Math. Res. Lett. 15 (2010), no. 5, 941–955.
        
 \bibitem{Pe1} G. Perelman, \emph{The entropy formula for the Ricci flow and its geometric applications},	arXiv:math.DG/0211159.

\bibitem{PW10} P. Petersen, W. Wylie, \emph{On the classification of gradient Ricci solitons}. Geom. Topol. 14(2010), no. 4,  2277-2300.

\bibitem{Ro81} O. S. Rothaus, \emph{Logarithmic Sobolev inequalities and the spectrum of Schr\"odinger operators}, J. Funct. Anal. 42(1981), no. 1, 110-120.


 \bibitem{Shi89A} W. X. Shi,  \emph{Deforming the metric on complete Riemannian manifolds}, J.Diff. Geom., 30(1989), no. 2, 223-301.

 \bibitem{Shi89B} W. X. Shi,	\emph{Ricci deformation of the metric on complete noncompact Riemannian manifolds}, J.Diff. Geom., 30(1989), no. 2, 303-394.

 \bibitem{TiWa} G. Tian, B. Wang, \emph{ On the structure of almost Einstein manifolds},  J. Amer. Math. Soc., 28(2015), no. 4, 1169-1209.  

 \bibitem{Vil08} C. Villani, \emph{Optimal Transport, Old and New}, Grundlehren Math. Wiss., vol. 338, Springer, 2008.

 \bibitem{BW17A} B. Wang, \emph{The local entropy along Ricci flow---Part A: the no-local-collapsing theorems},  Camb. J. Math., 6(2018), no. 3, 267-346.  

 \bibitem{BW17B} B. Wang, \emph{The local entropy along Ricci flow---Part B: the pseudo-locality theorems},  preprint.   

 \bibitem{WW09} G. Wei, W. Wylie, \emph{Comparison geometry for the Bakry-\'Emery Ricci tensor}, J. Diff. Geom., 83 (2009), no. 2, 377-405.

 \bibitem{Ye04}  R. Ye, \emph{Notes on the Reduced Volume and Asymptotic Ricci Solitons of $\kappa$-Solutions}, http://www.math.lsa.umich.edu/research/ricciflow/perelman.html.

 \bibitem{Yo09} T. Yokota, \emph{Perelman's reduced volume and a gap theorem for the Ricci flow}, Comm. Anal. Geom.,17 (2009), no. 2, 227-263.

 \bibitem{Yo12} T. Yokota, \emph{Addendum to `Perelman's reduced volume and a gap theorem for the Ricci flow'}, Comm. Anal. Geom., 20 (2012), no. 5, 949-955.

 \bibitem{Yau76} S. T. Yau, \emph{Some function-theoretic properties of complete Riemannian manifolds and their applications to geometry}, Indiana Univ. Math. J. 25(1976), no. 7, 659-670. 

 \bibitem{Zhangqi1} Q. S. Zhang, \emph{Some gradient estimates for the heat equation on domains and for an equation by Perelman}, Int. Math. Res. Not., Volume 2006, 1 January 2006.

 \bibitem{Zhangqi2} Q. S. Zhang, \emph{Bounds on volume growth of geodesic balls under Ricci flow}, Math. Res. Lett., 19 (2012), no. 1, 245-253.

 \bibitem{Z11} Q. S. Zhang, \emph{Extremal of Log Sobolev inequality and W entropy on noncompact manifolds}, J. Funct.  Anal., 263(Oct. 1, 2012),  no.7,  2051-2101.

 \bibitem{ZhangZL} Z. Zhang, \emph{Degeneration of shrinking Ricci solitons}, Int. Math. Res. Not., vol. 2010, no. 21, 4137-4158.  
 
 \bibitem{SZhu} S. H. Zhu, \emph{The Comparison Geometry of Ricci curvature},  Comparison Geometry, MSRI Publications, vol. 30(1997),  221-262.  
                  
\end{thebibliography}
\end{document}